\titleformat{\subsection}[runin]{\normalfont\bfseries}{\thesubsection.}{.5em}{}[.]\titlespacing{\subsection}{0pt}{2ex plus .1ex minus .2ex}{.8em}
\titleformat{\subsubsection}[runin]{\normalfont\itshape}{\thesubsubsection.}{.3em}{}[.]\titlespacing{\subsubsection}{0pt}{1ex plus .1ex minus .2ex}{.5em}
\numberwithin{equation}{section}
\numberwithin{figure}{section}
\renewcommand{\r}{\mathrm}  %upright
\renewcommand{\cal}{\mathcal}
\newcommand{\ol}[1]{\overline{#1} \!\,} %overline
\newcommand{\wh}{\widehat}
\newcommand{\wt}{\widetilde}
\newcommand{\f}[1]{\boldsymbol{\mathrm{#1}}} %bold
\newcommand{\ii}{\mathrm{i}}
\newcommand{\dd}{\mathrm{d}}
\newcommand{\col}{\mathrel{\mathop:}}
\newcommand{\st}{\,\col\,}
\newcommand{\deq}{\mathrel{\mathop:}=}
\newcommand{\eqdist}{\overset{d}{=}}
\newcommand{\simdist}{\overset{d}{\sim}}
\renewcommand{\leq}{\leqslant}
\renewcommand{\geq}{\geqslant}
\renewcommand{\le}{\leqslant}
\renewcommand{\ge}{\geqslant}
\newcommand{\ind}[1]{\f 1 (#1)}
\newcommand{\indb}[1]{\f 1 \pb{#1}}
\newcommand{\indBB}[1]{\f 1 \pBB{#1}}
\renewcommand{\epsilon}{\varepsilon}
\renewcommand{\P}{\mathbb{P}}
\newcommand{\E}{\mathbb{E}}
\newcommand{\R}{\mathbb{R}}
\newcommand{\C}{\mathbb{C}}
\newcommand{\N}{\mathbb{N}}
\newcommand{\Z}{\mathbb{Z}}
\newcommand{\p}[1]{({#1})}
\newcommand{\pb}[1]{\bigl({#1}\bigr)}
\newcommand{\pB}[1]{\Bigl({#1}\Bigr)}
\newcommand{\pbb}[1]{\biggl({#1}\biggr)}
\newcommand{\pBB}[1]{\Biggl({#1}\Biggr)}
\newcommand{\q}[1]{[{#1}]}
\newcommand{\qb}[1]{\bigl[{#1}\bigr]}
\newcommand{\qB}[1]{\Bigl[{#1}\Bigr]}
\newcommand{\qbb}[1]{\biggl[{#1}\biggr]}
\newcommand{\qBB}[1]{\Biggl[{#1}\Biggr]}
\newcommand{\qa}[1]{\left[{#1}\right]}
\newcommand{\qq}[1]{[\![{#1}]\!]}
\newcommand{\h}[1]{\{{#1}\}}
\newcommand{\hb}[1]{\bigl\{{#1}\bigr\}}
\newcommand{\hB}[1]{\Bigl\{{#1}\Bigr\}}
\newcommand{\hBB}[1]{\Biggl\{{#1}\Biggr\}}
\newcommand{\abs}[1]{\lvert #1 \rvert}
\newcommand{\absb}[1]{\bigl\lvert #1 \bigr\rvert}
\newcommand{\absbb}[1]{\biggl\lvert #1 \biggr\rvert}
\newcommand{\absBB}[1]{\Biggl\lvert #1 \Biggr\rvert}
\newcommand{\norm}[1]{\lVert #1 \rVert}
\newcommand{\normb}[1]{\bigl\lVert #1 \bigr\rVert}
\newcommand{\normbb}[1]{\biggl\lVert #1 \biggr\rVert}
\newcommand{\scalar}[2]{\langle{#1} \mspace{2mu}, {#2}\rangle}
\newcommand{\scalarbb}[2]{\biggl\langle{#1} \,\mspace{2mu},\, {#2}\biggr\rangle}
\DeclareMathOperator{\diag}{diag}
\DeclareMathOperator{\tr}{Tr}
\DeclareMathOperator{\supp}{supp}
\DeclareMathOperator{\re}{Re}
\DeclareMathOperator{\im}{Im}
\DeclareMathOperator{\dist}{dist}
\DeclareMathOperator{\spn}{Span}
\newcommand{\beqa}{\begin{eqnarray}}
\newcommand{\eeqa}{\end{eqnarray}}
\newcommand{\e}{\varepsilon}
\newcommand{\bv}{{\bf{v}}}
\newcommand{\bw}{{\bf{w}}}
\newcommand{\be}{\begin{equation}}
\newcommand{\ee}{\end{equation}}
\newcommand{\txt}[1]{\text{\rm{#1}}}
\definecolor{darkred}{rgb}{0.8,0,0.3}
\definecolor{darkblue}{rgb}{0,0.3,0.8}
\def\comment#1{\ifthenelse{\isodd{\value{page}}}{\marginpar{\raggedright\scriptsize{\textcolor{darkred}{#1}}}}{\marginpar{\raggedleft\scriptsize{\textcolor{darkred}{#1}}}}}
\theoremstyle{plain} %plain, definition, remark
\newtheorem{theorem}{Theorem}[section]
\newtheorem*{theorem*}{Theorem}
\newtheorem{lemma}[theorem]{Lemma}
\newtheorem*{lemma*}{Lemma}
\newtheorem{corollary}[theorem]{Corollary}
\newtheorem*{corollary*}{Corollary}
\newtheorem{proposition}[theorem]{Proposition}
\newtheorem*{proposition*}{Proposition}
\newtheorem{definition}[theorem]{Definition}
\newtheorem*{definition*}{Definition}
\theoremstyle{definition} %plain, definition, remark
\newtheorem{example}[theorem]{Example}
\newtheorem*{example*}{Example}
\newtheorem{remark}[theorem]{Remark}
\newtheorem*{remark*}{Remark}
\newtheorem*{remarks*}{Remarks}
\begin{document}

\title{On the principal components of sample covariance matrices}

\author{Alex Bloemendal\footnote{Harvard University, alexb@math.harvard.edu.} \and
Antti Knowles\footnote{ETH Z\"urich, knowles@math.ethz.ch. Partially supported by Swiss National Science Foundation grant 144662.}
 \and Horng-Tzer Yau\footnote{Harvard University, htyau@math.harvard.edu. Partially supported by NSF Grant DMS-1307444 and Simons investigator fellowship.}  \and Jun Yin\footnote{University of Wisconsin, jyin@math.wisc.edu. Partially supported by NSF Grant DMS-1207961.}}

%\date{April 2, 2014}

\maketitle

\begin{abstract}
We introduce a class of $M \times M$ sample covariance matrices $\cal Q$ which subsumes and generalizes several previous models. The associated population covariance matrix $\Sigma = \E \cal Q$ is assumed to differ from the identity by a matrix of bounded rank. All quantities except the rank of $\Sigma - I_M$ may depend on $M$ in an arbitrary fashion. We investigate the principal components, i.e.\ the top eigenvalues and eigenvectors, of $\cal Q$. We derive precise large deviation estimates on the generalized components $\scalar{\f w}{\f \xi_i}$ of the outlier and non-outlier eigenvectors $\f \xi_i$.  Our results also hold near the so-called BBP transition, where outliers are created or annihilated, and for degenerate or near-degenerate outliers. We believe the obtained rates of convergence to be optimal. In addition, we derive the asymptotic distribution of the generalized components of the non-outlier eigenvectors. A novel observation arising from our results is that, unlike the eigenvalues, the eigenvectors of the principal components contain information about the \emph{subcritical} spikes of $\Sigma$.

The proofs use several results on the eigenvalues and eigenvectors of the uncorrelated matrix $\cal Q$, satisfying $\E \cal Q = I_M$, as input: the isotropic local Marchenko-Pastur law established in \cite{BEKYY}, level repulsion, and quantum unique ergodicity of the eigenvectors. The latter is a special case of a new universality result for the joint eigenvalue-eigenvector distribution.
\end{abstract}

%\newpage
%\tableofcontents

\section{Introduction} \label{sec:intro}

In this paper we investigate $M \times M$ sample covariance matrices of the form
\begin{equation} \label{def_calQ}
\cal Q \;=\; \frac{1}{N} A A^* \;=\; \pbb{\frac{1}{N} \sum_{\mu = 1}^N A_{i \mu} A_{j \mu}}_{i,j = 1}^M\,,
\end{equation}
where the \emph{sample matrix} $A = (A_{i \mu})$ is a real $M \times N$ random matrix. The main motivation to study such models stems from multivariate statistics. Suppose we are interested in the statistics of $M$ mean-zero variables $\f a = (a_1, \dots, a_M)^*$ which are thought to possess a certain degree of interdependence. Such problems of multivariate statistics commonly arise in population genetics, economics, wireless communication, the physics of mixtures, and statistical learning \cite{Johnstone2, Paul, BBP}. The goal is to unravel the interdependencies among the variables $\f a$ by finding the \emph{population covariance matrix}
\begin{equation} \label{Sigma_mean_zero}
\Sigma \;=\; \E \f a \f a^* \;=\; (\E a_i a_j)_{i,j = 1}^M\,.
\end{equation}
To this end, one performs a large number, $N$, of repeated, independent measurements, called ``samples'', of the variables $\f a$. Let $A_{i\mu}$ denote the value of $a_i$ in the $\mu$-th sample. Then the sample covariance matrix \eqref{def_calQ} is the empirical mean approximating the population covariance matrix $\Sigma$.

In general, the mean of the variables $\f a$ is nonzero and unknown. In that case, the population covariance matrix \eqref{Sigma_mean_zero} has to be replaced with the general form
\begin{equation*}
\Sigma \;=\; \E \qb{(\f a - \E \f a) (\f a - \E \f a)^*}\,.
\end{equation*}
Correspondingly, one has to subtract from $A_{i \mu}$ the empirical mean of the $i$-th row of $A$, which we denote by $[A]_i \deq \frac{1}{N} \sum_{\mu = 1}^N A_{i \mu}$. Hence, we replace \eqref{def_calQ} with
\begin{equation} \label{def_calQ_mean}
\dot {\cal Q} \;\deq\; \frac{1}{N - 1} A (I_N - \f e \f e^*) A^* \;=\; \pbb{\frac{1}{N - 1} \sum_{\mu = 1}^N (A_{i \mu} - [A]_i) (A_{j \mu} - [A]_j)}_{i,j = 1}^M\,,
\end{equation}
where we introduced the vector
\begin{equation} \label{def_vec_e}
\f e \;\deq\; N^{-1/2} (1,1, \dots, 1)^* \;\in\; \R^N\,.
\end{equation}
Since $\dot{\cal Q}$ is invariant under the shift $A_{i\mu} \mapsto A_{i \mu} + m_i$ for any deterministic vector $(m_i)_{i = 1}^M$, we may assume without loss of generality that $\E A_{i \mu} = 0$. We shall always make this assumption from now on.

It is easy to check that $\E \cal Q = \E \dot{\cal Q} = \Sigma$. Moreover, we shall see that the principal components of $\cal Q$ and $\dot{\cal Q}$ have identical asymptotic behaviour. For simplicity of presentation, in the following we focus mainly on $\cal Q$, bearing in mind that every statement we make on $\cal Q$ also holds verbatim for $\dot{\cal Q}$ (see Theorem \ref{thm: Q dot} below).

By the law of large numbers, if $M$ is fixed and $N$ taken to infinity, the sample covariance matrix $\cal Q$ converges almost surely to the population covariance matrix $\Sigma$. In many modern applications, however, the population size $M$ is very large and obtaining samples is costly. Thus, one is typically interested in the regime where $M$ is of the same order as $N$, or even larger. In this case, as it turns out, the behaviour of $\cal Q$ changes dramatically and the problem becomes much more difficult. In \emph{principal component analysis}, one seeks to understand the correlations by considering the \emph{principal components}, i.e.\ the top eigenvalues and associated eigenvectors, of $\cal Q$. These provide an effective low-dimensional projection of the high-dimensional data set $A$, in which the significant trends and correlations are revealed by discarding superfluous data.

The fundamental question, then, is how the principal components of $\Sigma = \E \cal Q$ are related to those of $\cal Q$.

\subsection{The uncorrelated case} \label{sec: uncorr}
In the ``null'' case, the variables $\f a$ are uncorrelated and $\Sigma = I_M$ is the identity matrix. The global distribution of the eigenvalues is governed by the \emph{Marchenko-Pastur law} \cite{MP}. More precisely, defining the dimensional ratio
\begin{equation} \label{def_phi}
 \phi \;\equiv\; \phi_N \;\deq\;\frac MN\,,
\end{equation}
the empirical eigenvalue density of the rescaled matrix $Q = \phi^{-1/2} \cal Q$ has the same asymptotics for large $M$ and $N$ as
\begin{equation} \label{MP_law}
\frac{\sqrt{\q{(x-\gamma_-)(\gamma_+-x)}_+}}{2 \pi \sqrt{\phi} \, x}\, \dd x + (1-\phi^{-1})_+ \, \delta(\dd x)\,,
\end{equation}
where we defined
\begin{equation} \label{def:gamma_pm}
\gamma_\pm  \;\deq\; \phi^{1/2}+ \phi^{-1/2} \pm 2
\end{equation}
to be the edges of the limiting spectrum. Hence, the unique nontrivial eigenvalue $1$ of $\Sigma$ spreads out into a bulk spectrum of $\cal Q$ with diameter $4 \phi^{1/2}$. Moreover, the local spectral statistics are universal; for instance, the top eigenvalue of $\cal Q$ is distributed according to the Tracy-Widom-1 distribution \cite{TW1, TW2, Joh1, Johnstone}. Finally, the eigenvectors of $\cal Q$ are uniformly distributed on the unit sphere of $\R^M$; following \cite{BoY1}, we call this property the \emph{quantum unique ergodicity} of the eigenvectors of $\cal Q$, a term borrowed from quantum chaos. We refer to Theorem \ref{thm:univ_H} and Remark \ref{rem: universality} below for precise statements.

\subsection{Examples and outline of the model} \label{sec:examples}
The problem becomes much more interesting if the variables $\f a$ are correlated. Several models for correlated data have been proposed in the literature, starting with the Gaussian spiked model from the seminal paper of Johnstone \cite{Johnstone}. Here we propose a general model which includes many previous models as special cases. We motivate it using two examples.

\begin{itemize}
\item[(1)]
Let $\f a = T \f b$, where the entries of $\f b$ are independent with zero mean and unit variance, and $T$ is a deterministic $M \times M$ matrix. 
This may be interpreted as an observer studying a complicated system whose randomness is governed by many independent internal variables $\f b$. The observer only has access to the external variables $\f a$, which may depend on the internal variables $\f b$ in some complicated and unknown fashion. Assuming that this dependence is linear, we obtain $\f a = T \f b$. The sample matrix for this model is therefore $A = T B$, where $B$ is an $M \times N$ matrix with independent entries of unit variance. The population covariance matrix is $\Sigma = T T^*$.
\item[(2)]
Let $r \in \N$ and set
\begin{equation*}
\f a \;=\; \f z + \sum_{l = 1}^r y_l \f u_l\,.
\end{equation*}
Here $\f z \in \R^M$ is a vector of ``noise'', whose entries are independent with zero mean and unit variance. The ``signal'' is given by the contribution of $r$ terms of the form $y_l \f u_l$, whereby $y_1, \dots, y_r$ are independent, with zero mean and unit variance, and $\f u_1, \dots, \f u_r \in \R^M$ are arbitrary deterministic vectors. The sample matrix is
\begin{equation*}
A \;=\; Z + \sum_{l = 1}^r \f u_l \f y_l^*\,,
\end{equation*}
where, writing $Y \deq [\f y_1, \dots, \f y_r] \in \R^{N \times r}$, the $(M + r) \times N$ matrix $B \deq \binom{Z}{Y^*}$ has independent entries with zero mean and unit variance. Writing $U \deq [\f u_1, \dots, \f u_r] \in \R^{M \times r}$, we therefore have
\begin{equation*}
A \;=\; T B \,, \qquad T \;\deq\; (I_M, U)\,.
\end{equation*}
The population covariance matrix is $\Sigma = T T^* = I_M + U U^*$.
\end{itemize}

Below we shall refer to these examples as Examples (1) and (2) respectively.
Motivated by them, we now outline our model. Let $B$ be an $(M + r) \times N$ matrix whose entries are independent with zero mean and unit variance. We choose a deterministic $M \times (M + r)$ matrix $T$, and set $\cal Q = \frac{1}{N} TBB^* T^*$.
We stress that we do not assume that the underlying randomness is Gaussian. Our key assumptions are (i) $r$ is bounded; (ii) $\Sigma - I_M$ has bounded rank; (iii) $\log N$ is comparable to $\log M$; (iv) the entries of $B$ are independent, with zero mean and unit variance, and have a sufficient number of bounded moments. The precise assumptions are given in Section \ref{sec: model} below. We emphasize that everything apart from $r$ and the rank of $\Sigma - I_M$ is allowed to depend on $N$ in an arbitrary fashion.

As explained around \eqref{def_calQ_mean}, in addition to $\cal Q$ we also consider the matrix $\dot {\cal Q} = \frac{1}{N - 1} TB(I_N - \f e \f e^*)B^* T^*$,
whose principal components turn out to have the same asymptotic behaviour as those of $\cal Q$.

\subsection{Definition of model} \label{sec: model}

In this section we give the precise definition of our model and introduce some basic notations. For convenience, we always work with the rescaled sample covariance matrix
\begin{equation} \label{def_Q}
Q \;=\; \phi^{-1/2} \cal Q\,.
\end{equation}
The motivation behind this rescaling is that, as observed in \eqref{MP_law}, it ensures that the bulk spectrum of $Q$ has asymptotically a fixed diameter, $4$, for arbitrary $N$ and $M$.
%Accordingly, in the precise definition of $Q$ below, we replace the random matrix $B$ from \eqref{model_intr} with its rescaled version $X = (MN)^{-1/4} B$.

We always regard $N$ as the fundamental large 
parameter, and write $M \equiv M_N$. Here, and throughout the following, 
in order to unclutter notation we omit the argument $N$ in quantities, 
such as $M$, that depend on it. In other words, every symbol that is not explicitly a constant is in fact a sequence indexed by $N$.
We assume that $M$ and $N$ satisfy the bounds
\begin{equation} \label{NM gen}
N^{1/C} \;\leq\; M \;\leq\; N^C
\end{equation}
for some positive constant $C$. %We recall the dimensional ratio $\phi \deq M/N$ from \eqref{def_phi}.
%, and emphasize that it may depend on $N$ and need not converge in $(0, \infty)$.

Fix a constant $r = 0,1,2,3,\dots$. Let $X$ be an $(M + r) \times N$ random matrix and $T$ an $M \times (M + r)$ deterministic matrix. For definiteness, and bearing the motivation of sample covariance matrices in mind, we assume that the entries of $X$ and $T$ are real. However, our method also trivially applies to complex-valued $X$ and $T$, with merely cosmetic changes to the proofs. We consider the $M \times M$ matrix
\begin{equation} \label{wtH_cov}
Q \;\deq\; T X X^* T^*\,.
\end{equation}
Since $TX$ is an $M \times N$ matrix, we find that $Q$ has
\begin{equation}
K \;\deq\; M \wedge N
\end{equation}
nontrivial (i.e.\ nonzero) eigenvalues.

We define the population covariance matrix
\begin{equation} \label{def_Sigma}
\Sigma \;\equiv\; \Sigma(T) \;\deq\; T T^* \;=\; \sum_{i = 1}^M \sigma_i \f v_i \f v_i^* \;=\; I_M + \phi^{1/2} \sum_{i = 1}^M d_i \f v_i \f v_i^*\,,
\end{equation}
where $\{\f v_i\}_{i = 1}^M$ is a real orthonormal basis of $\R^M$ and $\{\sigma_i\}_{i = 1}^M$ are the eigenvalues of $\Sigma$. Here we introduce the representation
\begin{equation*}
\sigma_i \;=\; 1 + \phi^{1/2} d_i
\end{equation*}
for the eigenvalues $\sigma_i$. We always order the values $d_i$ such that
\begin{equation*}
d_1 \;\geq\; d_2 \;\geq\; \cdots \;\geq\; d_M\,.
\end{equation*}
We suppose that $\Sigma$ is positive definite, so that each $d_i$ lies in the interval
\begin{equation} \label{def_cal_D}
\cal D \;\deq\; (-\phi^{-1/2}, \infty)\,.
\end{equation}
Moreover, we suppose that $\Sigma - I_M$ has bounded rank, i.e.
\begin{equation} \label{def_calR}
\cal R \;\deq\; \h{i \col d_i \neq 0}
\end{equation}
has bounded cardinality, $\abs{\cal R} = O(1)$. We call the couples $((d_i, \f v_i))_{i \in \cal R}$ the \emph{spikes} of $\Sigma$.

We assume that the entries $X_{i \mu}$ of $X$ are independent (but not necessarily identically distributed) random variables satisfying
\begin{equation} \label{cond on entries of X}
\E X_{i \mu}\;=\;0\,,\qquad \E X_{i \mu}^2\;=\;\frac{1}{\sqrt {N M}}\,.
\end{equation}
In addition, we assume that,  for all $p \in \N$, the random variables $(N M)^{1/4} X_{i\mu}$ have a uniformly bounded $p$-th moment. In other words, we assume that there is a constant $C_p$ such that
\begin{equation} \label{moments of X-1}
\E \absb{(N M)^{1/4} X_{i\mu}}^p \;\leq\; C_p\,.
\end{equation}
The assumption that \eqref{moments of X-1} hold for all $p \in \N$ may be easily relaxed. For instance, it is easy to check that our results and their proofs remain valid, after minor adjustments, if we only require that \eqref{moments of X-1} holds for all $p \leq C$ for some large enough constant $C$. We do not pursue such generalizations further.

Our results concern the eigenvalues of $Q$, denoted by
\begin{equation*}
\mu_1 \;\geq\; \mu_2 \;\geq\; \cdots \;\geq\; \mu_M\,,
\end{equation*}
and the associated unit eigenvectors of $Q$, denoted by
\begin{equation*}
\f \xi_1, \f \xi_2, \dots, \f \xi_M \;\in\; \R^M\,.
\end{equation*}

\subsection{Sketch of behaviour of the principal components of $Q$} \label{sec:beh_ev}
To guide the reader, we now give a heuristic description of the behaviour of principal components of $Q$. The spectrum of $Q$ consists of a \emph{bulk spectrum} and of \emph{outliers}---eigenvalues separated from the bulk. The bulk contains an order $K$ eigenvalues, which are distributed on large scales according to the Marchenko-Pastur law \eqref{MP_law}. In addition, if $\phi > 1$ there are $M - K$ trivial eigenvalues at zero. Each $d_i$ satisfying $\abs{d_i} > 1$ gives rise to an outlier located near its \emph{classical location}
\begin{equation} \label{classical_loc}
\theta(d) \;\deq\; \phi^{1/2} + \phi^{-1/2} + d + d^{-1}\,.
\end{equation}
Any $d_i$ satisfying $\abs{d_i} < 1$ does not result in an outlier. We summarize this picture in Figure \ref{fig:outliers}.
\begin{figure}[ht!]
\begin{center}
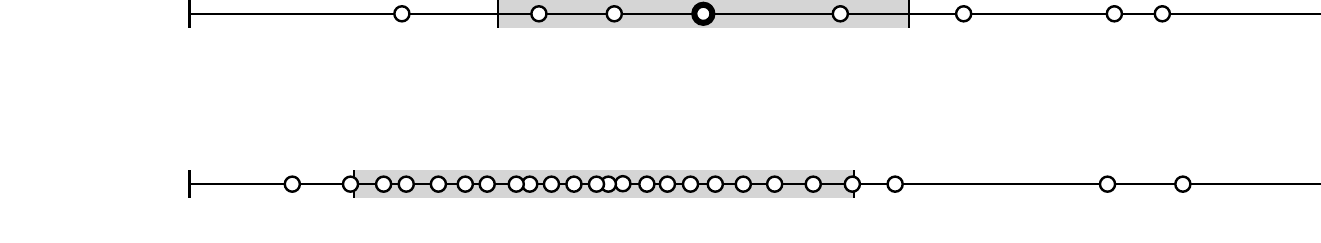
\end{center}
\caption{A typical configuration of $\{d_i\}$ (above) and the resulting spectrum of $Q$ (below). An order $M$ of the $d_i$'s are zero, which is symbolized by the thicker dot at $0$. Any $d_i$ inside the grey interval $[-1,1]$ does not give rise to an outlier, while any $d_i$ outside the grey interval gives rise to an outlier located near its classical location $\theta(d_i)$ and separated from the bulk $[\gamma_-, \gamma_+]$. \label{fig:outliers}} 
\end{figure}
The creation or annihilation of an outlier as a $d_i$ crosses $\pm 1$ is known as the \emph{BBP phase transition} \cite{BBP}. It takes place on the scale\footnote{We use the symbol $\asymp$ to denote quantities of comparable size; see ``Conventions'' at the end of this section for a precise definition.} $\absb{\abs{d_i} - 1}  \asymp  K^{-1/3}$. This scale has a simple heuristic explanation (we focus on the right edge of the spectrum). Suppose that $d_1 \in (0,1)$ and all other $d_i$'s are zero. Then the top eigenvalue $\mu_1$ exhibits universality, and fluctuates on the scale $K^{-2/3}$ around $\gamma_+$ (see Theorem \ref{thm:univ_H} and Remark \ref{rem:univ_Q} below). Increasing $d_1$ beyond the critical value $1$, we therefore expect $\mu_1$ to become an outlier when its classical location $\theta(d_1)$ is located at a distance greater than $K^{-2/3}$ from $\gamma_+$. By a simple Taylor expansion of $\theta$, the condition $\theta(d_1) - \gamma_+ \gg K^{-2/3}$ becomes $d_1 - 1 \gg K^{-1/3}$.

Next, we outline the distribution of the outlier eigenvectors. Let $\mu_i$ be an outlier with associated eigenvector $\f \xi_i$. Then $\f \xi_i$ is concentrated on a cone \cite{Paul,BGN,Mes} with axis parallel to $\f v_i$, the corresponding eigenvector of the population covariance matrix $\Sigma$. More precisely, assuming that the eigenvalue $\sigma_i = 1 + \phi^{1/2} d_i$ of $\Sigma$ is simple, we have\footnote{We use the symbol $\approx$ to denote approximate equality with high probability in heuristic statements. In the precise statements of Section \ref{sec: model_results}, it will be replaced by the more precise notion of stochastic domination from Definition \ref{def:stocdom}.}
\begin{equation} \label{cone_condition}
\scalar{\f v_i}{\f \xi_i}^2 \;\approx\; u(d_i)\,,
\end{equation}
where we defined
\begin{equation} \label{aperture}
u(d_i) \;\equiv\; u_\phi(d_i) \;\deq\; \frac{\sigma_i}{\phi^{1/2} \theta(d_i)} (1 - d_i^{-2})
\end{equation}
for $d_i > 1$.
The function $u$ determines the aperture $2 \arccos \sqrt{u(d_i)}$ of the cone. Note that $u(d_i) \in (0,1)$ and $u(d_i)$ converges to $1$ as $d_i \to \infty$. See Figure \ref{fig:cone}.
\begin{figure}[ht!]
\begin{center}
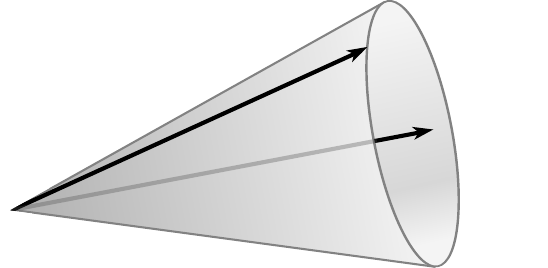
\end{center}
\caption{The eigenvector $\f \xi_i$ associated with an outlier $\mu_i$ is concentrated on a cone with axis parallel to $\f v_i$. The aperture of the cone is determined by $u(d_i)$ defined in \eqref{aperture}. \label{fig:cone}} 
\end{figure}

\subsection{Summary of previous related results} \label{sec:previous_work}
There is an extensive literature on spiked covariance matrices. So far most of the results have focused on the outlier eigenvalues of Example (1), with the nonzero $d_i$ independent of $N$ and $\phi$ fixed. Eigenvectors and the non-outlier eigenvalues have seen far less attention. 

For the uncorrelated case $\Sigma = I_M$ and Gaussian $X$ in \eqref{wtH_cov} with fixed $\phi$, it was proved in \cite{Joh1} for the complex case and in \cite{Johnstone} for the real case that the top eigenvalue, rescaled as $K^{2/3}(\mu_1 - \gamma_+)$, is asymptotically distributed according the Tracy-Widom law of the appropriate symmetry class \cite{TW1, TW2}. Subsequently, these results were shown to be universal, i.e.\ independent of the distribution of the entries of $X$, in \cite{Sosh2, PY}. The assumption that $\phi$ be fixed was relaxed in \cite{Pe1, ElK1}.

The study of covariance matrices with nontrivial population covariance matrix $\Sigma \neq I_M$ goes back to the seminal paper of Johnstone \cite{Johnstone}, where the Gaussian spiked model was introduced. The BBP phase transition was established by Baik, Ben Arous, and P\'ech\'e \cite{BBP} for complex Gaussian $X$, fixed rank of $\Sigma - I_M$, and fixed $\phi$. Subsequently, the results of \cite{BBP} were extended to the other Gaussian symmetry classes, such as real covariance matrices, in \cite{BV1,BV2}. The proofs of \cite{BBP, Pec} use an asymptotic analysis of Fredholm determinants, while those of \cite{BV1,BV2} use an explicit tridiagonal representation of $X X^*$; both of these approaches rely heavily on the Gaussian nature of $X$. See also \cite{PeBor} for a generalization of the BBP phase transition.

For the model from Example (1) with fixed nonzero $\{d_i\}$ and $\phi$, the almost sure convergence of the outliers was established in \cite{BS}. It was also shown in \cite{BS} that if $\abs{d_i} < 1$ for all $i$, the top eigenvalue $\mu_1$ converges to $\gamma_+$. For this model, a central limit theorem of the outliers was proved in \cite{BY1}. In \cite{BY2}, the almost sure convergence of the outliers was proved for a generalized spiked model whose population covariance matrix is of the block diagonal form $\Sigma = \diag (A,T)$, where $A$ is a fixed $r \times r$ matrix and $T$ is chosen so that the associated sample covariance matrix has no outliers.

In \cite{BGN}, the almost sure convergence of the projection of the outlier eigenvectors onto the finite-dimensional spike subspace was established, under the assumption that $\phi$ and the nonzero $d_i$ are fixed, and that $B$ and $T$ are both random and one of them is orthogonally invariant. In particular, the cone concentration from \eqref{cone_condition} was established in \cite{BGN}. In \cite{Paul}, under the assumption that $X$ is Gaussian and $\phi$ and the nonzero $d_i$ are fixed, a central limit theorem for a certain observable, the so-called sample vector, of the outlier eigenvectors was established. The result of \cite{Paul} was extended to non-Gaussian entries for a special class of $\Sigma$ in \cite{Shi}.

Moreover, in \cite{BGN2,NA1} result analogous to those of \cite{BGN} were obtained for the model from Example (2). Finally, a related class of models, so-called deformed Wigner matrices, have been the subject of much attention in recent years; we refer to \cite{SoshPert, SoshPert2, KY2, KY3} for more details; in particular, the joint distribution of all outliers was derived in \cite{KY3}.

\subsection{Overview of results} \label{sec:outl_res}

In this subsection we give an informal overview of our results. % on $Q$ defined in \eqref{wtH_cov}.
%We consider the general model $Q$ from Section \ref{sec: model}.
%For definiteness, we outline our results for $Q = \phi^{-1/2} \cal Q$, but they also hold verbatim for $\dot Q = \phi^{-1/2} \dot{\cal Q}$.

We establish results on the eigenvalues $\mu_i$ and the eigenvectors $\f \xi_i$ of $Q$. Our results consist of large deviation bounds and asymptotic laws. We believe that all of our large deviation bounds from Theorems \ref{thm: outlier locations}, \ref{thm:sticking}, \ref{thm: outlier eigenvectors}, \ref{thm: outlier eigenvectors 2}, and \ref{thm: non-outlier bound} are optimal (up to the technical conditions in the definition of $\prec$ given in Definition \ref{def:stocdom}). We do not prove this. However, we expect that, combining our method with the techniques of \cite{KY3}, one may also derive the asymptotic laws of all quantities on which we establish large deviation bounds, in particular proving the optimality of our large deviation bounds.

Our results on the eigenvalues of $Q$ consist of two parts. First, we derive large deviation bounds on the locations of the outliers (Theorem \ref{thm: outlier locations}). Second, we prove \emph{eigenvalue sticking} for the non-outliers (Theorem \ref{thm:sticking}), whereby each non-outlier ``sticks'' with high probability and very accurately to the eigenvalues of a related covariance matrix satisfying $\Sigma = I_M$ and whose top eigenvalues exhibit universality. As a corollary (Remark \ref{rem:univ_Q}), we prove that the top non-outlier eigenvalue of $Q$ has asymptotically the Tracy-Widom-1 distribution. This sticking is very accurate if all $d_i$'s are separated from the critical point $1$, and becomes less accurate if a $d_i$ is in the vicinity of $1$. Eventually, it breaks down precisely on the BBP transition scale $\abs{d_i - 1} \asymp K^{-1/3}$, at which the Tracy-Widom-1 distribution is known not to hold for the top non-outlier eigenvalue. These results generalize those from \cite[Theorem 2.7]{KY2}.

Next, we outline our results for the eigenvectors $\f \xi_i$ of $Q$. We consider the \emph{generalized components} $\scalar{\f w}{\f \xi_i}$ of $\f \xi_i$, where $\f w \in \R^M$ is an arbitrary deterministic vector. In our first result on the eigenvectors (Theorems \ref{thm: outlier eigenvectors} and \ref{thm: outlier eigenvectors 2}), we establish large deviation bounds on the generalized components of outlier eigenvectors (and, more generally, of the outlier spectral projections defined in \eqref{def_P_A} below). This result gives a quantitative version of the cone concentration from \eqref{cone_condition}, which in particular allows us to track the strength of the concentration in the vicinity of the BBP transition and for overlapping outliers. Our results also establish the complete delocalization of an outlier eigenvector $\f \xi_i$ in any direction orthogonal to the spike direction $\f v_i$, provided the outlier $\mu_i$ is well separated from the bulk spectrum and other outliers. We say that the vector $\f \xi_i$ is \emph{completely delocalized, or unbiased, in the direction $\f w$} if $\scalar{\f w}{\f \xi_i}^2 \prec M^{-1}$, where ``$\prec$'' denotes a high probability bound up to powers of $M^\epsilon$ (see Definition \ref{def:stocdom}).

If the outlier $\mu_i$ approaches the bulk spectrum or another outlier, the cone concentration becomes less accurate. For the case of two nearby outlier eigenvalues, for instance, the cone concentration \eqref{cone_condition} of the eigenvectors breaks down when the distributions of the outlier eigenvalues have a nontrivial overlap. In order to understand this behaviour in more detail, we introduce the deterministic projection
\begin{equation} \label{def_Pi_A}
\Pi_A \;\deq\; \sum_{i \in A} \f v_i \f v_i^*\,,
\end{equation}
where $A \subset \{1, \dots, M\}$. Then the cone concentration from \eqref{cone_condition} may be written as $\abs{\Pi_{\{i\}} \f \xi_i}^2 \approx u(d_i) \abs{\f \xi_i}^2$. In contrast, in the degenerate case $d_1 = d_2 > 1$ and all other $d_i$'s being zero, \eqref{cone_condition} is replaced with
\begin{equation} \label{cone_condition_deg}
\scalar{\f \xi_i}{\Pi_{\{1,2\}} \f \xi_j} \;\approx\; \delta_{ij} \, u(d_1) \abs{\f \xi_i} \abs{\f \xi_j}\,,
\end{equation}
where $i,j \in \{1,2\}$. We deduce that each $\f \xi_i$ lies on the cone
\begin{equation} \label{cone2}
\abs{\Pi_{\{1,2\}} \f \xi_i}^2 \;\approx\; u(d_1) \abs{\f \xi_i}^2\,,
\end{equation}
and that $\Pi_{\{1,2\}} \f \xi_1 \perp \Pi_{\{1,2\}} \f \xi_2$. Moreover, we prove that $\f \xi_i$ is completely delocalized in any direction orthogonal to $\f v_1$ and $\f v_2$. The interpretation is that $\f \xi_1$ and $\f \xi_2$ both lie on the cone \eqref{cone2}, that they are orthogonal on both the range and null space of $\Pi_{\{1,2\}}$, and that beyond these constraints their distribution is unbiased (i.e.\ isotropic). Finally, we note that the preceding discussion remains unchanged if one interchanges $\f \xi_i$ and $\f v_i$. We refer to Example \ref{ex:degen} below for more details.

In our second result on the eigenvectors (Theorem \ref{thm: non-outlier bound}), we establish delocalization bounds for the generalized components of non-outlier eigenvectors $\xi_i$. In particular, we prove complete delocalization of non-outlier eigenvectors in directions orthogonal to any spike $\f v_j$ whose value $d_j$ is near the critical point 1. In addition, we prove that non-outlier eigenvectors away from the edge are completely delocalized in all directions. The complete delocalization in the direction $\f v_j$ breaks down if $\abs{d_j - 1} \ll 1$. The interpretation of this result is that any spike $d_j$ near the BBP transition point $1$ causes all non-outlier eigenvectors $\f \xi_i$ near the upper edge of the bulk spectrum to have a bias in the direction $\f v_j$, in contrast to the completely delocalized case where $\f \xi_i$ is uniformly distributed on the unit sphere.

In our final result on the eigenvectors (Theorem \ref{thm: eigenvector law}), we give the asymptotic law of the generalized component $\scalar{\f w}{\f \xi_i}$ of a non-outlier eigenvector $\f \xi_i$. In particular, we prove that this generalized component is asymptotically Gaussian and has a variance predicted by the delocalization bounds from Theorem \ref{thm: non-outlier bound}. For instance, we prove that if $\abs{d_j - 1} \gg K^{-1/3}$ then
\begin{equation} \label{ev_bias}
\scalar{\f v_j}{\f \xi_i}^2 \;=\; \frac{\sigma_i}{M (d_j - 1)^2} \Theta\,,
\end{equation}
for all non-outlier indices $i$ that are not too large (see Theorem \ref{thm: eigenvector law} for a precise statement). Here $\Theta$ is a random variable that converges in distribution to a chi-squared variable. If $\f \xi_i$ were completely delocalized in the direction $\f v_j$, the right-hand side would be of order $M^{-1}$. Suppose for simplicity that $\phi$ is of order one. The bias of $\f \xi_i$ in the direction $\f v_j$ emerges as soon as $\abs{d_j - 1} \ll 1$, and reaches a magnitude of order $M^{-1/3}$ for $d_j$ near the BBP transition. This is much larger than the unbiased $M^{-1}$. Note that this phenomenon applies simultaneously to all non-outlier eigenvectors near the right edge: the right-hand side of \eqref{ev_bias} does not depend on $i$. Note also that the right-hand side of \eqref{ev_bias} is insensitive to the sign of $d_j - 1$. In particular, the bias is also present for \emph{subcritical} spikes. We conclude that even subcritical spikes are observable in the principal components. In contrast, if one only considers the eigenvalues of the principal components, the subcritical spikes cannot be detected; this follows from the eigenvalue sticking result in Theorem \ref{thm:sticking}.

Finally, the proofs of universality of the non-outlier eigenvalues and eigenvectors require the universality of $Q$ for the uncorrelated case $\Sigma = I_M$ as input. This universality result is given in Theorem \ref{thm:univ_H}, which is also of some independent interest. It establishes the joint, fixed-index, universality of the eigenvalues and eigenvectors of $Q$ (and hence, as a special case, the quantum unique ergodicity of the eigenvectors of $Q$ mentioned in Section \ref{sec: uncorr}). It works for all eigenvalue indices $i$ satisfying $i \leq K^{1 - \tau}$ for any fixed $\tau > 0$.

We conclude this subsection by outlining the key novelties of our work.
\begin{enumerate}
\item
We introduce the general models $Q$ from \eqref{wtH_cov} and $\dot Q$ from \eqref{def_dot_Q} below, which subsume and generalize several models considered previously in the literature\footnote{In particular, the current paper is the first to study the principal components of a realistic sample covariance matrix \eqref{def_calQ_mean} instead of the zero mean case \eqref{def_calQ}.}. We allow the entries of $X$ to be arbitrary random variables (up to a technical assumption on their tails). All quantities except $r$ and the rank of $\Sigma - I_M$ may depend on $N$. We make no assumption on $T$ beyond the bounded-rank condition of $T T^* - I_M$. The dimensions $M$ and $N$ may be wildly different, and are only subject to the technical condition \eqref{NM gen}.

\item
We study the behaviour of the principal components of $Q$ near the BBP transition and when outliers collide. Our results hold for generalized components $\scalar{\f w}{\f \xi_i}$ of the eigenvectors in arbitrary directions $\f w$.

\item
We obtain quantitative bounds (i.e.\ rates of convergence) on the outlier eigenvalues  and the generalized components of the eigenvectors. We believe these bounds to be optimal.  

\item
We obtain precise information about the non-outlier principal components. A novel observation is that, provided there is a $d_i$ satisfying $\abs{d_i - 1} \ll 1$ (i.e.\ $Q$ is near the BBP transition), all non-outlier eigenvectors near the edge will be biased in the direction of $\f v_i$. In particular, non-outlier eigenvectors, unlike non-outlier eigenvalues, retain some information about the subcritical spikes of $\Sigma$.

\item
We establish the joint, fixed-index, universality of the eigenvalues and eigenvectors for the case $\Sigma = I_M$. This result holds for any eigenvalue indices $i$ satisfying $i \leq K^{1 - \tau}$ for an arbitrary $\tau > 0$.
Note that previous works \cite{KY1, TV3} (established in the context of Wigner matrices) required either the much stronger condition $i \leq (\log K)^{C \log \log K}$ or a four-moment matching condition.
\end{enumerate}

We remark that the large deviation bounds derived in this paper also allow one to derive the joint distribution of the generalized components of the outlier eigenvectors; this will be the subject of future work.

\subsection*{Conventions}

The fundamental large parameter is $N$. All quantities that are not explicitly constant may depend on $N$; we almost always omit the argument $N$ from our notation.

We use $C$ to denote a generic large positive constant, which may depend on some fixed parameters and whose value may change from one expression to the next. Similarly, we use $c$ to denote a generic small positive constant.  For two positive quantities $A_N$ and $B_N$ depending on $N$ we use the notation $A_N \asymp B_N$ to mean $C^{-1} A_N \leq B_N \leq C A_N$ for some positive constant $C$. For $a < b$ we set $\qq{a,b} \deq [a,b] \cap \Z$. We use the notation $\f v = (v(i))_{i = 1}^M$ for vectors in $\R^M$, and denote by $\abs{\cdot}= \norm{\cdot}_2$ the Euclidean norm of vectors and by $\norm{\cdot}$ the corresponding operator norm of matrices. We use $I_M$ to denote the $M \times M$ identity matrix, which we also sometimes write simply as $1$ when there is no risk of confusion.

We use $\tau > 0$ in various assumptions to denote a positive constant that may be chosen arbitrarily small. A smaller value of $\tau$ corresponds to a weaker assumption. All of our estimates depend on $\tau$, and we neither indicate nor track this dependence.

\section{Results} \label{sec: model_results}

In this section we state our main results.
The following notion of a high-probability bound was introduced in \cite{EKY2}, and has been subsequently used in a number of works on random matrix theory. It provides a simple way of systematizing and making precise statements of the form ``$A$ is bounded with high probability by $B$ up to small powers of $N$''.

\begin{definition}[Stochastic domination]\label{def:stocdom}
Let
\begin{equation*}
A = \pb{A^{(N)}(u) \col N \in \N, u \in U^{(N)}} \,, \qquad
B = \pb{B^{(N)}(u) \col N \in \N, u \in U^{(N)}}
\end{equation*}
be two families of nonnegative random variables, where $U^{(N)}$ is a possibly $N$-dependent parameter set. 
We say that $A$ is \emph{stochastically dominated by $B$, uniformly in $u$,} if for all (small) $\epsilon > 0$ and (large) $D > 0$ we have
\begin{equation} \label{prec_def}
\sup_{u \in U^{(N)}} \P \qB{A^{(N)}(u) > N^\epsilon B^{(N)}(u)} \;\leq\; N^{-D}
\end{equation}
for large enough $N\geq N_0(\e, D)$.
Throughout this paper the stochastic domination will always be uniform in all parameters (such as matrix indices) that are not explicitly fixed. Note that $N_0(\e, D)$ may depend on the constants from \eqref{NM gen} and \eqref{moments of X-1} as well as any constants fixed in the assumptions of our main results.
If $A$ is stochastically dominated by $B$, uniformly in $u$, we use the notation $A \prec B$. Moreover, if for some complex family $A$ we have $\abs{A} \prec B$ we also write $A = O_\prec(B)$.
\end{definition}

\begin{remark} \label{rem:stochdomMN} 
Because of \eqref{NM gen}, all (or some) factors of $N$ in Definition \eqref{def:stocdom} could be replaced with $M$ without changing the definition of stochastic domination. 
\end{remark}

%The remainder of this section is devoted to main results of this paper.
%As explained in Section \ref{sec:intro}, we first state our results for the matrix $Q$ from \eqref{wtH_cov}, bearing in mind that they also hold for $\dot Q$ The precise statement for $\dot Q$ is given in Theorem \ref{thm: Q dot} below.

\subsection{Eigenvalue locations} \label{sec:eval_results}

We begin with results on the locations of the eigenvalues of $Q$. These results will also serve as a fundamental input for the proofs of the results on eigenvectors presented in Sections \ref{sec:out_results} and \ref{sec:non_outl_results}.

Recall that $Q$ has $M - K$ zero eigenvalues. We shall therefore focus on the $K$ nontrivial eigenvalues $\mu_1 \geq \cdots \geq \mu_K$ of $Q$. On the global scale, the eigenvalues of $Q$ are distributed according to the Marchenko-Pastur law \eqref{MP_law}. This may be easily inferred from the fact that \eqref{MP_law} gives the global density of the eigenvalues for the uncorrelated case $\Sigma = I_M$, combined with eigenvalue interlacing (see Lemma \ref{lem: interlacing} below). In this section we focus on \emph{local} eigenvalue information.

We introduce the set of \emph{outlier indices}
\begin{equation} \label{def: outlier indices}
\cal O \;\deq\; \hb{i \in \cal R \col \abs{d_i} \geq 1 + K^{-1/3}}\,.
\end{equation}
As explained in Section \ref{sec:beh_ev}, each $i \in \cal O$ gives rise to an outlier of $Q$ near the classical location $\theta(d_i)$ defined in \eqref{classical_loc}. In the definition \eqref{def: outlier indices}, the lower bound $1 + K^{-1/3}$ is chosen for definiteness; it could be replaced with $1 + a K^{-1/3}$ for any fixed $a > 0$. We denote by
\begin{equation} \label{def_r_pm}
s_{\pm} \;\deq\; \absb{\hb{i \in \cal O \col \pm d_i > 0}}
\end{equation}
the number of outliers to the left ($s_-$) and right ($s_+$) of the bulk spectrum.

For $d \in \cal D \setminus [-1,1]$ we define
\begin{equation*}
\Delta(d) \;\deq\;
\begin{cases}
\frac{\phi^{1/2} \theta(d)}{1 + (\abs{d} - 1)^{-1/2}} & \text{if } -\phi^{-1/2} < d < -1
\\
(d - 1)^{1/2} & \text{if } 1 < d \leq 2
\\
1 + \frac{d}{1 + \phi^{-1/2}} & \text{if } d \geq 2\,.
\end{cases}
\end{equation*}
The function $\Delta(d)$ will be used to give an upper bound on the magnitude of the fluctuations of an outlier associated with $d$. We give such a precise expression for $\Delta$ in order to obtain sharp large deviation bounds for all $d \in \cal D \setminus [-1,1]$. (Note that the discontinuity of $\Delta$ at $d = 2$ is immaterial since $\Delta$ is used as an upper bound with respect to $\prec$. The ratio of the right- and left-sided limits at $2$ of $\Delta$ lies in $[1,3]$.)

Our result on the outlier eigenvalues is the following.

\begin{theorem}[Outlier locations] \label{thm: outlier locations}
Fix $\tau > 0$. Then for $i \in \cal O$ we have the estimate
\begin{equation} \label{outlier locations}
\abs{\mu_{i} - \theta(d_i)} \;\prec\; \Delta(d_i) \, K^{-1/2}
\end{equation}
provided that $d_i > 0$ or $\abs{\phi - 1} \geq \tau$.

Furthermore, the extremal non-outliers $\mu_{s_+ + 1}$ and $\mu_{K - s_-}$ satisfy
\begin{equation} \label{ex+ bound}
\abs{\mu_{s_++1} - \gamma_+} \;\prec\; K^{-2/3}\,,
\end{equation}
and, assuming in addition that $\abs{\phi-1} \geq \tau$,
\begin{equation} \label{ex- bound}
\abs{\mu_{K-s_-} - \gamma_-} \;\prec\; K^{-2/3}\,. 
\end{equation}
\end{theorem}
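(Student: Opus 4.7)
My plan follows the master-equation strategy used for deformed Wigner matrices in \cite{KY2}, fed by the isotropic local Marchenko--Pastur law of \cite{BEKYY} as the fundamental input. First I would isolate the low-rank spike structure of $\Sigma$. Factoring $T = \Sigma^{1/2} T_0$ with $T_0 T_0^* = I_M$, so that $Q = \Sigma^{1/2} \wt Q \Sigma^{1/2}$ for the uncorrelated companion $\wt Q \deq T_0 X X^* T_0^*$ (which satisfies $\E \wt Q = I_M$), one has
\begin{equation*}
\det(Q - z) \;=\; \det(\Sigma) \, \det\pb{\wt Q - z\Sigma^{-1}}\,.
\end{equation*}
Since $\Sigma - I_M = \phi^{1/2} V D V^*$ with $V$ the $M \times \abs{\cal R}$ matrix of spike eigenvectors and $D = \diag(d_i)_{i \in \cal R}$, Sherman--Morrison--Woodbury applied to $\Sigma^{-1}$ together with the Sylvester determinant identity yield a master equation $\det\pb{\cal M(z)} = 0$ for an explicit $\abs{\cal R} \times \abs{\cal R}$ matrix $\cal M(z)$ whose entries are built from $D$ and $\scalar{\f v_i}{G(z) \f v_j}$, where $G(z) \deq (\wt Q - z)^{-1}$.

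Next, I would apply the isotropic local Marchenko--Pastur law from \cite{BEKYY} to replace each $\scalar{\f v_i}{G(z) \f v_j}$ by its deterministic counterpart $m(z) \delta_{ij}$, up to the optimal rate $\prec \pb{\im m(z)/(K\eta)}^{1/2} + (K\eta)^{-1}$ uniformly on the relevant spectral domain. Substituting this approximation into $\cal M(z)$ turns the deterministic master equation into an explicit scalar identity whose roots outside $[\gamma_-, \gamma_+]$ are precisely the classical locations $\theta(d_i)$. Converting the isotropic error into a bound on $\abs{\mu_i - \theta(d_i)}$ then amounts to quantitatively inverting the map $\theta$, whose derivative $\theta'(d) = 1 - d^{-2}$ vanishes at the BBP threshold. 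Balancing this vanishing against the square-root behaviour $\im m(\theta(d) + \ii\eta) \asymp \sqrt{\kappa + \eta}$ near the right edge (with $\kappa$ the distance of $\theta(d)$ to $\gamma_+$) produces the prefactor $(d-1)^{1/2}$ in $\Delta(d)$ for $1 < d \leq 2$; the regime $d \geq 2$ yields the linear $1 + d/(1+\phi^{-1/2})$; and at the left edge the asymmetric MP density gives the factor $\phi^{1/2}\theta(d)/(1+(\abs{d}-1)^{-1/2})$. The hypothesis $\abs{\phi-1} \geq \tau$ for $d_i < 0$ is used here to keep $\gamma_-$ bounded away from the hard edge $0$ and to guarantee genuine square-root edge behaviour for $m$.

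For the extremal non-outlier bounds \eqref{ex+ bound} and \eqref{ex- bound} I would invoke Cauchy eigenvalue interlacing. Since $Q = \Sigma^{1/2} \wt Q \Sigma^{1/2}$ and $\Sigma^{1/2} - I_M$ has rank $\abs{\cal R} = O(1)$, the matrix $Q$ is a bounded-rank multiplicative modification of $\wt Q$, and interlacing sandwiches $\mu_{s_+ + 1}(Q)$ between two eigenvalues of $\wt Q$ with indices differing by at most $O(\abs{\cal R})$. The edge rigidity of $\wt Q$---a standard consequence of the local MP law yielding $\abs{\mu_j(\wt Q) - \gamma_+} \prec (j/K)^{2/3}$ for small $j/K$---then supplies \eqref{ex+ bound}; the argument at the lower edge is identical, with $\abs{\phi-1} \geq \tau$ again used to guarantee that $\gamma_- > 0$ and that the MP density vanishes as a square root there.

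The main difficulty will be extracting the optimal prefactor $\Delta(d)$ uniformly across all three regimes $(1, 2]$, $[2, \infty)$ and $(-\phi^{-1/2}, -1)$. In the delicate BBP window $1 < d \leq 2$ the vanishing of $\theta'$ must be precisely offset by the square-root vanishing of $\im m$; at the left edge the $\phi$-dependent location of $\gamma_-$ interacts nontrivially with the asymmetric MP density; and for large $d$ the leading-order entries of $\cal M(z)$ themselves change size. Handling all regimes simultaneously will require applying the isotropic local MP law at a $d$-dependent spectral scale $\eta = \eta(d)$ rather than at a single uniform scale, and carefully tracking the stability of the implicit inversion from the master equation back to $\mu$.
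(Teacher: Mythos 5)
Your plan matches the paper's own proof in its main ingredients: the determinantal master equation $\det(D^{-1}+W(\mu))=0$ for the finite-rank deformation (Lemma~\ref{lem:linalg}), the isotropic local Marchenko--Pastur law of \cite{BEKYY} used to replace $W(z)$ by its deterministic limit $w_\phi(z)$, and Cauchy interlacing. However, there is a genuine gap at the heart of the argument: the \emph{counting step} is entirely missing. Replacing $W(z)$ by $w_\phi(z)$ only establishes a \emph{forbidden region} --- a set where $Q$ has no eigenvalues --- and exhibits $\{\theta(d_i)\}_{i\in\cal O}$ as the deterministic candidate locations. It does not, by itself, show that the $i$-th largest eigenvalue $\mu_i$ lies in the neighbourhood $I_i$ of $\theta(d_i)$ for the \emph{correct} index $i$, nor that each allowed neighbourhood contains exactly one eigenvalue. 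The paper resolves this by (ii) verifying the count for a well-separated reference configuration $D(0)$ and (iii) a continuity argument along a deformation path $(D(t))_{t\in[0,1]}$ from $D(0)$ to the actual $D$, which uses the persistent spectral gap between $\bigcup_{i\in\cal O} I_i(D(t))$ and the rest of the spectrum to prevent the count in each component from changing. Without this, \eqref{outlier locations} --- which is a statement about a specific ordered eigenvalue --- does not follow from the forbidden-region estimate.

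A second, smaller gap concerns \eqref{ex+ bound}. Interlacing (Corollary~\ref{cor: interlacing}) gives $\mu_{s_++1}\in[\lambda_{s_++1+\abs{\cal R}},\,\lambda_{s_++1-\abs{\cal R}}]$ with the convention $\lambda_j=\infty$ for $j<1$; when $s_++1\le\abs{\cal R}$ (e.g.\ a single subcritical spike with $s_+=0$) the upper side is vacuous, so interlacing plus rigidity only gives the lower bound $\mu_{s_++1}\ge\gamma_+-O_\prec(K^{-2/3})$. The matching upper bound again requires the forbidden-region argument (no eigenvalue of $Q$ above $\gamma_++K^{-2/3+\e}$ except in the allowed intervals), and the paper uses interlacing only for the lower bound. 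Finally, a small factual slip: for $E=\theta(d)$ with $d>1$ the point lies \emph{outside} the bulk, so $\im m_\phi(E+\ii\eta)\asymp \eta/\sqrt{\kappa+\eta}$, not $\sqrt{\kappa+\eta}$; the prefactor $\Delta(d)=(d-1)^{1/2}$ on $(1,2]$ actually arises from dividing the isotropic error $(\kappa+\eta)^{-1/4}K^{-1/2}$, with $\kappa(\theta(d))\asymp(d-1)^2$, by $\abs{w_\phi'(\theta(d))}\asymp(d^2-1)^{-1}$.
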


\begin{remark}
Theorem \ref{thm: outlier locations} gives large deviation bounds for the locations of the outliers to the right of the bulk.
Since $\tau > 0$ may be arbitrarily small, Theorem \ref{thm: outlier locations} also gives the full information about the outliers to the left of the bulk except in the case $1 > \phi = 1 + o(1)$. Although our methods may be extended to this case as well, we exclude it here to avoid extraneous complications.
\end{remark}

\begin{remark}
By definition of $s_-$ and $\cal D$, if $\phi>1$ then $s_-=0$. Hence, by \eqref{ex- bound}, if $\phi > 1$ there are no outliers on the left of the bulk spectrum.
\end{remark}

\begin{remark}
Previously, the model from Example (1) in Section \ref{sec:examples} with fixed nonzero $\{d_i\}$ and $\phi$ was investigated in \cite{BS, BY1}. In \cite{BS}, it was proved that each outlier eigenvalue $\mu_i$ with $i \in \cal O$ convergences almost surely to $\theta(d_i)$. Moreover, a central limit theorem for $\mu_i$ was established in \cite{BY1}.
\end{remark}

The locations of the non-outlier eigenvalues $\mu_i$, $i \notin \cal O$, are governed by \emph{eigenvalue sticking}, whereby the eigenvalues of $Q$ ``stick'' with high probability to eigenvalues of a reference matrix which has a trivial population covariance matrix. The reference matrix is $Q$ from \eqref{wtH_cov} with uncorrelated entries. More precisely, we set
\begin{equation} \label{def_H}
H \;\deq\; Y Y^*\,, \qquad Y \;\deq\; (I_M, 0) O X\,,
\end{equation}
where $O \equiv O(T) \in \r O(M + r)$ is a deterministic orthogonal matrix. It is easy to check that $\E H = (NM)^{-1/2} I_M$, so that $H$ corresponds to an uncorrelated population. The matrix $O(T)$ is explicitly given in \eqref{O_dep_T} below. In fact, in Theorem \ref{thm:univ_H} below we prove the universality of the joint  distribution of non-bulk eigenvalues and eigenvectors of $H$. Here, by definition, we say that an index $i \in \qq{1,K}$ is \emph{non-bulk} if $i \notin \qq{K^{1 - \tau}, K - K^{1 - \tau}}$ for some fixed $\tau > 0$.  In particular, the asymptotic distribution of the non-bulk eigenvalues and eigenvectors of $H$ does not depend on the choice of $O$. Note that for the special case $r = 0$ the eigenvalues of $H$ coincide with those of $X X^*$. We denote by
\begin{equation*}
\lambda_1\;\geq\; \lambda_2 \;\geq\; \cdots \;\geq\; \lambda_M
\end{equation*}
the eigenvalues of $H$.

\begin{theorem}[Eigenvalue sticking] \label{thm:sticking}
Define
\begin{equation} \label{def_alpha}
\alpha_\pm \;\deq\; \min_{1 \leq i \leq M} \abs{d_i \mp 1}\,.
\end{equation}
Fix $\tau > 0$. 
Then we have for all $i \in \qq{1, (1 - \tau) K}$
\begin{equation} \label{sticking_1}
\abs{\mu_{i + s_+} - \lambda_i} \;\prec\; \frac{1}{K \alpha_+}\,.
\end{equation}
Similarly, if $\abs{\phi - 1} \geq \tau$ then we have for all $i \in \qq{\tau K, K}$
\begin{equation} \label{sticking_2}
\abs{\mu_{i - s_-} - \lambda_i} \;\prec\; \frac{1}{K \alpha_-}\,.
\end{equation}
\end{theorem}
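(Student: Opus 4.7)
The plan is to reduce the problem to a low-dimensional secular equation for the eigenvalues of $Q$ in terms of the Green function of $H$, then combine the isotropic local Marchenko--Pastur law for $H$ (from \cite{BEKYY}) with eigenvalue interlacing and level repulsion to obtain the sticking bound. First, I would choose the orthogonal matrix $O$ in the definition of $H$ so that, using an SVD-like decomposition of $T$ as $\Sigma^{1/2} W$ with $W$ having orthonormal rows, the identity
\begin{equation*}
Q \;=\; \Sigma^{1/2} H \Sigma^{1/2}
\end{equation*}
holds. Writing $\Sigma = I_M + V_{\cal R} D_{\cal R} V_{\cal R}^*$ with $V_{\cal R} = [\f v_i]_{i \in \cal R}$ and $D_{\cal R} = \phi^{1/2} \diag(d_i)_{i \in \cal R}$, the matrix $Q - H$ has rank $O(1)$, so Cauchy interlacing yields $\lambda_{i + O(1)} \le \mu_i \le \lambda_{i - O(1)}$, which already forces each non-outlier of $Q$ to lie near some $\lambda_j$.

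Next, I would derive a secular equation: for $\mu$ not in the spectrum of $H$,
\begin{equation*}
\det(\mu I - Q) \;=\; \det(\mu I - H)\, \det\!\pb{D_{\cal R}^{-1} + I - \mu V_{\cal R}^* G(\mu) V_{\cal R}}\,,
\end{equation*}
where $G(\mu) \deq (\mu - H)^{-1}$, so the non-trivial eigenvalues of $Q$ not shared with $H$ are zeros of an $|\cal R| \times |\cal R|$ determinant. The isotropic local law of \cite{BEKYY} gives $V_{\cal R}^* G(\mu) V_{\cal R} = m(\mu) I_{|\cal R|} + O_\prec(\psi)$ at the relevant scales, and near an individual eigenvalue $\lambda_j$ of $H$ the decomposition
\begin{equation*}
V_{\cal R}^* G(\mu) V_{\cal R} \;=\; \sum_j \frac{V_{\cal R}^* \f u_j \f u_j^* V_{\cal R}}{\mu - \lambda_j}
\end{equation*}
combined with quantum unique ergodicity gives each residue size $O_\prec(M^{-1})$. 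The key algebraic observation is that, away from the critical points $\pm 1$, the deterministic matrix $D_{\cal R}^{-1} + I - \mu m(\mu) I_{|\cal R|}$ is non-singular with a minimum singular value comparable to $\alpha_+$ for $\mu$ close to the upper bulk edge, because its singularity precisely detects the classical outlier locations $\theta(d_i)$.

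The final step is a counting and perturbation argument. By interlacing and the outlier-location result (Theorem \ref{thm: outlier locations}), the non-outlier eigenvalues of $Q$ are exactly $\mu_{s_+ + 1}, \dots, \mu_{K - s_-}$, and between two consecutive $\lambda_i, \lambda_{i+1}$ near the upper edge there is a single such $\mu$. Inside such a gap, the secular determinant can be analyzed as a perturbation of its deterministic, singular-value-$\asymp \alpha_+$ baseline by rank-one poles of strength $O_\prec(1/M)$ at each $\lambda_j$; solving for its zero and using level repulsion to ensure consecutive $\lambda_j$'s are separated by $K^{-1+\epsilon}$ times the local MP density, one obtains that the unique zero sits within $O_\prec(1/(K\alpha_+))$ of $\lambda_i$, which yields \eqref{sticking_1}. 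The bound \eqref{sticking_2} near the lower edge follows by the symmetric argument, the hypothesis $|\phi - 1| \ge \tau$ guaranteeing that this edge is bounded away from the trivial zero eigenvalue at $0$. The main obstacle is the near-critical regime where $\alpha_+$ is small: there the classical outlier location merges with the edge of the bulk, outliers and non-outliers overlap, and one must track the competition between the $O(1/M)$ pole strength and the shrinking $\alpha_+$-gap in the deterministic matrix; controlling this cleanly while maintaining that exactly $s_+$ zeros of the secular determinant lie above $\lambda_1$ is the delicate point, and it is precisely where the level repulsion input becomes essential.
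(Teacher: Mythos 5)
Your skeleton — the secular equation $\det(D^{-1}+W(\mu))=0$ (which is Lemma \ref{lem:linalg}(i)), interlacing via the bounded-rank perturbation (Corollary \ref{cor: interlacing}), and the isotropic law for the $\abs{\cal R}\times\abs{\cal R}$ block — is indeed the same as the paper's. But your execution diverges in ways that create real gaps.

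The most important one is your reliance on \emph{level repulsion} (and, to a lesser extent, QUE). The paper does \emph{not} use level repulsion or QUE anywhere in the proof of Theorem \ref{thm:sticking}; these tools appear only later, in the proof of Theorem \ref{thm: eigenvector law}. More seriously, Proposition \ref{prop: level repulsion} only gives separation with probability $1-K^{-\delta}$ for some $\delta>0$ depending on $\epsilon$, \emph{not} with high probability in the sense of Definition \ref{def:stocdom}. A proof built on it therefore cannot produce a $\prec$-bound like \eqref{sticking_1}, which requires failure probability $\leq N^{-D}$ for every $D$. Your residue-at-$\lambda_i$ analysis forces you into this trap: to isolate the pole at $\lambda_i$ you need to control the adjacent poles $\lambda_{i\pm 1}$, and that is exactly what makes level repulsion appear necessary. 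The paper avoids this entirely by never trying to locate the zero of the secular determinant directly. Instead it uses a \emph{forbidden region} argument: for $x$ with $\dist(x,\sigma(H))>\eta$, the deterministic spectral-decomposition inequality $2\abs{\lambda_j - x}\geq\sqrt{(\lambda_j-x)^2+\eta^2}$ (see \eqref{W_add_eta}) shows $\norm{W(x)-W(x+\ii\eta)}$ is small \emph{regardless of how clustered the $\lambda_j$'s are}; combined with the isotropic law at $x+\ii\eta$ this makes $D^{-1}+W(x)$ uniformly invertible, so no eigenvalue of $Q$ lives there. Counting then localizes $\mu_{i+s_+}$ within $\eta\asymp K^{-1+2\epsilon}\alpha_+^{-1}$ of $\lambda_i$. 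No information about eigenvalue gaps of $H$ is ever needed.

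You also misidentify where the difficulty lies. You flag the near-critical regime $\alpha_+\to 0$ as "the delicate point where level repulsion becomes essential." In the paper this is the \emph{easy} case: once $\alpha_+ < K^{-1/3+\epsilon}$ (or $i > K^{1-2\epsilon}\alpha_+^3$), the claimed bound $1/(K\alpha_+)$ exceeds the rigidity scale $i^{-1/3}K^{-2/3}$, so \eqref{sticking_1} follows immediately from rigidity and interlacing with no further analysis. Finally, a small terminological point: bounding the residues by $O_\prec(M^{-1})$ needs only isotropic delocalization (Theorem \ref{thm: I deloc gen}), not quantum unique ergodicity, which is a strictly stronger distributional statement.
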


\begin{remark} \label{rem: sticking_univ}
As outlined above, in Theorem \ref{thm:univ_H} below we prove that the asymptotic joint distribution of the non-bulk eigenvalues of $H$ is universal, i.e.\ it coincides with that of the Wishart matrix $H_{\txt{Wish}} = X X^*$ with $r = 0$ and $X$ Gaussian.
As an immediate corollary of Theorems \ref{thm:sticking} and \ref{thm:univ_H}, we obtain the universality of the non-outlier eigenvalues of $Q$ with index $i \leq K^{1 - \tau} \alpha_+^3$. This condition states simply that the right-hand side of \eqref{sticking_1} is much smaller than the scale on which the eigenvalue $\lambda_i$ fluctuates, which is $K^{-2/3} i^{-1/3}$. See Remark \ref{rem:univ_Q} below for a precise statement.
\end{remark}

\begin{remark} \label{rem:com_old_ev}
Theorem \ref{thm:sticking} is analogous to Theorem 2.7 of \cite{KY2}, where sticking was first established for Wigner matrices. Previously, eigenvalue sticking was established for a certain class of random perturbations of Wigner matrices in \cite{BGGM1,BGGM2}. We refer to \cite[Remark 2.8]{KY2} for a more detailed discussion.

Aside from holding for general covariance matrices of the form \eqref{wtH_cov}, Theorem \ref{thm:sticking} is stronger than its counterpart from \cite{KY2} because it holds much further into the bulk: in \cite[Theorem 2.7]{KY2}, sticking was established under the assumption that $i \leq (\log K)^{C \log \log K}$.
\end{remark}

\begin{remark}
The edge universality following from Theorem \ref{thm:sticking} (as explained in Remark \ref{rem: sticking_univ}) generalizes the recent result \cite{BaoPanZhou2}. There, for the model from Example (1) in Section \ref{sec:examples} with fixed nonzero $\{d_i\}$ and $\phi$, it was proved that if $d_i < 1$ for all $i$ and $\Sigma$ is diagonal, then $\mu_1$ converges (after a suitable affine transformation) in distribution to the Tracy-Widom-1 distribution.
\end{remark}

\subsection{Outlier eigenvectors}\label{sec:out_results}
We now state our main results for the outlier eigenvectors.  Statements of results on eigenvectors requires some care, since there is some arbitrariness in the definition of the eigenvector $\f \xi_i$ of $Q$. In order to get rid of the arbitrariness in the sign (or, in the complex case, the phase) of $\f \xi_i$ we consider products of generalized components,
\begin{equation*}
\scalar{\f v}{\f \xi_i} \scalar{\f \xi_i}{\f w}\,.
\end{equation*}
It is easy to check that these products characterize the eigenvector $\f \xi_i$ completely, up to the ambiguity of a global sign (or phase). More generally, one may consider the generalized components $\scalar{\f v}{(\cdot) \, \f w}$ of the (random) \emph{spectral projection}
\begin{equation} \label{def_P_A}
P_A \;\deq\; \sum_{i \in A} \f \xi_{i} \f \xi_{i}^*\,,
\end{equation}
where $A \subset \cal O$.

In the simplest case $A = \{i\}$ the generalized components of $P_A$ characterize the generalized components of $\f \xi_i$. The need to consider higher-dimensional projections arises if one considers degenerate or almost degenerate outliers. Suppose for example that $d_1 \approx d_2$ and all other $d_i$'s are zero. Then the cone concentration \eqref{cone_condition} fails, to be replaced with \eqref{cone_condition_deg}. The failure of the cone concentration is also visible in our results as a blowup of the error bounds. This behaviour is not surprising, since for degenerate outliers $d_1 = d_2$ it makes no sense to distinguish the associated spike eigenvectors $\f v_1$ and $\f v_2$; only the eigenspace matters. Correspondingly, we have to consider the orthogonal projection onto the eigenspace of the outliers in $A$. See Example \ref{ex:degen} below for a more detailed discussion.

For $i \in \qq{1,M}$ we define $\nu_i \geq 0$ through
\begin{equation*}
\nu_i \;\equiv\; \nu_i(A) \;\deq\;
\begin{cases}
\min_{j \notin A} \abs{d_i - d_j} & \text{if } i \in A
\\
\min_{j \in A} \abs{d_i - d_j} & \text{if } i \notin A\,.
\end{cases}
\end{equation*}
In other words, $\nu_i(A)$ is the distance from $d_i$ to either $\{d_i\}_{i \in A}$ or $\{d_i\}_{i \notin A}$, whichever it does not belong to. For a vector $\f w \in \R^M$ we also introduce the shorthand
\begin{equation*}
w_i \;\deq\; \scalar{\f v_i}{\f w}
\end{equation*}
to denote the components of $\f w$ in the eigenbasis of $\Sigma$.

For definiteness, we only state our results for the outliers on the right-hand side of the bulk spectrum. Analogous results hold for the outliers on the left-hand side. Since the behaviour of the fluctuating error term is different in the regimes $\mu_i - \gamma_+ \ll 1$ (near the bulk) and $\mu_i - \gamma_+ \gg 1$ (far from the bulk), we split these two cases into separate theorems.

\begin{theorem}[Outlier eigenvectors near bulk] \label{thm: outlier eigenvectors}
Fix $\tau > 0$. Suppose that $A \subset \cal O$ satisfies $1 + K^{-1/3} \leq d_i \leq \tau^{-1}$ for all $i \in A$. Define the deterministic positive quadratic form
\begin{equation*}
\scalar{\f w}{Z_A \f w} \;\deq\; \sum_{i \in A} u(d_i) w_i^2\,,
\end{equation*}
where we recall the definition \eqref{aperture} of $u(d_i)$.
Then for any deterministic $\f w \in \R^M$ we have
\begin{equation} \label{outlier evectors 1}
\scalar{\f w}{P_A \f w} \;=\; \scalar{\f w}{Z_A \f w}
+ O_\prec \qa{
\sum_{i \in A} \frac{w_i^2}{M^{1/2}(d_i - 1)^{1/2}} + \sum_{i = 1}^M \frac{\sigma_i w_i^2}{M \nu_i(A)^2}
+
\scalar{\f w}{Z_A \f w}^{1/2} \pBB{\sum_{i \notin A} \frac{\sigma_i w_i^2}{M \nu_i(A)^2}}^{1/2}}\,.
\end{equation}
Note that the last error term is zero if $\f w$ is in the subspace $\spn \{\f v_i\}_{i \in A}$ or orthogonal to it.
\end{theorem}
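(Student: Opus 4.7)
The natural starting point is the spectral projection formula
\begin{equation*}
P_A \;=\; -\frac{1}{2\pi \ii} \oint_{\Gamma} (Q - z)^{-1}\,\dd z\,,
\end{equation*}
where $\Gamma$ is a positively oriented contour in $\C$ enclosing exactly the classical locations $\{\theta(d_i) : i \in A\}$ and no other points of $\{\theta(d_j) : j \in \cal R \setminus A\} \cup [\gamma_-, \gamma_+]$. By the outlier and extremal non-outlier location bounds of Theorem~\ref{thm: outlier locations}, $\Gamma$ encloses exactly $\{\mu_i : i \in A\}$ with high probability, so that the contour integral equals $P_A$. The radii of the circles around each $\theta(d_i)$ with $i \in A$ should be tuned to $\min(\nu_i(A), d_i - 1)$ so that the contour stays macroscopically away both from neighboring spike locations and from the bulk edge, up to the critical scale.

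The central algebraic input is the identity $Q = \Sigma^{1/2} H \Sigma^{1/2}$, valid after the orthogonal change of variables implicit in the definition $Y = (I_M, 0) O(T) X$, together with the two-step Woodbury expansion
\begin{equation*}
(Q - z)^{-1} \;=\; \Sigma^{-1/2}\bigl[G(z) - G(z) V\bigl(z^{-1}\Lambda^{-1} + V^* G(z) V\bigr)^{-1} V^* G(z)\bigr]\Sigma^{-1/2}\,,
\end{equation*}
where $G(z) \deq (H - z)^{-1}$, $V$ is the $M \times \abs{\cal R}$ matrix with columns $\f v_i$, $i \in \cal R$, and $\Lambda$ is a bounded-size diagonal matrix of spike-dependent constants. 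I would then invoke the isotropic local Marchenko--Pastur law of \cite{BEKYY} in the two forms $\scalar{\f u}{G(z) \f u'} = m(z)\scalar{\f u}{\f u'} + O_\prec(\Psi(z))$ and the analogous bound for $V^* G(z) V$. Substituting into the Woodbury formula, the $\abs{A} \times \abs{A}$ inverse collapses to an explicit rational function of $z$, and one obtains
\begin{equation*}
\scalar{\f w}{(Q - z)^{-1}\f w} \;=\; \ss M(z; \f w) + \ss E(z; \f w)\,,
\end{equation*}
where $\ss M(z; \f w)$ is deterministic with simple poles exactly at $\{\theta(d_i) : i \in \cal R\}$ and $\ss E$ satisfies an isotropic bound inherited from $\Psi$.

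Plugging this into the contour integral, the residue of $\ss M$ at $\theta(d_i)$ for $i \in A$ can be computed directly: it is $-u(d_i) w_i^2$, where the factor $u(d_i)$ is the algebraic consequence of $\theta'(d_i) = 1 - d_i^{-2}$ together with the identity $1 + \phi^{1/2} d_i\, m(\theta(d_i)) = 0$ satisfied at each classical outlier. This yields the deterministic main term $\scalar{\f w}{Z_A \f w}$. The error term $\oint_\Gamma \ss E(z; \f w)\,\dd z$ is then estimated by combining the size of $\Psi$ on $\Gamma$ with the length of $\Gamma$. The contribution of the poles \emph{inside} $\Gamma$ near the BBP threshold contributes $\sum_{i \in A} w_i^2 / (M^{1/2}(d_i - 1)^{1/2})$, coming from the edge-scale size of $\Psi$ on a contour whose radius saturates at $\asymp (d_i - 1)$; the poles of $\ss M$ \emph{outside} $\Gamma$ that lie close to $\Gamma$ contribute $\sum_i \sigma_i w_i^2 / (M \nu_i(A)^2)$; and the cross term comes from applying Cauchy--Schwarz to separate the deterministic amplitude on $\spn\{\f v_i\}_{i \in A}$ from fluctuations on the orthogonal complement.

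The main obstacle is the contour design and the error bookkeeping near the critical scale. Near the BBP transition the derivative $\theta'(d_i) = 1 - d_i^{-2}$ vanishes, so $\theta$ is nearly flat and the contour radius is forced to shrink simultaneously with the distance to the bulk edge; this is what produces the sharp $(d_i - 1)^{-1/2}$ rate and requires the \emph{edge} version of the isotropic law rather than the bulk one. For near-degenerate spikes the contour is instead squeezed between neighboring $\theta(d_j)$'s, and one must carefully keep track of the Cauchy-type singularity $\nu_i(A)^{-2}$ that arises when a $\theta(d_j)$ with $j \notin A$ lies just outside $\Gamma$. Patching these two regimes uniformly over the allowed range of $d_i$ is the technical heart of the argument and is what dictates the precise form of the three error terms in \eqref{outlier evectors 1}.
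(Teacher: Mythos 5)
Your overall strategy---contour integral around the classical outlier locations, Woodbury identity to isolate the rank-$\abs{\cal R}$ perturbation, isotropic local Marchenko--Pastur law to control the resulting random matrix, and a residue computation for the main term---is the right one and matches the paper's Section 6. But there are three concrete gaps, the first two of which are serious.

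First, your decomposition $\scalar{\f w}{(Q - z)^{-1}\f w} = \ss M(z;\f w) + \ss E(z;\f w)$ into a deterministic rational function plus a fluctuation does not yield the quoted error. The issue is that the $\abs{\cal R}\times\abs{\cal R}$ inverse $(D^{-1}+W(z))^{-1}$ must be expanded to \emph{second} order in the fluctuation $\Delta(z)=w_\phi(z)-W(z)$: writing $(D^{-1}+W)^{-1}=(D^{-1}+w_\phi)^{-1}+(D^{-1}+w_\phi)^{-1}\Delta(D^{-1}+w_\phi)^{-1}+(D^{-1}+w_\phi)^{-1}\Delta(D^{-1}+W)^{-1}\Delta(D^{-1}+w_\phi)^{-1}$, the third term is still random and has poles near the contour, and it is precisely this term that produces the $\sigma_i w_i^2/(M\nu_i(A)^2)$ error. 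A one-step ``main plus error'' split either leaves the error with uncontrolled poles or, after a crude bound, loses the $\nu_i^{-2}$ scaling. You need the explicit resolvent-norm bound $\normb{(D^{-1}+W(\theta(\zeta)))^{-1}}\lesssim\rho_k^{-1}$ along each arc $\Gamma_k$, which in turn requires both the non-overlapping structure of the contour and the second-order expansion.

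Second, a contour argument cannot reach the threshold $d_i-1\geq K^{-1/3}$ stated in the theorem. When $d_i-1\asymp K^{-1/3}$ the outlier sits at distance $\asymp(d_i-1)^2\asymp K^{-2/3}$ from $\gamma_+$, i.e.\ exactly on the scale of the edge eigenvalue fluctuations, so no deterministic contour can separate $\mu_i$ from the bulk spectrum with high probability. The paper first proves the statement only for $d_i-1\geq K^{-1/3+\tau}$ by the contour method (Prop.~\ref{prop: outl1}, \ref{prop: outl2}), and then reaches the borderline regime $K^{-1/3}\leq d_i-1\leq K^{-1/3+\tau}$ through an entirely different, non-contour delocalization bound (Prop.~\ref{prop: right bulk}), based on the inequality $\scalar{\f v}{\f\xi_a}^2\leq\eta\im\wt G_{\f v\f v}(\mu_a+\ii\eta)$ with a carefully chosen $\eta$. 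These two regimes are then patched by splitting $A=A_0\sqcup A_1$ across a gap in the $\{d_k\}$. Your proposal does not contain any analogue of the second, non-contour argument, and ``patching uniformly'' is not possible inside the contour framework alone.

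Two smaller points. You specify the contour as circles of radius $\min(\nu_i(A),d_i-1)$ around $\theta(d_i)$ in the $z$-plane; a $z$-radius of order $d_i-1$ would penetrate the bulk spectrum since $\abs{\theta(d_i)-\gamma_+}\asymp(d_i-1)^2$. The correct choice is discs of radius $\rho_i=\tfrac12(\nu_i\wedge(d_i-1))$ in the $\zeta$-plane, which the map $\theta$ compresses by a factor $\theta'\asymp d_i-1$ to the right scale in $z$. Finally, the theorem imposes no non-overlapping hypothesis, so after the contour argument you still need the combinatorial removal of the non-overlapping condition (the sets $S(A)$ and $L(A)$ in Section~\ref{sec: overlapping}); this is a genuine piece of the proof, not a remark.
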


\begin{remark} \label{rem:off-diag_outlier}
Theorem \ref{thm: outlier eigenvectors} may easily also be stated for more general quantities of the form $\scalar{\f v}{P_A \f w}$. We omit the precise statement; it is a trivial corollary of \eqref{vi_vj_result 2} below, which holds under the assumptions of Theorem \ref{thm: outlier eigenvectors}.
\end{remark}

We emphasize that the set $A$ in Theorem \ref{thm: outlier eigenvectors} may be chosen at will. If all outliers are well-separated, then the choice $A = \{i\}$ gives the most precise information. However, as explained at the beginning of this subsection, the indices of outliers that are close to each other should be included in the same set $A$. Thus, the freedom to chose $\abs{A} \geq 2$ is meant for degenerate or almost degenerate outliers. (In fact, as explained after \eqref{cone_condition2} below, the correct notion of closeness of outliers is that of \emph{overlapping}.)

We consider a few examples.

\begin{example} \label{ex:1}
Let $A = \{i\}$ and $\f w = \f v_i$. Then we get from \eqref{outlier evectors 1}
\begin{equation} \label{Pi_example1}
\scalar{\f v_i}{\f \xi_i}^2 \;=\; u(d_i) + O_\prec \qBB{\frac{1}{M^{1/2} (d_i - 1)^{1/2}} + \frac{\sigma_i}{M \nu_i^2}}\,.
\end{equation}
This gives a precise version of the cone concentration from \eqref{cone_condition}. Note that the cone concentration holds provided the error is much smaller than the main term $u(d_i)$, which leads to the conditions
\begin{equation} \label{cone_condition2}
d_i - 1 \;\gg\; K^{-1/3} \qquad \txt{and} \qquad \nu_i \;\gg\; (d_i - 1)^{-1/2} K^{-1/2}\,;
\end{equation}
here we used that $d_i \asymp 1$ and $M \asymp (1 + \phi) K$.

We claim that both conditions in \eqref{cone_condition2} are natural and necessary. The first condition of \eqref{cone_condition2} simply means that $\mu_i$ is an outlier. The second condition of \eqref{cone_condition2} is a \emph{non-overlapping} condition. To understand it, recall from \eqref{outlier locations} that $\mu_i$ fluctuates on the scale $(d_i - 1)^{1/2} K^{-1/2}$. Then $\mu_i$ is a non-overlapping outlier if all other outliers are located with high probability at a distance greater than this scale from $\mu_i$. Recalling the definition of the classical location $\theta(d_i)$ of $\mu_i$, the non-overlapping condition becomes
\begin{equation} \label{non-overlapping1}
\min_{j \in \cal O \setminus \{i\}} \abs{\theta(d_j) - \theta(d_i)} \;\gg\; (d_i - 1)^{1/2} K^{-1/2}\,.
\end{equation}
After a simple estimate using the definition of $\theta$, we find that this is precisely the second condition of \eqref{cone_condition2}. The degeneracy or almost degeneracy of outliers discussed at the beginning of this subsection is hence to be interpreted more precisely in terms of overlapping of outliers. 

Provided $\mu_i$ is well-separated from both the bulk spectrum and the other outliers, we find that the error in \eqref{Pi_example1} is of order $M^{-1/2}$.
\end{example}

\begin{example} \label{ex:2}
Take $A = \{i\}$ and $\f w = \f v_j$ with $j \neq i$. Then we get from \eqref{outlier evectors 1}
\begin{equation} \label{del_orth}
\scalar{\f v_j}{\f \xi_i}^2 \;\prec\; \frac{\sigma_j}{M (d_i - d_j)^2}\,.
\end{equation}
Suppose for simplicity that $\phi \asymp 1$. Then, under the condition that $\abs{d_i - d_j} \asymp 1$, we find that $\f \xi_i$ is completely delocalized in the direction $\f v_j$. In particular, if $\nu_i \asymp 1$ then $\f \xi_i$ is completely delocalized in any direction orthogonal to $\f v_i$.

As $d_j$ approaches $d_i$ the delocalization bound from \eqref{del_orth} deteriorates, and eventually when $\mu_i$ and $\mu_j$ start overlapping, i.e.\ the second condition of \eqref{cone_condition2} is violated, the right-hand side of \eqref{del_orth} has the same size as the leading term of \eqref{Pi_example1}. This is again a manifestation of the fact that the individual eigenspaces of overlapping outliers cannot be distinguished.
\end{example}

\begin{example}\label{ex:degen}
Suppose that we have an $\abs{A}$-fold degenerate outlier, i.e.\ $d_i = d_j$ for all $i,j \in A$. Then from Theorem \ref{thm: outlier eigenvectors} and Remark \ref{rem:off-diag_outlier} (see the estimate \eqref{vi_vj_result 2}) we get, for all $i,j \in A$,
\begin{equation*}
\scalar{\f v_i}{P_A \f v_j} \;=\; \delta_{ij} u(d_i) + O_{\prec} \qBB{\frac{1}{M^{1/2} (d_i - 1)^{1/2}} + \frac{\sigma_i}{M \nu_i(A)^2}}\,.
\end{equation*}
Defining the $\abs{A} \times \abs{A}$ random matrix $M = (M_{ij})_{i,j \in A}$ through $M_{ij} \deq \scalar{\f v_i}{\f \xi_j}$, we may write the left-hand side as $(M M^*)_{ij}$. 
We conclude that $u(d_i)^{-1/2} M$ is approximately orthogonal, from which we deduce that $u(d_i)^{-1/2} M^*$ is also approximately orthogonal. In other words, we may interchange the families $\{\f v_i\}_{i \in A}$ and $\{\f \xi_i\}_{i \in A}$.
More precisely, we get
\begin{equation*}
\scalar{\f \xi_i}{\Pi_A \f \xi_j} \;=\; (M^* M)_{ij} \;=\; \delta_{ij} u(d_i) + O_{\prec} \qBB{\frac{1}{M^{1/2} (d_i - 1)^{1/2}} + \frac{\sigma_i}{M \nu_i(A)^2}}\,.
\end{equation*}
This is the correct generalization of \eqref{Pi_example1} from Example \ref{ex:1} to the degenerate case. The error term is the same as in \eqref{Pi_example1}, and its size and relation to the main term is exactly the same as in Example \ref{ex:1}. Hence the discussion following \eqref{Pi_example1} may be take over verbatim to this case.

In addition, analogously to Example \ref{ex:2}, for $i \in A$ and $j \notin A$ we find that \eqref{del_orth} remains true. This establishes the delocalization of $\f \xi_i$ in any direction within the null space of $\Pi_A$.

These estimates establish the general cone concentration, with optimal rate of convergence, for degenerate outliers outlined around \eqref{cone2}. The eigenvectors $\{\f \xi_i\}_{i \in A}$ are all concentrated on the cone defined by $\abs{\Pi_A \f \xi}^2 = u(d_i) \abs{\f \xi}^2$ (for some immaterial $i \in A$). Moreover, the eigenvectors $\{\f \xi_i\}_{i \in A}$ are orthogonal on both the range and null space of $\Pi_A$. Provided that the group $\{d_i\}_{i \in A}$ is well-separated from $1$ and all other $d_i$'s, the eigenvectors $\{\f \xi_i\}_{i \in A}$ are completely delocalized on the null space of $\Pi_A$.

We conclude this example by remarking that a similar discussion also holds for a group of outliers that is not degenerate, but nearly degenerate, i.e.\ $\abs{d_i - d_j} \ll \abs{d_i - d_k}$ for all $i,j \in A$ and $k \notin A$. We omit the details.
\end{example}

The next result is the analogue of Theorem \ref{thm: outlier eigenvectors} for outliers far from the bulk.

\begin{theorem}[Outlier eigenvectors far from bulk] \label{thm: outlier eigenvectors 2}
Fix $\tau > 0$. Suppose that $A \subset \cal O$ satisfies $d_i \geq 1 + \tau$ for all $i \in A$, and that there exists a positive $d_A$ such that $\tau d_A \leq d_i \leq \tau^{-1} d_A$ for all $i \in A$. Then for any deterministic $\f w \in \R^M$ we have
\begin{multline} \label{outlier evectors 2}
\scalar{\f w}{P_A \f w} \;=\; \scalar{\f w}{Z_A \f w}
+ O_\prec \Biggl[\frac{1}{M^{1/2}(\phi^{1/2} + d_A)} \sum_{i \in A} \sigma_i w_i^2 + \pbb{1 + \frac{\phi^{1/2} d_A^2}{\phi^{1/2} + d_A}} \sum_{i = 1}^M \frac{\sigma_i w_i^2}{M \nu_i(A)^2}
\\
+ \frac{d_A}{\phi^{1/2} + d_A}\pBB{\sum_{i \in A} \sigma_i w_i^2}^{1/2} \pBB{\sum_{i \notin A} \frac{\sigma_i w_i^2}{M \nu_i(A)^2}}^{1/2} \Biggr]\,.
\end{multline}
\end{theorem}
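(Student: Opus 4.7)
My plan is to prove Theorem~\ref{thm: outlier eigenvectors 2} via the contour-integral approach, along the same lines as the near-bulk analysis behind Theorem~\ref{thm: outlier eigenvectors} but with all estimates adapted to the large-spike regime $d_A \geq 1 + \tau$. The three ingredients are: a polar-type reduction $Q = \Sigma^{1/2} H \Sigma^{1/2}$; the Sherman--Morrison--Woodbury identity, which expresses the resolvent of $Q$ in terms of that of the uncorrelated reference matrix $H$; and the isotropic local Marchenko--Pastur law from \cite{BEKYY}.

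The algebraic set-up goes as follows. Since $\Sigma > 0$, we may choose the orthogonal $O = O(T)$ entering \eqref{def_H} so that $T = \Sigma^{1/2}(I_M,0)O$, hence $Q = \Sigma^{1/2}H\Sigma^{1/2}$. Writing $\Sigma^{-1} = I_M - V\tilde D V^*$ with $V \deq [\f v_i]_{i\in\cal R}$ and $\tilde D \deq \phi^{1/2}D(I+\phi^{1/2}D)^{-1}$, Woodbury yields
\[
(Q-z)^{-1} \;=\; \Sigma^{-1/2}G(z)\Sigma^{-1/2} \,-\, z\,\Sigma^{-1/2}G(z)V\,\pb{\tilde D^{-1} + zV^*G(z)V}^{-1}V^*G(z)\Sigma^{-1/2},
\]
where $G(z) \deq (H - z)^{-1}$. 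I would then choose a contour $\Gamma$ that encloses precisely the outliers $\{\mu_i\}_{i \in A}$, staying at distance of order $d_A\nu_j(A)$ from every non-$A$ spike location $\theta(d_j)$, $j \notin A$, and at distance far exceeding the outlier fluctuation scale $\Delta(d_A) K^{-1/2}$ from each $\theta(d_i)$, $i \in A$; Theorem~\ref{thm: outlier locations} ensures that $\Gamma$ avoids all eigenvalues of $Q$ outside $A$ with high probability. Only the Woodbury correction contributes to the contour integral, giving
\[
\scalar{\f w}{P_A \f w} \;=\; \frac{1}{2\pi \ii} \oint_\Gamma z \,\scalarb{V^*G(z)\Sigma^{-1/2}\f w}{\pb{\tilde D^{-1} + zV^*G(z)V}^{-1} V^*G(z)\Sigma^{-1/2}\f w}\dd z.
\]

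Next I would extract the main term and the errors. Substituting the isotropic-law approximations $V^*G(z)V = m(z) I_{|\cal R|} + O_\prec(\Psi(z))$ and $V^* G(z)\Sigma^{-1/2}\f w = m(z) V^*\Sigma^{-1/2}\f w + O_\prec(\Psi(z)\abs{\Sigma^{-1/2}\f w})$, where $m$ is the Marchenko--Pastur Stieltjes transform, the leading-order integrand has simple poles at the classical locations $\theta(d_i)$, $i \in A$, which solve the master equation $\tilde d_i^{-1} + zm(z) = 0$. A short calculation using $m(\theta(d)) = -(d + \phi^{1/2})^{-1}$, $\theta'(d) = 1 - d^{-2}$, and $\Sigma^{-1/2}\f v_i = \sigma_i^{-1/2}\f v_i$ shows that the sum of residues at the poles in $A$ equals $\sum_{i \in A} u(d_i) w_i^2 = \scalar{\f w}{Z_A \f w}$. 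The three error terms in \eqref{outlier evectors 2} arise by substituting the isotropic-law remainders into the integrand: the prefactor $(\phi^{1/2} + d_A)^{-1}$ in the first error term reflects the size of $\abs{m(z)}$ on $\Gamma$; the factor $d_A/(\phi^{1/2}+d_A)$ in the Cauchy--Schwarz cross term reflects $\abs{zm(z)}$; and the $\nu_j(A)^{-2}$ factor encodes the distance from $\Gamma$ to the non-$A$ poles $\theta(d_j)$.

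The main obstacle is producing sharp constants uniformly in the large parameter $d_A$, which can range up to an arbitrary power of $N$. This requires tracking the precise $d_A$-dependence of $m(z)$, $m'(z)$, $zm(z)$, and of the isotropic-law error $\Psi(z)$ on the annular region of scale $\phi^{1/2} + d_A$ around $\theta(d_A)$, where standard ``edge'' resolvent bounds must be replaced by ``exterior'' ones. A secondary difficulty is the inversion of the $|\cal R|\times|\cal R|$ matrix $\tilde D^{-1} + zV^*G(z)V$ when $A$ contains nearly coincident spikes: one must split it into $A$- and non-$A$-blocks, invert each via a Neumann series based on the isotropic law, and keep track of how the spectral gap $\nu_i(A)$ propagates through; the $\nu_i(A)^{-2}$ factors in \eqref{outlier evectors 2} arise precisely from this block-inversion. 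With both of these ingredients in place, the residue expansion yields \eqref{outlier evectors 2}.
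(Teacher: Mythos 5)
The paper explicitly omits the proof of Theorem~\ref{thm: outlier eigenvectors 2}, stating only that it mirrors the proof of Theorem~\ref{thm: outlier eigenvectors} (the near-bulk case) adapted to the range $d_i \in [1+\tau,\infty)$, with the key new input being the extended spectral domain estimates of Corollary~\ref{cor:extension}. Your proposal follows precisely this blueprint: the contour-integral representation of $P_A$ via the Woodbury identity (the paper's version being \eqref{pert3}, yours an algebraically equivalent reformulation in terms of $V^*G(z)V$ and $\tilde D$), a resolvent expansion around the deterministic approximation to extract $\scalar{\f w}{Z_A\f w}$ as a sum of residues, and the isotropic local Marchenko--Pastur law to control the remainder. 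You correctly identify the two genuine difficulties the paper flags: (i) the need for ``exterior'' resolvent bounds far from the spectrum, uniformly in $d_A$ up to polynomial size in $N$ --- this is exactly what Corollary~\ref{cor:extension} (obtained by Helffer--Sj\"ostrand from the edge-scale law) supplies, replacing the near-bulk control used in Lemma~\ref{lem: Qij iso} for Theorem~\ref{thm: outlier eigenvectors}; and (ii) the block-inversion and tracking of $\nu_i(A)$ when outliers within and without $A$ nearly coincide, which in the paper is the de-overlapping argument of Section~\ref{sec: overlapping}. The one cosmetic divergence is that the paper works with $W(z) = V^*F(z)V$ and $w_\phi$ (normalized to compare against a semicircle Stieltjes transform) rather than directly with $V^*G(z)V$ and $m_{\phi^{-1}}$; this keeps the error control on the contour cleaner, but the two formulations are equivalent. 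In short: correct, and essentially the same approach as the paper indicates, competently filled in.
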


We leave the discussion on the interpretation of the error in \eqref{outlier evectors 2} to the reader; it is similar to that of Examples \ref{ex:1}, \ref{ex:2}, and \ref{ex:degen}.

\subsection{Non-outlier eigenvectors}\label{sec:non_outl_results}
In this subsection we state our results on the \emph{non-outlier} eigenvectors, i.e.\ on $\f \xi_a$ for $a \notin \cal O$.
Our first result is a delocalization bound. In order to state it, we define for $a \in \qq{1,K}$ the \emph{typical distance from $\mu_a$ to the spectral edges} $\gamma_\pm$ through
\begin{equation} \label{def_kappa_a}
\kappa_a \;\deq\; K^{-2/3} (a \wedge (K + 1 - a))^{2/3}\,.
\end{equation}
This quantity should be interpreted as a deterministic version of $\abs{\mu_a - \gamma_-}\wedge \abs{\mu_a - \gamma_+}$ for $a \notin \cal O$; see Theorem \ref{thm: cov-rig} below.

\begin{theorem}[Delocalization bound for non-outliers] \label{thm: non-outlier bound}
Fix $\tau > 0$.
For $a \in \qq{1, (1 - \tau)K} \setminus \cal O$ and deterministic $\f w \in \R^M$ we have
\begin{equation} \label{bulk_right}
\scalar{\f w}{\f \xi_a}^2 \;\prec\; \frac{\abs{\f w}^2}{M} + \sum_{i = 1}^M \frac{\sigma_i w_i^2}{M ((d_i - 1)^2 + \kappa_a)}\,.
\end{equation}
Similarly, if $\abs{\phi - 1} \geq \tau$ then
for $a \in \qq{\tau K, K} \setminus \cal O$ and deterministic $\f w \in \R^M$ we have
\begin{equation} \label{bulk_left}
\scalar{\f w}{\f \xi_a}^2 \;\prec\; \frac{\abs{\f w}^2}{M} + \sum_{i = 1}^M \frac{\sigma_i w_i^2}{M ((d_i + 1)^2 + \kappa_a)}\,.
\end{equation}
\end{theorem}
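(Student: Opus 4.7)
The plan is to control $\scalar{\f w}{\f\xi_a}^2$ by the imaginary part of the resolvent $G(z) \deq (Q - z)^{-1}$ at a carefully chosen spectral parameter just above $\mu_a$, and then to apply an isotropic local Marchenko-Pastur law for $Q$. The basic spectral-decomposition bound is
\begin{equation*}
\scalar{\f w}{\f\xi_a}^2 \;\leq\; \eta\,\im \scalar{\f w}{G(\mu_a + i\eta)\f w}\,,
\end{equation*}
valid for any $\eta > 0$, obtained by retaining only the $j = a$ term in $\im \scalar{\f w}{G(\mu_a + i\eta)\f w} = \sum_j \eta \scalar{\f w}{\f\xi_j}^2/((\mu_a - \mu_j)^2 + \eta^2)$. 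Theorem \ref{thm:sticking} together with the rigidity of the eigenvalues of the reference matrix $H$ from \cite{BEKYY} allow one to replace $\mu_a$ by its deterministic classical location up to an error much smaller than the local level spacing, provided $\eta$ is chosen at that scale, namely $\eta \asymp M^{-1}\kappa_a^{-1/2}$ near each spectral edge.

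The main input is an isotropic local Marchenko-Pastur law for $Q$, which I would derive from the corresponding law for $H$ proved in \cite{BEKYY} by exploiting that $\Sigma - I_M$ has bounded rank. A Woodbury/Schur-complement manipulation expresses $G(z)$ as a rational function of bounded complexity in $G_H(z)$ and in finitely many quadratic forms $\scalar{\f v_i}{G_H(z)\f v_j}$, each of which is close to its deterministic limit by \cite{BEKYY}. The resulting deterministic profile takes the form
\begin{equation*}
\scalar{\f w}{\Pi(z) \f w} \;=\; m(z)\abs{\f w}^2 \;+\; \sum_{i \in \cal R} \Xi_i(z)\, w_i^2\,,
\end{equation*}
where $m$ is the Marchenko-Pastur Stieltjes transform and $\Xi_i$ is a spike correction; for $z = \gamma_\pm + i\eta$, direct computation yields
\begin{equation*}
\im m(z) \;\asymp\; \sqrt{\kappa_a + \eta}\,,\qquad \abs{\Xi_i(z)} \;\lec\; \frac{\sigma_i}{(d_i - 1)^2 + \kappa_a + \eta}\,,\qquad \im \Xi_i(z) \;\lec\; \frac{\sigma_i\,\sqrt{\kappa_a + \eta}}{(d_i - 1)^2 + \kappa_a + \eta}\,.
\end{equation*}
The singularity as $d_i \to 1$ and $z$ approaches an edge is the resolvent-level signature of the BBP transition, and is precisely what generates the spike-direction bias in \eqref{bulk_right}.

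Plugging these estimates into the spectral bound at $\eta \asymp M^{-1}\kappa_a^{-1/2}$, the $m(z)$ contribution gives $\eta\,\im m(z)\abs{\f w}^2 \asymp M^{-1}\abs{\f w}^2$, while each spike contribution yields $\eta\,\im \Xi_i(z)\,w_i^2 \lec \sigma_i w_i^2/(M((d_i-1)^2 + \kappa_a))$; the error term in the isotropic law is controlled by the same quantities. This produces exactly \eqref{bulk_right}, and \eqref{bulk_left} near the lower edge follows by the same argument, the hypothesis $\abs{\phi - 1} \geq \tau$ being used through \eqref{ex- bound} to keep $\mu_a$ at positive distance from the trivial zero eigenvalues when $\phi>1$. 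The principal technical obstacle is establishing the isotropic local law for $Q$ with the sharp, $d_i$-dependent deterministic profile at the optimal spectral scale: in the Woodbury expansion, the denominators of the form $1 + d_i\,\phi^{1/2}\, z m(z)$ come close to vanishing precisely when $d_i$ is near the BBP threshold and $z$ is near an edge, so one must track these near-resonances quantitatively and show that the random fluctuations of $G_H(z)$ controlled by \cite{BEKYY} do not destroy them at the small scale $\eta \asymp M^{-1}\kappa_a^{-1/2}$.
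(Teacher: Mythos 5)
Your proposal follows the same route as the paper: the spectral-decomposition bound $\scalar{\f w}{\f\xi_a}^2 \leq \eta\,\im\scalar{\f w}{\wt G(\mu_a+\ii\eta)\f w}$ at the level-spacing scale $\eta$, the Woodbury/Schur identity \eqref{pert3} expressing $\wt G$ through $W(z)=V^*F(z)V$, the isotropic law of \cite{BEKYY} to control $W-w_\phi$, and the lower bound $\abs{1+d_i w_\phi}\gtrsim \abs{d_i-1}+\im w_\phi$ (the paper's \eqref{1+dw_bound}) to handle the near-resonant BBP denominators; this is exactly Proposition \ref{prop: right bulk} and its proof. One small caveat: the error from rigidity and sticking is of the \emph{same} order as the level spacing $\Delta_a$ rather than much smaller, so the paper avoids the replacement $\mu_a\mapsto\gamma_a$ and instead works directly with the random spectral parameter $z=\mu_a+\ii\eta$ by invoking the simultaneous version of the local law (Remark \ref{rem:all_z}).
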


For the following examples, we take $\f w = \f v_i$ and $a \in \qq{1, (1 - \tau)K} \setminus \cal O$. Under these assumptions \eqref{bulk_right} yields
\begin{equation} \label{bulk_example}
\scalar{\f w}{\f \xi_a}^2 \;\prec\; \frac{1}{M} + \frac{\sigma_i}{M ((d_i - 1)^2 + \kappa_a)}\,.
\end{equation}

\begin{example}
Fix $\tau > 0$.
If $\abs{d_i - 1} \geq \tau$ ($d_i$ is separated from the transition point) or $a \geq \tau K$ ($\mu_a$ is in the bulk), then the right-hand side of \eqref{bulk_example} reads $(1 + \sigma_i) / M$. In particular, if the eigenvalue $\sigma_i$ of $\Sigma$ is bounded, $\f \xi_a$ is completely delocalized in the direction $\f v_i$.
\end{example}

\begin{example}
Suppose that $a \leq C$ ($\mu_a$ is near the edge), which implies that $\kappa_a \asymp K^{-2/3}$. Suppose moreover that $d_i$ is near the transition point $1$. Then we get
\begin{equation*}
\scalar{\f v_i}{\f \xi_a}^2 \;\prec\; \frac{\sigma_i}{M ((d_i - 1)^2 + K^{-2/3})}\,.
\end{equation*}
Therefore the delocalization bound for $\f \xi_a$ in the direction of $\f v_i$ becomes worse as $d_i$ approaches the critical point (from either side), from $(1 + \phi)^{1/2} M^{-1}$ for $d_i$ separated from $1$, to $(1 + \phi)^{-1/6} M^{-1/3}$ for $d_i$ at the transition point $1$.
\end{example}

Next, we derive the law of the generalized component $\scalar{\f w}{\f \xi_a}$ for non-outlier $a$. In particular, this provides a lower bound complementing the upper bound from Theorem \ref{thm: non-outlier bound}. Recall the definition \eqref{def_alpha} of $\alpha_+$.
\begin{theorem}[Law of non-outliers] \label{thm: eigenvector law}
Fix $\tau > 0$. Then, for any deterministic $a \in \qq{1, K^{1 - \tau} \alpha_+^3} \setminus \cal O$ and $\f w \in \R^M$, there exists a random variable $\Theta(a, \f w) \equiv \Theta_N(a, \f w)$ satisfying
\begin{equation*}
\scalar{\f w}{\f \xi_a}^2 \;=\; \sum_i \frac{\sigma_i w_i^2}{M (d_i - 1)^2}  \, \Theta(a, \f w)
\end{equation*}
and
\begin{equation*}
\Theta(a, \f w) \;\longrightarrow \chi_1^2
\end{equation*}
in distribution as $N \to \infty$, uniformly in $a$ and $\f w$. Here $\chi_1^2$ is a chi-squared random variable (i.e.\ the square of a standard normal).

An analogous statement holds near the left spectral edge provided $\abs{\phi - 1} \geq \tau$; we omit the details.
\end{theorem}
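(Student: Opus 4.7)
The plan is to transport the Gaussian behaviour from the eigenvectors of the reference matrix $H$, where joint universality with its eigenvalues is provided by Theorem \ref{thm:univ_H}, to those of $Q$ via a rank-$\abs{\cal R}$ resolvent perturbation. After choosing the orthogonal matrix $O$ in \eqref{def_H} so that $T = \Sigma^{1/2}(I_M, 0) O$, one obtains the exact factorization $Q = \Sigma^{1/2} H \Sigma^{1/2}$; denote the unit eigenvectors of $H$ by $\f \zeta_1, \dots, \f \zeta_M$. I start from the contour representation
\begin{equation*}
\scalar{\f w}{\f \xi_a}^2 \;=\; -\frac{1}{2 \pi \ii} \oint_{\Gamma_a} \scalar{\f w}{(Q - z)^{-1} \f w}\,\dd z
\end{equation*}
with $\Gamma_a$ encircling only $\mu_{a + s_+}$. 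The hypothesis $a \leq K^{1-\tau}\alpha_+^3$ is sharp: it is precisely the threshold below which the sticking estimate $\abs{\mu_{a+s_+} - \lambda_a} \prec 1/(K\alpha_+)$ of Theorem \ref{thm:sticking} is much smaller than the local level spacing $\asymp K^{-2/3}a^{-1/3}$ of $H$. Combined with level repulsion for $H$ (another consequence of Theorem \ref{thm:univ_H}), this allows $\Gamma_a$ to be placed at distance $\asymp K^{-2/3}a^{-1/3}$ from the spectrum, enclosing both $\mu_{a+s_+}$ and $\lambda_a$ and no other eigenvalue of either matrix.

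Next I rewrite $(Q - z)^{-1} = \Sigma^{-1/2}(H - z\Sigma^{-1})^{-1}\Sigma^{-1/2}$ and apply the Woodbury identity to the rank-$\abs{\cal R}$ shift $z(\Sigma^{-1} - I_M) = -\sum_{i\in\cal R} z\phi^{1/2}d_i \sigma_i^{-1} \f v_i \f v_i^*$. The resulting expression involves the full resolvent of $H$ only through the low-dimensional objects $V^*(H-z)^{-1}V$ and $V^*(H-z)^{-1}\f w$, with $V = [\f v_i]_{i\in\cal R}$. On $\Gamma_a$ the isotropic local Marchenko-Pastur law from \cite{BEKYY} approximates $V^*(H-z)^{-1}V$ by $m(z) I_{\abs{\cal R}}$ with a negligible error, so the Woodbury denominator reduces to a deterministic diagonal matrix with $i$-th entry $1 + z\phi^{1/2}m(z)d_i/\sigma_i$. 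Evaluation at the spectral edge (using $\abs{\lambda_a - \gamma_+} \prec K^{-2/3}$ and the edge value of $m$) shows that this factor vanishes exactly at $d_i = 1$, and Taylor expansion yields $\bigl|1 + \gamma_+ \phi^{1/2} m(\gamma_+) d_i/\sigma_i \bigr| \asymp \abs{d_i - 1}/\sqrt{\sigma_i}$ as $d_i \to 1$. A residue calculation then identifies the main contribution to $\scalar{\f w}{\f \xi_a}^2$ with $\scalar{\wt{\f w}}{\f \zeta_a}^2 (1 + o(1))$, where $\wt{\f w}$ is the deterministic dressed vector with components $\scalar{\f v_i}{\wt{\f w}} = w_i \sqrt{\sigma_i}/\abs{d_i - 1}$, so that $\norm{\wt{\f w}}^2 = \sum_i \sigma_i w_i^2/(d_i-1)^2$.

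Finally I apply the quantum unique ergodicity of $\f \zeta_a$ (the eigenvector part of Theorem \ref{thm:univ_H}) to the deterministic unit vector $\wt{\f w}/\norm{\wt{\f w}}$: this gives $M\scalar{\wt{\f w}/\norm{\wt{\f w}}}{\f \zeta_a}^2 \longrightarrow \chi_1^2$ in distribution, uniformly in $a \in \qq{1, K^{1-\tau}}$. Setting $\Theta(a,\f w) \deq M\scalar{\f w}{\f \xi_a}^2 / \sum_i \sigma_i w_i^2/(d_i-1)^2$ yields the claimed distributional convergence. The main obstacle is making the Woodbury analysis quantitatively precise when a non-outlier spike is near the BBP transition, i.e.\ when $\abs{d_i - 1}$ is small (but still $\geq K^{-1/3}$, since $i \notin \cal O$): the dressing factor $\sqrt{\sigma_i}/\abs{d_i - 1}$ then inflates the main term $\norm{\wt{\f w}}^2/M$ while simultaneously increasing the Woodbury perturbation error. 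Showing that the error remains negligible compared to the enhanced main term requires the isotropic local MP law in its sharpest form, the two-term expansion of $m(z)$ near $\gamma_+$, and level repulsion at the scale of the local spacing --- the last being also what ensures that the non-residue part of the contour integral is well under control.
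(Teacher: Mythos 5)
Your proposal is correct and follows essentially the same route as the paper: a residue integral around the non-outlier eigenvalue, placement of the contour validated by eigenvalue sticking (Theorem \ref{thm:sticking}) together with level repulsion at the local spacing scale, the Woodbury/low-rank identity (equivalently \eqref{pert3}) to extract the rank-$\abs{\cal R}$ structure, the isotropic local Marchenko--Pastur law to replace $W(z)$ by its deterministic value near the edge, and then quantum unique ergodicity (Proposition \ref{prop:xie}, a corollary of Theorem \ref{thm:univ_H}) applied to the dressed direction $\sum_i \sqrt{\sigma_i}\, w_i (d_i-1)^{-1} \f v_i$. The paper packages the intermediate step as Proposition \ref{prop: bulk_law}, but the mechanism — including your identification that $a \leq K^{1-\tau}\alpha_+^3$ is exactly the regime in which the sticking error $1/(K\alpha_+)$ is much below the spacing $K^{-2/3}a^{-1/3}$, and your observation that the sensitive case is $\abs{d_i-1}$ small but $\gtrsim K^{-1/3}$ — matches the paper's argument.

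Two small notational remarks: the paper's convention indexes the non-outlier $Q$-eigenvalue by $a$ and the matching $H$-eigenvalue by $b = a - s_+$ (your labels shift the other way), and the paper actually integrates $\phi^{1/2} z\, \wt G_{\f w \f w}(z)$ rather than $\wt G_{\f w \f w}(z)$ so that \eqref{pert3} applies cleanly, yielding $\phi^{1/2}\mu_a\scalar{\f w}{\f\xi_a}^2$; neither affects the substance.
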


\begin{remark}
More generally, our method also yields the asymptotic joint distribution of the family
\begin{equation} \label{general_family}
\pB{\mu_{a_1}, \dots, \mu_{a_k}, \scalar{\f u_1}{\f \xi_{b_1}} \scalar{\f \xi_{b_1}}{\f w_1}, \dots, \scalar{\f u_k}{\f \xi_{b_k}} \scalar{\f \xi_{b_k}}{\f w_k}}
\end{equation}
(after a suitable affine rescaling of the variables, as in Theorem \ref{thm:univ_H} below), where $a_1, \dots, a_k, b_1, \dots, b_k \in \qq{1, K^{1 - \tau} \alpha_+^3} \setminus \cal O$. We omit the precise statement, which is a universality result: it says essentially that the asymptotic distribution of \eqref{general_family} coincides with that under the standard Wishart ensemble (i.e.\ an uncorrelated Gaussian sample covariance matrix). The proof is a simple corollary of Theorem \ref{thm:sticking}, Proposition \ref{prop: bulk_law}, Proposition \ref{prop: level repulsion}, and Theorem \ref{thm:univ_H}.
\end{remark}

\begin{remark}
The restriction $a \leq K^{1 - \tau} \alpha_+^3$ is the same as in Remarks \ref{rem: sticking_univ} and \ref{rem:univ_Q}. There, it is required for the eigenvalue sticking to be effective in the sense that the right-hand side of \eqref{sticking_1} is much smaller than the scale on which the eigenvalue $\lambda_a$ fluctuates. Here, it ensures that the distribution of the eigenvector $\f \xi_a$ is determined by the distribution of a single eigenvector of $H$ (see Proposition \ref{prop: bulk_law}).
\end{remark}

Finally, instead of $Q$ defined in \eqref{wtH_cov}, we may also consider
\begin{equation} \label{def_dot_Q}
\dot Q \;\deq\; \frac{N}{N - 1} T X (I_N - \f e \f e^*) X^* T^*\,,
\end{equation}
where the vector $\f e$ was defined in \eqref{def_vec_e}. All of our results stated for $Q$ also hold for $\dot Q$.
\begin{theorem} \label{thm: Q dot}
Theorems \ref{thm: outlier locations}, \ref{thm:sticking}, \ref{thm: outlier eigenvectors}, \ref{thm: outlier eigenvectors 2}, \ref{thm: non-outlier bound}, and \ref{thm: eigenvector law} hold with $\mu_i$ and $\f \xi_i$ denoting the eigenvalues and eigenvectors of $\dot Q$ instead of $Q$. For Theorem \ref{thm:sticking}, $\lambda_i$ denotes the eigenvalues of $\frac{N}{N - 1} Y (I_N - \f e \f e^*) Y^*$ instead of $Y Y^*$ from \eqref{def_H}.
\end{theorem}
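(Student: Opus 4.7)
The approach is to reduce $\dot Q$ to a matrix of the form \eqref{wtH_cov} at the cost of replacing $X$ by a matrix with slightly weaker independence properties, and to verify that all inputs used in the proofs for $Q$ still apply.

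Let $V \in \R^{N \times (N-1)}$ be any matrix whose columns form an orthonormal basis of $(\f e)^\perp$, so that $I_N - \f e \f e^* = V V^*$. Setting $\bar X \deq X V \in \R^{(M+r) \times (N-1)}$, we obtain
\begin{equation*}
\dot Q \;=\; \frac{N}{N-1} \, T \bar X \bar X^* T^*\,,
\end{equation*}
which has exactly the form \eqref{wtH_cov} with $N$ replaced by $N-1$ and $X$ by $\bar X$, modulo the factor $N/(N-1) = 1 + O(N^{-1})$; the latter is negligible at every scale appearing in Theorems \ref{thm: outlier locations}--\ref{thm: eigenvector law}. Under the same transformation, $H = YY^*$ becomes $\dot H = \frac{N}{N-1}(YV)(YV)^*$, which is exactly the reference matrix prescribed for eigenvalue sticking in the statement.

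The only new feature of $\bar X$ is that the entries within a single row are uncorrelated but not independent; rows remain independent since the rows of $X$ are. However, inspection of the proofs of Theorems \ref{thm: outlier locations}--\ref{thm: eigenvector law} and of the inputs they invoke---the isotropic local Marchenko--Pastur law from \cite{BEKYY}, level repulsion, quantum unique ergodicity, and Theorem \ref{thm:univ_H}---reveals that randomness is accessed only through (i) independence of rows, and (ii) concentration of linear and quadratic forms of individual rows against deterministic vectors and matrices, which typically arise in Schur-complement expansions for resolvent entries. For any deterministic $\f w \in \R^{N-1}$, $A \in \R^{(N-1) \times (N-1)}$, and row index $i$,
\begin{equation*}
\sum_\nu w_\nu \bar X_{i\nu} \;=\; \sum_\mu (V \f w)_\mu X_{i\mu}\,, \qquad \sum_{\nu, \nu'} \bar X_{i\nu} A_{\nu\nu'} \bar X_{i\nu'} \;=\; \sum_{\mu, \mu'} X_{i\mu} (V A V^*)_{\mu\mu'} X_{i\mu'}\,.
\end{equation*}
Since $\norm{V \f w} = \norm{\f w}$, $\norm{V A V^*} = \norm{A}$, and $\tr\pb{(V A V^*)^2} = \tr(A^2)$, each concentration estimate for a row of $\bar X$ reduces verbatim to the corresponding estimate for the underlying row of $X$ with coefficients of the same norms; the moment assumption \eqref{moments of X-1} for the entries of $\bar X$ is inherited by the same reasoning.

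With these substitutions in place, the entire proof machinery developed for $Q$ carries over to $\dot Q$ with purely cosmetic adjustments ($X \to \bar X$, $N \to N-1$, and an overall factor $N/(N-1)$), yielding the stated conclusions. The main task I anticipate is a careful audit that every use of randomness in the cited inputs indeed factors through row independence plus single-row concentration, rather than relying on full independence of individual entries of $X$; once this is verified, the remainder of the argument is mechanical.
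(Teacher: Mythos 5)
Your reduction via $\bar X = XV$ with $V V^* = I_N - \f e\f e^*$ is a natural idea, but there is a genuine gap in the claim that the entire machinery factors only through row independence and single-row concentration. The proofs you would need to audit rely essentially on \emph{column} decompositions of $X$. The isotropic law of \cite{BEKYY} and, within this paper, the identity \eqref{Neumann} and the operator $G^{[\mu]}$, as well as the whole of Section \ref{sec: QUE} (where $\bar X$ is split as $\bar X_{[\beta]} + \bar X^{[\beta]}$ and the resolvent $T^{[\beta]}$ of the column minor is used), require that the $\mu$-th column of the underlying matrix be independent of the matrix with that column removed. For $\bar X = X V$ the column $\bar X_{\cdot\nu}=\sum_\mu V_{\mu\nu}X_{\cdot\mu}$ depends on all columns of $X$, and the minor $\bar X^{[\nu]}$ does as well, so this conditional independence is destroyed; your orthogonal-invariance identities for row forms do not rescue the column minor structure. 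Likewise the Green function comparison in Lemma \ref{CPZX} proceeds by replacing one entry $X_{b\beta}$ at a time, and a single-entry change in $X$ produces a rank-one change to an entire \emph{row} of $\bar X$, so the telescoping and the resolvent expansions of Lemma \ref{zddz} would have to be reorganized from scratch rather than carried over cosmetically. In short, the ``audit'' you defer is precisely where the difficulty lives, and it does not pass.

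The paper's own route avoids this entirely: it keeps the fully independent $X$ and instead derives the exact resolvent identity
\begin{equation*}
\dot G \;=\; G + \frac{1}{1 - (X^* G X)_{\f e \f e}}\, G X \f e \f e^* X^* G\,,
\end{equation*}
then shows the correction is negligible. The one genuinely new ingredient is the fluctuation averaging estimate of Lemma \ref{lem: fluct_avg}, namely $\abs{(GX)_{\f v \f x}} \prec \phi^{1/4}(1+\phi^{1/2})\Phi$ for an arbitrary deterministic $\f x \in \R^N$, proved by a moment computation and graph expansion in the spirit of \cite{BEKYY}. Once that is in place, the isotropic law for $\dot G$ (Theorem \ref{IMP for XeX}) and the QUE statement (Lemma \ref{lem: QUE eX}) follow from the identity above, and the factor $N/(N-1)$ and the combination with the general-$T$ arguments are indeed routine. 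If you want to salvage your change-of-variables approach, you would have to re-prove the isotropic local law and the Green function comparison for matrices with merely row-independent, row-isotropic entries, which is a substantive project rather than a verbatim carry-over.
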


\section{Preliminaries} \label{sec:prelim}

The rest of this paper is devoted to the proofs of the results from Sections \ref{sec:eval_results}--\ref{sec:non_outl_results}. To clarify the presentation of the main ideas of the proofs, we shall first assume that
\begin{equation} \label{simpl_assumption}
\txt{$r = 0$ \, and \, $T = \Sigma^{1/2}$\,.}
\end{equation}
We make the assumption \eqref{simpl_assumption} throughout Sections \ref{sec:prelim}--\ref{sec: QUE}. The additional arguments required to relax the assumption \eqref{simpl_assumption} are presented in Section \ref{sec:general_model}. Under the assumption \eqref{simpl_assumption} we have
\begin{equation} \label{Q_simplified}
Q \;=\; \Sigma^{1/2} X X^* \Sigma^{1/2}\,.
\end{equation}
Moreover, the extension of our results from $Q$ to $\dot Q$, and hence the proof of Theorem \ref{thm: Q dot}, is given in Section \ref{sec: Q dot}.

For an $M \times M$ matrix $A$ and $\f v, \f w \in \R^M$ we abbreviate
\begin{equation*}
A_{\f v \f w} \;\deq\; \scalar{\f v}{A \f w}\,.
\end{equation*}
We also write
\begin{equation*}
A_{\f v \f e_i} \;\equiv\; A_{\f v i}\,,
\qquad
A_{\f e_i \f v} \;\equiv\; A_{i \f v}\,,
\qquad
A_{\f e_i \f e_j} \;\equiv\; A_{i j}\,,
\end{equation*}
where $\f e_i \in \R^M$ denotes the $i$-th standard basis vector.

\subsection{The isotropic local Marchenko-Pastur law}

In this section we collect the key tool of our analysis: the isotropic Marchenko-Pastur law from \cite{BEKYY}.

It is well known that the empirical distribution of the eigenvalues of the $N\times N$ matrix $X^*X$ has the same asymptotics as the \emph{Marchenko-Pastur} law
\begin{equation}\label{def: rhog}
\varrho_\phi(\dd x) \;\deq\; \frac{\sqrt\phi}{2\pi}\frac{\sqrt{\q{(x-\gamma_-)(\gamma_+-x)}_+}}{x}\, \dd x + (1-\phi)_+ \, \delta(\dd x)\,,
\end{equation}
where we recall the edges $\gamma_\pm$ of the limiting spectrum defined in \eqref{def:gamma_pm}. Similarly, as noted in \eqref{MP_law}, the empirical distribution of the eigenvalues of the $M \times M$ matrix $X X^*$ has the same asymptotics as $\varrho_{\phi^{-1}}$.

Note that \eqref{def: rhog} is normalized so that its integral is equal to one. 
The Stieltjes transform of the Marchenko-Pastur law \eqref{def: rhog} is
\begin{equation} \label{S_MP}
m_\phi(z)\;\deq\; \int \frac{\varrho_\phi (\dd x)}{x-z} \;=\; \frac{\phi^{ 1/2}-\phi^{-1/2}-z+\ii \sqrt{(z-\gamma_-)(\gamma_+-z)}}{2\,\phi^{-1/2}\, z}\,,
\end{equation}
where the square root is chosen so that $m_\phi$ is holomorphic in the upper half-plane and satisfies $m_\phi(z) \to 0$ as $z \to \infty$. 
The function $m_\phi = m_\phi(z)$ is also characterized as the unique solution of the equation
\begin{equation} \label{identity for m MP}
m+\frac{1}{z+z\phi^{-1/2}m-(\phi^{ 1/2}-\phi^{-1/2})} \;=\; 0
\end{equation}
satisfying $\im m (z) > 0$ for $\im z >0$.
The formulas \eqref{def: rhog}--\eqref{identity for m MP} were originally derived for the case when $\phi=M/N$ is independent of $N$ 
 (or, more precisely, when $\phi$ has a limit in $(0,\infty)$ as $N \to \infty$).  Our results
allow $\phi$ to depend on $N$ under the constraint \eqref{NM gen}, so that $m_\phi$ and $\varrho_\phi$ may also depend on $N$ through $\phi$.

Throughout the following we use a spectral parameter
\begin{equation*}
z \;=\; E + \ii \eta\,,
\end{equation*}
with $\eta > 0$, as the argument of Stieltjes transforms and resolvents.
Define the resolvent
\begin{equation*}
G(z) \;\deq\; (X X^*-z)^{-1}\,.
\end{equation*}
For $z\in \C$, define $\kappa(z)$ to be the distance from $E=\re z$ to the spectral edges $\gamma_\pm$, i.e.
\begin{equation} \label{def kappa sc}
\kappa \;\equiv\; \kappa(z) \;\deq\; \abs{\gamma_+-E}\wedge\abs{\gamma_- -E}\,.
\end{equation}
Throughout the following we regard the quantities $E(z)$, $\eta(z)$, and $\kappa(z)$ as functions of $z$ and usually omit the argument unless it is needed to avoid confusion.

Sometimes we shall need the following notion of high probability.
\begin{definition} \label{def: high probability}
An $N$-dependent event $\Xi \equiv \Xi_N$ holds with \emph{high probability} if $1 - \ind{\Xi} \prec 0$.
\end{definition}

Fix a (small) $\omega \in (0,1)$
and define the domain
\begin{equation} \label{def_S_theta}
\f S \;\equiv\; \f S(\omega, K)  \;\deq\; \hb{z \in \C \col  \kappa \leq   \omega^{-1} \,,\, K^{-1+\omega} \leq \eta \leq  \omega^{-1} \,,\, \abs{z} \geq \omega}\,.
\end{equation}
Beyond the support of the limiting spectrum, one has stronger control all the way down to the real axis. For fixed (small) $\omega > 0$ define the region
\begin{equation} \label{def_S_theta_wt}
\wt{\f S} \;\equiv\; \wt {\f S}(\omega, K) \;\deq\; \hb{z \in \C \col E \notin [\gamma_-, \gamma_+]\,,\, K^{-2/3 + \omega} \leq \kappa \leq \omega^{-1} \,,\, \abs{z} \geq \omega\,,\, 0 < \eta \leq \omega^{-1}}
\end{equation}
of spectral parameters separated from the asymptotic spectrum by $K^{-2/3 + \omega}$, which may have an arbitrarily small positive imaginary part $\eta$. Throughout the following we regard $\omega$ as fixed once and for all, and do not track the dependence of constants on $\omega$.

\begin{theorem}[Isotropic local Marchenko-Pastur law \cite{BEKYY}] \label{thm: IMP gen}
Suppose that \eqref{cond on entries of X}, \eqref{NM gen}, and \eqref{moments of X-1} hold. 
Then
\begin{equation}\label{bound: Rij isotropic gen}
\absb{\scalar{\f v}{G(z) \f w} - m_{\phi^{-1}}(z) \scalar{\f v}{\f w}} \;\prec\; \sqrt{\frac{\im m_{\phi^{-1}}(z)}{M \eta}} + \frac{1}{M \eta}
\end{equation}
uniformly in $z \in \f S$ and any deterministic unit vectors $\f v, \f w \in \R^M$.
Moreover,
\begin{equation}\label{bound: Rij isotropic outside sc gen}
\absb{\scalar{\f v}{G(z) \f w} - m_{\phi^{-1}}(z) \scalar{\f v}{\f w}} \;\prec\; \sqrt{\frac{\im m_{\phi^{-1}}(z)}{M\eta}} \;\asymp\; \frac{1}{1 + \phi} (\kappa + \eta)^{-1/4} K^{-1/2}
\end{equation}
uniformly in $z \in \wt{\f S}$ and any deterministic unit vectors $\f v, \f w \in \R^M$.
\end{theorem}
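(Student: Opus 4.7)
The plan is to proceed in two stages: first establish the \emph{entrywise} version, namely $\absb{G_{ij}(z) - \delta_{ij} m_{\phi^{-1}}(z)} \prec \sqrt{\im m_{\phi^{-1}}(z)/(M\eta)} + 1/(M\eta)$, by a Schur-complement / self-consistent equation analysis, then upgrade to the isotropic bound \eqref{bound: Rij isotropic gen} by a high-moment method that exploits additional cancellation when resolvent entries are summed against arbitrary unit vectors $\f v, \f w$.

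For the entrywise stage, the standard Schur-complement expansion for sample covariance matrices expresses $1/G_{ii}$ as a linear combination of $z$ and a quadratic form $\scalar{X_{i\cdot}}{\cal G^{(i)}(z) X_{i\cdot}}$, where $\cal G^{(i)}(z) \deq (X^{(i)*} X^{(i)} - z)^{-1}$ is the $N \times N$ resolvent obtained after removing row $i$ of $X$. Since $X_{i\cdot}$ is independent of $\cal G^{(i)}$, a Hanson--Wright / Marcinkiewicz--Zygmund large deviation bound shows that the quadratic form concentrates around $(NM)^{-1/2} \tr \cal G^{(i)}(z)$ with fluctuation of size $\prec \bigl(\im \tr \cal G^{(i)}(z)/(M\eta)\bigr)^{1/2}$. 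Combined with eigenvalue interlacing (so that $M^{-1}\tr G \approx M^{-1} \tr \cal G^{(i)}$), this yields an approximate version of the self-consistent equation \eqref{identity for m MP} for the normalized trace of $G$; a stability analysis of this fixed-point equation then produces the pointwise bound $\abs{G_{ii} - m_{\phi^{-1}}} \prec \sqrt{\im m_{\phi^{-1}}/(M\eta)} + 1/(M\eta)$. Off-diagonal entries $G_{ij}$, $i \neq j$, are handled analogously via a two-row Schur complement and a matching large deviation bound.

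For the isotropic upgrade, set $Z_{\f v \f w} \deq \scalar{\f v}{G \f w} - m_{\phi^{-1}}(z) \scalar{\f v}{\f w}$. I would derive a recursive high-moment bound on $\E \abs{Z_{\f v \f w}}^{2p}$: substituting the Schur-complement expansion for each occurrence of $G$ and isolating centered fluctuations of the form $X_{i\mu} X_{i\nu} - (NM)^{-1/2} \delta_{\mu\nu}$, one obtains a polynomial expansion whose expectation, by independence of distinct rows of $X$, forces indices to pair up, each pairing paying an explicit factor of $(NM)^{-1/2}$. A careful combinatorial accounting, iterated together with the entrywise law (to control resolvent entries appearing inside the expansion), produces $\E \abs{Z_{\f v \f w}}^{2p} \leq (Cp)^{Cp} \bigl(\im m_{\phi^{-1}}/(M\eta) + 1/(M\eta)^2\bigr)^p$, whence \eqref{bound: Rij isotropic gen} follows by Markov's inequality.

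The main obstacle will be twofold: uniformity in the arbitrary deterministic directions $\f v, \f w$ and the descent to arbitrarily small $\eta$ in the region $\wt{\f S}$. Uniformity forces one beyond entrywise $\ell^\infty$ bounds and into an explicit tracking of cancellations in signed sums $\sum_{ij} \bar v_i w_j G_{ij}$ throughout the moment expansion; this combinatorial bookkeeping is the heart of the argument and is where \cite{BEKYY} does the genuinely new work. For the region $\wt{\f S}$, I would establish the bound first at the reference scale $\eta \asymp K^{-1+\omega}$ (where the $\f S$-analysis applies directly) and then propagate it to smaller $\eta$ using holomorphy of $G(z)$ on $\wt{\f S}$ together with the deterministic a priori bound $\norm{G(z)} \leq \eta^{-1}$ on a sufficiently fine grid; outside the bulk the improved asymptotic $\im m_{\phi^{-1}}(z) \asymp \eta/\sqrt{\kappa + \eta}$ delivers the sharper rate in \eqref{bound: Rij isotropic outside sc gen}.
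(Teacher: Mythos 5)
This theorem is not proved in the paper; it is imported verbatim from \cite{BEKYY}, as the citation in the theorem header indicates. Your sketch is a reasonable high-level account of the strategy actually used there: an entrywise local law (essentially available already from \cite{PY}) via Schur complements and a self-consistent equation, followed by the isotropic upgrade through a high-moment (``polynomialization'') expansion that tracks cancellations in $\sum_{ij}\bar v_i w_j G_{ij}$ against the contraction with arbitrary deterministic unit vectors. You correctly flag the combinatorial bookkeeping in that moment expansion as where the genuine work of \cite{BEKYY} lies, and you do not attempt to reproduce it; so this is an outline of the cited proof, not a proof.

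One place where your outline is not merely incomplete but misdirected is the extension to $\wt{\f S}$, i.e.\ to arbitrarily small $\eta>0$ away from the bulk. Holomorphy of $G$ together with the deterministic bound $\norm{G(z)}\leq \eta^{-1}$ on a fine grid is what gives uniformity in $z$ at a \emph{fixed} $\eta$; it cannot by itself push the estimate downward in $\eta$, because the Lipschitz constant in this argument is $\norm{G}^2\leq\eta^{-2}$ and blows up exactly where you need control. What actually powers the descent on $\wt{\f S}$ is that eigenvalue rigidity, derived from the local law at the reference scale $\eta\asymp K^{-1+\omega}$, ensures that with high probability there are no eigenvalues within distance of order $\kappa$ of $E$; once this is known, one controls $\scalar{\f v}{G(E+\ii\eta)\f w}$ for all $\eta>0$ directly from the spectral decomposition, with isotropic delocalization $\scalar{\f v}{\f\zeta_a}^2\prec M^{-1}$ controlling the numerators and the spectral gap of order $\kappa$ controlling the denominators. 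Without the rigidity input, nothing prevents an eigenvalue from sitting at $E$, in which case $G(E+\ii\eta)$ genuinely diverges as $\eta\downarrow 0$ and no propagation argument can rescue the estimate.
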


\begin{remark} \label{rem:all_z}
The probabilistic estimates
\eqref{bound: Rij isotropic gen} and \eqref{bound: Rij isotropic outside sc gen} of Theorem \ref{thm: IMP gen}  may be strengthened to hold simultaneously for all $z \in \f S$ and for all $z\in \wt{\f S}$, respectively.
For instance, \eqref{bound: Rij isotropic outside sc gen} may be strengthened to
\begin{equation*}
\P \qBB{\bigcap_{z \in \wt {\f S}} \hBB{\absb{\scalar{\f v}{G(z) \f w} - m_{\phi^{-1}}(z) \scalar{\f v}{\f w}} \leq N^\epsilon \frac{1}{1 + \phi} (\kappa + \eta)^{-1/4} K^{-1/2}}} \;\geq\; 1 - N^{-D}\,,
\end{equation*}
for all $\epsilon > 0$, $D > 0$, and $N \geq N_0(\epsilon, D)$. See \cite[Remark 2.6]{BEKYY}.
\end{remark}

The next results are on the nontrivial (i.e.\ nonzero) eigenvalues of $H \deq XX^*$ as well as the corresponding eigenvectors. The matrix $H$ has $K$
nontrivial eigenvalues, which we order according to
\begin{equation} \label{def_lambda}
\lambda_1 \;\geq\; \lambda_2 \;\geq\; \cdots \;\geq\; \lambda_K\,.
\end{equation}
(The remaining $M - K$ eigenvalues of $H$ are zero.)
Moreover, we denote by
\begin{equation} \label{def_zeta}
\f \zeta_1, \f \zeta_2 , \dots, \f \zeta_K \in \R^M
\end{equation}
the unit eigenvectors of $H$ associated with the nontrivial eigenvalues $\lambda_1 \geq \lambda_2 \geq \dots \geq \lambda_{K}$.

\begin{theorem}[Isotropic delocalization \cite{BEKYY}] \label{thm: I deloc gen}
Fix $\tau > 0$, and suppose that \eqref{cond on entries of X}, \eqref{NM gen}, and \eqref{moments of X-1} hold. Then for $i \in \qq{1,K}$ we have
\begin{equation}\label{smfy gen}
\scalar{\f \zeta_i}{\f v}^2 \;\prec\; M^{-1} 
\end{equation}
if either $i \leq (1 - \tau) K$ or $\abs{\phi - 1} \geq \tau$.
\end{theorem}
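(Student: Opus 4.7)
The plan is to combine the spectral decomposition of $H = XX^*$ with the isotropic local Marchenko--Pastur law (Theorem \ref{thm: IMP gen}), following the standard strategy that deduces isotropic eigenvector delocalization from isotropic resolvent control. First I would write, for $\eta > 0$,
\begin{equation*}
\im G_{\f v \f v}(E + \ii \eta) \;=\; \sum_{k=1}^{K} \frac{\eta \, \scalar{\f v}{\f \zeta_k}^2}{(\lambda_k - E)^2 + \eta^2}\,,
\end{equation*}
and, choosing $E = \lambda_i$ and keeping only the diagonal $k = i$ term, obtain the pointwise bound
\begin{equation*}
\scalar{\f v}{\f \zeta_i}^2 \;\leq\; \eta \, \im G_{\f v \f v}(\lambda_i + \ii \eta)\,.
\end{equation*}

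Next, fix a small $\omega > 0$ and set $\eta \deq M^{-1+\omega}$. The second step is to observe that the random point $z = \lambda_i + \ii \eta$ lies in the domain $\f S$ of \eqref{def_S_theta} with high probability: eigenvalue rigidity (a direct byproduct of Theorem \ref{thm: IMP gen}, see e.g.\ \cite{BEKYY}) gives $\kappa(\lambda_i) \prec 1$, while the hypothesis $i \leq (1-\tau)K$ or $\abs{\phi-1}\geq \tau$ guarantees that $\lambda_i$ stays bounded away from $0$ (the only potentially dangerous situation being $\phi \to 1^+$ with $i$ close to $K$, which is excluded). Invoking the ``uniform in $z$'' version of Theorem \ref{thm: IMP gen} (Remark \ref{rem:all_z}) at this random $z$ then yields
\begin{equation*}
\im G_{\f v \f v}(z) \;=\; \im m_{\phi^{-1}}(z) + O_\prec\pbb{\sqrt{\tfrac{\im m_{\phi^{-1}}(z)}{M\eta}} + \tfrac{1}{M\eta}} \;\prec\; 1\,,
\end{equation*}
where the bound $\im m_{\phi^{-1}}(z) \leq C$ is immediate in the bulk and at the upper edge follows from the standard square-root behaviour $\im m_{\phi^{-1}} \asymp \sqrt{\kappa + \eta} \leq C$.

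Combining the two displays gives $\scalar{\f v}{\f \zeta_i}^2 \prec \eta = M^{-1+\omega}$, and since $\omega > 0$ was arbitrary, the definition of $\prec$ immediately upgrades this to \eqref{smfy gen}. The main (essentially only) obstacle is verifying that the random point $\lambda_i + \ii \eta$ really does lie in $\f S$, i.e.\ controlling $\abs{\lambda_i}$ from below and $\kappa(\lambda_i)$ from above; this is precisely the role of the hypothesis $i \leq (1-\tau)K$ or $\abs{\phi-1}\geq \tau$, which rules out the degenerate scenario where the lower edge $\gamma_-$ approaches zero simultaneously with $\lambda_i$ approaching $\gamma_-$. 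Everything else is an application of the already-established isotropic law and spectral calculus.
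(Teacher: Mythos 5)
This theorem is quoted verbatim from \cite{BEKYY} and is not reproved in the present paper, so there is no internal proof to compare against; I can only assess the proposal on its own merits. Your overall strategy (spectral decomposition, the bound $\scalar{\f v}{\f \zeta_i}^2 \leq \eta \im G_{\f v \f v}(\lambda_i + \ii\eta)$, then the isotropic law) is the standard and correct route, and the discussion of why the hypothesis $i \leq (1-\tau)K$ or $\abs{\phi-1} \geq \tau$ keeps $\lambda_i$ away from $0$ is fine.

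There is, however, a genuine quantitative gap in the regime $\phi > 1$, which the paper's assumptions explicitly allow to hold with $\phi$ polynomially large. Your choice $\eta = M^{-1+\omega}$ is then \emph{below} the lower threshold $K^{-1+\omega}$ of the domain $\f S$ in \eqref{def_S_theta} (since $K = M \wedge N = N < M$), so $z = \lambda_i + \ii\eta$ does not lie in $\f S$ and Theorem \ref{thm: IMP gen} cannot be invoked at it. If you fix this by taking $\eta = K^{-1+\omega}$, the crude bound $\im G_{\f v\f v}(z) \prec 1$ that you use then only yields $\scalar{\f v}{\f \zeta_i}^2 \prec K^{-1+\omega}$, which is weaker than the claimed $M^{-1}$ when $\phi \gg 1$. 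The missing ingredient is the sharper estimate $\im m_{\phi^{-1}}(z) \lesssim K/M$: by \eqref{im_m_swap} one has $\im m_{\phi^{-1}} \asymp \phi^{-1}\im m_\phi$, and \eqref{im m gamma} gives $\im m_\phi \asymp \sqrt{\kappa + \eta} \leq C$ (note your intermediate claim $\im m_{\phi^{-1}} \asymp \sqrt{\kappa+\eta}$ is off by this factor of $\phi^{-1}$ when $\phi>1$). Plugging this into \eqref{bound: Rij isotropic gen} with $\eta = K^{-1+\omega}$ gives $\im G_{\f v \f v}(z) \prec K/M$, hence $\scalar{\f v}{\f \zeta_i}^2 \prec \eta \cdot K/M = K^{\omega} M^{-1}$; since $\log K \asymp \log M$ by \eqref{NM gen} and $\omega$ is arbitrary, this yields \eqref{smfy gen}. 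In short: right idea, but the choice of $\eta$ and the use of the crude bound $\im m_{\phi^{-1}} \leq C$ together lose the necessary factor $K/M$, and you must recover it from the explicit $\phi$-dependence of the Marchenko--Pastur density.
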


The following result is on the rigidity of the nontrivial eigenvalues of $H$.
Let $\gamma_1 \geq \gamma_2 \geq \cdots \geq \gamma_K$ be the \emph{classical eigenvalue locations according to $\varrho_{\phi}$} (see \eqref{def: rhog}), defined through
\begin{equation} \label{def:gamma_alpha}
\int_{\gamma_i}^\infty \varrho_{\phi}(\dd x) \;=\; \frac{i}{N}\,.
\end{equation}

\begin{theorem}[Eigenvalue rigidity \cite{BEKYY}]\label{thm: cov-rig}
Fix $\tau > 0$,  and suppose that \eqref{cond on entries of X}, \eqref{NM gen}, and \eqref{moments of X-1} hold.
Then for $i \in \qq{1,M}$ we have
\begin{equation}\label{rigidity1}
\absb{\lambda_i-\gamma_i}   \;\prec\; \pb{i \wedge (K +1 - i)}^{-1/3}K^{-2/3}
\end{equation}
if $i \leq (1 - \tau) K$ or $\abs{\phi - 1} \geq \tau$.
\end{theorem}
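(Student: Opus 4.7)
The plan is to derive Theorem \ref{thm: cov-rig} from the isotropic local Marchenko--Pastur law of Theorem \ref{thm: IMP gen} by the now-standard route: average the local law over the diagonal, convert Stieltjes-transform control into control on the empirical counting function via a Helffer--Sjöstrand contour argument, and invert using the square-root behaviour of $\varrho_{\phi}$ at the edges. I expect the edge step to be the only delicate one.

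\emph{Step 1 (averaged local law).} Setting $\f v = \f w = \f e_i$ in \eqref{bound: Rij isotropic gen} and averaging, one obtains for the normalized trace $m_K(z) \deq \frac1K \tr G(z)$ the bound
\begin{equation*}
\absb{m_K(z) - m_{\phi^{-1}}(z)} \;\prec\; \sqrt{\frac{\im m_{\phi^{-1}}(z)}{M\eta}} + \frac{1}{M\eta}
\end{equation*}
for $z \in \f S$, and the improved edge bound $\prec \frac{1}{1+\phi}(\kappa+\eta)^{-1/4}K^{-1/2}$ for $z \in \wt{\f S}$. (In fact cancellations in the sum usually give $\prec (M\eta)^{-1}$ in the bulk, but the weaker isotropic bound already suffices for rigidity.) By Remark \ref{rem:all_z} these estimates may be assumed to hold simultaneously for all $z$ in the respective domains on a single event of overwhelming probability.

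\emph{Step 2 (counting function estimate).} Let $\cal N(E) \deq \abs{\{i : \lambda_i \leq E\}}$ and $\cal N_{MP}(E) \deq M\int_{-\infty}^E \varrho_{\phi^{-1}}(\dd x)$. Apply the Helffer--Sjöstrand representation to a smoothed indicator $f_E$ of $(-\infty, E]$ at scale $\eta_0 \deq K^{-2/3+\omega}$ to express
\begin{equation*}
\cal N(E) - \cal N_{MP}(E) \;=\; M\int f_E(x)\,\qb{\rho_K(\dd x) - \varrho_{\phi^{-1}}(\dd x)}\,,
\end{equation*}
where $\rho_K$ is the empirical spectral measure. Standard manipulations reduce this to contour integrals of $m_K - m_{\phi^{-1}}$ along horizontal lines at height $\eta_0$; inserting the bound from Step 1, with the edge-improved estimate controlling the region near $\gamma_\pm$, yields
\begin{equation*}
\sup_E \absb{\cal N(E) - \cal N_{MP}(E)} \;\prec\; 1\,.
\end{equation*}
This is where the all-$z$ version of the local law from Remark \ref{rem:all_z} is needed, so that the contour integration may be carried out on the good event.

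\emph{Step 3 (inversion).} Near the upper edge, $\varrho_{\phi^{-1}}(\gamma_+ - t) \asymp \phi^{-1/2} t^{1/2}$, hence the classical locations satisfy $\gamma_+ - \gamma_i \asymp (i/K)^{2/3}$ for $i \leq K/2$, and symmetrically near $\gamma_-$ (which requires $\abs{\phi - 1} \geq \tau$, otherwise $\gamma_-$ approaches $0$ and the density ceases to be a square root at the lower edge; this is the source of the dichotomy in the hypothesis). A fluctuation of $\cal N$ by order $1$ therefore translates to an eigenvalue shift of order
\begin{equation*}
\pbb{\frac{\dd \cal N_{MP}}{\dd E}(\gamma_i)}^{-1} \;\asymp\; \pa{i \wedge (K + 1 - i)}^{-1/3} K^{-2/3}\,,
\end{equation*}
which gives \eqref{rigidity1}. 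The hypothesis $i \leq (1-\tau)K$ in the first case simply ensures that we remain in the upper half of the spectrum, where the shape of $\varrho_{\phi^{-1}}$ at $\gamma_+$ governs the spacing regardless of the behaviour at the lower edge.

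The one genuinely nontrivial step is Step 2 at the edge: one must push the contour all the way down to height $K^{-2/3+\omega}$, and the only way to avoid a logarithmically divergent contribution there is to use the sharper $\wt{\f S}$ estimate \eqref{bound: Rij isotropic outside sc gen}, which gains the factor $(\kappa+\eta)^{-1/4} K^{-1/2}$ over the bulk bound.
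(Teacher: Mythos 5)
This theorem is cited from \cite{BEKYY} and not proved in the paper, so the natural benchmark is the argument in that reference. Your overall template (local law $\to$ Helffer--Sj\"ostrand counting-function estimate $\to$ inversion at the square-root edge) is the right one and Steps 2 and 3 are structured correctly, but the parenthetical dismissal in Step 1 creates a genuine gap: the isotropic bound $\sqrt{\im m_{\phi^{-1}}/(M\eta)} + (M\eta)^{-1}$ does \emph{not} suffice, and the fluctuation-averaged bound $\absb{\tfrac1M\tr G - m_{\phi^{-1}}} \prec (M\eta)^{-1}$ is essential. To see where this bites, after the usual double integration by parts the Helffer--Sj\"ostrand representation of $\cal N(E) - \cal N_{MP}(E)$ produces a term of the form $M\int_{\tilde\eta}^{1} \absb{m_M - m_{\phi^{-1}}}(E + \ii\sigma)\,\dd\sigma$ for bulk $E$; with the averaged bound this is $\int_{\tilde\eta}^1 \sigma^{-1}\,\dd\sigma \prec 1$ (a harmless logarithm), but with the isotropic bound it is of order $M^{1/2}\int_{\tilde\eta}^{1}\sigma^{-1/2}\,\dd\sigma \asymp M^{1/2}$, giving only $\sup_E\absb{\cal N - \cal N_{MP}} \prec M^{1/2}$ and hence bulk rigidity on scale $M^{-1/2}$ rather than the claimed $K^{-1}$. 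In other words, the bottleneck is the \emph{bulk}, not the edge (which you singled out as the delicate part); near the edges $\im m_{\phi^{-1}} \lesssim (M\eta)^{-1}$ so the two bounds actually coincide there. The reference \cite{BEKYY} proves the averaged law as a separate theorem via a fluctuation-averaging argument and deduces rigidity from it; you would need to either invoke that averaged law explicitly or supply the fluctuation-averaging step yourself.
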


\subsection{Link to the semicircle law} \label{sec: sc}
It will often be convenient to replace the Stieltjes transform $m_\phi(z)$ of $\varrho_{\phi}(\dd x)$ with the Stieltjes transform $w_\phi(z)$ of the measure
\begin{equation} \label{sc_trans}
\phi^{1/2} x \varrho_{\phi^{-1}}(\dd x) \;=\; \frac{1}{2\pi}\sqrt{\q{(x-\gamma_-)(\gamma_+-x)}_+}\, \dd x\,.
\end{equation}
Note that this is nothing but Wigner's semicircle law centred at $\phi^{1/2} + \phi^{-1/2}$.
Thus,
\begin{equation} \label{def_w}
w_\phi(z) \;\deq\; \int \frac{\phi^{1/2} x \varrho_{\phi^{-1}}(\dd x)}{x - z} \;=\; \phi^{1/2}\pb{1 + z m_{\phi^{-1}}(z)} \;=\; \frac{\phi^{ 1/2}+\phi^{-1/2}-z+\ii \sqrt{(z-\gamma_-)(\gamma_+-z)}}{2}\,,
\end{equation}
where in the last step we used \eqref{S_MP}. Note that
\begin{equation*}
w_\phi \;=\; w_{\phi^{-1}}\,.
\end{equation*}
Using $w_\phi$ we can write \eqref{identity for m MP} as
\begin{equation} \label{self_w}
z \;=\; (1 - \phi^{-1/2} w_\phi^{-1}) (\phi^{1/2} - w_\phi)\,.
\end{equation}

\begin{lemma} \label{lemma: w}
For $z \in \f S$ and $\phi \geq 1$ we have
\begin{equation} \label{bounds on mg}
\abs{m_\phi(z)} \;\asymp\; \abs{w_\phi(z)} \;\asymp\; 1 \,, \qquad %\abs{1 - m_\phi(z)^2} \;\asymp\;
\abs{1 - w_\phi(z)^2} \;\asymp\; \sqrt{\kappa + \eta}\,,
\end{equation}
as well as
\begin{equation} \label{im m gamma}
\im m_\phi(z) \;\asymp\; \im w_\phi(z) \;\asymp\;
\begin{cases}
\sqrt{\kappa + \eta} & \text{if $E \in [\gamma_-, \gamma_+]$}
\\
\frac{\eta}{\sqrt{\kappa + \eta}} & \text{if $E \notin [\gamma_-, \gamma_+]$}\,.
\end{cases}
\end{equation}
Similarly,
\begin{equation} \label{re m gamma}
\re m_\phi(z) - I(z) \;\asymp\; \re w_\phi(z) - I(z) \;\asymp\;
\begin{cases}
\frac{\eta}{\sqrt{\kappa + \eta}} + \kappa & \text{if $E \in [\gamma_-, \gamma_+]$}
\\
\sqrt{\kappa + \eta} & \text{if $E \notin [\gamma_-, \gamma_+]$}\,,
\end{cases}
\end{equation}
where $I(z) \deq -1$ for $E \geq \phi^{1/2} + \phi^{-1/2}$ and $I(z) \deq +1$ for $E < \phi^{1/2} + \phi^{-1/2}$.
Finally, for $z \in \f S$ we have
\begin{equation} \label{im_m_swap}
\im m_{\phi^{-1}}(z) \;\asymp\; \frac{1}{\phi} \im m_\phi(z)\,.
\end{equation}
(All implicit constants depend on $\omega$ in the definition \eqref{def_S_theta} of $\f S$.)
\end{lemma}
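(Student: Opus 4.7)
The plan is to reduce everything to standard estimates on the Stieltjes transform of the semicircle law. The key observation is that a direct comparison of \eqref{def_w} with $m_{\mathrm{sc}}(\zeta) = (-\zeta + \mathrm{i}\sqrt{4-\zeta^2})/2$, the Stieltjes transform of the standard semicircle measure on $[-2,2]$, yields
\[
w_\phi(z) \;=\; m_{\mathrm{sc}}(z - c)\,, \qquad c \;\deq\; \phi^{1/2} + \phi^{-1/2}\,.
\]
In the translated variable $\tilde z = z-c$, the domain $\f S$ becomes a bounded region of the upper half-plane whose distance to $\pm 2$ is exactly the original $\kappa$. On this region, all claims concerning $w_\phi$ reduce to standard semicircle bounds. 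For instance, $|w_\phi| \asymp 1$ follows from $|m_{\mathrm{sc}}| \asymp 1$ on bounded regions, while $|1-w_\phi^2| \asymp \sqrt{\kappa+\eta}$ follows by rewriting the semicircle self-consistent equation $m_{\mathrm{sc}}^2 + \tilde z\, m_{\mathrm{sc}} + 1 = 0$ in the form $1 - m_{\mathrm{sc}}^2 = -\mathrm{i}\, m_{\mathrm{sc}}\sqrt{4-\tilde z^2}$ and using $|\sqrt{4-\tilde z^2}| \asymp \sqrt{\kappa+\eta}$. The detailed bounds on $\im w_\phi$ and $\re w_\phi - I(z)$ in \eqref{im m gamma}--\eqref{re m gamma} follow by splitting into the cases $\tilde E \in (-2,2)$ and $\tilde E \notin (-2,2)$ and tracking the real and imaginary parts of the square root in the explicit formula for $m_{\mathrm{sc}}$.

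To transfer from $w_\phi$ to $m_\phi$, I use $m_\phi = (\phi^{1/2} w_\phi - 1)/z$, which is immediate from \eqref{def_w}. The crucial algebraic step, obtained by using the self-consistent equation \eqref{self_w} in the form $w_\phi^{-1} = c - z - w_\phi$, is the factorization
\[
\phi^{1/2} w_\phi - 1 \;=\; w_\phi\pb{w_\phi + z - \phi^{-1/2}}\,.
\]
A direct computation shows that $w_\phi + z - \phi^{-1/2}$ vanishes only at $z = 0$, which is excluded by $|z|\geq \omega$; combined with $|z| \gtrsim \phi^{1/2}$ on $\f S$ (since $E$ stays within distance $\omega^{-1}$ of $c \asymp \phi^{1/2}$), this yields $|w_\phi + z - \phi^{-1/2}| \asymp |z|$ and hence $|m_\phi| \asymp |w_\phi| \asymp 1$. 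The estimates \eqref{im m gamma}--\eqref{re m gamma} for $m_\phi$ follow by expanding $m_\phi = w_\phi(w_\phi + z - \phi^{-1/2})/z$ and extracting the dominant contributions to the real and imaginary parts in each of the four regimes, the sign $I(z)$ being dictated by the branch of $\sqrt{(z-\gamma_-)(\gamma_+-z)}$ enforced by analyticity in the upper half-plane.

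For the last claim \eqref{im_m_swap}, I use that $XX^*$ and $X^*X$ share their nonzero spectra, which gives the elementary identity
\[
m_\phi(z) \;=\; \phi\, m_{\phi^{-1}}(z) + \frac{\phi-1}{z} \qquad (\phi \geq 1)\,.
\]
Taking imaginary parts yields $\phi\,\im m_{\phi^{-1}} = \im m_\phi + (\phi-1)\eta/|z|^2$, so the lower bound $\im m_{\phi^{-1}} \geq \phi^{-1}\im m_\phi$ is immediate from positivity of the correction term. The matching upper bound reduces to $(\phi-1)\eta/|z|^2 \lesssim \im m_\phi$, which follows from $|z|^2 \gtrsim \max(1,\phi)$ on $\f S$ together with the lower bound on $\im m_\phi$ already established. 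The main technical nuisance throughout is ensuring that the constants in the various $\asymp$ bounds are uniform in $\phi \geq 1$: when $\phi$ is large, both $c$ and $|z|$ grow like $\phi^{1/2}$, and subtle cancellations arise in $\phi^{1/2} w_\phi - 1$ and in $m_\phi - \phi\, m_{\phi^{-1}}$. The factorization above is the algebraic device that exposes these cancellations and makes the correct powers of $\phi$ manifest, after which the remaining verification is a routine case analysis on whether $E$ lies in or outside the support and whether $\eta$ or $\kappa$ dominates.
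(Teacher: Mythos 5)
Your proposal is correct, and it takes a genuinely different path from the paper's. Both arguments hinge on the semicircle identification $w_\phi(z) = m_{\mathrm{sc}}(z-c)$ with $c=\phi^{1/2}+\phi^{-1/2}$, but they diverge in how they reach $m_\phi$. The paper simply cites pre-existing results: \cite[Lemma~3.3]{BEKYY} for $m_\phi$ and \cite[Lemma~4.3]{EKYY4} for $m_{\mathrm{sc}}$ (hence $w_\phi$), and then derives \eqref{re m gamma} from the algebraic identity obtained by separating real and imaginary parts in \eqref{self_w}. You instead derive \emph{only} the semicircle bounds from scratch and then transfer to $m_\phi$ via the factorization $\phi^{1/2}w_\phi - 1 = w_\phi\bigl(w_\phi + z - \phi^{-1/2}\bigr)$, which is a nice algebraic observation absent from the paper (it follows from the self-consistent equation in the form $w_\phi^{-1} = c - z - w_\phi$, as you note). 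I have verified the factorization, the claim that its second factor vanishes only at $z=0$ (indeed $m_{\mathrm{sc}}(\tilde z) = \phi^{-1/2}$ forces $\tilde z = -c$, i.e.\ $z=0$), and that $|z| \asymp c \asymp \phi^{1/2}$ on $\f S$ for $\phi \geq 1$; together these give $|w_\phi + z - \phi^{-1/2}| \asymp |z|$ with $\omega$-dependent constants, so $|m_\phi| \asymp |w_\phi|$. Your self-contained route buys transparency in the uniformity over $\phi \geq 1$, which is the subtle point of the lemma and which the paper delegates to its citations. The one place where you are hand-wavy (``expanding and extracting dominant contributions in each of the four regimes'') is in deducing \eqref{im m gamma} and \eqref{re m gamma} for $m_\phi$ from the factored form; this does work --- the apparently dangerous term $\eta\,\re w_\phi/\im w_\phi$ that arises stays bounded on $\f S$ since $\im w_\phi \gtrsim \eta/\sqrt{\kappa+\eta}$ and $\kappa+\eta \lesssim 1$ --- but the verification is tedious and you might alternatively just note that $\varrho_\phi$ and the translated semicircle share support $[\gamma_-,\gamma_+]$ for $\phi\ge1$ with densities comparable up to constants (since $x \asymp \phi^{1/2}$ there), whence their Poisson extensions $\im m_\phi$ and $\im w_\phi$ are comparable directly. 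Finally, your treatment of \eqref{im_m_swap} is the same as the paper's, using \eqref{mphi_minvphi}, and is complete. As an aside, the paper's displayed identity $\re w_\phi = -(E-c)/(1+\eta/\im w_\phi)$ appears to have a typo: separating real and imaginary parts of $z = c - w_\phi - w_\phi^{-1}$ gives $\re w_\phi = -(E-c)/(2+\eta/\im w_\phi)$; this does not affect the conclusion.
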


\begin{proof}
The estimates \eqref{bounds on mg} and \eqref{im m gamma} follow from the explicit expressions in \eqref{S_MP} and \eqref{def_w}. In fact, these estimates have already appeared in previous works. Indeed, for $m_\phi$ the estimates \eqref{bounds on mg} and \eqref{im m gamma} were proved in \cite[Lemma 3.3]{BEKYY}. In order to prove them for $w_\phi$, we observe that the estimates \eqref{bounds on mg} and \eqref{im m gamma} follow from the corresponding ones for the semicircle law, which were proved in \cite[Lemma 4.3]{EKYY4}. The estimates \eqref{re m gamma} follow from \eqref{im m gamma} and the elementary identity
\begin{equation*}
\re w_\phi \;=\; - \frac{E - \phi^{1/2} - \phi^{-1/2}}{1 + \eta / \im w_\phi}\,,
\end{equation*}
which can be derived from \eqref{self_w}; the estimates for $m_\phi$ are derived similarly.  Finally, \eqref{im_m_swap} follows easily from
\begin{equation} \label{mphi_minvphi}
m_{\phi^{-1}}(z) \;=\; \frac{1}{\phi} \pbb{m_\phi(z) + \frac{1 - \phi}{z}}\,,
\end{equation}
which may itself be derived from \eqref{identity for m MP}.
\end{proof}

In analogy to $w_\phi$ (see \eqref{def_w}), we define the matrix-valued function
\begin{equation} \label{def_F}
F(z) \;\deq\; \phi^{1/2} (1 + z G(z))\,.
\end{equation}
Theorem \ref{thm: IMP gen} has the following analogue, which compares $F$ with $m_\phi$.
\begin{lemma} \label{lem: Qij iso}
Suppose that \eqref{cond on entries of X}, \eqref{NM gen}, and \eqref{moments of X-1} hold. 
Then
\begin{equation}\label{bound: Qij iso}
\absb{\scalar{\f v}{F(z) \f w} - w_\phi(z) \scalar{\f v}{\f w}} \;\prec\; \sqrt{\frac{\im w_\phi(z)}{K \eta}} + \frac{1}{K \eta}
\end{equation}
uniformly in $z \in \f S$ and any deterministic unit vectors $\f v, \f w \in \R^M$.
Moreover,
\begin{equation}\label{bound: Qij iso outside}
\absb{\scalar{\f v}{F(z) \f w} - w_\phi(z) \scalar{\f v}{\f w}} \;\prec\; \sqrt{\frac{\im w_\phi(z)}{K \eta}} \;\asymp\; (\kappa + \eta)^{-1/4} K^{-1/2}
\end{equation}
uniformly in $z \in \wt{\f S}$ and any deterministic unit vectors $\f v, \f w \in \R^M$.
\end{lemma}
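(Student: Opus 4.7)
The plan is to reduce Lemma~\ref{lem: Qij iso} directly to Theorem~\ref{thm: IMP gen} using a clean algebraic identity. Rewriting \eqref{def_w} as $w_\phi(z) = \phi^{1/2}(1 + z m_{\phi^{-1}}(z))$ and comparing with the definition \eqref{def_F} of $F$ yields
\begin{equation*}
F(z) - w_\phi(z) I_M \;=\; \phi^{1/2} z \bigl(G(z) - m_{\phi^{-1}}(z) I_M\bigr)\,,
\end{equation*}
so that the generalized entry to be controlled is precisely $\phi^{1/2} z$ times the corresponding generalized entry already estimated by Theorem~\ref{thm: IMP gen}. Applying the latter produces $\phi^{1/2}|z|\bigl(\sqrt{\im m_{\phi^{-1}}(z)/(M\eta)} + 1/(M\eta)\bigr)$ on $\f S$ and the sharper $\phi^{1/2}|z|(1+\phi)^{-1}(\kappa+\eta)^{-1/4}K^{-1/2}$ on $\wt{\f S}$.

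The remaining task is to convert $(M,\im m_{\phi^{-1}})$ into $(K,\im w_\phi)$, which uses only the relation $M \asymp (1 \vee \phi) K$ together with Lemma~\ref{lemma: w}. For $\phi \geq 1$ I would combine \eqref{im m gamma} and \eqref{im_m_swap} to get $\im m_{\phi^{-1}} \asymp \phi^{-1} \im w_\phi$, so that
\begin{equation*}
\phi^{1/2}|z|\sqrt{\frac{\im m_{\phi^{-1}}}{M\eta}} \;\asymp\; \frac{|z|}{\phi^{1/2}}\sqrt{\frac{\im w_\phi}{K\eta}} \;\lesssim\; \sqrt{\frac{\im w_\phi}{K\eta}}\,,
\end{equation*}
while $\phi^{1/2}|z|/M \lesssim 1/K$ is immediate. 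For $\phi \leq 1$ I would exploit the symmetry $w_\phi = w_{\phi^{-1}}$ noted just after \eqref{def_w} to apply Lemma~\ref{lemma: w} with $\phi^{-1} \geq 1$ in place of $\phi$; this yields $\im m_{\phi^{-1}} \asymp \im w_{\phi^{-1}} = \im w_\phi$, and since $M = K$ in this regime the same conclusion follows at once. The $\wt{\f S}$ estimate is then immediate because $\phi^{1/2}|z|/(1+\phi) \lesssim 1$ using the crude upper bound $|z| \lesssim 1 + \phi^{1/2} + \phi^{-1/2}$, and the equivalence $\sqrt{\im w_\phi/(K\eta)} \asymp (\kappa+\eta)^{-1/4}K^{-1/2}$ on $\wt{\f S}$ is read off from $\im w_\phi \asymp \eta/\sqrt{\kappa+\eta}$ outside the asymptotic spectrum, again via Lemma~\ref{lemma: w}.

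Conceptually the proof is a one-line reduction; the only obstacle is book-keeping. In particular, Lemma~\ref{lemma: w} is formally stated only for $\phi \geq 1$, so the $\phi \leq 1$ regime has to be handled through the symmetry $w_\phi = w_{\phi^{-1}}$, and some care is required to verify that the $\phi$-dependent prefactors $\phi^{1/2}|z|$ compress correctly in each regime. I expect this minor case analysis to be the only nontrivial step; there is no new probabilistic input beyond Theorem~\ref{thm: IMP gen}.
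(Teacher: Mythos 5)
Your proof is correct and takes essentially the same route as the paper: the algebraic identity $F(z) - w_\phi(z) I_M = \phi^{1/2} z\bigl(G(z) - m_{\phi^{-1}}(z) I_M\bigr)$ reduces the claim to Theorem~\ref{thm: IMP gen}, and Lemma~\ref{lemma: w} together with the observation that $\abs{z} \asymp \phi^{1/2}$ for $\phi \geq 1$ and $\abs{z} \asymp \phi^{-1/2}$ for $\phi \leq 1$ on $\f S$ and $\wt{\f S}$ handles the conversion of $(M, \im m_{\phi^{-1}})$ into $(K, \im w_\phi)$. Your care about Lemma~\ref{lemma: w} being stated only for $\phi \geq 1$ and the use of $w_\phi = w_{\phi^{-1}}$ to treat $\phi \leq 1$ is exactly the right book-keeping.
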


\begin{proof}
The proof is an easy consequence of Theorem \ref{thm: IMP gen} and Lemma \ref{lemma: w}, combined with the fact that for $z \in \f S$ or $z \in \wt {\f S}$ we have $\abs{z} \asymp \phi^{1/2}$ for $\phi \geq 1$ and $\abs{z} \asymp \phi^{-1/2}$ for $\phi \leq 1$.
\end{proof}

\subsection{Extension of the spectral domain} \label{sec: ext_dom}
In this section we extend the spectral domain on which Theorem \ref{thm: IMP gen} and Lemma \ref{lem: Qij iso} hold. The argument relies on the Helffer-Sj\"ostrand functional calculus \cite{Davies}. Define the domains
\begin{equation*}
\wh {\f S} \;\equiv\; \wh{\f S}(\omega, K) \;\deq\; \hb{z \in \C \col E \notin [\gamma_-, \gamma_+]\,,\, \kappa \geq K^{-2/3 + \omega}\,,\, \eta > 0}\,, \qquad
\f B \;\equiv\; \f B(\omega) \;\deq\; \h{z \in \C \col \abs{z} < \omega}\,.
\end{equation*}
\begin{proposition} \label{prop:extension}
Fix $\omega, \tau \in (0,1)$.
\begin{enumerate}
\item
If $\phi < 1 - \tau$ then
\begin{equation}\label{extended_1}
\absb{\scalar{\f v}{G(z) \f w} - m_{\phi^{-1}}(z) \scalar{\f v}{\f w}} \;\prec\; \frac{1}{(\kappa + \eta)^2 + (\kappa + \eta)^{1/4}} K^{-1/2}
\end{equation}
uniformly for $z \in \wh{\f S}$ and any deterministic unit vectors $\f v, \f w \in \R^M$.
\item
If $\abs{\phi - 1} \leq \tau$ then \eqref{extended_1} holds uniformly for $z \in \wh{\f S} \setminus \f B$ and any deterministic unit vectors $\f v, \f w \in \R^M$.
\item
If $\phi > 1 + \tau$ then
\begin{equation}\label{extended_2}
\absb{\scalar{\f v}{G(z) \f w} - m_{\phi^{-1}}(z) \scalar{\f v}{\f w}} \;\prec\;  \frac{1}{\phi^{1/2} \abs{z} ((\kappa + \eta) + (\kappa + \eta)^{1/4})} K^{-1/2}
\end{equation}
uniformly for $z \in \wh{\f S} \setminus \{0\}$ and any deterministic unit vectors $\f v, \f w \in \R^M$.
\end{enumerate}
\end{proposition}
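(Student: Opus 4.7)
The plan is to take Theorem \ref{thm: IMP gen}, which already supplies the isotropic bound on $\wt{\f S}$, as input and extend it to all of $\wh{\f S}$ by contour integration. Since $\wt{\f S}$ already covers exactly the part of $\wh{\f S}$ with $\kappa + \eta \leq 2\omega^{-1}$ and $\abs{z} \geq \omega$, it will suffice to handle two remaining regimes separately: a \emph{far} regime $\kappa + \eta \gg 1$, and (for Cases 2 and 3) a \emph{near-zero} regime $\abs{z} \ll 1$.

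For the far regime I would use that $\norm{H} \prec 1$ by Theorem \ref{thm: cov-rig} in order to expand
\begin{equation*}
G(z) \;=\; -\frac{1}{z}\sum_{k \geq 0} \frac{H^k}{z^k}\,, \qquad m_{\phi^{-1}}(z) \;=\; -\frac{1}{z}\sum_{k\geq 0} \frac{m_k}{z^k}\,, \qquad m_k \;\deq\; \int x^k \varrho_{\phi^{-1}}(\dd x)\,,
\end{equation*}
both convergent in operator norm for $\abs{z}$ sufficiently large. Since $m_0 = 1$, the $k = 0$ terms cancel in $\scalar{\f v}{G(z) \f w} - m_{\phi^{-1}}(z) \scalar{\f v}{\f w}$, leaving a leading prefactor $z^{-2}$. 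I would then write each moment difference as a Cauchy integral
\begin{equation*}
\scalar{\f v}{H^k \f w} - m_k \scalar{\f v}{\f w} \;=\; \frac{1}{2\pi \ii} \oint_{\Gamma} \zeta^k \qb{\scalar{\f v}{G(\zeta) \f w} - m_{\phi^{-1}}(\zeta) \scalar{\f v}{\f w}} \dd \zeta
\end{equation*}
along a fixed contour $\Gamma \subset \wt{\f S}$ of diameter $O(1)$ enclosing $[\gamma_-, \gamma_+]$ (and avoiding the origin, which will be treated separately when $\phi > 1$). On $\Gamma$ the integrand is of size $O_\prec(C^k K^{-1/2})$ by Theorem \ref{thm: IMP gen}, so each moment difference is $O_\prec(C^k K^{-1/2})$; summing the Neumann series then delivers \eqref{extended_1} in the far regime, and the analogous bound for Cases 2 and 3.

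For the near-zero regime with $\phi > 1$, $H$ has an $(M-K)$-dimensional null space with spectral projector $P_0$, yielding the decomposition $G(z) = G^{\r{reg}}(z) - z^{-1} P_0$ matched by the singular structure $m_{\phi^{-1}}(z) = m^{\r{reg}}_{\phi^{-1}}(z) - z^{-1}(1 - \phi^{-1})$. The regular parts are handled by the argument above. The only remaining task is to estimate $\scalar{\f v}{P_0 \f w} - (1 - \phi^{-1}) \scalar{\f v}{\f w}$, which I would extract as the residue at $\zeta = 0$ of $\scalar{\f v}{G(\zeta) \f w} - m_{\phi^{-1}}(\zeta) \scalar{\f v}{\f w}$ via a small contour around the origin sitting inside $\wt{\f S}$, obtaining size $O_\prec(K^{-1/2} \phi^{-1/2})$ and hence the $(\phi^{1/2} \abs{z})^{-1}$ prefactor in \eqref{extended_2}. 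In Case 2, where $\abs{\phi - 1} \leq \tau$, the exclusion of $\f B$ keeps one away from the coalescence of $\gamma_-$ with the origin, so the near-zero regime does not arise and the argument reduces entirely to the far regime one.

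The main obstacle I expect is the uniform tracking of the $\phi$-dependent prefactors across the three cases, particularly near the critical ratio $\phi = 1$ where the asymptotics of $m_\phi$ and $w_\phi$ recorded in Lemma \ref{lemma: w} change character. Matching the stated powers of $(\kappa + \eta)$ and $\phi$ simultaneously in a single contour-based scheme, while also preserving simultaneous-in-$z$ control in the sense of Remark \ref{rem:all_z}, will require some care; this is precisely why Cases 2 and 3 state different bounds and why the ball $\f B$ must be excluded in Case 2.
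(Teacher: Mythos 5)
Your contour-integral strategy is genuinely different from the paper's, which uses the Helffer--Sj\"ostrand functional calculus rather than a Neumann expansion, and it has two gaps that I don't think can be patched without essentially abandoning the scheme. First, the expansion $G(z) = -z^{-1}\sum_k (H/z)^k$ converges only when $\abs{z} > \norm{H}$, which with high probability is $\asymp \gamma_+ = \phi^{1/2}+\phi^{-1/2}+2$. Under \eqref{NM gen} this can be polynomially large in $N$, so the condition $\abs{z} > \gamma_+$ is much more restrictive than $\kappa+\eta \gg 1$. For example, take $E$ just above $\gamma_+$ and $\eta = 2\omega^{-1}$: this lies in $\wh{\f S}\setminus \wt{\f S}$ with $\kappa + \eta$ of order $\omega^{-1}$, but $\abs{z}\approx \gamma_+$ and the series barely converges (or not at all). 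Centering the expansion at $c = (\gamma_-+\gamma_+)/2$ would remove the $\gamma_+^k$ growth of the moments, but you do not do this, and even with the centered expansion the contour you use to extract the moment differences must cross the strip $\re \zeta\in[\gamma_-,\gamma_+]$ — so it cannot lie in $\wt{\f S}$ (where $E\notin[\gamma_-,\gamma_+]$ is required), and one has to fall back on $\f S$ with $\eta$ bounded below; this is workable but not what you wrote.

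Second, and more seriously, your residue extraction at $z=0$ in Case~3 relies on a small contour around the origin sitting inside $\wt{\f S}$. For $\phi$ large, $\gamma_- \asymp \phi^{1/2}$ is large, so any circle of radius $< \gamma_-$ around the origin has $\kappa = \gamma_- - E > \omega^{-1}$ and hence lies outside $\wt{\f S}$ (and outside $\f S$): there is no local-law input on that contour, and the residue cannot be extracted this way. The paper sidesteps both problems at once by the Helffer--Sj\"ostrand representation: the cutoff $\chi$ is concentrated near $[\gamma_-,\gamma_+]$ only, so the local law is needed only on $\supp \partial_{\bar w}\chi$, at distance $\asymp 1$ from the bulk; and for $\phi>1+\tau$ the reference point is chosen as $x_0 = 0$, so that $f_z(0)=0$ and the point mass of $\rho^\Delta$ at the origin contributes nothing — making the pole at $z=0$ appear automatically through $\abs{f_z(w)} = \abs{w}/(\abs{z}\abs{w-z})$ rather than through any contour near $0$. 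This choice of $f_z$ is the key observation that your scheme does not replicate, and it is what produces the $(\phi^{1/2}\abs{z})^{-1}$ factor in \eqref{extended_2} without needing any estimate near the origin.
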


\begin{proof}
By polarization and linearity, we may assume that $\f w = \f v$. Define the signed measure
\begin{equation}
\rho^\Delta(\dd x) \;\deq\; \sum_{i = 1}^M \scalar{\f v}{\f \zeta_i} \scalar{\f \zeta_i}{\f v} \, \delta_{\lambda_i}(\dd x) -  \varrho_{\phi^{-1}}(\dd x) \,,
\end{equation}
so that
\begin{equation*}
m^\Delta(z) \;\deq\; \int \frac{\rho^\Delta(\dd x)}{x - z} \;=\; \scalar{\f v}{G(z) \f v} -  m_{\phi^{-1}}(z)\,.
\end{equation*}
The basic idea of the proof is to apply the Helffer-Sj\"ostrand formula to the function
\begin{equation*}
f_z(x) \;\deq\; \frac{1}{x - z} - \frac{1}{x_0 - z}\,,
\end{equation*}
where $x_0$ is chosen below. To that end, we need a smooth compactly supported cutoff function $\chi$ on the complex plane satisfying $\chi(w) \in [0,1]$ and $\abs{\partial_{\bar w} \chi(w)} \leq C(\omega, \tau)$. We distinguish the three cases $\phi < 1 - \tau$, $\abs{\phi - 1} \leq \tau$, and $\phi > 1 + \tau$.

Let us first focus on the case $\phi < 1 - \tau$. Set $x_0 \deq \phi^{1/2} + \phi^{-1/2}$ and choose a constant $\omega' = \omega'(\omega, \tau) \in (0, \omega)$ small enough that $\gamma_- \geq 4 \omega'$. We require that $\chi$ be equal to $1$ in the $\omega'$-neighbourhood of $[\gamma_-, \gamma_+]$ and $0$ outside of the $2\omega'$-neighbourhood of $[\gamma_-,\gamma_+]$. By Theorem \ref{thm: cov-rig} we have $\supp \rho^\Delta \subset \{\chi = 1\}$ with high probability. Now choose $z$ satisfying $\dist(z, [\gamma_-, \gamma_+]) \geq 3 \omega'$. Then the Helffer-Sj\"ostrand formula \cite{Davies} yields, for $x \in \supp \rho^\Delta$,
\begin{equation} \label{HS formula}
f_z(x) \;=\; \frac{1}{\pi} \int_{\C} \frac{\partial_{\bar w} (f_z(w) \chi(w))}{x - w} \, \dd w
\end{equation}
with high probability, where $\dd w$ denotes the two-dimensional Lebesgue measure in the complex plane. Noting that $\int \dd \rho^{\Delta} = 0$, we may therefore write
\begin{equation} \label{HS formula 2}
m^\Delta(z) \;=\; \int \rho^\Delta(\dd x) \, f_z(x)
\;=\;
\frac{1}{\pi} \int_{\C}  f_z(w) \, \partial_{\bar w} \chi(w) \, m^\Delta(w) \, \dd w
\end{equation}
with high probability, where in second step we used \eqref{HS formula} and the fact that $f_z$ is holomorphic away from $z$. The integral is supported on the set $\{\partial_{\bar w} \chi \neq 0\} \subset \h{w \col \dist(w,[\gamma_-, \gamma_+]) \in [\omega', 2 \omega']}$, on which we have the estimates $\abs{f_z(w)} \leq C (\kappa(z) + \eta(z))^{-2}$ and $\abs{m^\Delta(w)} \prec K^{-1/2}$, as follows from Theorem \ref{bound: Rij isotropic outside sc gen} applied to $\f S(\omega', K)$ and \eqref{im_m_swap}. Recalling Remark \ref{rem:all_z}, we may plug these estimates into the integral to get
\begin{equation*}
\abs{m^\Delta(z)} \;\prec\;  (\kappa + \eta)^{-2} K^{-1/2}\,,
\end{equation*}
which holds for $\dist(z, [\gamma_-, \gamma_+]) \geq 3 \omega'$. (Recall that $\abs{\partial_{\bar w} \chi(w)} \leq C$.) Combining this estimate with \eqref{bound: Rij isotropic outside sc gen}, the claim \eqref{extended_1} follows for $z \in \wh{\f S}$.

Next, we deal with the case $\abs{\phi - 1} \leq \tau$. The argument is similar. We again choose $x_0 \deq \phi^{1/2} + \phi^{-1/2}$. We require that $\chi$ be equal to $1$ in the $\omega$-neighbourhood of $[0, \gamma_+]$ and $0$ outside of the $2\omega$-neighbourhood of $[0,\gamma_+]$. We may now repeat the above argument almost verbatim. For $\dist\{z, [0, \gamma_+]\} \geq 3 \omega$ and $w \in \{\partial_{\bar w} \chi \neq 0\}$ we find that $\abs{f_z(w)} \leq C (\kappa(z) + \eta(z))^{-2}$ and $\abs{m^\Delta(w)} \prec K^{-1/2}$. Hence, recalling \eqref{bound: Rij isotropic outside sc gen}, we get \eqref{extended_1}  for $z \in \wh{\f S} \setminus \f B$.

Finally, suppose that $\phi > 1 + \tau$. Now we set $x_0 \deq 0$. We choose the same $\omega'$ and cutoff function $\chi$ as in the case $\phi < 1 - \tau$ above. Suppose that $\dist(z, [\gamma_-, \gamma_+]) \geq 3 \omega'$ and $z \neq 0$. Thus, \eqref{HS formula} holds with high probability for $x \in \supp \rho^\Delta \setminus \{0\}$. Since $f_w(0) = 0$, we therefore find that \eqref{HS formula 2} holds. As above, we find that for $w \in \{\partial_{\bar w} \chi \neq 0\}$ we have
\begin{equation*}
\abs{f_z(w)} \;\leq\; \frac{C \phi^{1/2}}{\abs{z} (\kappa(z) + \eta(z))}
\end{equation*}
and $\abs{m^\Delta(w)} \prec \phi^{-1} K^{-1/2}$. Recalling \eqref{bound: Rij isotropic outside sc gen}, we find that \eqref{extended_2} follows easily.
\end{proof}

Proposition \ref{prop:extension} yields the following result for $F$ defined in \eqref{def_F}.

\begin{corollary} \label{cor:extension}
Fix $\omega, \tau \in (0,1)$.
\begin{enumerate}
\item
If $\phi < 1 - \tau$ then
\begin{equation}\label{extended_1_Q}
\absb{\scalar{\f v}{F(z) \f w} - w_{\phi}(z) \scalar{\f v}{\f w}} \;\prec\; \frac{\phi^{1/2} \abs{z}}{(\kappa + \eta)^2 + (\kappa + \eta)^{1/4}} K^{-1/2}
\end{equation}
uniformly for $z \in \wh{\f S}$ and any deterministic unit vectors $\f v, \f w \in \R^M$.
\item
If $\abs{\phi - 1} \leq \tau$ then \eqref{extended_1_Q} holds uniformly for $z \in \wh{\f S} \setminus \f B$ and any deterministic unit vectors $\f v, \f w \in \R^M$.
\item
If $\phi > 1 + \tau$ then
\begin{equation}\label{extended_2_Q}
\absb{\scalar{\f v}{F(z) \f w} - w_{\phi}(z) \scalar{\f v}{\f w}} \;\prec\;  \frac{1}{(\kappa + \eta) + (\kappa + \eta)^{1/4}} K^{-1/2}
\end{equation}
uniformly for $z \in \wh{\f S}$ and any deterministic unit vectors $\f v, \f w \in \R^M$.
\end{enumerate}
\end{corollary}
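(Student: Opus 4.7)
The plan is to observe that Corollary \ref{cor:extension} reduces immediately to Proposition \ref{prop:extension} via a one-line algebraic identity, so no further functional-calculus work is needed.

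From the definition \eqref{def_F} we have $F(z) = \phi^{1/2}(1 + zG(z))$, while from \eqref{def_w} we have $w_\phi(z) = \phi^{1/2}(1 + z m_{\phi^{-1}}(z))$. Subtracting the two and taking a matrix element against deterministic unit vectors $\f v, \f w \in \R^M$ gives
\begin{equation*}
\scalar{\f v}{F(z) \f w} - w_\phi(z) \scalar{\f v}{\f w} \;=\; \phi^{1/2} z \pb{\scalar{\f v}{G(z) \f w} - m_{\phi^{-1}}(z) \scalar{\f v}{\f w}}\,.
\end{equation*}
Taking absolute values, this reduces each of the three bounds in Corollary \ref{cor:extension} to the corresponding bound in Proposition \ref{prop:extension} multiplied by $\phi^{1/2}\abs{z}$.

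For cases (i) and (ii) ($\phi \leq 1 + \tau$), multiplying the right-hand side of \eqref{extended_1} by $\phi^{1/2}\abs{z}$ yields exactly the bound in \eqref{extended_1_Q}. For case (iii) ($\phi > 1 + \tau$), multiplying the right-hand side of \eqref{extended_2} by $\phi^{1/2}\abs{z}$ produces $\frac{1}{(\kappa+\eta)+(\kappa+\eta)^{1/4}} K^{-1/2}$, which is precisely \eqref{extended_2_Q}. In case (ii) we carry over the same exclusion of $\f B$ as in Proposition \ref{prop:extension}, while in case (iii) the factor $z$ in our identity cancels the $\abs{z}^{-1}$ in \eqref{extended_2}, so the resulting bound in fact extends continuously to $z = 0$ and hence holds throughout $\wh{\f S}$ as claimed.

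There is no real obstacle here: the proof is essentially a restatement of Proposition \ref{prop:extension} under the affine change of variable $G \mapsto F$. The only point to double-check is the domain specification in each of the three cases, which matches the domains of Proposition \ref{prop:extension} under this multiplication.
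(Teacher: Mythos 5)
Your proposal is correct and is exactly the paper's intended (unwritten) proof: the identity $\scalar{\f v}{F(z)\f w} - w_\phi(z)\scalar{\f v}{\f w} = \phi^{1/2}z\pb{\scalar{\f v}{G(z)\f w} - m_{\phi^{-1}}(z)\scalar{\f v}{\f w}}$ follows directly from \eqref{def_F} and \eqref{def_w}, and multiplying each bound in Proposition \ref{prop:extension} by $\phi^{1/2}\abs{z}$ gives each bound in the Corollary. One minor remark: in case (iii) you do not need a continuity argument to pass from $\wh{\f S} \setminus \{0\}$ to $\wh{\f S}$, since the definition of $\wh{\f S}$ requires $\eta > 0$, so $0 \notin \wh{\f S}$ and the two domains coincide.
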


\subsection{Identities for the resolvent and eigenvalues}\label{sec:linalg}
In this section we derive the identities on which our analysis of the eigenvalues and eigenvectors relies. Recall the definition of the set $\cal R$ from \eqref{def_calR}. We write the population covariance matrix $\Sigma$ from \eqref{def_Sigma} as
\begin{equation*}
\Sigma \;=\; 1 + \phi^{1/2} V D V^*\,,
\end{equation*}
where $D = \diag(d_i)_{i \in \cal R}$ is an invertible diagonal $\abs{\cal R} \times \abs{\cal R}$ matrix and
$V = [\f v_i]_{i \in \cal R}$ is the matrix of eigenvectors $\f v_i$ of $\Sigma$ indexed by the set $\cal R$.
Note that $V$ is an $N \times \abs{\cal R}$ isometry, i.e.\ $V$ satisfies $V^* V = I_{\abs{\cal R}}$.

We use the definitions
\begin{equation*}
G(z) \;\deq\; (H - z)^{-1} \,, \qquad \wt G(z) \;\deq\; (Q - z)^{-1}\,, \qquad F(z) \;\deq\; \phi^{1/2} (1 + z G(z))\,,
\end{equation*}
where $H = X X^*$ and $Q = \Sigma^{1/2} H \Sigma^{1/2}$.
We introduce the $\abs{\cal R} \times \abs{\cal R}$ matrix
\begin{equation*}
W(z) \;\deq\; V^* F(z) V\,.
\end{equation*}
We also denote by $\sigma(A)$ the spectrum of a square matrix $A$.

The following lemma collects the basic identities for analysing $\sigma(Q)$ and $\wt G$. We remark that versions of its part (i) have already appeared in several previous works on finite-rank deformations of random matrix ensembles \cite{BGGM1, BY1, KY2, SoshPert}.

\begin{lemma} \label{lem:linalg}
\begin{enumerate}
\item
Suppose that $\mu \notin \sigma(H)$. Then $\mu \in \sigma(Q)$ if and only if
\begin{equation} \label{pert1}
\det \pb{D^{-1} + W(\mu)} \;=\; 0\,.
\end{equation}
\item
We have
\begin{equation} \label{pert2}
\Sigma^{1/2} \wt G(z) \Sigma^{1/2} \;=\; G(z) - G(z) V \frac{\phi^{1/2} z}{D^{-1} + W(z)} V^* G(z)\,.
\end{equation}
\end{enumerate}
\end{lemma}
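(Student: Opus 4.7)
The plan is to obtain both parts from a single computation: factoring $Q-z$ through $\Sigma$ and then applying Woodbury's identity to the rank-$|\mathcal{R}|$ perturbation of $H$ that appears.

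The first step is the algebraic identity
\begin{equation*}
Q - z \;=\; \Sigma^{1/2} H \Sigma^{1/2} - z I \;=\; \Sigma^{1/2} \pb{H - z \Sigma^{-1}} \Sigma^{1/2},
\end{equation*}
which uses $\Sigma^{1/2} \Sigma^{-1} \Sigma^{1/2} = I$. Since $\Sigma$ is invertible, this immediately reduces both assertions to statements about $H - z \Sigma^{-1}$: the singularity of $Q - \mu$ is equivalent to that of $H - \mu \Sigma^{-1}$, and $\Sigma^{1/2} \wt G(z) \Sigma^{1/2} = (H - z \Sigma^{-1})^{-1}$ whenever $z \notin \sigma(Q)$.

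The second step is to rewrite $H - z\Sigma^{-1}$ as a rank-$|\mathcal{R}|$ perturbation of $H - z$. Using Woodbury on $\Sigma = I + \phi^{1/2} V D V^*$ with $V^* V = I$ gives $\Sigma^{-1} = I - V(\phi^{-1/2} D^{-1} + I)^{-1} V^*$, hence
\begin{equation*}
H - z\Sigma^{-1} \;=\; (H-z) + z V \pb{\phi^{-1/2} D^{-1} + I}^{-1} V^*.
\end{equation*}
Applying Woodbury again (with $C \deq z(\phi^{-1/2} D^{-1} + I)^{-1}$) yields
\begin{equation*}
(H - z\Sigma^{-1})^{-1} \;=\; G(z) - G(z) V \pb{C^{-1} + V^* G(z) V}^{-1} V^* G(z).
\end{equation*}
A direct calculation using $F(z) = \phi^{1/2}(I + z G(z))$ and $V^*V = I$ then gives
\begin{equation*}
C^{-1} + V^* G(z) V \;=\; z^{-1}\pB{\phi^{-1/2} D^{-1} + \phi^{-1/2} V^* F(z) V} \;=\; \frac{\phi^{-1/2}}{z} \pb{D^{-1} + W(z)}.
\end{equation*}

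Substituting this identity back into the Woodbury expansion proves part (ii). For part (i), since $\mu \notin \sigma(H)$ the factor $G(\mu)$ is well-defined, so by the matrix determinant lemma the singularity of $H - \mu \Sigma^{-1}$ is equivalent to that of $C^{-1} + V^* G(\mu) V$, which by the display above is equivalent to $\det(D^{-1} + W(\mu)) = 0$. There is no genuine obstacle here; the only care needed is bookkeeping the two applications of Woodbury and checking that the scalar factors $z^{-1}\phi^{-1/2}$ collapse exactly into the combination $\phi^{1/2} z (D^{-1} + W)^{-1}$ required by \eqref{pert2}.
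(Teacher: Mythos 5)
Your proof is correct and takes essentially the same route as the paper's: both factor $Q - z = \Sigma^{1/2}(H - z\Sigma^{-1})\Sigma^{1/2}$, rewrite $H - z\Sigma^{-1}$ as a rank-$\abs{\cal R}$ perturbation of $H - z$ via $1 - \Sigma^{-1} = V D(\phi^{-1/2}+D)^{-1}V^*$, and apply Woodbury for (ii) and the matrix determinant lemma / Sylvester identity for (i), finishing with the same cancellation that produces $\phi^{-1/2}(D^{-1}+W)$. The only difference is bookkeeping: you absorb $z$ into the middle factor $C = zD(\phi^{-1/2}+D)^{-1}$, while the paper keeps it in $T = zV^*$; this is purely cosmetic.
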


\begin{proof}
To prove (i), we write the condition $\mu \in \sigma(Q)$ as
\begin{equation*}
0 \;=\; \det \pb{\Sigma^{1/2} H \Sigma^{1/2} - \mu} \;=\; \det \pb{H - \mu \Sigma^{-1}} \det(\Sigma) \;=\;  \det\pb{1 + G(\mu) (1 - \Sigma^{-1}) \mu} \det (H - \mu) \det(\Sigma)\,,
\end{equation*}
where we used that $\mu \notin \sigma(H)$. Using
\begin{equation*}
1 - \Sigma^{-1} \;=\; V \frac{D}{\phi^{-1/2} + D} V^*\,,
\end{equation*}
the matrix identity $\det(1 + XY) = \det(1 + YX)$, and $\det(\Sigma) \neq 0$, we find
\begin{equation*}
0 \;=\; \det \pbb{1 + \frac{D}{\phi^{-1/2} + D} \mu V^* G(\mu) V}\,,
\end{equation*}
and the claim follows.

To prove (ii), we write
\begin{equation*}
\Sigma^{1/2} \wt G(z) \Sigma^{1/2} \;=\;  (H - \Sigma^{-1} z)^{-1} \;=\; \pb{H - z + (1 - \Sigma^{-1}) z}^{-1}\,.
\end{equation*}
The claim now follows from the identity
\begin{equation} \label{woodbury}
(A + S B T)^{-1} \;=\; A^{-1} - A^{-1} S \pb{B^{-1} + T A^{-1} S}^{-1} T  A^{-1}
\end{equation}
with $A = H - z$, $B = D (\phi^{-1/2} + D)^{-1}$, $S = V$, and $T = zV^*$.
\end{proof}

The result \eqref{pert2}, when restricted to the range of $V$, has an alternative form \eqref{pert3} which is often easier to work with, since it collects all of the randomness in the single quantity $W(z)$ on its right-hand side.

\begin{lemma}
We have
\begin{equation} \label{pert3}
V^* \wt G(z) V \;=\; \frac{1}{\phi^{1/2} z} \pbb{D^{-1} - \frac{\sqrt{1 + \phi^{1/2} D}}{D} \frac{1}{D^{-1} + W(z)} \frac{\sqrt{1 + \phi^{1/2} D}}{D}}\,.
\end{equation}
\end{lemma}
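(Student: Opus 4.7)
The plan is to derive \eqref{pert3} directly from part (ii) of Lemma \ref{lem:linalg} by sandwiching with $V^*$ and $V$, and then performing a short algebraic manipulation that uses only that $D$ and $R \deq \sqrt{1+\phi^{1/2}D}$ commute (both being diagonal).

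First, I would multiply \eqref{pert2} on the left by $V^*$ and on the right by $V$. Since $\Sigma \,V = V(1+\phi^{1/2}D)$, and both $\Sigma$ and $V$ respect the invariant spectral decomposition on the range of $V$, we get $\Sigma^{1/2} V = V R$, so the left-hand side becomes $R \,V^*\wt G(z) V \,R$. For the right-hand side, I would express $V^* G V$ via $W$: since $F = \phi^{1/2}(1+zG)$ and $V^*V = I_{|\cal R|}$, we have $V^*GV = (\phi^{1/2}z)^{-1}(W-\phi^{1/2})$. This collapses everything into an identity in $W$ and $D$:
\begin{equation*}
\phi^{1/2}z\, R\, V^*\wt G V\, R \;=\; (W-\phi^{1/2}) - (W-\phi^{1/2})(D^{-1}+W)^{-1}(W-\phi^{1/2})\,.
\end{equation*}

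Next I would symmetrize the right-hand side using the telescoping identity $W-\phi^{1/2} = (D^{-1}+W) - (D^{-1}+\phi^{1/2})$ applied on both sides of the middle resolvent. Applying it on the right yields $(W-\phi^{1/2})(D^{-1}+W)^{-1}(D^{-1}+\phi^{1/2})$, and applying it again on the left gives
\begin{equation*}
(W-\phi^{1/2}) - (W-\phi^{1/2})(D^{-1}+W)^{-1}(W-\phi^{1/2}) \;=\; (D^{-1}+\phi^{1/2}) - (D^{-1}+\phi^{1/2})(D^{-1}+W)^{-1}(D^{-1}+\phi^{1/2})\,.
\end{equation*}

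Finally I would use the factorization $D^{-1}+\phi^{1/2} = D^{-1}(1+\phi^{1/2}D) = D^{-1}R^2 = R^2 D^{-1}$ (valid since $D$ and $R$ are diagonal and commute), substitute, and divide both sides by $R$ on the left and right. Since $R$ commutes with $D^{-1}$ we obtain
\begin{equation*}
\phi^{1/2}z\, V^*\wt G V \;=\; D^{-1} - (R D^{-1})(D^{-1}+W)^{-1}(D^{-1}R) \;=\; D^{-1} - \frac{\sqrt{1+\phi^{1/2}D}}{D}(D^{-1}+W)^{-1}\frac{\sqrt{1+\phi^{1/2}D}}{D}\,,
\end{equation*}
which is exactly \eqref{pert3} after dividing by $\phi^{1/2}z$. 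There is no serious obstacle; the only point requiring care is the bookkeeping of the order of factors, since $W$ does not commute with $D$, but this is handled automatically once one respects the symmetric form of the telescoping identity above.
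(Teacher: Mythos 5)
Your proof is correct and follows essentially the same route as the paper: sandwich \eqref{pert2} between $V^*$ and $V$, rewrite in terms of $W$, apply the identity $A - A(A+B)^{-1}A = B - B(A+B)^{-1}B$ (which you derive inline via the telescoping substitution rather than invoking it abstractly), factor $D^{-1}+\phi^{1/2} = R^2 D^{-1}$, and divide out $R$ on both sides using commutativity of diagonal matrices. The only cosmetic difference is that you normalize by $\phi^{1/2}z$ up front, whereas the paper leaves the factor $(\phi^{1/2}z)^{-1}$ inside $B$.
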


\begin{proof}
From \eqref{pert2} we get
\begin{equation*}
(1 + \phi^{1/2} D)^{1/2} \, V^* \wt G V \, (1 + \phi^{1/2} D)^{1/2} \;=\; V^*GV - V^*GV \frac{1}{(D^{-1} + \phi^{1/2})/(z \phi^{1/2}) + V^*GV} V^*GV\,.
\end{equation*}
Applying the identity
\begin{equation*}
A - A \, (A + B)^{-1} A \;=\; B - B \, (A + B)^{-1} B
\end{equation*}
to the right-hand side yields
\begin{equation*}
(1 + \phi^{1/2} D)^{1/2} \, V^* \wt G V \, (1 + \phi^{1/2} D)^{1/2} \;=\; \frac{1}{\phi^{1/2} z} \pbb{D^{-1} + \phi^{1/2} - (D^{-1} + \phi^{1/2}) \frac{1}{D^{-1} + W} (D^{-1} + \phi^{1/2})}\,,
\end{equation*}
from which the claim follows.
\end{proof}

\section{Eigenvalue locations} \label{sec:ev_locations}

In this section we prove Theorems \ref{thm: outlier locations} and \ref{thm:sticking}. The arguments are similar to those of \cite[Section 6]{KY2}, and we therefore only sketch the proofs. The proof of \cite[Section 6]{KY2} relies on three main steps: (i) establishing a forbidden region which contains with high probability no eigenvalues of $Q$; (ii) a counting estimate for the special case where $D$ does not depend on $N$, which ensures that each connected component of the allowed region (complement of the forbidden region) contains exactly the right number of eigenvalues of $Q$; and (iii) a continuity argument where the counting result of (ii) is extended to arbitrary $N$-dependent $D$ using the gaps established in (i) and the continuity of the eigenvalues as functions of the matrix entries. The steps (ii) and (iii) are exactly the same as in \cite{KY2}, and will not be repeated here. The step (i) differs slightly from that of \cite{KY2}, and in the proofs below we explain these differences.

We need the following eigenvalue interlacing result, which is purely deterministic. It holds for any nonnegative definite $M \times M$ matrix $H$ and any rank-one deformation of the form $Q = (1 + \tilde d \f v \f v^*)^{1/2} H (1 + \tilde d \f v \f v^*)^{1/2}$ with $\tilde d \geq -1$ and $\f v \in \R^M$.

\begin{lemma}[Eigenvalue interlacing] \label{lem: interlacing}
Let $\abs{\cal R} = 1$ and $D = d \in \cal D$. For $d > 0$ we have
\begin{equation*}
\mu_1 \;\geq\; \lambda_1 \;\geq\; \mu_2 \;\geq\; \cdots \;\geq\; \lambda_{M-1} \;\geq\; \mu_M \;\geq\; \lambda_M
\end{equation*}
and for $d < 0$ we have
\begin{equation*}
\lambda_1 \;\geq\; \mu_1 \;\geq\; \lambda_2 \;\geq\; \cdots \;\geq\; \mu_{M-1} \;\geq\; \lambda_M \;\geq\; \mu_M\,.
\end{equation*}
\end{lemma}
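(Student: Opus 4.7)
The plan is to reduce the claim to the classical eigenvalue interlacing theorem for symmetric rank-one perturbations. First, I would introduce the auxiliary matrix $\wh Q \deq H^{1/2} \Sigma H^{1/2}$ and observe that its ordered spectrum coincides with that of $Q$. Indeed, setting $A \deq \Sigma^{1/2} H^{1/2}$, we have $Q = A A^*$ and $\wh Q = A^* A$, so their nonzero eigenvalues agree with multiplicity. For the zero eigenvalue, since $\Sigma^{1/2}$ is invertible, $\ker Q = \ker H^{1/2} = \ker H$, while $\ker \wh Q = \ker A^* = \ker H^{1/2} = \ker H$ as well, so both matrices have nullity $M - K$. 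Hence $\mu_i$ coincides with the $i$-th largest eigenvalue of $\wh Q$ for every $i \in \qq{1,M}$.

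Next, using $\abs{\cal R} = 1$ and the representation \eqref{def_Sigma}, I would write $\Sigma = I_M + \phi^{1/2} d \, \f v \f v^*$ and expand
\[
\wh Q \;=\; H + \phi^{1/2} d \, (H^{1/2} \f v)(H^{1/2} \f v)^* \,,
\]
which exhibits $\wh Q$ as a rank-one symmetric perturbation of $H$ whose coefficient $\phi^{1/2} d$ has the same sign as $d$.

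Finally, I would invoke the standard interlacing for such perturbations, an immediate consequence of the Courant--Fischer min-max principle: if $B = A + t \f u \f u^*$ with $A = A^*$, $t \in \R$, and $\f u \in \R^M$, then the ordered eigenvalues satisfy $\nu_i(B) \geq \lambda_i(A) \geq \nu_{i+1}(B)$ when $t > 0$ and $\lambda_i(A) \geq \nu_i(B) \geq \lambda_{i+1}(A)$ when $t < 0$. Applying this with $A = H$ and $t \f u \f u^* = \phi^{1/2} d \, (H^{1/2}\f v)(H^{1/2}\f v)^*$ produces exactly the two chains of inequalities stated in the lemma. No substantive obstacle arises; the only point requiring care is the bookkeeping of the zero eigenvalues in the regime $K = N < M$, which is already resolved by the kernel computation in the first step.
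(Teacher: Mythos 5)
Your proof is correct, and it takes a genuinely different (and more elementary) route than the paper's. The paper derives interlacing from the resolvent identity \eqref{pert2}: it writes the Stieltjes transforms $G_{\f v \f v}$ and $\wt G_{\f v \f v}$ in spectral form, obtains a secular equation relating them, and then reads off interlacing from the zero/pole structure of a monotone meromorphic function of $z \in (0,\infty)$; this requires a preliminary perturbation argument to ensure all eigenvalues are positive and distinct, and all scalar products $\scalar{\f v}{\f \zeta_i}$, $\scalar{\f v}{\f \xi_i}$ are nonzero. You instead conjugate the multiplicative perturbation into an additive one: since $Q = AA^*$ and $\wh Q \deq H^{1/2}\Sigma H^{1/2} = A^*A$ with $A = \Sigma^{1/2}H^{1/2}$ are both $M \times M$, they share the same characteristic polynomial (so your kernel computation, while correct, is already implied); then $\wh Q = H + \phi^{1/2} d\,(H^{1/2}\f v)(H^{1/2}\f v)^*$ and Weyl's interlacing for symmetric rank-one perturbations finishes the job immediately. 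Your reduction avoids the degeneracy-removal step entirely, works in one line from the min-max principle, and makes transparent why the sign of $d$ determines the direction of interlacing. The paper's secular-equation approach is a bit heavier here, but it is the same machinery that is reused elsewhere (e.g.\ Lemma \ref{lem:linalg} and the outlier analysis), which presumably explains the authors' choice; as a standalone proof of the interlacing, yours is the more direct argument.
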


\begin{proof}
Using a simple perturbation argument (using that eigenvalues depend continuously on the matrix entries), we may assume without loss of generality that $\lambda_1, \dots, \lambda_M$ are all positive and distinct.
Writing $\Sigma = 1 + \phi^{1/2} d \f v \f v^*$, we get from \eqref{pert2} that
\begin{equation*}
\wt G_{\f v \f v}(z) \;=\; a^2 G_{\f v \f v}(z) - a^2 G_{\f v \f v}(z)^2 \frac{1}{b(z)^{-1} + G_{\f v \f v}(z)}\,, \qquad a \;\deq\; (\Sigma^{-1/2})_{\f v \f v} \,, \qquad b(z) \;\deq\; \frac{z}{1 + \phi^{-1/2} d^{-1}}\,.
\end{equation*}
Note that $a > 0$. Thus we get
\begin{equation*}
\frac{1}{G_{\f v \f v}(z)} + b(z) \;=\; \frac{a^2}{\wt G_{\f v \f v}(z)}\,.
\end{equation*}
Writing this in spectral decomposition yields
\begin{equation} \label{interlacing_step}
\pbb{\sum_i \frac{\scalar{\f v}{\f \zeta_i}^2}{\lambda_i - z}}^{-1} \;=\; a^2 \pbb{\sum_i \frac{\scalar{\f v}{\f \xi_i}^2}{\mu_i - z}}^{-1} - b(z)\,.
\end{equation}
As above, a simple perturbation argument implies that we may without loss of generality assume that all scalar products in \eqref{interlacing_step} are nonzero. Now take $z \in (0, \infty)$. Note that $b(z)$ and $d$ have the same sign.

To conclude the proof, we observe that the left-hand side of \eqref{interlacing_step} defines a function of $z \in (0,\infty)$ with $M - 1$ singularities 
and $M$ zeros, which is smooth and decreasing away from the singularities. Moreover, its zeros are the eigenvalues $\lambda_1, \dots, \lambda_M$.
The interlacing property now follows from the fact that $z$ is an eigenvalue of $Q$ if and only if the left-hand side of \eqref{interlacing_step} is equal to $-b(z)$.
\end{proof}

\begin{corollary} \label{cor: interlacing}
For the rank-$\abs{\cal R}$ model \eqref{wtH_cov} we have
\begin{equation*}
\mu_i \;\in\; [\lambda_{i + r}, \lambda_{i - r}] \qquad (i \in \qq{1,M})\,,
\end{equation*}
with the convention that $\lambda_{i} = 0$ for $i > K$ and $\lambda_i = \infty$ for $i < 1$.
\end{corollary}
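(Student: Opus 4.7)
The plan is to reduce to Lemma \ref{lem: interlacing} by iterating over the spikes one at a time. Writing $r \deq \abs{\cal R}$ and enumerating $\cal R = \{k_1, \ldots, k_r\}$, I set
\begin{equation*}
A_j \;\deq\; I_M + \sum_{l = 1}^j \pB{\sqrt{1 + \phi^{1/2} d_{k_l}} - 1} \f v_{k_l} \f v_{k_l}^*\,, \qquad H^{(j)} \;\deq\; A_j H A_j
\end{equation*}
for $j = 0, \ldots, r$, so that $A_0 = I_M$, $A_r = \Sigma^{1/2}$, $H^{(0)} = H$, and $H^{(r)} = Q$. Each $H^{(j)}$ is symmetric and positive semi-definite, and hence of the right shape to plug into Lemma \ref{lem: interlacing} at every step.

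The key algebraic step is to recognize $H^{(j)}$ as a rank-one deformation of $H^{(j-1)}$ in the precise form required by Lemma \ref{lem: interlacing}. Orthonormality of $\{\f v_{k_l}\}_{l \in \cal R}$ gives $A_{j-1} \f v_{k_j} = \f v_{k_j}$; combined with the elementary identity $(I_M + a \f u \f u^*)^{1/2} = I_M + (\sqrt{1+a} - 1) \f u \f u^*$ valid for a unit vector $\f u$ and $a \geq -1$, this will yield the factorization $A_j = (1 + \tilde d_j \f v_{k_j} \f v_{k_j}^*)^{1/2} A_{j-1}$ with $\tilde d_j \deq \phi^{1/2} d_{k_j}$, and hence
\begin{equation*}
H^{(j)} \;=\; \pb{1 + \tilde d_j \f v_{k_j} \f v_{k_j}^*}^{1/2} H^{(j-1)} \pb{1 + \tilde d_j \f v_{k_j} \f v_{k_j}^*}^{1/2}\,.
\end{equation*}
The assumption $d_{k_j} \in \cal D$ gives $\tilde d_j > -1$, so Lemma \ref{lem: interlacing} applies with $H \to H^{(j-1)}$, $\tilde d \to \tilde d_j$, and $\f v \to \f v_{k_j}$.

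With this in hand, the conclusion follows by iteration on $j$. Either half of Lemma \ref{lem: interlacing} (depending on the sign of $d_{k_j}$) gives a single-step index shift of at most one: if $\nu^{(j)}_i$ denote the ordered eigenvalues of $H^{(j)}$, then $\nu^{(j)}_i \in [\nu^{(j-1)}_{i+1}, \nu^{(j-1)}_{i-1}]$, with the boundary conventions that $\nu^{(j-1)}_i = \infty$ for $i < 1$ and $\nu^{(j-1)}_i = 0$ for $i$ past the nontrivial range. Composing these inclusions over $j = 1, \ldots, r$ gives $\mu_i = \nu^{(r)}_i \in [\nu^{(0)}_{i+r}, \nu^{(0)}_{i-r}] = [\lambda_{i+r}, \lambda_{i-r}]$, which is the stated bound. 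I expect the main (admittedly modest) obstacle to be the verification of the rank-one factorization in the second step, which relies crucially on the mutual orthogonality of the $\f v_{k_l}$ together with the explicit square-root formula for rank-one perturbations of the identity; once that algebraic identity is in hand, the rest is a routine telescoping.
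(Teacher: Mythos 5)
Your proof is correct and is precisely the intended argument: the paper states the corollary without proof, as an immediate iteration of Lemma \ref{lem: interlacing}, and your factorization $A_j = (1 + \tilde d_j \f v_{k_j} \f v_{k_j}^*)^{1/2} A_{j-1}$ (valid by mutual orthogonality of the $\f v_{k_l}$ and the rank-one square-root identity) reduces the passage $H^{(j-1)} \to H^{(j)}$ exactly to the form covered by the lemma's preamble, with $\tilde d_j = \phi^{1/2}d_{k_j} > -1$ since $d_{k_j} \in \cal D$. The composition of the one-step inclusions $\nu^{(j)}_i \in [\nu^{(j-1)}_{i+1}, \nu^{(j-1)}_{i-1}]$, with the boundary conventions consistent because all eigenvalues involved are nonnegative, gives the stated bound, and your identification $r = \abs{\cal R}$ matches the corollary's usage.
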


We now move on to the proof of Theorem \ref{thm: outlier locations}.
Note that the function $\theta$ defined in \eqref{classical_loc} may be extended to a biholomorphic function
from $\{\zeta \in \C \col \abs{\zeta} > 1\}$ to $\{z \in \C \col z - (\phi^{1/2} + \phi^{-1/2}) \notin [-2,2])\}$.
Moreover, using \eqref{self_w} it is easy to check that for $\abs{\zeta} > 1$ we have
\begin{equation} \label{w_theta_relation}
w_\phi(z) \;=\; - \frac{1}{\zeta} \qquad \Longleftrightarrow \qquad z \;=\; \theta(\zeta)\,.
\end{equation}
Throughout the following we shall make use of the subsets of outliers
\begin{equation*}
\cal O_\tau^\pm \;\deq\; \hb{i \col \pm d_i \geq 1 + K^{-1/3 + \tau}}
\end{equation*}
for $\tau \geq 0$. 
Note that $\cal O \;=\; \cal O_0^+ \cup \cal O_0^-$.

\begin{proof}[Proof of Theorem \ref{thm: outlier locations}]
The proof of Proposition \ref{thm: outlier locations} is similar to that of \cite[Equation (2.20)]{KY2}. We focus first on the outliers to the right of the bulk spectrum. Let $\epsilon > 0$. We shall prove that there exists an event $\Xi$ of high probability (see Definition \ref{def: high probability}) such that for all $i \in \cal O_{4 \epsilon}^+$ we have
\begin{equation} \label{outl_bound_eps_1}
\ind{\Xi} \abs{\mu_i - \theta(d_i)} \;\leq\; C \Delta(d_i) K^{-1/2 + \epsilon}
\end{equation}
and for $i \in \qq{\abs{\cal O_{4 \epsilon}^+} + 1, \abs{\cal O_{4 \epsilon}^+} + r}$ we have
\begin{equation} \label{outl_bound_eps_2}
\ind{\Xi} \abs{\mu_i - \gamma_+} \;\leq\; C K^{-2/3 + 8 \epsilon}\,.
\end{equation}

Before proving \eqref{outl_bound_eps_1} and \eqref{outl_bound_eps_2}, we show how they imply \eqref{outlier locations} for $d_i > 0$ and \eqref{ex+ bound}. 
From \eqref{outl_bound_eps_2} we get for $i$ satisfying $K^{-1/3} \leq d_i - 1 \leq K^{-1/3 + 4 \epsilon}$
\begin{equation} \label{outl_smaller_eps}
\ind{\Xi} \abs{\mu_i - \theta(d_i)} \;\leq\; \ind{\Xi} \pb{\abs{\mu_i - \gamma_+} + \abs{\theta(d_i) - \gamma_+}} \;\leq\; C K^{-2/3 + 8 \epsilon} \;\leq\; C \Delta(d_i) K^{-1/2 + 8 \epsilon}\,.
\end{equation}
Since $\epsilon > 0$ was arbitrary, \eqref{outlier locations} for $d_i > 0$ and \eqref{ex+ bound} follow from \eqref{outl_bound_eps_1} and \eqref{outl_smaller_eps}.

What remains is the proof of \eqref{outl_bound_eps_1} and \eqref{outl_bound_eps_2}. As in \cite[Proposition 6.5]{KY2}, the first step is to prove that with high probability there are no eigenvalues outside a neighbourhood of the classical outlier locations $\theta(d_i)$. To that end, we define for each $i \in \cal O_\epsilon^+$ the interval
\begin{equation*}
I_i(D) \;\deq\; \qB{\theta(d_i) - \Delta(d_i) K^{-1/2 + \epsilon} \,,\, \theta(d_i) + \Delta(d_i) K^{-1/2 + \epsilon}}\,.
\end{equation*}
Moreover, we set $I_0 \deq [0, \theta(1 + K^{-1/3 + 2 \epsilon})]$.

We now claim that with high probability the complement of the set $I(D) \deq I_0 \cup \bigcup_{i \in \cal O_{\epsilon}^+} I_i(D)$ contains no eigenvalues of $Q$. Indeed, from Theorem \ref{thm: cov-rig} and Corollary \ref{cor:extension} combined with Remark \ref{rem:all_z} (with small enough $\omega \equiv \omega(\epsilon)$), we find that there exists an event $\Xi$ of high probability such that $\abs{\lambda_i - \gamma_+} \leq K^{-2/3 + \epsilon}$ for $i \in \qq{1,2r}$ and
\begin{equation*}
\ind{\Xi} \normb{W(x) - w_{\phi}(x)} \;\leq\; \cal E(x) \, K^{-1/2 + \epsilon/2}
\end{equation*}
for all $x \notin I_0$, where we defined
\begin{equation*}
\cal E(x) \;\deq\;
\begin{cases}
\kappa(x)^{-1/4} & \text{if } \kappa(x) \leq 1
\\
\frac{1}{\kappa(x)^2} \pb{1 + \frac{\kappa(x)}{1 + \phi^{-1/2}}} & \text{if } \kappa(x) > 1\,.
\end{cases}
\end{equation*}
In particular, we have $\ind{\Xi} \lambda_1 \leq \theta(1 + K^{-1/3 + \epsilon})$.
Hence we find from \eqref{pert1} that on the event $\Xi$ the value $x \notin I_0$ is an eigenvalue of $Q$ if and only if the matrix
\begin{equation*}
\ind{\Xi} \pb{D^{-1} + W(x)} \;=\; \ind{\Xi} \pb{D^{-1} + w_\phi(x) + O(\cal E(x) K^{-1/2 + \epsilon/2})}
\end{equation*}
is singular. Since $-d_i^{-1} = w_\phi(\theta(d_i))$ for $i \in \cal O_\epsilon^+$, we conclude from the definition of $I(D)$ that it suffices to show that if $x \notin I(D)$ then
\begin{equation} \label{min_i_gap}
\min_{i \in \cal O_\epsilon^+} \absb{w_\phi(x) - w_\phi(\theta(d_i))} \;\gg\; \cal E(x) K^{-1/2 + \epsilon/2}\,.
\end{equation}
We prove \eqref{min_i_gap} using the two following observations. First, $w_\phi$ is monotone increasing on $(\gamma_+, \infty)$ and
\begin{equation*}
w_\phi'(x) \;\asymp\; (d_i^2 - 1)^{-1} \qquad (x \in I_i(D))\,,
\end{equation*}
as follows from \eqref{w_theta_relation}. Second,
\begin{equation*}
\Delta(d_i) \;\asymp\; \frac{\cal E(\theta(d_i))}{\abs{w_\phi'(\theta(d_i))}} \;=\; (d_i^2 - 1) \cal E(\theta(d_i))\,.
\end{equation*}
We omit further details, which may be found e.g.\ in \cite[Section 6]{KY2}. Thus we conclude that on the event $\Xi$ the complement of $I(D)$ contains no eigenvalues of $Q$.

The next step of the proof consists in making sure that the allowed neighbourhoods $I_i(D)$ contain exactly the right number of outliers; the counting argument (sketched in the steps (ii) and (iii) at the beginning of this section) follows that of \cite[Section 6]{KY2}. First we consider the case $D = D(0)$ where for all $i \neq j \in \cal O_{\epsilon}^+$ we have $d_i(0),d_j(0) \geq 2$ and $\abs{d_i(0) - d_j(0)} \geq 1$, and show that each interval $\{I_i(D(0)) \col i \in \cal O_{\epsilon}^+\}$ contains exactly one eigenvalue of $Q$ (see \cite[Proposition 6.6]{KY2}). We then deduce the general case by a continuity argument, by choosing an appropriate continuous path $(D(t))_{t \in [0,1]}$ joining the initial configuration $D(0)$ to the desired final configuration $D = D(1)$. The continuity argument requires the existence of a gap in the set $I(D)$ to the left of $\bigcup_{i \in \cal O_{4 \epsilon}^+} I_i(D)$. The existence of such a gap follows easily from the definition of $I(D)$ and the fact that $\abs{\cal R}$ is bounded. The details are the same as in \cite[Section 6.5]{KY2}. Hence \eqref{outl_bound_eps_1} follows. Moreover, \eqref{outl_bound_eps_2} follows from the same argument combined with Corollary \ref{cor: interlacing} for a lower bound on $\mu_i$. This concludes the analysis of the outliers to the right of the bulk spectrum.

The case of outliers to the left of the bulk spectrum is analogous. Here we assume that $\phi < 1 - \tau$. The argument is exactly the same as for $d_i > 0$, except that we use the bound \eqref{extended_1_Q} to the left of the bulk spectrum as well as $\abs{\lambda_i - \gamma_-} \leq K^{-2/3 + \epsilon}$ for $i \in \qq{K-2r, K}$ with high probability.
\end{proof}

\begin{proof}[Proof of Theorem \ref{thm:sticking}]
We only give the proof of \eqref{sticking_1}; the proof of \eqref{sticking_2} is analogous. Fix $\epsilon > 0$.
By Theorem \ref{thm: outlier locations}, Theorem \ref{thm: cov-rig}, Theorem \ref{thm: IMP gen}, Lemma \ref{lem: Qij iso}, and Remark \ref{rem:all_z}, there exists a high-probability event $\Xi \equiv \Xi_N(\epsilon)$ satisfying the following conditions.
\begin{enumerate}
\item
We have
\begin{equation} \label{Xi_bulk_edge}
\ind{\Xi} \abs{\mu_{s_+ + 1} - \gamma_+} \;\leq\; K^{-2/3 + \epsilon}\,, \qquad 
\ind{\Xi} \abs{\lambda_i-\gamma_i} \;\leq\; i^{-1/3}K^{-2/3 + \epsilon} \qquad (i \leq (1 - \tau)K)\,.
\end{equation}
\item
For $z \in \f S(\epsilon, K)$ we have
\begin{equation}
\label{Xi_W}
\ind{\Xi} \normb{W(z) - w_{\phi}(z)} \;\leq\; K^\epsilon \pBB{\sqrt{\frac{\im w_{\phi}(z)}{K \eta}} + \frac{1}{K \eta}}
\end{equation}
and
\begin{equation} \label{Xi_G}
\max_{i,j} \absb{\scalar{\f v_i}{G(z) \f v_j} - m_{\phi^{-1}}(z) \delta_{ij}} \;\leq\; K^\epsilon \pBB{\sqrt{\frac{\im m_{\phi^{-1}}(z)}{M \eta}} + \frac{1}{M \eta}}\,.
\end{equation}
\end{enumerate}

For the following we fix a realization $H \in \Xi$.
We suppose first that
\begin{equation} \label{alpha_lower_bound}
\alpha_+ \;\geq\; K^{-1/3 + \epsilon}\,,
\end{equation}
and define $\eta \deq K^{-1 + 2 \epsilon} \alpha_+^{-1}$.
Now suppose that $x$ satisfies
\begin{equation} \label{conditions_x_sticking}
x \;\in\; \qb{\gamma_+ - 1, \gamma_+ + K^{-2/3 + 2 \epsilon}}\,, \qquad \dist(x, \sigma(H)) \;>\; \eta\,.
\end{equation}
We shall show, using \eqref{pert1}, that any $x$ satisfying \eqref{conditions_x_sticking} cannot be an eigenvalue of $Q$. First we deduce from \eqref{Xi_W} that
\begin{equation} \label{W_add_eta}
\normb{W(x) - W(x + \ii \eta)} \;\leq\; C (1 + \phi) \max_{i} \im G_{\f v_i \f v_i}(x + \ii \eta)\,.
\end{equation}
The estimate \eqref{W_add_eta} follows by spectral decomposition of $F(\cdot)$ together with the estimate $2 \abs{\lambda_i - x} \geq \sqrt{(\lambda_i - x)^2 + \eta^2}$ for all $i$. We get from \eqref{W_add_eta} and Lemma \ref{lemma: w} that
\begin{align*}
W(x) \;=\; w_\phi(x + \ii \eta) + O \pbb{\im w_\phi(x + \ii \eta) + \frac{K^{\epsilon}}{K \eta}}
\;=\; -1 + O \pB{\sqrt{\kappa(x)} + \sqrt{\eta} + K^{-\epsilon} \alpha_+^{-1}}\,,
\end{align*}
where we use the notation $A = B + O(t)$ to mean $\norm{A - B} \leq C t$.
Recalling \eqref{pert1}, we conclude that on the event $\Xi$ the value $x$ is not an eigenvalue of $Q$ provided
\begin{equation*}
\min_i\abs{1/d_i - 1} \;\geq\; K^{\epsilon/2} \pB{\sqrt{\kappa(x)} + \sqrt{\eta} + K^{-\epsilon} \alpha_+^{-1}}\,.
\end{equation*}
It is easy to check that this condition is satisfied if
\begin{equation*}
\kappa(x) + \eta \;\leq\; C K^{-\epsilon} \alpha_+^2\,,
\end{equation*}
which holds provided that
\begin{equation*}
\kappa(x) \;\leq\; C K^{-\epsilon} \alpha_+^2\,,
\end{equation*}
where we used \eqref{alpha_lower_bound}. Recalling \eqref{Xi_bulk_edge}, we therefore conclude that for $i \leq K^{1 - 2 \epsilon} \alpha_+^3$ the set 
\begin{equation*}
\hB{x \in \qb{\lambda_{i - r - 1}, \gamma_+ + K^{-2/3 + 2 \epsilon}} \col \dist(x, \sigma(H)) > K^{-1 + 2 \epsilon}\alpha_+^{-1}}
\end{equation*}
contains no eigenvalue of $Q$.

The next step of the proof is a counting argument (sketched in the steps (ii) and (iii) at the beginning of this section), which uses the eigenvalue interlacing from Lemma \ref{lem: interlacing}. They details are the same as in \cite[Section 6]{KY2}, and hence omitted here. The counting argument implies that for $i \leq K^{1 - 2 \epsilon} \alpha_+^3$ and assuming \eqref{alpha_lower_bound} we have
\begin{equation} \label{sticking estimate proof}
\abs{\mu_{i + s_+} - \lambda_i} \;\leq\; C K^{-1 + 2 \epsilon}\alpha_+^{-1}\,.
\end{equation}
What remains is to check \eqref{sticking estimate proof} for the cases $\alpha_+ < K^{-1/3 + \epsilon}$ and $i > K^{1 - 2 \epsilon} \alpha_+^3$.

Suppose first that $\alpha_+ < K^{-1/3 + \epsilon}$. Then using the rigidity from \eqref{Xi_bulk_edge} and interlacing from Corollary \ref{cor: interlacing} we find
\begin{equation*}
\abs{\mu_{i + s_+} - \lambda_i} \;\leq\; C \, i^{-1/3}K^{-2/3 + \epsilon} \;\leq\; C K^{-1 + 2 \epsilon}\alpha_+^{-1}\,,
\end{equation*}
where we used the trivial bound $i \geq 1$. Similarly, if $i > K^{1 - 2 \epsilon} \alpha_+^3$ satisfies $i \leq (1 - \tau)K$, we may repeat the same estimate.

We conclude that \eqref{sticking estimate proof} under the sole assumption that $i \leq (1 - \tau)K$. Since $\epsilon > 0$ was arbitrary, \eqref{sticking_1} follows.
\end{proof}

\section{Outlier eigenvectors} \label{sec:outliers}

In this section we focus on the outlier eigenvectors $\xi_a$, $a \in \cal O$. Here we in fact prove Theorem \ref{thm: outlier eigenvectors} under the stronger assumption
\begin{equation} \label{assumption on A}
1 + K^{-1/3 + \tau} \;\leq\; d_i \;\leq\; \tau^{-1} \qquad (i \in A)
\end{equation}
instead of $1 + K^{-1/3} \leq d_i \leq \tau^{-1}$. How to improve the lower bound from $1 + K^{-1/3 + \tau}$ to the claimed $K^{-1/3}$ requires a completely different approach, relying on eigenvector delocalization bounds, and is presented in Section \ref{sec:bulk} in conjunction with results for the non-outlier eigenvectors $\xi_a$, $a \notin \cal O$.

The proof of Theorem \ref{thm: outlier eigenvectors 2} is similar to that of Theorem \ref{thm: outlier eigenvectors}; one has to adapt the proof to cover the range $d_i \in [1 + \tau, \infty)$ instead of $d_i \in [1 + K^{-1/3}, \tau^{-1}]$. The key input is the extension of the spectral domain from Corollary \ref{cor:extension}. For the sake of brevity we omit the details of the proof of Theorem \ref{thm: outlier eigenvectors 2}, and focus solely on Theorem \ref{thm: outlier eigenvectors}.

The following proposition is the main result of this section.

\begin{proposition} \label{prop: outl2}
Fix $\tau > 0$. Suppose that $A$ satisfies \eqref{assumption on A}. Then for all $i,j = 1, \dots, M$ we have
\begin{multline} \label{vi_vj_result 2}
\scalar{\f v_i}{P_A \f v_j} \;=\; \delta_{ij} \ind{i \in A} u(d_i)
+ O_\prec \Biggl[ \frac{\ind{i,j \in A}}{(d_i - 1)^{1/4} (d_j - 1)^{1/4} M^{1/2}}
\\
+
\frac{\sqrt{\sigma_i \sigma_j}}{M} \pbb{\frac{1}{\nu_i} + \frac{\ind{i \in A}}{d_i - 1}} \pbb{\frac{1}{\nu_j} + \frac{\ind{j \in A}}{d_j - 1}}
+ \frac{\ind{i \in A} \ind{j \notin A} (d_i - 1)^{1/2} \sqrt{\sigma_j}}{(1 + \phi)^{1/4} \nu_j M^{1/2}}  + (i \leftrightarrow j)
\Biggr]\,,
\end{multline}
where the symbol $(i \leftrightarrow j)$ denotes the preceding terms with $i$ and $j$ interchanged.
\end{proposition}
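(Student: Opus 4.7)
The approach is spectral via a contour integral: since $P_A = -\frac{1}{2\pi\ii}\oint_{\Gamma_A} \wt G(z)\,\dd z$ for any contour $\Gamma_A$ enclosing exactly the outliers $\{\mu_a \col a \in A\}$, and by Theorem~\ref{thm: outlier locations} each $\mu_a$ with $a \in A$ is stochastically close to its classical location $\theta(d_a)$ on the scale $\Delta(d_a) K^{-1/2} \ll (d_a - 1)^{1/2}$, I would take $\Gamma_A$ to be the union $\bigsqcup_{a \in A} \partial B(\theta(d_a), r_a)$ of small circles of radius $r_a \deq K^{-\tau/4}(d_a-1)^{1/2}$ (shrinking radii slightly so the circles are disjoint in the case of nearly degenerate outliers). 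With high probability this encloses exactly $\{\mu_a\}_{a \in A}$, keeps $\Gamma_A$ comfortably separated from the bulk edge and from all $\theta(d_b)$ with $b \notin A$ on the ``non-overlapping'' scale of $\nu_{a}$-type distances, and has total length $O(\sum_{a \in A} r_a)$.

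Along $\Gamma_A$ I would compute $\scalar{\f v_i}{\wt G(z)\f v_j}$ by means of Lemma~\ref{lem:linalg}: when both $i,j \in \cal R$, identity \eqref{pert3} gives this quantity directly in terms of the small matrix $M(z) \deq (D^{-1} + W(z))^{-1}$; in general, identity \eqref{pert2} combined with $\Sigma^{\pm 1/2}\f v_k = \sigma_k^{\pm 1/2}\f v_k$ expresses it via $M(z)$ plus isotropic resolvent entries $G_{\f v_i \f v_j}(z)$. Theorem~\ref{thm: IMP gen} and Corollary~\ref{cor:extension} then yield, uniformly on $\Gamma_A$,
\begin{equation*}
\normb{W(z) - w_\phi(z) I} \;\prec\; \Psi(z), \qquad \absb{G_{\f v_i \f v_j}(z) - m_{\phi^{-1}}(z)\delta_{ij}} \;\prec\; (1+\phi)^{-1}\Psi(z),
\end{equation*}
where $\Psi(z) \deq (\kappa(z) + \eta(z))^{-1/4} K^{-1/2}$; on the $a$-th circle $\Psi \asymp (d_a-1)^{-1/4} K^{-1/2}$. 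Writing $\wt M(z) \deq (D^{-1} + w_\phi(z)I)^{-1}$, which is diagonal with simple poles at $z = \theta(d_k)$ of residue $d_k^2 - 1$ (by \eqref{w_theta_relation} and $w_\phi'(\theta(d_k)) = (d_k^2 - 1)^{-1}$), I would expand
\begin{equation*}
M(z) \;=\; \wt M(z) - \wt M(z)\,(W(z) - w_\phi(z) I)\,\wt M(z) + \wt M(z)\,(W(z) - w_\phi(z)I)\,M(z)\,(W(z) - w_\phi(z)I)\,\wt M(z)
\end{equation*}
and insert this into the formulas from Lemma~\ref{lem:linalg}.

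The zeroth-order $\wt M$ contribution yields the main term: the contour picks up residues only at $z = \theta(d_k)$ with $k \in A$, and combining the residue $d_k^2 - 1$ with the deterministic prefactor $\sigma_i/(\phi^{1/2} d_i^2 \theta(d_i))$ that emerges from \eqref{pert3} produces exactly $\delta_{ij}\ind{i \in A} u(d_i)$, with the off-diagonal $i \neq j$ residue vanishing thanks to the diagonality of $\wt M$. Residues of the linear and quadratic correction terms produce the three error families in \eqref{vi_vj_result 2}. The first, $(d_i - 1)^{-1/4}(d_j - 1)^{-1/4} M^{-1/2}$ (active only for $i,j \in A$), comes from the linear correction, where one factor of $\wt M$ contributes the captured pole $d_k^2 - 1$ and the other is evaluated on $\Gamma_A$ at distance $\sim r_k$ from its own pole. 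The $\nu_i \nu_j$-denominated double-sum terms arise from the quadratic correction, the indicators $\ind{k \in A}$ vs.\ $\ind{k \notin A}$ distinguishing whether the inner pole of $\wt M$ is captured by $\Gamma_A$ or not, in which case it contributes $1/\nu_k$-type denominators via $|d_k^{-1} + w_\phi(\theta(d_i))| \asymp |d_i - d_k|/(d_i d_k)$. The last family, carrying $(d_i-1)^{1/2}\sqrt{\sigma_j}/((1+\phi)^{1/4} \nu_j M^{1/2})$, comes from the cross term between the $G$-fluctuation (of size $(1+\phi)^{-1}\Psi$) and the pole of $\wt M$ at $\theta(d_i)$ in the formula coming from \eqref{pert2}.

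The principal obstacle will be the combinatorial bookkeeping of these residue and contour computations so that the error bounds fall out in the precise form claimed, correctly splitting cases $i \in A$ vs.\ $i \notin A$ and tracking the $\sqrt{\sigma_i \sigma_j}$ prefactors through the $\Sigma^{\pm 1/2}$ conjugations. A supporting technical step is a deterministic a priori bound $\norm{M(z)} \lesssim \max_k |d_k^{-1} + w_\phi(z)|^{-1}$ on $\Gamma_A$, which follows from a Neumann series inversion of $\wt M^{-1} + (W - w_\phi I)$ once $\norm{W - w_\phi I} \prec \Psi \ll r_k/(d_k^2 - 1)$ is verified on each circle; this last inequality is precisely where the strengthened lower bound $d_a \geq 1 + K^{-1/3 + \tau}$ assumed in \eqref{assumption on A} (rather than the weaker $1 + K^{-1/3}$ of Theorem~\ref{thm: outlier eigenvectors}) enters, explaining why a different, delocalization-based argument is deferred to Section~\ref{sec:bulk} for the near-transition regime.
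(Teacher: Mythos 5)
Your high-level strategy---writing $P_A$ as a contour integral of $\wt G$ via \eqref{pert3}, expanding $(D^{-1} + W)^{-1}$ to second order about $(D^{-1} + w_\phi)^{-1}$, and reading off the main term from residues and the errors from the remainder---is indeed the backbone of the paper's argument, but your contour construction has a genuine gap that prevents it from covering the full scope of Proposition~\ref{prop: outl2}. You place disjoint circles of radius $r_a = K^{-\tau/4}(d_a-1)^{1/2}$ around each $\theta(d_a)$, $a\in A$, and assert that this ``encloses exactly $\{\mu_a\}_{a\in A}$'' while staying separated from the $\theta(d_b)$, $b\notin A$. But Proposition~\ref{prop: outl2} (unlike its precursor Proposition~\ref{prop: outl1}) does \emph{not} assume any non-overlapping condition: one may have $\nu_a \ll K^{-\tau/4}(d_a-1)^{-1/2}$, in which case $\theta(d_b)$ and, with high probability, $\mu_b$ for some $b\notin A$ lie \emph{inside} your circle around $\theta(d_a)$, so the residue integral no longer equals $P_A$. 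Moreover, insisting on pairwise disjoint circles cannot work for exactly degenerate outliers $d_a = d_{a'}$ with $a,a'\in A$ (concentric circles cannot be made disjoint by shrinking radii), even though the proposition explicitly must cover this case.

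The paper sidesteps both issues by a two-stage argument. First, the contour is drawn in the $\zeta$-plane (not the $z$-plane) around each $d_i$, $i\in A$, with \emph{adaptive} radius $\rho_i = \tfrac12\bigl(\nu_i(A)\wedge(d_i-1)\bigr)$, and one takes the boundary of the \emph{union} of discs, so degenerate or nearly degenerate $d_i$ within $A$ are enclosed in a single connected component; the estimate of the quadratic remainder crucially exploits that $\rho_i$ is comparable to the distance to $A^c$ (see Lemma~\ref{lem:radius_bound}). This establishes Proposition~\ref{prop: outl1} under an explicit non-overlapping hypothesis \eqref{non-overlapping}. Second, and this is the part your proposal does not anticipate, the non-overlapping hypothesis is removed by a separate combinatorial scheme (Section~\ref{sec: overlapping}) that compares $P_A$ with $P_{S(A)}$ and $P_{L(A)}$, where $S(A)\subset A\subset L(A)$ are maximal/minimal modifications of $A$ that do satisfy the non-overlapping condition; this is also where the sharper denominator $\abs{d_i - d_j}$ of Proposition~\ref{prop: outl1} is weakened to $\nu_j$ in the $\ind{i\in A}\ind{j\notin A}$ error term, a distinction your write-up does not track. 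Finally, your proposed use of \eqref{pert2} together with raw isotropic entries $G_{\f v_i\f v_j}$ when $i$ or $j\notin\cal R$ is a valid alternative, but the paper's cleaner route is to augment $\cal R$ to $\cal R\cup\{i,j\}$ with dummy spikes $\wh d_k\to0$ so that \eqref{pert3} applies uniformly; this is what keeps the $\sqrt{\sigma_i\sigma_j}$ bookkeeping from proliferating.
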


Note that, under the assumption \eqref{assumption on A}, Theorem \ref{thm: outlier eigenvectors} is an easy consequence of Proposition \ref{prop: outl2}.
As explained above, the proof of Theorem \ref{thm: outlier eigenvectors} in full generality is given in Section \ref{sec:bulk}, where we give the additional argument required to relax \eqref{assumption on A}.

The rest of this section is devoted to the proof of Proposition \ref{prop: outl2}.
%We assume throughout that \eqref{assumption on A} holds. 

\subsection{Non-overlapping outliers}
We first prove a slightly stronger version of \eqref{vi_vj_result 2} under the additional \emph{non-overlapping condition}
\begin{equation} \label{non-overlapping}
\nu_i(A) \;\geq\; (d_i - 1)^{-1/2} K^{-1/2 + \delta}
\end{equation}
for all $i \in A$,
where $\delta > 0$ is a positive constant. This is a precise version of the second condition of \eqref{cone_condition2}, whose interpretation was given below \eqref{cone_condition2}: an outlier indexed by $A$ cannot overlap with an outlier indexed by $A^c$. Note, however, that there is no restriction on the outliers indexed by $A$ overlapping among themselves. The assumption \eqref{non-overlapping} will be removed in Section \ref{sec: overlapping}.
The main estimate for non-overlapping outliers is the following.
\begin{proposition} \label{prop: outl1}
Fix $\tau > 0$ and $\delta > 0$. Suppose that $A$ satisfies \eqref{assumption on A} and \eqref{non-overlapping} for all $i \in A$. Then for all $i,j = 1, \dots, M$ we have
\begin{multline} \label{vi_vj_result}
\scalar{\f v_i}{P_A \f v_j} \;=\; \delta_{ij} \ind{i \in A} u(d_i)
+ O_\prec \Biggl[ \frac{\ind{i,j \in A}}{(d_i - 1)^{1/4} (d_j - 1)^{1/4} M^{1/2}}
\\
+
\frac{\sqrt{\sigma_i \sigma_j}}{M} \pbb{\frac{1}{\nu_i} + \frac{\ind{i \in A}}{d_i - 1}} \pbb{\frac{1}{\nu_j} + \frac{\ind{j \in A}}{d_j - 1}}
+ \frac{\ind{i \in A} \ind{j \notin A} (d_i - 1)^{1/2} \sqrt{\sigma_j}}{(1 + \phi)^{1/4} \abs{d_i - d_j} M^{1/2}}  + (i \leftrightarrow j)
\Biggr]\,.
\end{multline}
\end{proposition}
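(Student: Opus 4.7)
The plan is to represent the spectral projection as a contour integral and combine the resolvent identities of Lemma \ref{lem:linalg} with the isotropic Marchenko--Pastur law. Write $P_A = -\frac{1}{2\pi \ii} \oint_\Gamma \wt G(z) \, \dd z$ where $\Gamma$ is a closed contour in $\C$ enclosing exactly the outlier eigenvalues $\{\mu_a\}_{a \in A}$. By Theorem \ref{thm: outlier locations}, the $\mu_a$ with $a \in A$ lie $O_\prec(\Delta(d_a) K^{-1/2}) = O_\prec((d_a-1)^{1/2} K^{-1/2})$ close to their classical locations $\theta(d_a)$, while the extremal non-outlier $\mu_{s_+ + 1}$ sticks to $\gamma_+$ via \eqref{ex+ bound}. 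Using $\theta'(d) = 1 - d^{-2}$ to convert distances in $d$ to distances in $z$, the non-overlapping condition \eqref{non-overlapping} allows one to deform $\Gamma$ to pass at distance $\rho_a \asymp (d_a - 1) \nu_a / 2$ from each $\theta(d_a)$, $a \in A$, with $\Gamma$ cleanly separating $\{\theta(d_a)\}_{a \in A}$ from $\{\theta(d_b)\}_{b \in \cal R \setminus A}$ and from $[\gamma_-, \gamma_+]$ with high probability.

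On such a contour I would decompose $\wt G(z)$ via Lemma \ref{lem:linalg}: the $V \otimes V$ block of $\wt G$ is given by \eqref{pert3}, while mixed and orthogonal components follow from \eqref{pert2} after diagonalizing $\Sigma^{-1/2}$. Both expressions are rational in $W(z)$ and in the entries $\scalar{\f v_k}{G(z) \f v_l}$. By Lemma \ref{lem: Qij iso} and Theorem \ref{thm: IMP gen}, on $\Gamma$ one has $W(z) = w_\phi(z) I + O_\prec(\Psi_W(z))$ and $\scalar{\f v_k}{G(z) \f v_l} = m_{\phi^{-1}}(z) \delta_{kl} + O_\prec(\Psi_G(z))$, with explicit fluctuation scales $\Psi_W, \Psi_G$ driven by $\im w_\phi(z)/(K\eta)$. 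Substituting the deterministic approximation produces an integrand whose only singularities inside $\Gamma$ are simple poles at $z = \theta(d_a)$, $a \in A$, coming from the factor $(D^{-1} + w_\phi(z))^{-1}$. A residue computation, using $w_\phi'(\theta(d)) = (d^2 - 1)^{-1}$ (obtained by differentiating $w_\phi(\theta(d)) = -1/d$) together with $\sigma_a = 1 + \phi^{1/2} d_a$, gives the leading contribution $\delta_{ij} \ind{i \in A} u(d_i)$ with $u$ as in \eqref{aperture}; off-diagonal or $j \notin A$ entries produce vanishing residues, so the deterministic part accounts entirely for the main term.

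For the stochastic remainder I would use the expansion
\[
(D^{-1} + W)^{-1} \;=\; (D^{-1} + w_\phi)^{-1} - (D^{-1} + w_\phi)^{-1} (W - w_\phi I) (D^{-1} + W)^{-1}
\]
and bound each resulting term on $\Gamma$. The operator norm $\norm{(D^{-1} + w_\phi(z))^{-1}} \asymp \max_a (d_a^2 - 1)/\abs{z - \theta(d_a)}$ together with \eqref{bound: Qij iso outside} and \eqref{bound: Rij isotropic outside sc gen} and the length of $\Gamma$ yield the error terms in \eqref{vi_vj_result}: the $(d_i - 1)^{-1/4} (d_j - 1)^{-1/4} M^{-1/2}$ contribution comes from the fluctuation of $W$ at the residue $z = \theta(d_a)$; the $\sqrt{\sigma_i \sigma_j}/(M \nu_i \nu_j)$ type terms arise from fluctuations away from the poles, estimated through \eqref{pert2} and the isotropic MP bound on $\scalar{\f v_i}{G \f v_j}$; and the asymmetric $(i \in A, j \notin A)$ cross terms come from estimating one factor of $\scalar{\f v_i}{GV}$ by its MP approximation while keeping the other at its residue strength.

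The main obstacle is to balance, uniformly on $\Gamma$, the pole factor $(d_a^2 - 1)/\abs{z - \theta(d_a)}$ against the fluctuation scale $\sqrt{\im w_\phi(z)/(K\eta)}$ so as to produce sharp error bounds. Near the BBP transition, $\Delta(d_a) \asymp (d_a - 1)^{1/2}$ and the residue strength $d_a^2 - 1 \asymp d_a - 1$ nearly cancel, and obtaining the sharp exponent $(d_i - 1)^{1/4}$ in the denominator (rather than a cruder $(d_i - 1)^{1/2}$) requires exploiting the extra decay $\im m_{\phi^{-1}}(z) \asymp \eta/\sqrt{\kappa + \eta}$ outside the bulk from Lemma \ref{lemma: w}, along with an optimal choice of contour radius $\rho_a \asymp \Delta(d_a) K^{-1/2 + \delta}$ near each well-separated outlier. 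The remaining prefactors $\sigma_i$, $1/\nu_i$, $1/(d_i - 1)$ then emerge by straightforward bookkeeping of the weights in \eqref{pert2} and \eqref{pert3}.
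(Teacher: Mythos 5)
Your proposal captures the paper's overall architecture: a contour representation of $P_A$, the formula \eqref{pert3}, the deterministic residue at the $\theta(d_a)$ giving the leading term $\delta_{ij}\ind{i\in A}u(d_i)$, and a resolvent expansion around $w_\phi$ to control the fluctuations. However, the central technical difficulty is precisely where your sketch is vague, and the expansion you propose is one order too short to close the argument.

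The first-order expansion you write,
\begin{equation*}
(D^{-1}+W)^{-1} \;=\; (D^{-1}+w_\phi)^{-1} - (D^{-1}+w_\phi)^{-1}\,(W - w_\phi)\,(D^{-1}+W)^{-1}\,,
\end{equation*}
leaves the random resolvent $(D^{-1}+W)^{-1}$ intact in the error term. You cannot compute this by residues (its poles are random), so you are forced into a crude $\sup$-times-length bound on the contour. For two nearby outliers $i,j\in A$, on the arc $\Gamma_i$ one has $|\zeta-d_i|\asymp\rho_i$, $\|W-w_\phi\|\prec(d_i-1)^{-1/2}K^{-1/2}$, and $\|(D^{-1}+W)^{-1}\|\prec\rho_i^{-1}$, producing an error of order $(d_i-1)^{-1/2}K^{-1/2}/\rho_i$. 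With $\rho_i\asymp d_i-1$ this is $(d_i-1)^{-3/2}K^{-1/2}$, worse than the claimed $(d_i-1)^{-1/2}M^{-1/2}$ by a full factor of $(d_i-1)$ (which can be as small as $K^{-1/3}$). The paper avoids this by pushing the expansion to \emph{second} order: the first-order piece then has the \emph{deterministic} factor $(D^{-1}+w_\phi)^{-1}$ on both sides, so it becomes an exact residue $d_id_j\,\big(f_{ij}(d_i)-f_{ij}(d_j)\big)/(d_i-d_j)$ where $f_{ij}(\zeta) = \zeta^2\,\big(w_\phi(\theta(\zeta))-W(\theta(\zeta))\big)\,\theta'(\zeta)/\theta(\zeta)$ is holomorphic inside $\Gamma$. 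This difference quotient is bounded by $\int_{d_i}^{d_j}|f_{ij}'|$, and $|f_{ij}'|\lesssim|\zeta-1|^{-1/2}K^{-1/2+\epsilon}$ is obtained from Cauchy's integral formula applied to the a priori bound $|f_{ij}|\lesssim|\zeta-1|^{1/2}K^{-1/2+\epsilon}$. This holomorphy argument is exactly what produces the $(d_i-1)^{-1/4}(d_j-1)^{-1/4}$ exponent and the correct cross terms for $i\in A$, $j\notin A$; it has no counterpart in your sketch, where the exponent is attributed (incorrectly) to the $\eta/\sqrt{\kappa+\eta}$ decay of $\im m_{\phi^{-1}}$ and a choice of contour radius. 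Only the genuine remainder (with two factors of $w_\phi - W$) is then bounded crudely, and its $K^{-1}$ smallness absorbs the loss from $\|(D^{-1}+W)^{-1}\|$.

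Two smaller points: the contour should live in the $\zeta$-plane (circles of radius $\rho_i = \tfrac12\big(\nu_i\wedge(d_i-1)\big)$ around each $d_i\in A$, pulled back through $\theta$), and your stated radii --- $\rho_a\asymp(d_a-1)\nu_a/2$ in one place, $\rho_a\asymp\Delta(d_a)K^{-1/2+\delta}$ in another --- are not the right choice; the latter is the \emph{minimal} radius permitted by the non-overlapping hypothesis, whereas the optimal radius is the \emph{largest} one compatible with separating $A$ from $A^c$ and with staying inside the region where the outside-the-spectrum law holds. Choosing the radius too small inflates $\|(D^{-1}+W)^{-1}\|$ and degrades the $S^{(2)}$-type error. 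Finally, the extension from $i,j\in\cal R$ to arbitrary $i,j$ via an augmented spike set $\wh{\cal R}=\cal R\cup\{i,j\}$ (with vanishing auxiliary $d$'s) is needed and should be stated.
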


\begin{remark}
The only difference between \eqref{vi_vj_result 2} and \eqref{vi_vj_result} is the term proportional to $\ind{j \notin A}$ on the last line. In order to prove \eqref{vi_vj_result 2} without the overlapping condition \eqref{non-overlapping}, it is necessary to start from the stronger bound \eqref{vi_vj_result}; see Section \ref{sec: overlapping} below.
\end{remark}

The rest of this subsection is devoted to the proof of Proposition \ref{prop: outl1}.
We begin by defining $\omega \deq \tau / 2$ and letting $\epsilon < \min\h{\tau/3, \delta}$ be a positive constant to be determined later. We choose a high-probability event $\Xi \equiv \Xi_N(\epsilon, \tau)$ (see Definition \ref{def: high probability}) satisfying the following conditions.
\begin{enumerate}
\item
We have
\begin{equation} \label{Q bound Xi}
\ind{\Xi} \absb{W_{ij}(z) - w_\phi(z) \delta_{ij}} \;\leq\; \abs{z - \gamma_+}^{-1/4} K^{-1/2 + \epsilon}
\end{equation}
for $i,j \in \cal R$,
large enough $K$, and all $z$ in the set
\begin{equation} \label{domain of contour}
\hb{z \in \C \col \re z \geq \gamma_+ + K^{-2/3 + \omega} \,,\, \abs{z} \leq \omega^{-1}}\,.
\end{equation}
\item
For all $i$ satisfying $1 + K^{-1/3} \leq d_i \leq \omega^{-1}$ we have
\begin{equation} \label{outlier loc on Xi}
\ind{\Xi} \abs{\mu_{i} - \theta(d_i)} \;\leq\; (d_i - 1)^{1/2} \, K^{-1/2 + \epsilon}\,.
\end{equation}
\item
We have
\begin{equation} \label{bound_mu_edge}
\ind{\Xi} \abs{\mu_{s_++1} - \gamma_+} \;\leq\; K^{-2/3+\epsilon}\,.
\end{equation}
\end{enumerate}
Note that such an event $\Xi$ exists. Indeed, \eqref{outlier loc on Xi} and \eqref{bound_mu_edge} may be satisfied using Theorem \ref{thm: outlier locations},
and \eqref{Q bound Xi} using Theorem \ref{lem: Qij iso} combined with Remark \ref{rem:all_z}.

For the sequel we fix a realization $H \in \Xi$ satisfying the conditions (i)--(iii) above. Hence, the rest of the proof of Proposition \ref{prop: outl1} is entirely deterministic, and the randomness only enters in ensuring that $\Xi$ has high probability. Our starting point is a contour integral representation of the projection $P_A$. In order to construct the contour, we define for each $i \in A$ the radius
\begin{equation} \label{def_rho_i}
\rho_i \;\deq\; \frac{\nu_i \wedge (d_i - 1)}{2}\,.
\end{equation}
We define the contour $\Gamma \deq \partial \Upsilon$ as the boundary of the union of discs $\Upsilon \deq \bigcup_{i \in A} B_{\rho_i}(d_i)$,
where $B_\rho(d)$ is the open disc of radius $\rho$ around $d$. We shall sometimes need the decomposition $\Gamma = \bigcup_{i \in A} \Gamma_i$, where $\Gamma_i \deq \Gamma \cap \partial B_{\rho_i}(d_i)$.
See Figure \ref{fig: contour} for an illustration of $\Gamma$.
\begin{figure}[ht!]
\begin{center}
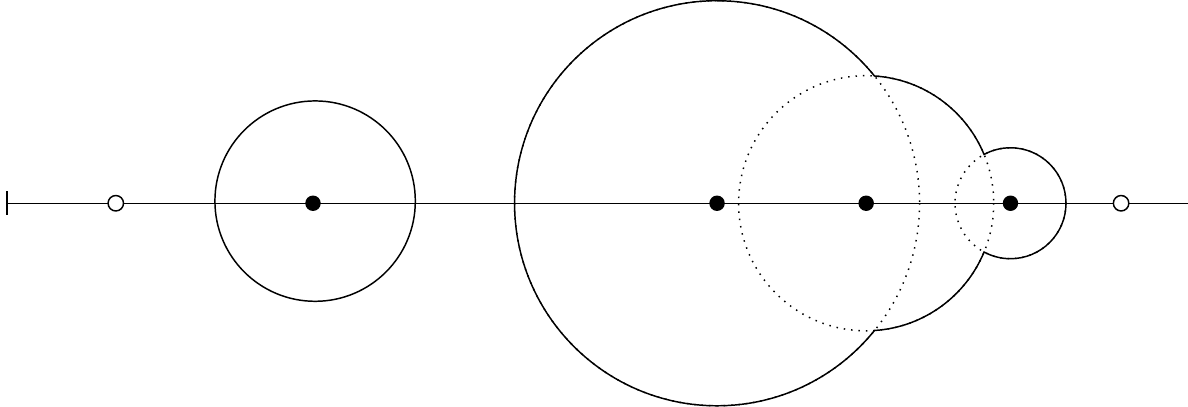
\end{center}
\caption{The integration contour $\Gamma = \bigcup_{i \in A} \Gamma_i$. In this example $\Gamma$ consists of two components, and we have $\abs{\cal R} = 6$ with $A = \{2,3,4,5\}$. We draw the locations of $d_i$ with $i \in A$ using black dots and the other $d_i$ using white dots. The contour is constructed by drawing circles of radius $\rho_i$ around each $d_i$ for $i \in A$ (depicted with dotted lines). The piece $\Gamma_i$ consists of the points on the circle centred at $d_i$ that lie outside all other circles. \label{fig: contour}} 
\end{figure}

We shall have to use the estimate \eqref{Q bound Xi} on the set $\ol{\theta(\Upsilon)}$. Its applicability is an immediate consequence of the following lemma.

\begin{lemma}
The set $\ol{\theta(\Upsilon)}$ lies in \eqref{domain of contour}.
\end{lemma}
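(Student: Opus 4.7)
The plan is to show that on the compact set $\ol{\theta(\Upsilon)} = \theta(\ol{\Upsilon}) = \bigcup_{i \in A} \theta(\ol{B_{\rho_i}(d_i)})$, both defining inequalities of the domain in \eqref{domain of contour} hold. The main content is the lower bound $\re \theta(z) \geq \gamma_+ + K^{-2/3 + \omega}$ for $z \in \ol{B_{\rho_i}(d_i)}$, $i \in A$; the upper bound on $|\theta(z)|$ is immediate from $(d_i+1)/2 \leq |z| \leq 3\tau^{-1}/2$ and $|\theta(z)| \leq |z| + |z|^{-1} + \phi^{1/2} + \phi^{-1/2}$, upon choosing $\omega$ small enough depending on $\tau$.

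Using $\theta(z) - \gamma_+ = (z-1)^2/z$ (immediate from \eqref{classical_loc} and \eqref{def:gamma_pm}) and the fact that, by \eqref{def_rho_i}, $\rho_i \leq (d_i-1)/2$, the closed disk $\ol{B_{\rho_i}(d_i)}$ is contained in $\{\re z \geq (d_i+1)/2\}$ and in particular avoids $0$. Hence the function $g(z) \deq \re[\theta(z) - \gamma_+]$ is harmonic in a neighbourhood of this disk, and by the minimum principle its infimum on $\ol{B_{\rho_i}(d_i)}$ is attained on $\partial B_{\rho_i}(d_i)$.

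Writing $z - 1 = \alpha + \ii\beta$ with $\alpha, \beta \in \R$ and expanding $(z-1)^2\bar z/|z|^2$ yields the identity
\begin{equation*}
g(z) \;=\; \frac{(1+\alpha)\alpha^2 + (\alpha-1)\beta^2}{|z|^2}.
\end{equation*}
Set $a_i \deq d_i - 1 \geq K^{-1/3 + \tau}$. Parametrizing $z = d_i + \rho_i \me^{\ii t}$ on the boundary circle and letting $u \deq \rho_i \cos t \in [-\rho_i, \rho_i]$ (so that $\alpha = a_i + u$ and $\beta^2 = \rho_i^2 - u^2$), the numerator above reduces to the polynomial
\begin{equation*}
q(u) \;\deq\; (a_i+u)^2(1 + a_i + u) + (\rho_i^2 - u^2)(a_i + u - 1).
\end{equation*}
A short computation gives $q''(u) = 4(1 + a_i) > 0$ and, using $\rho_i \leq a_i/2$,
\begin{equation*}
q'(-\rho_i) \;=\; 2a_i - 4\rho_i + 3a_i^2 - 4a_i\rho_i + \rho_i^2 \;\geq\; a_i^2 + \rho_i^2 \;>\; 0,
\end{equation*}
so $q$ is convex and strictly increasing on $[-\rho_i, \rho_i]$; its minimum is $q(-\rho_i) = (a_i - \rho_i)^2(1 + a_i - \rho_i) \geq a_i^2/4$.

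Combining this with the uniform bound $|z|^2 \leq (1 + a_i + \rho_i)^2 + \rho_i^2 = O_\tau(1)$ yields $g(z) \geq c(\tau)(d_i - 1)^2 \geq c(\tau) K^{-2/3 + 2\tau}$, which exceeds $K^{-2/3 + \omega} = K^{-2/3 + \tau/2}$ for all sufficiently large $K$. The main obstacle is the explicit minimization of $q$; once the change of variables $u = \rho_i \cos t$ is in hand, however, it reduces cleanly to checking the convexity of $q$ and the positivity of $q'(-\rho_i)$, both of which are short calculations under the constraint $\rho_i \leq a_i/2$.
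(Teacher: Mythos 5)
Your proof is correct, and the key geometric input is the same as the paper's: the identity $\theta(\zeta)-\gamma_+ = (\zeta-1)^2/\zeta$ together with the constraint $\rho_i \leq (d_i-1)/2$ from \eqref{def_rho_i}, which keeps the disc $\ol{B_{\rho_i}(d_i)}$ well to the right of $\zeta=1$ and away from $0$. The execution differs, though. The paper observes that under $\rho_i \leq (d_i-1)/2$ the disc lies inside the cone $\{\re\zeta\ge 1,\; |\im\zeta|\le \alpha(\re\zeta-1)\}$ with $\alpha=1/\sqrt3$, and that on such a cone with $|\zeta|\le\tau^{-1}$ one has $\re\theta(\zeta)-\gamma_+\ge c(\re\zeta-1)^2$ directly from your formula for $g$ (since $(1+\alpha)\alpha^2+(\alpha-1)\beta^2\ge(1-\alpha_0^2)\alpha^2$ when $|\beta|\le\alpha_0\alpha$); combined with $\re\zeta-1\ge(d_i-1)/2\ge K^{-1/3+\tau}/2$ this finishes immediately. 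You instead invoke the minimum principle for harmonic functions to push the infimum to the boundary circle, parametrize it, and solve the resulting one-variable minimization by convexity, locating the minimizer at the leftmost point $\zeta=d_i-\rho_i$ and obtaining $q(-\rho_i)\ge a_i^2/4$. Both arguments are elementary and give the same $c(\tau)(d_i-1)^2$ lower bound against $K^{-2/3+\omega}$ with $\omega=\tau/2$. The paper's cone estimate is a bit shorter and works pointwise without appealing to harmonicity; yours is somewhat more computational but cleanly identifies the exact minimizer, which is the kind of thing that scales if one later needs sharper control along the contour.

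One small presentational point: $\ol{\theta(\Upsilon)}\subset\theta(\ol\Upsilon)$ because the latter is compact (hence closed) and contains $\theta(\Upsilon)$ — you state the equality $\ol{\theta(\Upsilon)}=\theta(\ol\Upsilon)$, which is true here since $\theta$ is an open map, but the inclusion is all you need and saves a sentence of justification.
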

\begin{proof}
It is easy to check that $\theta(\zeta) \leq \omega^{-1}$ for all $\zeta \in \Upsilon$. In order to check the lower bound on $\re \theta(\zeta)$, we note that for any $\alpha \in (0,1)$ there exists a constant $c \equiv c(\alpha, \tau)$ such that
\begin{equation*}
\re \theta(\zeta) \;\geq\; \gamma_+ + c (\re \zeta - 1)^2
\end{equation*}
for $\re \zeta \geq 1$, $\abs{\im \zeta} \leq \alpha (\re \zeta - 1)$, and $\abs{\zeta} \leq \tau^{-1}$. Now the claim follows easily from $\re \zeta \geq 1 + K^{-1/3 + \tau}/2$ for all $\zeta \in \Upsilon$, by choosing $\alpha = 1/\sqrt{3}$.
\end{proof}

\begin{lemma} \label{lem:contour}
Each outlier $\{\mu_{i}\}_{i \in A}$ lies in $\theta(\Upsilon)$, and all other eigenvalues of $Q$ lie in the complement of $\ol{\theta(\Upsilon)}$.
\end{lemma}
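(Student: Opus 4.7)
The plan is to combine the outlier location estimates~\eqref{outlier loc on Xi} and~\eqref{bound_mu_edge} with the conformal geometry of the map $\theta$. The key facts are the identity $\theta(\zeta) - \gamma_+ = (\zeta - 1)^2/\zeta$ and the derivative $\theta'(\zeta) = 1 - \zeta^{-2}$, so that $|\theta'(d_i)| \asymp d_i - 1$ when $d_i$ is bounded and close to $1$. Since $\theta$ is a biholomorphism from $\{|\zeta|>1\}$ onto $\C \setminus [\gamma_-, \gamma_+]$ and strictly increasing on $(1, \infty)$, the real section $\theta(\overline\Upsilon) \cap \R$ equals the disjoint union of intervals $\bigcup_{i \in A} [\theta(d_i - \rho_i), \theta(d_i + \rho_i)]$, all lying strictly to the right of $\gamma_+$. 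Since every eigenvalue of $Q$ is real, both assertions reduce to locating these intervals relative to the various $\mu_a$.

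For the first assertion, I would argue that $B_{\rho_i}(d_i) \subset \Upsilon$ gives $\mathrm{dist}(d_i, \Gamma) \geq \rho_i$. The condition $\rho_i \leq (d_i-1)/2$ keeps $B_{\rho_i}(d_i)$ uniformly separated from the critical point $\zeta = 1$ of $\theta$, so a standard conformal estimate (e.g.\ controlling $\theta''$ on $B_{\rho_i}(d_i)$) yields $\mathrm{dist}(\theta(d_i), \theta(\Gamma)) \gtrsim |\theta'(d_i)|\rho_i \asymp (d_i - 1)\rho_i$. By~\eqref{outlier loc on Xi} it suffices to verify $\rho_i > (d_i - 1)^{-1/2} K^{-1/2+\epsilon}$. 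In the subcase $\rho_i = \nu_i/2$ this is immediate from~\eqref{non-overlapping} since $\delta > \epsilon$; in the subcase $\rho_i = (d_i - 1)/2$ it reduces to $(d_i - 1)^{3/2} \gg K^{-1/2+\epsilon}$, which follows from $d_i - 1 \geq K^{-1/3+\tau}$ upon choosing $\epsilon < \tau/3$.

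For the second assertion, the eigenvalues to rule out split into three classes. (a) Non-outliers $\mu_k$ with $k \geq s_+ + 1$: by~\eqref{bound_mu_edge}, $|\mu_k - \gamma_+| \leq K^{-2/3+\epsilon}$, while the leftmost endpoint of $\theta(\overline\Upsilon) \cap \R$ exceeds $\gamma_+$ by $(d_i - \rho_i - 1)^2/(d_i - \rho_i) \geq c(d_i - 1)^2 \geq c K^{-2/3+2\tau}$, dominating the fluctuation for $\epsilon < 2\tau$. (b) Outliers $\mu_j$ with $j \in \cal O \setminus A$ and $d_j > 1$: the key estimate is $\mathrm{dist}(d_j, \overline\Upsilon \cap \R) \geq \nu_j(A)/2$, which follows because $\rho_i \leq \nu_i(A)/2 \leq \nu_j(A)/2$ whenever $|d_i - d_j| = \nu_j(A)$, and $|d_j - d_i| > \rho_i$ for any other $i \in A$; applying $\theta$ and comparing with the fluctuation bound from Theorem~\ref{thm: outlier locations} gives the required separation. (c) Outliers with $d_j < -1$ lie near $\gamma_-$, while $\theta(\overline\Upsilon)$ is contained in a bounded region whose real part exceeds $\gamma_+$, so the case is trivial. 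I expect the main obstacle to be the careful justification of the conformal distortion estimate $\mathrm{dist}(\theta(d_i), \theta(\Gamma)) \asymp |\theta'(d_i)| \rho_i$ used in the first step, which requires a genuine two-sided bound (not just a linearization at $d_i$) and hence controlling $\theta'$ uniformly over the disc $B_{\rho_i}(d_i)$; the bound $\rho_i \leq (d_i-1)/2$ is precisely what makes this work by keeping $B_{\rho_i}(d_i)$ in a region where $|\theta''|$ is controlled.
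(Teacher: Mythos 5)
Your proof takes essentially the same route as the paper's: decompose into the two assertions, control $\theta$ via $|\theta'(\zeta)| \asymp |\zeta-1|$ and $\theta(\zeta)-\gamma_+ \asymp (\zeta-1)^2$, use \eqref{outlier loc on Xi} together with the lower bound $\rho_i \geq \frac12(d_i-1)^{-1/2}K^{-1/2+\delta}$ (from \eqref{non-overlapping} and \eqref{assumption on A}) for the first assertion, and split the second assertion into other outliers (handled by \eqref{non-overlapping} and the derivative estimate) and the near-edge eigenvalues (handled by \eqref{bound_mu_edge} together with $(d_i-\rho_i-1)^2 \gtrsim K^{-2/3+2\tau}$). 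The minor imprecisions --- the intervals $[\theta(d_i-\rho_i),\theta(d_i+\rho_i)]$ for $i\in A$ need not be disjoint since discs within $A$ may overlap, and the claim $|d_j-d_i|>\rho_i$ for non-nearest $i\in A$ is weaker than the $\nu_j(A)/2$ separation you actually need (though it does follow since $\rho_i\le\nu_i(A)/2\le|d_i-d_j|/2$) --- are cosmetic, and your closing concern about the distortion estimate is resolved exactly as you indicate by $\rho_i \le (d_i-1)/2$.
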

\begin{proof}
It suffices to prove that (a) for each $i \in A$ we have $\mu_{i} \in \theta(B_{\rho_i}(d_i))$ and (b) all the other eigenvalues $\mu_j$ satisfy $\mu_j \notin\theta(B_{\rho_i}(d_i))$ for all $i \in A$.

In order to prove (a), we note that
\begin{equation} \label{bound on rho}
\rho_i \;\geq\; \frac{1}{2} \, (d_i - 1)^{-1/2} K^{-1/2 + \delta}\,,
\end{equation}
for $i \in A$, as follows from \eqref{non-overlapping} and \eqref{assumption on A}. Using
\begin{equation} \label{theta prime small z}
\abs{\theta'(\zeta)} \;\asymp\; \abs{\zeta - 1}  \qquad (\re \zeta \geq 1 \,,\, \abs{\zeta} \leq \tau^{-1})\,,
\end{equation}
it is then not hard to get (a) from \eqref{bound on rho} and \eqref{outlier loc on Xi}.

In order to prove (b), we consider the two cases (i) $1 + K^{-1/3} \leq d_j \leq \omega^{-1}$ with $j \notin A$, and (ii) and $j \geq s_+ + 1$. In the case (i), the claim (b) follows using \eqref{outlier loc on Xi}, \eqref{theta prime small z}, and \eqref{non-overlapping}. In the case (ii), the claim (b) follows from \eqref{bound_mu_edge} and the estimate
\begin{equation} \label{theta small z}
\abs{\theta(\zeta) - \gamma_+} \;\asymp\; \abs{\zeta - 1}^2 \qquad (\re \zeta \geq 1 , \abs{\zeta} \leq \tau^{-1})\,.
\end{equation}
This concludes the proof.
\end{proof}
Using the spectral decomposition of $\wt G(z)$, Lemma \ref{lem:contour}, and the residue theorem, we may write the projection $P_A$ as
\begin{equation*}
P_A \;=\; - \frac{1}{2 \pi \ii} \oint_{\theta(\Gamma)} \wt G(z) \, \dd z \;=\; - \frac{1}{2 \pi \ii} \oint_{\Gamma} \wt G(\theta(\zeta)) \, \theta'(\zeta) \, \dd \zeta\,.
\end{equation*}
Hence we get from \eqref{pert3} that
\begin{equation} \label{outlier integral}
V^* P_A V \;=\; \phi^{-1/2} \, \frac{1}{2 \pi \ii} \oint_{\Gamma} \frac{\sqrt{1 + \phi^{1/2} D}}{D} \frac{1}{D^{-1} + W(\theta(\zeta))} \frac{\sqrt{1 + \phi^{1/2} D}}{D} \frac{\theta'(\zeta)}{\theta(\zeta)} \, \dd \zeta\,.
\end{equation}
This is the desired integral representation of $P_A$.

We first use \eqref{outlier integral} to compute $\scalar{\f v_i}{P_A \f v_j}$ in the case $i,j \in \cal R$, where $\f v_i$ and $\f v_j$ lie in the range of $V$. In that case we get from \eqref{outlier integral} that
\begin{equation*}
\scalar{\f v_i}{P_A \f v_j} \;=\; \frac{\sqrt{\sigma_i \sigma_j}}{\phi^{1/2}d_i d_j} \, \frac{1}{2 \pi \ii} \oint_{\Gamma} \pbb{\frac{1}{D^{-1} + W(\theta(\zeta))}}_{ij} \frac{\theta'(\zeta)}{\theta(\zeta)} \, \dd \zeta\,.
\end{equation*}
We now perform a resolvent expansion on the denominator
\begin{equation} \label{resolvent expansion}
D^{-1} + W(\theta) \;=\; (D^{-1} + w_\phi(\theta)) - \Delta(\theta) \,, \qquad \Delta(\theta) \;\deq\; w_\phi(\theta) - W(\theta)\,.
\end{equation}
Thus we get
\begin{equation} \label{123split}
\scalar{\f v_i}{P_A \f v_j} \;=\; \frac{\sqrt{\sigma_i \sigma_j}}{\phi^{1/2}d_i d_j} \pb{S^{(0)}_{ij} + S^{(1)}_{ij} + S^{(2)}_{ij}}\,,
\end{equation}
where we defined
\begin{align}
S^{(0)}_{ij} &\;\deq\; \frac{1}{2 \pi \ii} \oint_{\Gamma} \pbb{\frac{1}{D^{-1} + w_\phi(\theta(\zeta))}}_{ij} \frac{\theta'(\zeta)}{\theta(\zeta)} \, \dd \zeta\,,
\\
S^{(1)}_{ij} &\;\deq\; \frac{1}{2 \pi \ii} \oint_{\Gamma} \pbb{\frac{1}{D^{-1} + w_\phi(\theta(\zeta))} \Delta(\theta(\zeta)) \frac{1}{D^{-1} + w_\phi(\theta(\zeta))}}_{ij} \frac{\theta'(\zeta)}{\theta(\zeta)} \, \dd \zeta\,,
\\ \label{def_S^2}
S^{(2)}_{ij} &\;\deq\; \frac{1}{2 \pi \ii} \oint_{\Gamma} \pbb{\frac{1}{D^{-1} + w_\phi(\theta(\zeta))} \Delta(\theta(\zeta)) \frac{1}{D^{-1} + W(\theta(\zeta))} \Delta(\theta(\zeta)) \frac{1}{D^{-1} + w_\phi(\theta(\zeta))} }_{ij} \frac{\theta'(\zeta)}{\theta(\zeta)} \, \dd \zeta\,.
\end{align}

We begin by computing
\begin{equation} \label{S^0 result}
S^{(0)}_{ij} \;=\; \delta_{ij} \frac{1}{2 \pi \ii} \oint_{\Gamma} \pbb{\frac{1}{d_i^{-1} - \zeta^{-1}}}_{ij} \frac{\theta'(\zeta)}{\theta(\zeta)} \, \dd \zeta
\;=\; \delta_{ij} \ind{i \in A} \frac{d_i^2 - 1}{\theta(d_i)}\,,
\end{equation}
where we used Cauchy's theorem, \eqref{w_theta_relation}, and the fact that $d_i$ lies in $\Upsilon$ if and only if $i \in A$.

Next, we estimate
\begin{equation} \label{S_1 starting point}
S^{(1)}_{ij} \;=\; d_i d_j \, \frac{1}{2 \pi \ii} \oint_{\Gamma} \frac{f_{ij}(\zeta)}{(\zeta - d_i)(\zeta - d_j)} \, \dd \zeta \,, \qquad f_{ij}(\zeta) \;\deq\; \zeta^2 \Delta(\theta(\zeta)) \frac{\theta'(\zeta)}{\theta(\zeta)}\,,
\end{equation}
using the fact that $f_{ij}$ is holomorphic inside $\Gamma$ and satisfies the bounds
\begin{equation} \label{fij bounds}
\abs{f_{ij}(\zeta)} \;\leq\; C \phi^{1/2} (1 + \phi)^{-1} \abs{\zeta - 1}^{1/2} K^{-1/2 + \epsilon}\,, \qquad
\abs{f_{ij}'(\zeta)} \;\leq\; C \phi^{1/2} (1 + \phi)^{-1} \abs{\zeta - 1}^{-1/2} K^{-1/2 + \epsilon}\,.
\end{equation}
The first bound of \eqref{fij bounds} follows from \eqref{Q bound Xi}, \eqref{theta prime small z}, and \eqref{theta small z}. The second bound of \eqref{fij bounds} follows by plugging the first one into
\begin{equation*}
f_{ij}'(\zeta) \;=\; \frac{1}{2 \pi \ii} \oint_{\cal C} \frac{f_{ij}(\xi)}{(\xi - \zeta)^2} \, \dd \xi\,,
\end{equation*}
where the contour $\cal C$ is the circle of radius $\abs{\zeta - 1}/2$ centred at $\zeta$. (By assumptions on $\epsilon$ and $\omega$, the function $f_{ij}$ is holomorphic in a neighbourhood of the closed interior of $\cal C$.)

In order to estimate \eqref{S_1 starting point}, we consider the three cases (i) $i,j \in A$, (ii) $i \in A$, $j \notin A$, (iii) $i \notin A$, $j \in A$. Note that \eqref{S_1 starting point} vanishes if $i,j \notin A$.
We start with the case (i). Suppose first that $i \neq j$ and $d_i \neq d_j$. Then we find
\begin{equation*}
\abs{S^{(1)}_{ij}} \;=\; \abs{d_i d_j} \absbb{\frac{f_{ij}(d_i) - f_{ij}(d_j)}{d_i - d_j}} \;\leq\; \frac{\abs{d_i d_j}}{\abs{d_i - d_j}} \absbb{\int_{d_i}^{d_j} \abs{f'_{ij}(t)} \, \dd t} \;\leq\; \frac{C \abs{d_i d_j} \phi^{1/2}}{(1 + \phi) (d_i - 1)^{1/4} (d_j - 1)^{1/4}} K^{-1/2 + \epsilon}\,.
\end{equation*}
A simple limiting argument shows that this bound is also valid for $d_i = d_j$ and $i = j$. Next, in the case (ii) we get from \eqref{fij bounds}
\begin{equation*}
\abs{S^{(1)}_{ij}} \;=\; \frac{\abs{d_i d_j f_{ij}(d_i)}}{\abs{d_i - d_j}} \;\leq\; \frac{C \abs{d_i d_j} \phi^{1/2} (d_i - 1)^{1/2}}{(1 + \phi) \abs{d_i - d_j}} K^{-1/2 + \epsilon}\,.
\end{equation*}
A similar estimate holds for the case (iii). Putting all three cases together, we find
\begin{equation} \label{S^1 result}
\abs{S^{(1)}_{ij}} \;\leq\; \frac{C \ind{i,j \in A} \abs{d_i d_j} \phi^{1/2}}{(1 + \phi) (d_i - 1)^{1/4} (d_j - 1)^{1/4}} K^{-1/2 + \epsilon} + 
\frac{C \ind{i \in A} \ind{j \notin A}\abs{d_i d_j} \phi^{1/2} (d_i - 1)^{1/2}}{(1 + \phi) \abs{d_i - d_j}} K^{-1/2 + \epsilon} + (i \leftrightarrow j)\,.
\end{equation}

What remains is the estimate of $S_{ij}^{(2)}$. Here residue calculations are unavailable, and the precise choice of the contour $\Gamma$ is crucial. We use the following basic estimate to control the integral.
\begin{lemma} \label{lem:radius_bound}
For $k \in A$, $l \in \cal R$, and $\zeta \in \Gamma_k$ we have
\begin{equation*}
\abs{\zeta - d_l} \;\asymp\; \rho_k + \abs{d_k - d_l}\,.
\end{equation*}
\end{lemma}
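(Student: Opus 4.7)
The plan is to prove the two bounds $\abs{\zeta - d_l} \leq \rho_k + \abs{d_k - d_l}$ and $\abs{\zeta - d_l} \geq c(\rho_k + \abs{d_k - d_l})$ separately, for some absolute constant $c > 0$. The upper bound is immediate from the triangle inequality together with $\abs{\zeta - d_k} = \rho_k$ for $\zeta \in \Gamma_k$. For the lower bound I would split into three cases according to whether $l = k$, $l \notin A$, or $l \in A \setminus \{k\}$.

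If $l = k$ both sides equal $\rho_k$. If $l \notin A$, the definition of $\nu_k$ forces $\abs{d_k - d_l} \geq \nu_k \geq 2\rho_k$, so the triangle inequality gives $\abs{\zeta - d_l} \geq \abs{d_k - d_l} - \rho_k \geq \abs{d_k - d_l}/2$, which is comparable to $\rho_k + \abs{d_k - d_l}$ because the latter is at most $\tfrac{3}{2}\abs{d_k - d_l}$.

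The interesting case is $l \in A \setminus \{k\}$. Two geometric facts combine: first, $\abs{\zeta - d_l} \geq \absb{\abs{d_k - d_l} - \rho_k}$ by the triangle inequality; second, $\abs{\zeta - d_l} \geq \rho_l$, since $\zeta \in \partial \Upsilon$ cannot lie in the open disc $B_{\rho_l}(d_l)$. When $\abs{d_k - d_l} \geq 2\rho_k$ the first bound alone suffices, exactly as for $l \notin A$. When $\abs{d_k - d_l} < 2\rho_k$, the key extra input is a comparison between $\rho_l$ and $\rho_k$: the triangle inequality applied to both $\nu_\cdot$ and $d_\cdot - 1$ gives $\nu_l \geq \nu_k - \abs{d_k - d_l}$ and $d_l - 1 \geq d_k - 1 - \abs{d_k - d_l}$, whence
\begin{equation*}
\rho_l \;\geq\; \rho_k - \tfrac{1}{2}\abs{d_k - d_l}\,.
\end{equation*}
Setting $x \deq \abs{d_k - d_l}/\rho_k \in [0, 2]$, the lower bound then reduces to the elementary numerical inequality $\max\bigl(\abs{x - 1},\, 1 - x/2\bigr) \geq (1 + x)/7$ on $[0, 2]$, whose minimum is attained near $x = 4/3$.

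The only genuine obstacle is the overlapping regime $\abs{d_k - d_l} \approx \rho_k$, where the triangle bound $\absb{\abs{d_k - d_l} - \rho_k}$ degenerates. The comparison $\rho_l \geq \rho_k - \abs{d_k - d_l}/2$ rescues the argument, reflecting the geometric fact that $\rho_l$ cannot collapse relative to $\rho_k$ when $d_l$ is close to $d_k$, because both $\nu_\cdot$ and $d_\cdot - 1$ are $1$-Lipschitz in their index.
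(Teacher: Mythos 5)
Your proof is correct and follows essentially the same route as the paper's: in both arguments, the upper bound is the triangle inequality, the ``far'' regime is handled by $\abs{\zeta - d_l} \geq \abs{d_k - d_l} - \rho_k$ using $\abs{\zeta - d_k} = \rho_k$, and the ``near'' regime relies on the disc-exclusion bound $\abs{\zeta - d_l} \geq \rho_l$ together with the $\tfrac{1}{2}$-Lipschitz comparison $\rho_l \geq \rho_k - \tfrac{1}{2}\abs{d_k - d_l}$, which is precisely what the paper summarizes as ``a simple estimate using the definition of $\rho_i$'' to obtain $\rho_k/5 \leq \rho_l \leq 5\rho_k$. The only cosmetic difference is the case split (you split on $\abs{d_k - d_l}$ versus $2\rho_k$, the paper on the inter-disc gap $\delta$ versus $\tfrac{1}{4}\abs{d_k - d_l}$) and that you carry an explicit constant $c = 1/7$ through a short numerical inequality, which is a bit cleaner than the paper's cruder bookkeeping.
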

\begin{proof}
The upper bound $\abs{\zeta - d_l} \leq \rho_k + \abs{d_k - d_l}$ is trivial, so that we only focus on the lower bound. Suppose first that $l \notin A$. Then we get $\abs{\zeta - d_l} \geq \abs{d_k - d_l} - \rho_k$, from which the claim follows since $\abs{d_k - d_l} \geq 2 \rho_k$ by \eqref{def_rho_i}.

For the remainder of the proof we may therefore suppose that $l \in A$.
Define $\delta \deq \abs{d_k - d_l} - \rho_k - \rho_l$, the distance between the discs $D_{\rho_k}(d_k)$ and $D_{\rho_l}(d_l)$ (see Figure \ref{fig: contour}). We consider the two cases $4 \delta \leq \abs{d_k - d_l}$ and $4 \delta > \abs{d_k - d_l}$ separately.

Suppose first that $4 \delta \leq \abs{d_k - d_l}$. Then by definition of $\delta$ we have $\abs{d_k - d_l} \leq \frac{4}{3} (\rho_k + \rho_l)$. Now a simple estimate using the definition of $\rho_i$ yields $\rho_k / 5 \leq \rho_l \leq 5 \rho_k$, from which we conclude $\abs{d_k - d_l} \leq 8 \rho_k$. The claim now follows from the bound $\abs{\zeta - d_l} \geq \rho_l$.

Suppose now that $4 \delta > \abs{d_k - d_l}$. Hence $\rho_k + \rho_l \leq \frac{3}{4} \abs{d_k - d_l}$, so that in particular $\rho_k \leq \abs{d_k - d_l}$. Thus we get
\begin{equation*}
\abs{\zeta - d_l} \;\geq\; \abs{d_k - d_l} - \rho_k - \rho_l \;\geq\; \frac{1}{4} \abs{d_k - d_l} \;\geq\; \frac{1}{8} \pb{\abs{d_k - d_l} + \rho_k}\,.
\end{equation*}
This concludes the proof.
\end{proof}

From \eqref{def_S^2}, \eqref{Q bound Xi}, \eqref{theta prime small z}, and \eqref{theta small z} we get
\begin{equation} \label{S^2 1}
\absb{S_{ij}^{(2)}} \;\leq\; C \oint_{\Gamma} \frac{\phi^{1/2} \abs{d_i d_j} K^{-1 + 2 \epsilon}}{(1 + \phi) \abs{\zeta - d_i} \abs{\zeta - d_j}} \normbb{\frac{1}{D^{-1} + W(\theta(\zeta))}} \, \abs{\dd \zeta}\,,
\end{equation}
where we also used the estimate $\abs{\theta(\zeta)} \asymp \phi^{-1/2} (1 + \phi)$ for $\zeta \in \Gamma$.

In order to estimate the matrix norm, we observe that for $\zeta \in \Gamma_k$ we have on the one hand
\begin{equation*}
\norm{W(\theta) - w_\phi(\theta)} \;\leq\; (d_k - 1)^{-1/2} K^{-1/2 + \epsilon}
\end{equation*}
from \eqref{Q bound Xi} and on the other hand
\begin{equation*}
\abs{w_\phi(\theta) - d_l^{-1}} \;\geq\; c (\abs{\zeta - d_l} \wedge 1) \;\geq\; c \abs{\zeta - d_k} \;=\; c \rho_k \;\geq\; c (d_k - 1)^{-1/2} K^{-1/2 + \delta}
\end{equation*}
for any $l \in \cal R$, where in the last step we used \eqref{bound on rho}. Since $\epsilon < \delta$, these estimates combined with a resolvent expansion give the bound
\begin{equation*}
\normbb{\frac{1}{D^{-1} + W(\theta(\zeta))}} \;\leq\; \frac{1}{\min_{b \in \cal R} \abs{w_\phi(\theta) - d_b^{-1}} - \norm{W(\theta) - w_\phi(\theta)}} \;\leq\; \frac{C}{\rho_k}
\end{equation*}
for $\zeta \in \Gamma_k$. Decomposing the integration contour in \eqref{S^2 1} as $\Gamma = \bigcup_{k \in A} \Gamma_k$, and recalling that $\Gamma_k$ has length bounded by $2 \pi \rho_k$, we get from Lemma \ref{lem:radius_bound}
\begin{equation} \label{S^2_step1}
\absb{S_{ij}^{(2)}} \;\leq\; C \sum_{k \in A} \sup_{\zeta \in \Gamma_k} \frac{\phi^{1/2} \abs{d_i d_j} K^{-1 + 2 \epsilon}}{(1 + \phi) \abs{\zeta - d_i} \abs{\zeta - d_j}}
\;\leq\; C \sum_{k \in A}  \frac{\phi^{1/2} \abs{d_i d_j} K^{-1 + 2 \epsilon}}{(1 + \phi) (\rho_k + \abs{d_k - d_i}) (\rho_k + \abs{d_k - d_j})}\,.
\end{equation}
We estimate the right-hand side using Cauchy-Schwarz. For $i \notin A$ we find, using \eqref{def_rho_i},
\begin{equation} \label{CS1}
\sum_{k \in A} \frac{1}{(\rho_k + \abs{d_k - d_i})^2} \;\leq\; \sum_{k \in A} \frac{1}{\abs{d_k - d_i}^2} \;\leq\; \frac{C}{\nu_i^2}\,.
\end{equation}
For $i \in A$ we use \eqref{def_rho_i} and the estimate $\rho_k + \abs{d_i - d_k} \geq \rho_i$ for all $k \in A$ to get
\begin{equation} \label{CS2}
\sum_{k \in A} \frac{1}{(\rho_k + \abs{d_k - d_i})^2} \;\leq\; \frac{C}{\rho_i^2} \;\leq\; \frac{C}{\nu_i^2} + \frac{C}{(d_i - 1)^2}\,.
\end{equation}
From \eqref{S^2_step1}, \eqref{CS1}, and \eqref{CS2}, we get
\begin{equation} \label{S^2 result}
\absb{S_{ij}^{(2)}} \;\leq\; \frac{C \phi^{1/2} \abs{d_i d_j} K^{-1 + 2 \epsilon}}{1 + \phi} \pbb{\frac{1}{\nu_i} + \frac{\ind{i \in A}}{d_i - 1}} \pbb{\frac{1}{\nu_j} + \frac{\ind{j \in A}}{d_j - 1}}\,.
\end{equation}

Recall that $M \asymp (1 + \phi) K$.
Hence, plugging \eqref{S^0 result}, \eqref{S^1 result}, and \eqref{S^2 result} into \eqref{123split}, we find
\begin{multline} \label{vPv for R}
\scalar{\f v_i}{P_A \f v_j} \;=\; \delta_{ij} \ind{i \in A} u(d_i)
+ O \Biggl[ \frac{\ind{i,j \in A} K^\epsilon}{(d_i - 1)^{1/4} (d_j - 1)^{1/4} M^{1/2}}
\\
+
\frac{\sqrt{\sigma_i \sigma_j} K^{2 \epsilon}}{M} \pbb{\frac{1}{\nu_i} + \frac{\ind{i \in A}}{d_i - 1}} \pbb{\frac{1}{\nu_j} + \frac{\ind{j \in A}}{d_j - 1}}
+ \frac{\ind{i \in A} \ind{j \notin A} (d_i - 1)^{1/2} \sqrt{\sigma_j} K^\epsilon}{(1 + \phi)^{1/4} \abs{d_i - d_j} M^{1/2}}  + (i \leftrightarrow j)
\Biggr]\,.
\end{multline}
We have proved \eqref{vPv for R} under the assumption that $i, j \in \cal R$. The general case is an easy corollary. For general $i,j \in \qq{1,M}$, we define $\wh {\cal R} \deq \cal R \cup \{i,j\}$ and consider
\begin{equation*}
\wh \Sigma \;\deq\; 1 + \phi^{1/2} \wh V \wh D \wh V^*\,, \qquad \wh V \;\deq\; [\f v_k]_{k \in \wh {\cal R}} \,, \qquad \wh D \;\deq\; \diag(\wh d_k)_{k \in \wh {\cal R}}\,,
\end{equation*}
where $\wh d_k \deq d_k$ for $k \in \cal R$ and $\wh d_k \in (0,1/2)$ for $k \in \wh {\cal R} \setminus \cal R$.
Since $\abs{\wh {\cal R}} \leq r + 2$ and $\wh D$ is invertible, we may apply the result \eqref{vPv for R} to this modified model. Now taking the limit $\wh d_k \to 0$ for $k \in \wh {\cal R} \setminus \cal R$ in \eqref{vPv for R} concludes the proof in the general case. Now Proposition \ref{prop: outl1} follows since $\epsilon$ may be chosen arbitrarily small. This concludes the proof of Proposition \ref{prop: outl1}.

\subsection{Removing the non-overlapping assumption} \label{sec: overlapping}
In this subsection we complete the proof of Proposition \ref{prop: outl2} by extending Proposition \ref{prop: outl1} to the case where \eqref{non-overlapping} does not hold.

\begin{proof}[Proof of Proposition \ref{prop: outl2}]
Let $\delta < \tau/4$. We say that $i,j \in \cal O_{\tau/2}^+$ \emph{overlap} if $\abs{d_i - d_j} \leq (d_i - 1)^{-1/2} K^{-1/2 + \delta}$ or $\abs{d_i - d_j} \leq (d_j - 1)^{-1/2} K^{-1/2 + \delta}$. For $A \subset \cal O_{\tau}^+$ we introduce sets $S(A), L(A) \subset \cal O_{\tau/2}^+$ satisfying $S(A) \subset A \subset L(A)$. Informally, $S(A) \subset A$ is the largest subset of indices of $A$ that do not overlap with its complement. It is by definition constructed by successively choosing $k \in A$, such that $k$ overlaps with an index of $A^c$, and removing $k$ from $A$; this process is repeated until no such $k$ exists. One can check that the result is independent of the choice of $k$ at each step. Note that $S(A)$ may be empty.

Informally, $L(A) \supset A$ is the smallest subset of indices in $\cal O_{\tau/2}^+$ that do not overlap with its complement. It is by definition constructed by successively choosing $k \in \cal O_{\tau/2}^+ \setminus A$, such that $k$ overlaps with an index of $A$, and adding $k$ to $A$; this process is repeated until no such $k$ exists. One can check that the result is independent of the choice of $k$ at each step. See Figure \ref{fig: S and L} for an illustration of $S(A)$ and $L(A)$. Throughout the following we shall repeatedly make use of the fact that, for any $A \subset \cal O_{\tau}^+$, Proposition \ref{prop: outl1} is applicable with $(\tau, A)$ replaced by $(\tau/2, S(A))$ or $(\tau/2, L(A))$.

\begin{figure}[ht!]
\begin{center}
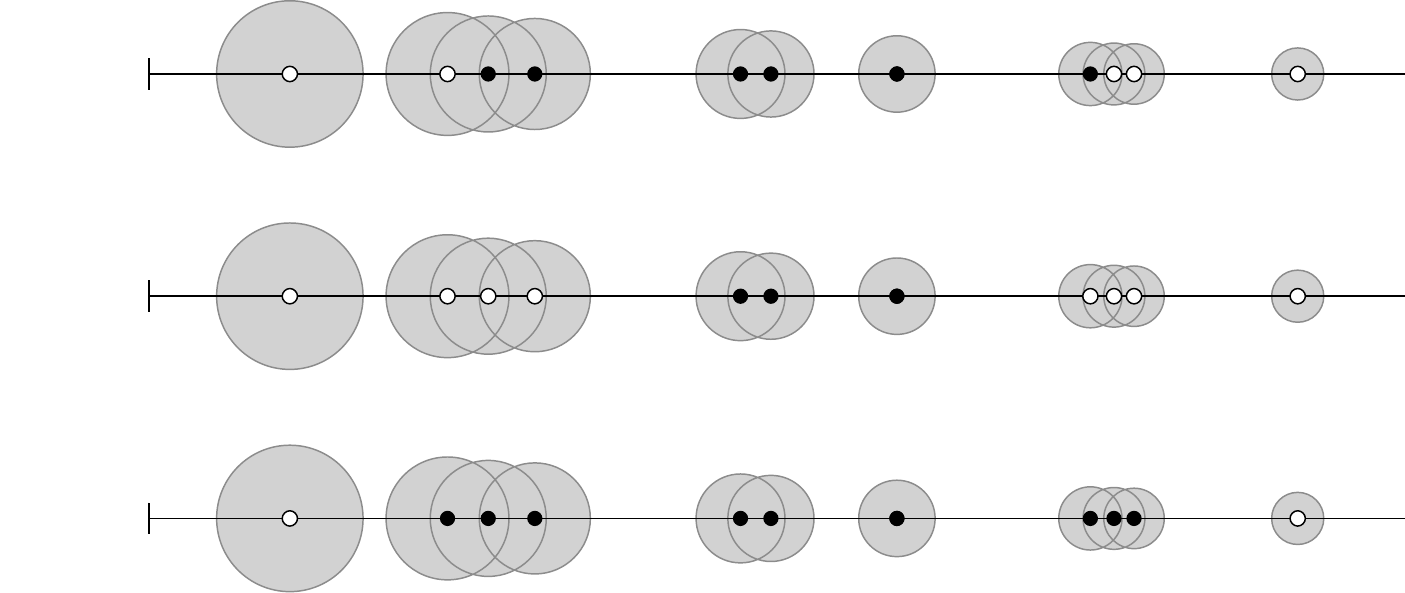
\end{center}
\caption{The construction of the sets $S(A)$ and $L(A)$. The black and white dots are the outlier indices $\h{d_i \col i \in \cal O_{\tau/2}^+}$, contained in the interval $[1, \infty)$. Around each outlier index $d_i$ we draw a grey circle of radius $(d_i - 1)^{-1/2} K^{-1/2 + \delta}$. By definition, two dots overlap if one is contained in the grey circle of the other. The three pictures depict (from top to bottom) the sets $A$, $S(A)$, and $L(A)$, respectively. In each case, the given set is drawn using black dots and its complement using white dots. \label{fig: S and L}} 
\end{figure}

After these preparations, we move on to the proof of \eqref{vi_vj_result 2}. We divide the argument into four steps.

\subsubsection*{(a) $i = j \notin A$}
We consider two cases, $i \notin L(A)$ and $i \in L(A)$. Suppose first that $i \notin L(A)$. Using that $\abs{\cal R}$ is bounded, it is not hard to see that $\nu_i(A) \asymp \nu_i(L(A))$. We now invoke Proposition \ref{prop: outl1} and get
\begin{equation} \label{i_R-A1}
\scalar{\f v_i}{P_A \f v_i} \;\leq\; \scalar{\f v_i}{P_{L(A)} \f v_i} \;\prec\; \frac{\sigma_i}{M \nu_i(L(A))^2} \;\leq\; C \frac{\sigma_i}{M \nu_i(A)^2}\,.
\end{equation}
In the complementary case, $i \in L(A)$, a simple argument yields
\begin{equation} \label{nuA_nuLA}
\nu_i(A) \;\leq\; C (d_i - 1)^{-1/2} K^{-1/2 + \delta} \;\leq\; C \nu_i(L(A))\,,
\end{equation}
as well as $\sigma_i \asymp 1 + \phi^{1/2}$.
From Proposition \ref{prop: outl1} we therefore get
\begin{multline*}
\scalar{\f v_i}{P_A \f v_i} \;\leq\; \scalar{\f v_i}{P_{L(A)} \f v_i} \;\prec\; \frac{d_i - 1}{1 + \phi^{1/2}} + \frac{1}{(d_i - 1)^{1/2} M^{1/2}} + \frac{1 + \phi^{1/2}}{M \nu_i(L(A))^2} + \frac{1 + \phi^{1/2}}{M (d_i - 1)^2}
\\
\leq\; C K^{2 \delta} \frac{d_i - 1}{1 + \phi^{1/2}} \;\leq\; C K^{2 \delta} \frac{1 + \phi^{1/2}}{M \nu_i(A)^2} \;\leq\; C K^{2 \delta} \frac{\sigma_i}{M \nu_i(A)^2}\,,
\end{multline*}
where we used that $M \asymp (1 + \phi) K$. Recalling \eqref{i_R-A1}, we conclude
\begin{equation} \label{tR-A gen}
\scalar{\f v_i}{P_A \f v_i} \;\prec\; K^{2 \delta} \frac{\sigma_i}{M \nu_i(A)^2} \qquad (i \notin A)\,.
\end{equation}

\subsubsection*{(b) $i = j \in A$}
We consider the two cases $i \in S(A)$ and $i \notin S(A)$. Suppose first that $i \in S(A)$. We write
\begin{equation} \label{vAv split}
\scalar{\f v_i}{P_A \f v_i} \;=\; \scalar{\f v_i}{P_{S(A)} \f v_i} + \scalar{\f v_i}{P_{A \setminus S(A)} \f v_i}\,.
\end{equation}
We compute the first term of \eqref{vAv split} using Proposition \ref{prop: outl1} and the observation that $\nu_i(A) \asymp \nu_i(S(A))$:
\begin{equation*}
\scalar{\f v_i}{P_{S(A)} \f v_i} \;=\; u(d_i) + O_\prec \qBB{\frac{1}{(d_i - 1)^{1/2} M^{1/2}} + \frac{\sigma_i}{M} \pbb{\frac{1}{\nu_i(A)^2} + \frac{1}{(d_i - 1)^2}}}\,.
\end{equation*}
In order to estimate the second term of \eqref{vAv split}, we note that $\nu_i(A) \asymp \nu_i(A \setminus S(A))$. We therefore apply \eqref{tR-A gen} with $A$ replaced by $A \setminus S(A)$ to get
\begin{equation*}
\scalar{\f v_i}{P_{A \setminus S(A)} \f v_i} \;\prec\; K^{2 \delta} \frac{\sigma_i}{M \nu_i(A)^2}\,.
\end{equation*}
Going back to \eqref{vAv split}, we have therefore proved that
\begin{equation} \label{tA gen}
\scalar{\f v_i}{P_A \f v_i} \;=\; u(d_i) + K^{2 \delta} O_\prec \qBB{\frac{1}{(d_i - 1)^{1/2} M^{1/2}} + \frac{\sigma_i}{M} \pbb{\frac{1}{\nu_i(A)^2} + \frac{1}{(d_i - 1)^2}}}
\end{equation}
for $i \in S(A)$.

Next, we consider the case $i \notin S(A)$. Now we have \eqref{nuA_nuLA}, so that Proposition \ref{prop: outl1} yields
\begin{equation*}
\scalar{\f v_i}{P_A \f v_i} \;\leq\; \scalar{\f v_i}{P_{L(A)} \f v_i} \;\prec\; u(d_i) + \frac{1}{(d_i - 1)^{1/2} M^{1/2}} + \frac{\sigma_i}{M} \pbb{\frac{1}{\nu_i(A)^2} + \frac{1}{(d_i - 1)^2}}\,.
\end{equation*}
By \eqref{nuA_nuLA} and $M \asymp (1 + \phi) K$, we have
\begin{equation*}
u(d_i) \;=\; \frac{\sigma_i}{\phi^{1/2} \theta(d_i)} (1 - d_i^{-2}) \;\leq\; C K^{2 \delta} \frac{1}{(d_i - 1)^{1/2} M^{1/2}}\,,
\end{equation*}
from which we deduce \eqref{tA gen} also in the case $i \notin S(A)$.

\subsubsection*{(c) $i \neq j$ and $i \notin A$ or $j \notin A$}
From cases (a) and (b) (i.e.\ \eqref{tR-A gen} and \eqref{tA gen}), combined with the estimate
\begin{equation*}
\absb{\scalar{\f v_i}{P_A \f v_j}}^2 \;\leq\; \scalar{\f v_i}{P_A \f v_i} \scalar{\f v_j}{P_A \f v_j}\,,
\end{equation*}
we find, assuming $i \notin A$ or $j \notin A$, that \eqref{vi_vj_result 2} holds with an additional factor $K^{2 \delta}$ multiplying the right-hand side.

\subsubsection*{(d) $i \neq j$ and $i,j \in A$}
We now deal with the last remaining case by using the splitting
\begin{equation} \label{vAv split 2}
\scalar{\f v_i}{P_A \f v_j} \;=\; \scalar{\f v_i}{P_{S(A)} \f v_j} + \scalar{\f v_i}{P_{A \setminus S(A)} \f v_j}\,.
\end{equation}
The goal is to show that
\begin{equation} \label{goal_case_e}
\absb{\scalar{\f v_i}{P_A \f v_j}} \;\prec\; \frac{K^{2 \delta}}{(d_i - 1)^{1/4} (d_j - 1)^{1/4} M^{1/2}} + 
K^{2 \delta} \frac{\sqrt{\sigma_i \sigma_j}}{M} \pbb{\frac{1}{\nu_i(A)} + \frac{1}{d_i - 1}} \pbb{\frac{1}{\nu_j(A)} + \frac{1}{d_j - 1}}\,.
\end{equation}
Note that here $\sigma_i \asymp \sigma_j \asymp 1 + \phi^{1/2}$.
We consider the four cases (i) $i,j \in S(A)$, (ii) $i \in S(A)$ and $j \notin S(A)$, (iii) $i \notin S(A)$ and $j \in S(A)$, and (iv) $i,j \notin S(A)$.

Consider first the case (i). The first term of \eqref{vAv split 2} is bounded using Proposition \ref{prop: outl1} combined with $\nu_i(A) \asymp \nu_i(S(A))$ and $\nu_j(A) \asymp \nu_j(S(A))$. The second term of \eqref{vAv split 2} is bounded using \eqref{vi_vj_result 2} from case (c) combined with $\nu_i(A) \leq C \nu_i(A \setminus S(A))$ and $\nu_j(A) \leq C \nu_j(A \setminus S(A))$. This yields \eqref{goal_case_e} for $\scalar{\f v_i}{P_A \f v_j}$ in the case (i).

Next, consider the case (ii). For the first term of \eqref{vAv split 2} we use the estimates
\begin{equation} \label{basic_case_ii}
\nu_i(S(A)) \;\asymp\; \nu_i(A)\,, \qquad 
\nu_j(A) \;\leq\; C (d_j - 1)^{-1/2} K^{-1/2 + \delta} \;\leq\; C \nu_j(S(A))\,, \qquad
\nu_i(A) \;\leq\; C \abs{d_i - d_j}\,.
\end{equation}
Thus we get from \eqref{vi_vj_result}
\begin{align}
\absb{\scalar{\f v_i}{P_{S(A)} \f v_j}} &\;\prec\; \frac{1 + \phi^{1/2}}{M \nu_i(S(A)) \nu_j(S(A))} + \frac{1 + \phi^{1/2}}{M \nu_j(S(A)) (d_i - 1)} + \frac{(d_i - 1)^{1/2}}{M^{1/2} \abs{d_i - d_j}} 
\notag \\ \label{case_ii_1}
&\;\leq\; \frac{1 + \phi^{1/2}}{M \nu_i(A) \nu_j(A)} + \frac{1 + \phi^{1/2}}{M \nu_j(A) (d_i - 1)} + \frac{(d_i - 1)^{1/2}}{M^{1/2} \abs{d_i - d_j}}\,.
\end{align}
In order to estimate the last term, we first assume that $d_j \leq d_i$ and $d_i - 1 \leq 2 \abs{d_i - d_j}$. Then we find
\begin{equation} \label{case_ii_2}
\frac{(d_i - 1)^{1/2}}{M^{1/2} \abs{d_i - d_j}} \;\leq\; \frac{2}{M^{1/2} (d_i - 1)^{1/2}} \;\leq\; \frac{2}{M^{1/2} (d_i - 1)^{1/4} (d_j - 1)^{1/4}}\,.
\end{equation}
Conversely, if $d_i \leq d_j$ or $d_i - 1 \geq 2 \abs{d_i - d_j}$, we have $d_i - 1 \leq 2 (d_j - 1)$. Therefore, using \eqref{basic_case_ii} and the estimate $M \asymp (1 + \phi) K$, we get
\begin{equation} \label{case_ii_3}
\frac{(d_i - 1)^{1/2}}{M^{1/2} \abs{d_i - d_j}} \;\leq\; \frac{C (d_j - 1)^{1/2}}{M^{1/2} \nu_i(A)} \;\leq\; K^\delta \frac{1 + \phi^{1/2}}{M \nu_i(A) \nu_j(A)}\,.
\end{equation}
Putting \eqref{case_ii_1}, \eqref{case_ii_2}, and \eqref{case_ii_3} together, we may estimate the first term of \eqref{vAv split 2} in the case (ii) as
\begin{equation} \label{vAvcaseii}
\absb{\scalar{\f v_i}{P_{S(A)} \f v_j}} \;\prec\; \frac{1 + \phi^{1/2}}{M \nu_i(A) \nu_j(A)} + K^\delta \frac{1 + \phi^{1/2}}{M \nu_j(A) (d_i - 1)} + \frac{1}{M^{1/2} (d_i - 1)^{1/4} (d_j - 1)^{1/4}}\,.
\end{equation}

For the second term of \eqref{vAv split 2} in the case (ii) we use the estimates
\begin{equation*}
\nu_i(A \setminus S(A)) \;\asymp\; \nu_i(A)\,, \qquad 
\nu_j(A) \;\leq\; C \nu_j(A \setminus S(A)) \;\leq\; C (d_j - 1)^{-1/2} K^{-1/2 + \delta} \,, \qquad
\nu_i(A) \;\leq\; C \abs{d_i - d_j}\,.
\end{equation*}
Thus we get from case (c) that
\begin{equation*}
\absb{\scalar{\f v_i}{P_{A \setminus S(A)} \f v_j}} \;\leq\; \frac{1+ \phi^{1/2}}{M \nu_i(A)} \pbb{\frac{1}{\nu_j(A)} + \frac{1}{d_j - 1}} + \frac{(d_j - 1)^{1/2}}{M^{1/2} \abs{d_i - d_j}}\,,
\end{equation*}
where the last term is bounded by $K^\delta \frac{1 + \phi^{1/2}}{M \nu_i(A) \nu_j(A)}$. Recalling \eqref{vAvcaseii}, we find \eqref{goal_case_e} in the case (ii). The case (iii) is dealt with in the same way.

What remains therefore is case (iv). For the first term of \eqref{vAv split 2} we use the estimates
\begin{equation*}
\nu_i(A) \;\leq\; C \nu_i(S(A))\,, \qquad \nu_j(A) \;\leq\; C \nu_j(S(A))\,.
\end{equation*}
Thus we get from \eqref{vi_vj_result} that
\begin{equation*}
\scalar{\f v_i}{P_{S(A)} \f v_j} \;\leq\; \frac{1 + \phi^{1/2}}{M \nu_i(A) \nu_j(A)}\,.
\end{equation*}
For the second term of \eqref{vAv split 2} we use the estimates
\begin{equation*}
\nu_i(A) \;\leq\; C \nu_i(A \setminus S(A)) \;\leq\; C (d_i - 1)^{-1/2} K^{-1/2 + \delta}\,, \qquad
\nu_j(A) \;\leq\; C \nu_j(A \setminus S(A)) \;\leq\; C (d_j - 1)^{-1/2} K^{-1/2 + \delta}\,.
\end{equation*}
Therefore we get from case (c) that
\begin{equation*}
\scalar{\f v_i}{P_{A \setminus S(A)} \f v_i}
\;\prec\; \frac{d_i - 1}{1 + \phi^{1/2}} + \frac{1}{(d_i - 1)^{1/2} M^{1/2}} + \frac{1 + \phi^{1/2}}{M} \pbb{\frac{1}{\nu_i(A \setminus S(A))^2} + \frac{1}{(d_i - 1)^2}} \;\leq\; C K^{2 \delta} \frac{1 + \phi^{1/2}}{M \nu_i(A)^2}\,,
\end{equation*}
and a similar estimate holds for $\scalar{\f v_j}{P_{A \setminus S(A)} \f v_j}$. Thus we conclude that
\begin{equation*}
\absb{\scalar{\f v_i}{P_{A \setminus S(A)} \f v_j}} \;\leq\; \scalar{\f v_i}{P_{A \setminus S(A)} \f v_i}^{1/2} \, \scalar{\f v_j}{P_{A \setminus S(A)} \f v_j}^{1/2} \;\leq\; C K^{2 \delta} \frac{1 + \phi^{1/2}}{M \nu_i(A) \nu_j(A)}\,,
\end{equation*}
which is \eqref{goal_case_e}. This concludes the analysis of case (iv), and hence of case (d).

\subsubsection*{Conclusion of the proof}
Putting the cases (a)--(d) together, we have proved that \eqref{vi_vj_result 2} holds for arbitrary $i,j$ with an additional factor $K^{2 \delta}$ multiplying the error term on the right-hand side. Since $\delta > 0$ can be chosen arbitrarily small, \eqref{vi_vj_result 2} follows. This concludes the proof of Proposition \ref{prop: outl2}.
\end{proof}

\section{Non-outlier eigenvectors} \label{sec:bulk}

In this section we focus on the non-outlier eigenvectors $\f \xi_a$, $a \notin \cal O$, as well as outlier eigenvectors close to the bulk spectrum. We derive isotropic delocalization bounds for $\f \xi_a$ and establish the asymptotic law of the generalized components of $\f \xi_a$. We also use the former result to complete the proof of Theorem \ref{thm: outlier eigenvectors} on the outlier eigenvectors.

In Section \ref{sec: deloc_bounds} we derive isotropic delocalization bounds on $\f \xi_a$ for $\dist\{d_a, [-1,1]\} \leq 1 + K^{-1/3 + \tau}$. In Section \ref{sec72} we use these bounds to prove Theorem \ref{thm: outlier eigenvectors} and to complete the proof of Theorem \ref{thm: non-outlier bound} started in Section \ref{sec:outliers}. Next, in Section \ref{sec73} we derive the law of the generalized components of $\f \xi_a$ for $a \notin \cal O$. This argument requires two tools as input: level repulsion (Proposition \ref{prop: level repulsion}) and quantum unique ergodicity (see Section \ref{sec: uncorr}) of the eigenvectors $\f\zeta_b$ of $H$ (Proposition \ref{prop:xie}). Both are explained in detail and proved below.

\subsection{Bound on the spectral projections in the neighbourhood of the bulk spectrum} \label{sec: deloc_bounds}

We first consider eigenvectors near the right edge of the bulk spectrum. Recall the typical distance from $\mu_a$ to the spectral edges, denoted by $\kappa_a$ and defined in \eqref{def_kappa_a}.
\begin{proposition}[Eigenvectors near the right edge] \label{prop: right bulk}
Fix $\tau \in (0,1/3)$.
For $a \in \qq{s_+ + 1, (1 - \tau)K}$ we have
\begin{equation} \label{bulk_right_1}
\scalar{\f v_i}{\f \xi_a}^2 \;\prec\; \frac{1}{M} + \frac{\sigma_i}{M (\abs{d_i - 1}^2 + \kappa_a)}\,.
\end{equation}
Moreover, if $a \in \qq{1, s_+}$ satisfies $d_a \leq 1 + K^{-1/3 + \tau}$ then
\begin{equation} \label{bulk_right_2}
\scalar{\f v_i}{\f \xi_a}^2 \;\prec\; K^{3 \tau} \pbb{\frac{1}{M} + \frac{\sigma_i}{M (\abs{d_i - 1}^2 + \kappa_a)}}\,.
\end{equation}
\end{proposition}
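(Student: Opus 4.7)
The plan is to invoke the standard spectral-projection bound
\begin{equation*}
\scalar{\f v_i}{\f \xi_a}^2 \;\leq\; 2\eta \, \im \scalar{\f v_i}{\wt G(E_a + \ii \eta) \f v_i}\,,
\end{equation*}
valid whenever $\abs{\mu_a - E_a} \leq \eta$, and then to estimate the right-hand side using the identities from Lemma \ref{lem:linalg} together with the isotropic laws already established for $G$ and $W$. First I would pick a deterministic $E_a$ near $\mu_a$ and an $\eta$ just above the rigidity scale for $\mu_a$. For \eqref{bulk_right_1}, rigidity follows from Corollary \ref{cor: interlacing} combined with Theorem \ref{thm: cov-rig}, permitting $E_a = \gamma_{a - s_+}$ and $\eta = K^{-2/3 + \omega} a^{-1/3}$ for a small $\omega > 0$. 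For \eqref{bulk_right_2}, rigidity comes from Theorem \ref{thm: outlier locations}, and I would pick $E_a$ near $\theta(d_a)$ with $\eta \asymp K^{-2/3 + 3\tau}$.

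I would then split the estimate according to whether $i \notin \cal R$ or $i \in \cal R$. If $i \notin \cal R$, then $\Sigma^{1/2} \f v_i = \f v_i$ and $V^* \f v_i = 0$, so \eqref{pert2} yields
\begin{equation*}
\scalar{\f v_i}{\wt G(z) \f v_i} \;=\; \scalar{\f v_i}{G(z) \f v_i} - \phi^{1/2} z \sum_{k, l \in \cal R} \scalar{\f v_k}{G(z) \f v_i} \qb{(D^{-1} + W(z))^{-1}}_{kl} \scalar{\f v_l}{G(z) \f v_i}\,.
\end{equation*}
By Theorem \ref{thm: IMP gen}, each off-diagonal entry $\scalar{\f v_k}{G \f v_i}$ with $k \in \cal R$ is $O_\prec(\epsilon_G)$ for $\epsilon_G^2 \asymp \im m_{\phi^{-1}}/(M\eta)$, while $\scalar{\f v_i}{G \f v_i} \approx m_{\phi^{-1}}(z)$. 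Combined with $\norm{(D^{-1} + W)^{-1}} \prec (\kappa+\eta)^{-1/2}$ (which follows from $W \approx w_\phi(z) I$ via Corollary \ref{cor:extension}), multiplication by $\eta$ produces $\scalar{\f v_i}{\f \xi_a}^2 \prec 1/M$, yielding the first term in both \eqref{bulk_right_1} and \eqref{bulk_right_2}.

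For $i \in \cal R$ I would instead apply identity \eqref{pert3}, which gives
\begin{equation*}
\scalar{\f v_i}{\wt G(z) \f v_i} \;=\; \frac{1}{\phi^{1/2} z} \pbb{d_i^{-1} - \frac{\sigma_i}{d_i^2} \qb{(D^{-1} + W(z))^{-1}}_{ii}}\,.
\end{equation*}
Writing $W = w_\phi(z) I + \Delta$ with $\norm{\Delta} \prec \epsilon_W$ from Corollary \ref{cor:extension}, and expanding the resolvent around the diagonal matrix $D^{-1} + w_\phi(z) I$, I would obtain $\qb{(D^{-1}+W)^{-1}}_{ii} = \alpha_i^{-1} + O_\prec(\epsilon_W / \abs{\alpha_i}^2)$ where $\alpha_i \deq d_i^{-1} + w_\phi(z)$. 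Since $w_\phi(z) = -1 + O(\sqrt{\kappa+\eta})$ near $\gamma_+$, we have $\abs{\alpha_i} \asymp \abs{d_i - 1} + \sqrt{\kappa+\eta}$ and $\im(\alpha_i^{-1}) \asymp \sqrt{\kappa+\eta}/\abs{\alpha_i}^2$. Taking the imaginary part, using $\abs{z} \asymp \max(\phi^{1/2}, \phi^{-1/2})$ and $M \asymp (1 + \phi)K$, and multiplying by $\eta$ yields
\begin{equation*}
\eta \im \scalar{\f v_i}{\wt G(z) \f v_i} \;\prec\; \frac{K^\omega \sigma_i}{M\pb{(d_i - 1)^2 + \kappa_a}}\,,
\end{equation*}
which matches the second term in \eqref{bulk_right_1}. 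The bound \eqref{bulk_right_2} will follow from an analogous calculation with $\eta \asymp K^{-2/3 + 3\tau}$, producing the advertised $K^{3\tau}$ factor.

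The main obstacle will be controlling the resolvent expansion for $i \in \cal R$ in the regime $\abs{d_i - 1} \lesssim \sqrt{\kappa+\eta}$: here $\abs{\alpha_i}$ is only of order $\epsilon_W$, so I must track cancellations carefully to confirm that the first-order correction from $\Delta$ contributes only an additive error of the same order as the main term rather than amplifying it. Secondarily, the coordinated choice of $\eta$ is delicate, since it must simultaneously exceed the rigidity scale for $\mu_a$, lie within the domain of Corollary \ref{cor:extension}, and satisfy $\kappa(E_a) + \eta \asymp \kappa_a$, so that the resulting bound is expressed in terms of the deterministic quantity $\kappa_a$ rather than the choice of $\eta$.
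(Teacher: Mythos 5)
Your overall strategy matches the paper's: bound $\scalar{\f v_i}{\f \xi_a}^2$ by $\eta \im \wt G_{\f v_i \f v_i}$, use \eqref{pert3} to reduce to a resolvent expansion of $(D^{-1}+W(z))^{-1}$ around $(D^{-1}+w_\phi(z))^{-1}$, and estimate the result via Lemma \ref{lemma: w}. You also correctly identify that the constraints on $\eta$ (above the rigidity scale, inside the domain of validity of the isotropic laws, and with $\kappa(E_a)+\eta\asymp\kappa_a$) have to be reconciled; the paper does this cleanly by fixing the spectral parameter as $z=\mu_a+\ii\eta$ with $\eta$ defined \emph{implicitly} by $\im w_\phi(\mu_a+\ii\eta)=K^{-1+6\epsilon}\eta^{-1}$, which simultaneously sets $\eta$ to the correct scale in both regimes $\kappa\lesssim\eta$ and $\kappa\gg\eta$. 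Two smaller structural differences from the paper are that you treat $i\notin\cal R$ separately via \eqref{pert2} (the paper handles all $i$ uniformly via \eqref{pert3} by augmenting $\cal R$ to $\wh{\cal R}\deq\cal R\cup\{i\}$ and sending the artificial $\wh d_i\to 0$, which is somewhat cleaner since every bound goes through the single quantity $W$), and that you use a deterministic $E_a$ with $\abs{\mu_a-E_a}\leq\eta$ rather than $E=\mu_a$; both variations are fine.

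The genuine gap is in the key lower bound you assert, $\abs{\alpha_i}\asymp\abs{d_i-1}+\sqrt{\kappa+\eta}$ with $\alpha_i\deq d_i^{-1}+w_\phi(z)$. First, your diagnosis of the difficulty in the regime $\abs{d_i-1}\lesssim\sqrt{\kappa+\eta}$ is misstated: $\abs{\alpha_i}\geq\im w_\phi\asymp\sqrt{\kappa+\eta}$ always, and by the choice of $\eta$ one has $\norm{W-w_\phi}\ll\im w_\phi$, so the resolvent expansion never degenerates; the real delicacy is elsewhere. The issue is that $\re\alpha_i=(d_i^{-1}-1)+(\re w_\phi+1)$ can vanish by cancellation when $d_i>1$, and then one must fall back on $\im\alpha_i=\im w_\phi$. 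When $E\le\gamma_+$ (the case \eqref{bulk_right_1}), Lemma \ref{lemma: w} gives $\abs{\re w_\phi+1}\lesssim \im w_\phi$, so at the cancellation point one still has $\abs{d_i-1}\lesssim\im w_\phi$ and the claimed lower bound survives. But when $E>\gamma_+$ (the case \eqref{bulk_right_2}), one has $\abs{\re w_\phi+1}\asymp\sqrt{\kappa+\eta}$ but $\im w_\phi\asymp\eta/\sqrt{\kappa+\eta}$, and the ratio $\abs{\re w_\phi+1}/\im w_\phi\asymp(\kappa+\eta)/\eta$ can be as large as $K^{O(\tau)}$; the clean $\asymp$ is therefore \emph{false} in the outlier regime, and must be replaced by $\abs{1+d_iw_\phi}\geq c\pb{K^{-2\delta}\abs{d_i-1}+\im w_\phi}$ with $\delta=\tau$, proved via the elementary inequality $(x-y)_++z\geq\frac{x}{3M}+\frac z3$ whenever $y\leq Mz$ (the paper's \eqref{1+dw_bound}). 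It is precisely this $K^{-2\delta}$ degradation, together with the larger $\eta$, that produces the $K^{3\tau}$ factor in \eqref{bulk_right_2}; attributing it solely to the larger choice of $\eta$, as you do, undercounts the loss. You should also be a little more careful with the \emph{upper} bound implicit in your $\asymp$: for $d_i$ near $0$ (or negative and near $-\phi^{-1/2}$) the quantity $\abs{\alpha_i}$ diverges, so one should really work with $1+d_iw_\phi=d_i\alpha_i$ throughout, as the paper does.
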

Proposition \ref{prop: right bulk} has a close analogue for the left edge of the bulk spectrum, which holds under the additional condition $\abs{\phi - 1} \geq \tau$; we omit its detailed statement.

%Proposition \ref{prop: right bulk} has the following analogue for the left edge of the bulk spectrum.
%\begin{proposition}[Eigenvectors near the left edge] \label{prop: left bulk}
%Fix $\tau \in (0,1/3)$ and suppose that $\abs{\phi - 1} \geq \tau$.
%For $a \in \qq{\tau K, K - s_-}$ we have
%\begin{equation} \label{bulk_left_1}
%\scalar{\f v_i}{\f \xi_a}^2 \;\prec\; \frac{1}{M} + \frac{\sigma_i}{M (\abs{d_i + 1}^2 + \kappa_a)}\,.
%\end{equation}
%Moreover, if $a \in \qq{K - s_- + 1, K}$ satisfies $d_a \geq -1 - K^{-1/3 + \tau}$ then
%\begin{equation} \label{bulk_left_2}
%\scalar{\f v_i}{\f \xi_a}^2 \;\prec\; K^{3 \tau} \pbb{\frac{1}{M} + \frac{\sigma_i}{M (\abs{d_i + 1}^2 + \kappa_a)}}\,.
%\end{equation}
%\end{proposition}
%The proofs of Propositions \ref{prop: right bulk} and \ref{prop: left bulk} are analogous, and we only prove the former.

\begin{proof}[Proof of Proposition \ref{prop: right bulk}]
Suppose first that $i \in \cal R$. Let $\epsilon > 0$ and set $\omega \deq \epsilon / 2$. Using \eqref{bound: Qij iso}, Remark \ref{rem:all_z}, Theorem \ref{thm: outlier locations}, and  \eqref{rigidity1}, we choose a high-probability event $\Xi$ satisfying \eqref{Xi_bulk_edge}, \eqref{Xi_W}, and
\begin{equation} \label{Xi_outl_edge}
\ind{\Xi} \abs{\mu_i - \theta(d_i)} \;\leq\; (d_i - 1)^{1/2} K^{-1/2 + \epsilon} \qquad (1 + K^{-1/3} \leq d_i \leq 1 + K^{-1/3 + \tau})\,.
\end{equation}
For the following we fix a realization $H \in \Xi$.
We choose the spectral parameter $z = \mu_a + \eta$, where $\eta > 0$ is the smallest (in fact unique) solution of the equation $\im w_\phi(\mu_a + \ii \eta) = K^{-1 + 6 \epsilon} \eta^{-1}$. Hence \eqref{Xi_W} reads
\begin{equation} \label{W-w_bulk}
\norm{W(z) - w_\phi(z)} \;\leq\; \frac{K^{2 \epsilon}}{K \eta}\,.
\end{equation}
Abbreviating $\kappa \equiv \kappa(\mu_a)$, we find from \eqref{im m gamma} that
\begin{equation} \label{eta_case1}
\eta \;\asymp\; \frac{K^{6\epsilon}}{K \sqrt{\kappa} + K^{2/3 + 2\epsilon}} \qquad (\mu_a \leq \gamma_+ + K^{-2/3 + 4 \epsilon})
\end{equation}
and
\begin{equation} \label{eta_case2}
\eta \;\asymp\; K^{-1/2 + 3 \epsilon} \kappa^{1/4} \qquad (\mu_a \geq \gamma_+ + K^{-2/3 + 4 \epsilon})\,.
\end{equation}

Armed with these definitions, we may begin the estimate of $\scalar{\f v_i}{\f \xi_a}^2$.
The starting point is the bound
\begin{equation} \label{y_bound_G}
\scalar{\f v_i}{\f \xi_a}^2 \;\leq\; \eta \im \wt G_{\f v_i \f v_i}(z)\,,
\end{equation}
which follows easily by spectral decomposition. Since $i \in \cal R$, we get from \eqref{pert3}, omitting the arguments $z$ for brevity,
\begin{align}
\phi^{1/2} z \, \wt G_{\f v_i \f v_i} &\;=\;  \frac{1}{d_i} - \frac{\sigma_i}{d_i^2} \pbb{\frac{1}{D^{-1} + W}}_{ii}
\notag \\ \label{wtG_expanded}
&\;=\;\frac{1}{d_i} - \frac{\sigma_i}{d_i^2}\qBB{\frac{1}{d_i^{-1} + w_\phi} + \frac{1}{(d_i^{-1} + w_\phi)^2} \pbb{\p{w_\phi - W} + \p{w_\phi - W} \frac{1}{D^{-1} + W} \p{w_\phi - W}}_{ii}}\,,
\end{align}
where the last step follows from a resolvent expansion as in \eqref{resolvent expansion}. We estimate the error terms using
\begin{equation*}
\min_j \abs{d_j^{-1} + w_\phi} \;\geq\; \im w_\phi \;=\; \frac{K^{6 \epsilon}}{K \eta} \;\gg\; \frac{K^{2 \epsilon}}{K \eta} \;\geq\; \norm{W - w_\phi}\,,
\end{equation*}
where we used the definition of $\eta$ and \eqref{W-w_bulk}. Hence a resolvent expansion yields
\begin{equation*}
\normbb{\frac{1}{D^{-1} + W}} \;\leq\; \frac{2}{\im w_\phi} \;=\; 2 K^{1 - 6 \epsilon} \eta\,.
\end{equation*}
We therefore get from \eqref{wtG_expanded} that
\begin{equation} \label{wtG_expanded 2}
\phi^{1/2} z \, \wt G_{\f v_i \f v_i} \;=\; \frac{w_\phi - \phi^{1/2}}{1 + d_i w_\phi} + O \pbb{\frac{\sigma_i}{\abs{1 + d_i w_\phi}^2} \frac{K^{2 \epsilon}}{K \eta}}\,.
\end{equation}

Next, we claim that for any fixed $\delta \in [0, 1/3-\epsilon)$ we have the lower bound
\begin{equation} \label{1+dw_bound}
\abs{1 + d w_\phi} \;\geq\; c \pb{K^{-2 \delta} \abs{d - 1} + \im w_\phi}
\end{equation}
whenever $\mu_a \in [\theta(0), \theta(1 + K^{-1/3 + \delta + \epsilon})]$.
To prove \eqref{1+dw_bound}, suppose first that $\abs{d - 1} \geq 1/2$. By \eqref{re m gamma}, there exists a constant $c_0 > 0$ such that for $\kappa \leq c_0$ we have $\abs{\re w_\phi + 1} \leq 1/4$. Thus we get, for $\kappa \leq c_0$,
\begin{equation*}
\abs{1 + d w_\phi} \;\geq\; \abs{1 + d \re w_\phi} \;\asymp\; \abs{d-1} + \im w_\phi\,,
\end{equation*}
where we used that $\im w_\phi \leq C$ by \eqref{bounds on mg}. Moreover, if $\kappa \geq c_0$ we find from \eqref{im m gamma} that $\im w_\phi \geq c$, from which we get
\begin{equation*}
\abs{1 + d w_\phi} \;\geq\; \abs{1 + d \re w_\phi} + \abs{d} \im w_\phi \;\geq\; c (1 + \abs{d}) \;\asymp\; \abs{d-1} + \im w_\phi\,,
\end{equation*}
where in the second step we used $\abs{\re w_\phi} \leq C$ as follows from \eqref{bounds on mg}. This concludes the proof of \eqref{1+dw_bound} for the case $\abs{d - 1} \geq 1/2$.

Suppose now that $\abs{d - 1} \leq 1/2$. Then we get
\begin{equation*}
\abs{1 + d w_\phi} \;\asymp\; \abs{1 + d \re w_\phi} + \abs{d} \im w_\phi \;\geq\; \pb{\abs{d - 1} - \abs{\re w_\phi + 1}}_+ + \im w_\phi\,.
\end{equation*}
We shall estimate this using the elementary bound
\begin{equation} \label{x-y-z}
(x-y)_+ + z \;\geq\; \frac{x}{3M} + \frac{z}{3} \qquad \text{if } y \leq M z \text{ for some } M \geq 1\,.
\end{equation}
For $\mu_a \in [\theta(0), \theta(1)]$ we get from \eqref{x-y-z} with $M = C$, recalling \eqref{im m gamma} and \eqref{re m gamma}, that $\abs{1 + d w_\phi} \geq c(\abs{d - 1} + \im w_\phi)$. By a similar argument, for $\mu_a \in [\theta(1), \theta(1 + K^{-1/3 + \delta + \epsilon})]$ we set $M = K^{2 \delta}$ and get \eqref{1+dw_bound} using \eqref{eta_case1} and \eqref{eta_case2}. This concludes the proof of \eqref{1+dw_bound}.

Going back to \eqref{y_bound_G}, we find using \eqref{wtG_expanded 2}
\begin{align}
\scalar{\f v_i}{\f \xi_a}^2 &\;\leq\; \eta \phi^{-1/2} \im \pbb{\frac{w_\phi - \phi^{1/2}}{z (1 + d_i w_\phi)}} + \frac{C \sigma_i}{\abs{1 + d_i w_\phi}^2} \frac{K^{2 \epsilon}}{K \eta}
\notag \\ \label{ya_estimate1}
&\;=\; \frac{\eta^2}{\phi^{1/2} \abs{z}^2} \re \frac{\phi^{1/2} - w_\phi}{1 + d_i w_\phi}
+ \frac{\eta \mu_a}{\phi^{1/2} \abs{z}^2} \im \frac{w_\phi - \phi^{1/2}}{1 + d_i w_\phi}
+ \frac{C \sigma_i}{\phi^{1/2} \abs{z} \abs{1 + d_i w_\phi}^2} \frac{K^{2 \epsilon}}{K}\,.
\end{align}
Using $\abs{z} \asymp \mu_a \asymp \phi^{-1/2} + \phi^{1/2}$ and \eqref{1+dw_bound}, we estimate the first term on the right-hand side of \eqref{ya_estimate1} as
\begin{equation*}
\frac{\eta^2}{\phi^{1/2} \abs{z}^2} \re \frac{\phi^{1/2} - w_\phi}{1 + d_i w_\phi} \;\leq\; \frac{\eta^2}{(1 + \phi) \abs{1 + d_i w_\phi}} \;\leq\; 
\frac{\eta^2}{(1 + \phi) \im w_\phi} \;\leq\; C \frac{\eta^3 K}{1 + \phi} \;\leq\; C \frac{K^{12 \epsilon + 3 \delta}}{M}\,,
\end{equation*}
where in the last step we used that $\eta \leq K^{-2/3 + 4 \epsilon + \delta}$, as follows from \eqref{eta_case1} and \eqref{eta_case2}.

Next, we estimate the second term of \eqref{ya_estimate1} as
\begin{equation*}
\frac{\eta \mu_a}{\phi^{1/2} \abs{z}^2} \im \frac{w_\phi - \phi^{1/2}}{1 + d_i w_\phi} \;=\; \frac{\eta \mu_a}{\phi^{1/2} \abs{z}^2} \frac{\sigma_i \im w_\phi}{\abs{1 + d_i w_\phi}^2} \;\asymp\; \frac{\sigma_i \eta \im w_\phi}{(1 + \phi) \abs{1 + d_i w_\phi}^2} \;\leq\; \frac{C \sigma_i K^{6 \epsilon}}{M \abs{1 + d_i w_\phi}^2}\,.
\end{equation*}
We estimate the last term of \eqref{ya_estimate1} as
\begin{equation*}
\frac{C \sigma_i}{\phi^{1/2} \abs{z} \abs{1 + d_i w_\phi}^2} \frac{K^{2 \epsilon}}{K} \;\leq\; \frac{C \sigma_i K^{2 \epsilon}}{M \abs{1 + d_i w_\phi}^2}\,.
\end{equation*}
Putting all three estimates together, we conclude that
\begin{equation} \label{ya_estimate2}
\scalar{\f v_i}{\f \xi_a}^2 \;\leq\; \frac{K^{12 \epsilon + 3 \delta}}{M} + \frac{C \sigma_i K^{6 \epsilon}}{M \abs{1 + d_i w_\phi}^2}\,.
\end{equation}

In order to estimate the denominator of \eqref{ya_estimate2} from below using \eqref{1+dw_bound}, we need a suitable lower bound on $\im w_\phi(\mu_a + \ii \eta)$. First, if $a \geq s_+ + 1$ then we get from \eqref{Xi_bulk_edge}, Corollary \ref{cor: interlacing}, \eqref{eta_case1}, and \eqref{im m gamma} that
\begin{equation*}
\im w_\phi(\mu_a + \ii \eta) \;\geq\; c \sqrt{\kappa_a}\,,
\end{equation*}
in which case we get by choosing $\delta = 0$ in \eqref{1+dw_bound} that
\begin{equation} \label{bulk_right_1'}
\scalar{\f v_i}{\f \xi_a}^2 \;\leq\; \frac{K^{12 \epsilon}}{M} + \frac{C \sigma_i K^{6 \epsilon}}{M (\abs{d_i - 1}^2 + \kappa_a)}\,.
\end{equation}

Next, if $a \leq s_+$ satisfies $d_a \leq K^{-1/3 + \tau}$ we get from \eqref{eta_case1}, \eqref{eta_case2}, and \eqref{im m gamma} that
\begin{equation*}
\im w_\phi(\mu_a + \ii \eta) \;\geq\; c \sqrt{\eta} \;\geq\; c K^{-1/3 + 2 \epsilon}\,.
\end{equation*}
In this case we have $\mu_a \leq \theta(1 + K^{-1/3 + \tau + \epsilon})$ by \eqref{Xi_outl_edge}, so that setting $\delta = \tau$ in \eqref{1+dw_bound} yields
\begin{equation} \label{bulk_right_2'}
\scalar{\f v_i}{\f \xi_a}^2 \;\leq\; \frac{K^{12 \epsilon + 3 \tau}}{M} + \frac{C \sigma_i K^{6 \epsilon + 2 \tau}}{M (\abs{d_i - 1}^2 + \kappa_a)}\,.
\end{equation}
Since $\epsilon > 0$ was arbitrary, \eqref{bulk_right_1} and \eqref{bulk_right_2} follow from \eqref{bulk_right_1'} and \eqref{bulk_right_2'} respectively. 
This concludes the proof of Proposition \ref{prop: right bulk} in the case $i \in \cal R$.

Finally, the case $i \notin \cal R$ is handled by replacing $\cal R$ with $\wh {\cal R} \deq \cal R \cup \{i\}$ and using a limiting argument, exactly as after \eqref{vPv for R}.
\end{proof}

\subsection{Proof of Theorems \ref{thm: outlier eigenvectors} and \ref{thm: non-outlier bound}} \label{sec72}

We now have all the ingredients needed to prove Theorems \ref{thm: outlier eigenvectors} and \ref{thm: non-outlier bound}.

\begin{proof}[Proof of Theorem \ref{thm: non-outlier bound}]
The estimate \eqref{bulk_right} is an immediate corollary of \eqref{bulk_right_1} from Proposition \ref{prop: right bulk}. The estimate \eqref{bulk_left} is proved similarly (see also the remark following Proposition \ref{prop: right bulk}).
\end{proof}

\begin{proof}[Proof of Theorem \ref{thm: outlier eigenvectors}]
We prove Theorem \ref{thm: outlier eigenvectors} using Propositions \ref{prop: outl2}, \ref{prop: outl1}, and \ref{prop: right bulk}. First we remark that it suffices to prove that \eqref{vi_vj_result 2} holds for $A \subset \cal O$ satisfying $1 + K^{-1/3} \leq d_k \leq \tau^{-1}$ for all $k \in A$. Indeed, supposing this is done, we get the estimate
\begin{multline*}
\scalar{\f w}{P_A \f w} \;=\; \scalar{\f w}{Z_A \f w}
+ O_\prec \Biggl[
\sum_{i \in A} \frac{w_i^2}{M^{1/2}(d_i - 1)^{1/2}}
+ \sum_{i \in A} \frac{\sigma_i w_i^2}{M (d_i - 1)^2}
\\
+ \sum_{i = 1}^M \frac{\sigma_i w_i^2}{M \nu_i^2}
+
\scalar{\f w}{Z_A \f w}^{1/2} \pBB{\sum_{i \notin A} \frac{\sigma_i w_i^2}{M \nu_i^2}}^{1/2} \Biggr]\,,
\end{multline*}
from which Theorem \ref{thm: outlier eigenvectors} follows by noting that the second error term may be absorbed into the first, recalling that $\sigma_i \asymp 1 + \phi^{1/2}$ for $i \in A$, that $M \asymp (1 + \phi) K$, and that $d_i - 1 \geq K^{-1/3}$.

Fix $\epsilon > 0$. Note that there exists some $s \in [1, \abs{\cal R}]$ satisfying the following gap condition: for all $k$ such that $d_k > 1 + s K^{-1/3 + \epsilon}$ we have $d_k \geq 1 + (s+1) K^{-1/3 + \epsilon}$. The idea of the proof is to split $A = A_0 \sqcup A_1$, such that $d_k \leq 1 + s K^{-1/3 + \epsilon}$ for $k \in A_0$ and $d_k \geq 1 + (s + 1) K^{-1/3 + \epsilon}$ for $k \in A_1$. Note that such a splitting exists by the above gap property. Without loss of generality, we assume that $A_0 \neq \emptyset$ (for otherwise the claim follows from Proposition \ref{prop: outl2}).

It suffices to consider the six cases (a) $i,j \in A_0$, (b) $i \in A_0$ and $j \in A_1$, (c) $i \in A_0$ and $j \notin A$, (d) $i,j \in A_1$, (e) $i \in A_1$ and $j \notin A$, (f) $i,j \notin A$.

\subsubsection*{(a) $i,j \in A_0$}
We split
\begin{equation} \label{split near bulk}
\scalar{\f v_i}{P_A \f v_j} \;=\; \scalar{\f v_i}{P_{A_0} \f v_j} + \scalar{\f v_i}{P_{A_1} \f v_j}\,.
\end{equation}
We apply Cauchy-Schwarz and Proposition \ref{prop: right bulk} to the first term, and Proposition \ref{prop: outl2} to the second term. Using the above gap condition, we find
\begin{align*}
\absb{\scalar{\f v_i}{P_A \f v_j}} &\;\prec\; \frac{K^{3 \epsilon} \sqrt{\sigma_i \sigma_j}}{M \abs{d_i - 1} \abs{d_j - 1}} + \frac{\sqrt{\sigma_i \sigma_j}}{M} \pbb{\frac{1}{\nu_i} + \frac{1}{d_i - 1}} \pbb{\frac{1}{\nu_j} + \frac{1}{d_j - 1}}
\\
&\;=\; \delta_{ij}u(d_i)
+ K^{3 \epsilon} \, O \qBB{\frac{1}{(d_i - 1)^{1/4} (d_j - 1)^{1/4} M^{1/2}} + \frac{\sqrt{\sigma_i \sigma_j}}{M} \pbb{\frac{1}{\nu_i} + \frac{1}{d_i - 1}} \pbb{\frac{1}{\nu_j} + \frac{1}{d_j - 1}}}\,,
\end{align*}
where the last step follows from $d_i - 1 \leq C K^{-1/3 + \epsilon}$.

\subsubsection*{(b) $i \in A_0$ and $j \in A_1$}
For this case it is crucial to use the stronger bound \eqref{vi_vj_result} and not \eqref{vi_vj_result 2}. Hence, we need the non-overlapping condition \eqref{non-overlapping}. To that end, we assume first that \eqref{non-overlapping} holds with $\delta \deq \epsilon$. Thus, by the above gap assumption \eqref{non-overlapping} also holds for $A_1$. In this case we get from \eqref{split near bulk} and Propositions \ref{prop: outl1} and \ref{prop: right bulk} that
\begin{equation*}
\absb{\scalar{\f v_i}{P_A \f v_j}} \;\prec\; \frac{K^{3 \epsilon} \sqrt{\sigma_i \sigma_j}}{M \abs{d_i - 1} \abs{d_j - 1}} + \frac{\sqrt{\sigma_i \sigma_j}}{M} \pbb{\frac{1}{\nu_i} + \frac{1}{d_i - 1}} \pbb{\frac{1}{\nu_j} + \frac{1}{d_j - 1}} + \frac{(d_j - 1)^{1/2} \sqrt{\sigma_i}}{(1 + \phi^{1/4}) \abs{d_i - d_j} M^{1/2}}\,.
\end{equation*}
Clearly, the first two terms are bounded by the right-hand side of \eqref{vi_vj_result 2} times $K^{3 \epsilon}$. The last term is estimated as
\begin{equation*}
\frac{(d_j - 1)^{1/2} \sqrt{\sigma_i}}{(1 + \phi^{1/4}) \abs{d_i - d_j} M^{1/2}} \;\asymp\;
\frac{(d_j - 1)^{1/2}}{\abs{d_i - d_j} M^{1/2}} \;\leq\; \frac{1}{(d_i - 1)^{1/4} (d_j - 1)^{1/4} M^{1/2}}\,,
\end{equation*}
where we used that $d_i - 1 \leq d_j - 1 \leq C \abs{d_i - d_j}$ be the above gap condition. This concludes the proof in the case where the non-overlapping condition \eqref{non-overlapping} holds.

If \eqref{non-overlapping} does not hold, we replace $A_1$ with the smaller set $S(A_1)$ defined in Section \ref{sec: overlapping}. Then we proceed as above, except that we have to deal in addition with the term $\scalar{\f v_i}{P_{A_1 \setminus S(A_1)}}$. The details are analogous to those of Section \ref{sec: overlapping}, and we omit them here.

\subsubsection*{(c), (e), (f) $j \notin A$}
We use the splitting \eqref{split near bulk} and apply Cauchy-Schwarz and Proposition \ref{prop: right bulk} to the first term, and Proposition \ref{prop: outl2} to the second term. Since $\nu_j(A_1) \leq \abs{d_j - 1}$ in all cases, it is easy to prove that \eqref{split near bulk} is bounded by $K^{3 \epsilon}$ times the right-hand side of \eqref{vi_vj_result 2}.

\subsubsection*{(d) $i,j \in A_1$} From \eqref{split near bulk} and Propositions \ref{prop: right bulk} and \ref{prop: outl2} we get
\begin{multline*}
\scalar{\f v_i}{P_A \f v_j} \;=\; \delta_{ij}u(d_i)
\\
+ O_\prec \qBB{\frac{K^{3 \epsilon} \sqrt{\sigma_i \sigma_j}}{M \abs{d_i - 1} \abs{d_j - 1}} + \frac{1}{(d_i - 1)^{1/4} (d_j - 1)^{1/4} M^{1/2}} + \frac{\sqrt{\sigma_i \sigma_j}}{M} \pbb{\frac{1}{\nu_i} + \frac{1}{d_i - 1}} \pbb{\frac{1}{\nu_j} + \frac{1}{d_j - 1}}}\,,
\end{multline*}
From which we get \eqref{vi_vj_result 2} with the error term multiplied by $K^{3 \epsilon}$.

\subsubsection*{Conclusion of the proof}
We have proved that, for all $i,j \in \qq{1,M}$  and $A$ satisfying the assumptions of Theorem \ref{thm: outlier eigenvectors}, the estimate \eqref{vi_vj_result 2} holds with an additional factor $K^{3 \epsilon}$ multiplying the error term. Since $\epsilon$ was arbitrary, we get \eqref{vi_vj_result 2}. This concludes the proof.
\end{proof}

\subsection{The law of the non-outlier eigenvectors}\label{sec73}
For $a \leq K/2$ define
\begin{equation} \label{def_Delta}
\Delta_a \;\deq\; K^{-2/3} a^{-1/3}\,,
\end{equation}
the typical distance between $\lambda_{a+1}$ and $\lambda_a$. More precisely, the classical locations $\gamma_a$ defined in \eqref{def:gamma_alpha} satisfy $\gamma_a - \gamma_{a+1} \asymp \Delta_a$ for $a \leq K/2$.

We may now state the main result behind the proof of Theorem \ref{thm: eigenvector law}. Recall the definitions \eqref{def_alpha} of $\alpha_+$ and \eqref{def_r_pm} of $s_+$, the number of outliers to the right of the bulk spectrum. Recall also from \eqref{def_lambda} and \eqref{def_zeta} that $\{\lambda_a\}$ and $\{\f \zeta_a\}$ denote the eigenvalues and eigenvectors of $H = X X^*$.
\begin{proposition} \label{prop: bulk_law}
Let $s_+ + 1 \leq a \leq K^{1 - \tau} \alpha_+^3$ and define $b \deq a - s_+$.
Define the event
\begin{equation*}
\Omega \;\equiv\; \Omega_{a, b, \tau} \;\deq\; \hB{\abs{\mu_{b' + s_+} - \lambda_{b'}} \leq K^{-\tau/4} \Delta_a \txt{ for } \abs{b' - b} \leq 1} \cap \hB{\abs{\lambda_{b'} - \lambda_b} \geq K^{-\tau/6} \Delta_a \txt{ for } \abs{b' - b} = 1}\,.
\end{equation*}
Then
\begin{equation} \label{ind_Omega_component}
\ind{\Omega} \scalar{\f w}{\f \xi_a}^2 \;=\; \ind{\Omega} \absbb{\scalarbb{\sum_i \frac{\sqrt{\sigma_i} \, w_i}{d_i - 1} \, \f v_i}{\f \zeta_b}}^2 + O_\prec \qBB{\frac{K^{-1/3 + \tau/5} a^{1/3}}{\alpha_+} \sum_i \frac{\sigma_i w_i^2}{M (d_i - 1)^2}}\,. 
\end{equation}
\end{proposition}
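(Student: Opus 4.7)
The key identity is a residue formula. By the spectral decomposition of $\wt G$ one has $\scalar{\f w}{\f\xi_a}^2 = -\text{Res}_{z = \mu_a}\scalar{\f w}{\wt G(z)\f w}$. Inserting the decomposition \eqref{pert2} with $\tilde{\f w} \deq \Sigma^{-1/2}\f w$, the term $\scalar{\tilde{\f w}}{G(z)\tilde{\f w}}$ is regular at $\mu_a$, while by Lemma~\ref{lem:linalg}(i) the matrix $(D^{-1}+W(z))^{-1}$ has a simple pole at $\mu_a$ with residue $\f x_a\f x_a^*/\scalar{\f x_a}{W'(\mu_a)\f x_a}$, where $\f x_a$ is the unit null vector of $D^{-1}+W(\mu_a)$. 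This yields the exact formula
\begin{equation}\label{xi_residue_plan}
\scalar{\f w}{\f\xi_a}^2 \;=\; \phi^{1/2}\mu_a\,\frac{\scalar{\tilde{\f w}}{G(\mu_a)V\f x_a}^2}{\scalar{\f x_a}{W'(\mu_a)\f x_a}}\,,
\end{equation}
and the remainder of the proof consists in evaluating the three ingredients on the right-hand side asymptotically on the event $\Omega$.

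On $\Omega$, the gap $\abs{\mu_a - \lambda_b} \leq K^{-\tau/4}\Delta_a$ is much smaller than the distance $K^{-\tau/6}\Delta_a$ to the neighboring eigenvalues. Writing $W(z) = \phi^{1/2}\sum_k \lambda_k\f p_k\f p_k^*/(\lambda_k - z)$ and $W'(z) = \phi^{1/2}\sum_k \lambda_k\f p_k\f p_k^*/(\lambda_k - z)^2$ with $\f p_k \deq V^*\f\zeta_k$, both quantities evaluated at $z = \mu_a$ are dominated by the rank-one $k=b$ contributions; the regular remainders are approximated by $w_\phi(\mu_a)I_{\abs{\cal R}}$ and $w_\phi'(\mu_a)I_{\abs{\cal R}}$ via the isotropic local Marchenko-Pastur law (Lemma~\ref{lem: Qij iso}) at $z = \mu_a + \ii\eta$ with $\eta \asymp \Delta_a$, passed to the real axis by a Helffer-Sj\"ostrand-type argument (cf.\ Proposition~\ref{prop:extension}) exploiting the $\Omega$-gaps. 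Since $\mu_a$ lies near $\gamma_+$ in the range $a \leq K^{1-\tau}\alpha_+^3$, one has $w_\phi(\mu_a) = -1 + O(\sqrt{\kappa_a})$, so the null equation $(D^{-1}+W(\mu_a))\f x_a = 0$ reduces at leading order to $\text{diag}((d_i-1)/d_i)\f x_a \propto \f p$, giving $(\f x_a)_i \propto d_i\scalar{\f v_i}{\f\zeta_b}/(d_i-1)$ with $\scalar{\f x_a}{\f x_a} = 1$ fixing a proportionality constant $c$. The bilinear forms in \eqref{xi_residue_plan} are then computed by combining (a) the identity $W(z) = \phi^{1/2}(I + zV^*GV)$, which via the null equation yields the \emph{exact} relation $(V^*G(\mu_a)V\f x_a)_j = -\sigma_j(\f x_a)_j/(\phi^{1/2}\mu_a d_j)$ controlling the spike components of $G(\mu_a)V\f x_a$, and (b) the rank-one approximation $G(\mu_a) \approx \f\zeta_b\f\zeta_b^*/(\lambda_b - \mu_a)$ controlling the non-spike components. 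Unifying the two cases by the identity $\sqrt{\sigma_j}/(d_j - 1) = -1$ for $j \notin \cal R$ produces
\begin{equation*}
\scalar{\tilde{\f w}}{G(\mu_a)V\f x_a} \;\approx\; \frac{c}{\phi^{1/2}\lambda_b}\,\scalarbb{\sum_i \frac{\sqrt{\sigma_i}\,w_i}{d_i - 1}\f v_i}{\f\zeta_b}\,,\qquad \scalar{\f x_a}{W'(\mu_a)\f x_a} \;\approx\; \frac{c^2}{\phi^{1/2}\lambda_b}\,,
\end{equation*}
where the second approximation uses $\scalar{\f p}{\f x_a}^2/(\lambda_b - \mu_a)^2 = c^2/(\phi\lambda_b^2)$ from the null equation. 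Substituting into \eqref{xi_residue_plan} cancels $c^2$ and $\phi^{1/2}\lambda_b$ and reproduces the stated main term, up to a harmless factor $\mu_a/\lambda_b = 1 + O_\prec(M^{-1})$.

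The main obstacle is to track the cumulative error and produce the sharp relative bound $K^{-1/3 + \tau/5}a^{1/3}/\alpha_+$. Four sources contribute: (i) the isotropic law deviation $\norm{V^*G(\mu_a + \ii\eta)V - m_{\phi^{-1}} I}$, propagated through the denominator $(d_i - 1)^{-1}$ of $\f x_a$ with sensitivity $\alpha_+^{-1}$; (ii) the expansion $w_\phi(\mu_a) = -1 + O(\sqrt{\kappa_a})$ with $\sqrt{\kappa_a} \asymp K^{-1/3}a^{1/3}$, likewise amplified by $\alpha_+^{-1}$---this yields the dominant contribution to the error; (iii) the subleading $k \neq b$ contributions to $G(\mu_a)V\f x_a$ and $W'(\mu_a)$, controlled via QUE for $\{\f\zeta_k\}_{k\neq b}$ (the universality input for $H$, via Proposition~\ref{prop:extension} and the cited level-repulsion estimates) together with the $K^{-\tau/6}\Delta_a$-gap on $\Omega$; and (iv) the sticking correction $\abs{\mu_a - \lambda_b}$, controlled by Theorem~\ref{thm:sticking}. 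Optimizing the imaginary part $\eta$ of the spectral parameter to balance (i) against (iii) and collecting the errors produces the stated bound.
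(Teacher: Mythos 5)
Your approach is correct in outline but takes a genuinely different route from the paper. The paper also starts from the residue theorem, but avoids evaluating anything at the real point $\mu_a$: it defines $Y_a(\f w) = -\frac{1}{2\pi\ii}\oint_{\Gamma_a}\phi^{1/2}z\,\wt G_{\f w\f w}(z)\,\dd z$ over a circle $\Gamma_a$ of finite radius $\eta = K^{-\tau/5}\Delta_a$ centred at $\lambda_b$, reduces to $\f w = \f v_i$ with $i$ in (an extension of) $\cal R$ so that only the finite matrix $W$ enters via \eqref{pert3}, and then does a two-term resolvent expansion of $(D^{-1}+W(z))^{-1}$ around $D^{-1}+w_\phi(z_0)$ with $z_0 = \lambda_b + \ii\eta$; the residue of the linear term in $W$ produces the main contribution and the quadratic term is bounded using $\|(D^{-1}+W)^{-1}\|\leq C/\alpha_+$ (your error source (ii) in disguise). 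Your plan instead uses \eqref{pert2} and the exact null-vector residue formula \eqref{xi_residue_plan}, handling general $\f w$ directly, which means the high-dimensional $G(\mu_a)$ appears rather than just the finite block $W$, and the non-spike components must then be controlled separately by the rank-one approximation of $G$ at $\lambda_b$. The algebra checks out: the null equation giving $(\f x_a)_i \propto d_i(\f p_b)_i/(d_i-1)$, the exact identity for $(V^*G(\mu_a)V\f x_a)_j$, and the unifying observation $\sqrt{\sigma_j}/(d_j-1) = -1$ for $j\notin\cal R$ are all correct and elegantly recover the main term. The trade-off is that the paper's finite-radius contour keeps everything safely off the real axis where the isotropic law applies directly and $\dist(z,\sigma(H))\geq c\eta$ holds, whereas your plan needs to evaluate random resolvents at the real point $\mu_a$ inside the bulk; your appeal to Proposition~\ref{prop:extension} is a slight misstep here, since that proposition is restricted to $E\notin[\gamma_-,\gamma_+]$, and the correct device (which the paper uses implicitly via \eqref{W_w_split}) is simply the deterministic Lipschitz bound on $W$ minus its $\lambda_b$-pole, valid on $\Omega$ thanks to the gap condition. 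Similarly, invoking QUE for the $k\neq b$ contributions is overkill---rigidity plus the isotropic local law at $z_0 = \mu_a + \ii\eta$ suffice and are what the paper uses. Finally, the sharp error $K^{-1/3+\tau/5}a^{1/3}/\alpha_+$ is never actually derived; you correctly identify that the dominant contribution comes from $w_\phi(\mu_a+\ii\eta) = -1 + O(\sqrt{\kappa_a})$ amplified by $\alpha_+^{-1}$, but a full proof would have to carry this and the other three sources through the exact formula \eqref{xi_residue_plan} and optimize $\eta$, a computation that the paper's contour setup organizes somewhat more cleanly.
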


Informally, Proposition \ref{prop: bulk_law} expresses generalized components of the eigenvectors of $Q$ in terms of generalized components of eigenvectors of $H$, under the assumption that $\Omega$ has high probability. We first show how Proposition \ref{prop: bulk_law} implies Theorem \ref{thm: eigenvector law}. This argument requires two key tools. The first one is level repulsion, which, together with the eigenvalue sticking from Theorem \ref{thm:sticking}, will imply that $\Omega$ indeed has high probability. The second tool is quantum unique ergodicity (See Section \ref{sec: uncorr}) of the eigenvectors of $H$, which establishes the law of the generalized components of the eigenvectors of $H$.

The precise statement of level repulsion sufficient for our needs is as follows.

\begin{proposition}[Level repulsion] \label{prop: level repulsion}
Fix $\tau \in (0, 1)$. For any $\epsilon > 0$ there exists a $\delta > 0$ such that for all $a \leq K^{1 - \tau}$ we have
\begin{equation} \label{level repulsion}
\P \pb{\abs{\lambda_a - \lambda_{a+1}} \leq \Delta_a K^{-\epsilon}} \;\leq\; K^{-\delta}\,.
\end{equation}
\end{proposition}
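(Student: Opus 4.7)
My plan is to prove \eqref{level repulsion} by the standard three-step universality scheme: first establish the bound by a direct calculation in the Gaussian (real Wishart) case, then propagate it to arbitrary $X$ via a Dyson Brownian motion (DBM) interpolation together with a Green's function comparison. Since the target is merely a polynomial decay $K^{-\delta}$ rather than the sharp exponent $s^{\beta+1}$, there is considerable slack at each step.

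First I would handle the Gaussian case. When $X$ has i.i.d.\ centred real Gaussian entries of variance $(NM)^{-1/2}$, the joint density of the $K$ nontrivial eigenvalues of $H = XX^*$ is exactly the real Laguerre ensemble and carries a Vandermonde factor $\prod_{i<j}\abs{\lambda_i - \lambda_j}$. Combined with the rigidity $\abs{\lambda_{a} - \gamma_a} + \abs{\lambda_{a+1} - \gamma_a} \prec \Delta_a$ from Theorem \ref{thm: cov-rig}, integrating out all other variables and isolating the single factor $\abs{\lambda_a - \lambda_{a+1}}$ yields $\P\pa{\abs{\lambda_a - \lambda_{a+1}} \leq s\Delta_a} \leq C s^2$ for $s \in (0,1)$, which is much stronger than required.

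Next I would extend to arbitrary $X$ satisfying \eqref{cond on entries of X}--\eqref{moments of X-1} by the DBM plus Green's function comparison strategy. Couple $X$ to the Ornstein--Uhlenbeck flow $X_t \deq e^{-t/2}X + (1 - e^{-t})^{1/2} X^{\mathrm{G}}$, with $X^{\mathrm{G}}$ an independent Gaussian matrix of matching variance, and consider the eigenvalue process of $H_t = X_t X_t^*$. For $t = K^{-1 + \omega}$ with small $\omega > 0$, the local relaxation of DBM for sample covariance matrices (the covariance analogue of the Wigner argument of Erd\H{o}s--Schlein--Yau--Yin, using the isotropic local law of Theorem \ref{thm: IMP gen} and the rigidity of Theorem \ref{thm: cov-rig} as a priori inputs) shows that the local eigenvalue statistics of $H_t$ on scales $\gtrsim \Delta_a$ match those of the Wishart ensemble; hence the level repulsion from Step 1 holds for $H_t$. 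A Lindeberg-type Green's function comparison then transfers the bound from $H_t$ back to $H$: encode the event by a smooth cutoff applied to a suitable functional of $\im m_H(E + \ii \eta')$ at scale $\eta' \asymp \Delta_a K^{-\epsilon}$ with $E$ near $\gamma_a$, and note that the first two moments of the entries of $X$ and $X_t$ match exactly, so the standard resolvent expansion in the matrix entries yields $\absb{\P_X(\cdot) - \P_{X_t}(\cdot)} \leq K^{-\delta'}$ for some $\delta' > 0$.

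The hard part will be Step 2, the DBM local relaxation for covariance matrices with the full range of aspect ratios $\phi$ permitted by \eqref{NM gen} and down to the scale $\Delta_a$ for indices $a$ as large as $K^{1-\tau}$; this requires importing the short-range DBM analysis in the covariance setting. Step 3 is largely routine but needs some care in choosing a smoothed observable that is simultaneously sensitive to the two-eigenvalue coincidence event and regular enough in the matrix entries for the Taylor expansion in the Lindeberg swap to close.
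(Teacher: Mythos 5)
Your overall strategy is correct and the statement is provable along these lines, but you have inserted an unnecessary and expensive intermediate step. The paper proves Proposition~\ref{prop: level repulsion} in two stages, not three: (i) the Gaussian case (Lemma~\ref{lemma:lr1}), and (ii) a \emph{direct} two-moment Green function comparison between any two ensembles satisfying \eqref{cond on entries of X}--\eqref{moments of X-1} (Lemma~\ref{lemma:lr2}, proved in Section~\ref{sec:pf_lrcomparison} using the graded-polynomial machinery of Section~\ref{sec: QUE}). There is no DBM interpolation anywhere in the argument.

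Your Step~2 (OU flow plus local relaxation of DBM for sample covariance matrices) is logically redundant given your Step~3. You justify the Lindeberg swap between $X$ and $X_t$ by observing that their first two moments match exactly; but that observation applies equally to $X$ and the Gaussian matrix $X^{\mathrm{G}}$ directly, since both have the same mean and variance. So if the two-moment Green function comparison closes at all at scale $\eta' \asymp \Delta_a K^{-\epsilon}$ with the polynomial slack $K^{-\delta}$ one is permitted, it already closes with $X^{\mathrm{G}}$ as the reference, and the DBM relaxation---which you correctly flag as the hard part, requiring short-range DBM analysis for covariance matrices over the full range of aspect ratios $\phi$ and for indices as large as $K^{1-\tau}$---is entirely dispensable. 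The reason the sharp DBM+GFC scheme is needed in the usual universality proofs is that one must match \emph{four} moments to control the swap at the optimal scale; here, because the target is only $K^{-\delta}$ rather than a limiting distribution, the paper shows that two moments suffice (the third-moment errors are controlled by the parity argument on graded polynomials, cf.\ Lemma~\ref{lem: 3m}), and the whole flow step can be skipped.

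A second, smaller remark concerns your Step~1. The heuristic of ``integrating out all other variables and isolating the factor $\abs{\lambda_a-\lambda_{a+1}}$'' glosses over the fact that the Vandermonde factor couples $\lambda_a$ and $\lambda_{a+1}$ to all of the other $K-2$ eigenvalues with long-range logarithmic interactions, so a naive marginalization does not close. The paper's proof (Lemma~\ref{lemma:lr1}, following \cite{BEY3}) makes this rigorous by first conditioning on the far eigenvalues $\{\lambda_i : i > K^{1-\tau}\}$, using rigidity (Theorem~\ref{thm: cov-rig}) to control their locations, and then running the log-gas level-repulsion analysis on the conditioned measure. Your conclusion $\P(\abs{\lambda_a-\lambda_{a+1}}\leq s\Delta_a)\leq Cs^2$ is the right $\beta=1$ rate, but it is the output of this conditioning argument, not of a direct integration.
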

The proof of Proposition \ref{prop: level repulsion} consists of two steps: (i) establishing \eqref{level repulsion} for the case of Gaussian $X$ and (ii) a comparison argument showing that if $X^{(1)}$ and $X^{(2)}$ are two matrix ensembles satisfying \eqref{cond on entries of X} and \eqref{moments of X-1}, and if \eqref{level repulsion} holds for $X^{(1)}$, then \eqref{level repulsion} also holds for $X^{(2)}$. Both steps have already appeared, in a somewhat different form, in the literature. Step (i) is performed in Lemma \ref{lemma:lr1} below, and step (ii) in Lemma \ref{lemma:lr2} below. Together, Lemmas \ref{lemma:lr1} and \ref{lemma:lr2} immediately yield Proposition \ref{prop: level repulsion}.

\begin{lemma}[Level repulsion for the Gaussian case] \label{lemma:lr1}
Proposition \ref{prop: level repulsion} holds if $X$ is Gaussian.
\end{lemma}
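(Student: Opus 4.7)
Since $X$ has i.i.d.\ Gaussian entries, the $K$ nontrivial eigenvalues of $H = X X^*$ form a real Laguerre ensemble, with explicit joint density
\[
p(\lambda_1, \dots, \lambda_K) \;=\; Z_{M,N}^{-1} \prod_{1 \leq i < j \leq K} |\lambda_i - \lambda_j| \prod_{i=1}^{K} \lambda_i^{q} \me^{-c \lambda_i},
\]
where $q = (|M-N|-1)/2$ and $c > 0$ are fixed by the normalization \eqref{cond on entries of X}. The plan is to exploit the Vandermonde repulsion factor directly: once one conditions on all but two adjacent eigenvalues, the explicit factor $|\lambda_a - \lambda_{a+1}|$ in the joint density produces a quadratic (rather than linear) decay in the probability of small gaps.

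First I would fix an arbitrarily small $\sigma > 0$ and use Theorem \ref{thm: cov-rig} to restrict to the high-probability rigidity event $\Xi$ on which $|\lambda_i - \gamma_i| \leq K^{\sigma} \Delta_i$ for all $i \leq (1-\tau)K$; its complement contributes a negligible $K^{-D}$ for arbitrary $D$. On $\Xi$ I would condition on the frozen eigenvalues $\{\lambda_j\}_{j \neq a, a+1}$ and change variables to $u = (\lambda_a + \lambda_{a+1})/2$, $s = \lambda_a - \lambda_{a+1} \geq 0$ (Jacobian $1$), obtaining a conditional density of the form
\[
p(u, s \mid \mathrm{frozen}) \;\propto\; s \cdot g(u, s), \qquad g(u, s) \;=\; \prod_{j \neq a, a+1} |u + s/2 - \lambda_j|\,|u - s/2 - \lambda_j| \cdot (u+s/2)^{q}(u-s/2)^{q} \me^{-2cu}.
\]

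The analytic core is to show that on $\Xi$, for $0 \leq s \leq \Delta_a$ and $u$ in the rigidity window of $\gamma_a$, the function $g(u,s)$ is comparable to $g(u,0)$ up to a universal multiplicative constant: far-away factors differ only by $1 + O(K^{-c})$, and the only potentially problematic contributions come from $j = a-1$ and $j = a+2$, which remain bounded provided those two neighbors are separated from $\gamma_a$ by at least $c \Delta_a$. Granted this comparability, the conditional probability is controlled by
\[
\P(s \leq \Delta_a K^{-\epsilon} \mid \mathrm{frozen}, \Xi) \;\leq\; C \, \frac{\int_0^{\Delta_a K^{-\epsilon}} s\,\dd s}{\int_0^{c \Delta_a} s\,\dd s} \;\leq\; C K^{-2\epsilon},
\]
and integrating over the frozen configuration yields the lemma with any $\delta < 2\epsilon$.

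The main obstacle is the comparability hypothesis when a neighboring eigenvalue $\lambda_{a-1}$ or $\lambda_{a+2}$ is itself anomalously close to $\lambda_a$, a configuration that rigidity alone does not rule out. I would resolve this by a bootstrap: first establish a crude a priori bound $|\lambda_i - \lambda_{i+1}| \geq K^{-C}$ with overwhelming probability (a direct consequence of the explicit Laguerre density and the boundedness of the spectrum under rigidity), then apply the main estimate inductively to the $O(1)$ neighboring pairs and use a union bound to exclude the pathological configurations at cost of a constant factor.
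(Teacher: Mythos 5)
The core of your argument — condition on $\{\lambda_j\}_{j\neq a,a+1}$, change to center/gap variables, and exploit the explicit Vandermonde factor $s=\lambda_a-\lambda_{a+1}$ — runs into a genuine circularity that the proposed bootstrap does not resolve. The comparability claim that $g(u,s)\asymp g(u,0)$ requires the frozen neighbors $\lambda_{a-1}$ and $\lambda_{a+2}$ to be separated from $\gamma_a$ by $c\Delta_a$, but rigidity alone does \emph{not} provide this: Theorem \ref{thm: cov-rig} gives $|\lambda_i-\gamma_i|\prec\Delta_i$, i.e.\ a window of size $K^{\sigma}\Delta_a$ around each $\gamma_i$, and since $\gamma_{a-1}-\gamma_{a+2}\asymp\Delta_a$ the ordering constraint $\lambda_{a-1}>\lambda_{a+2}$ is the only lower bound rigidity leaves you with. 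A short gap $\lambda_{a-1}-\lambda_{a+2}\ll\Delta_a K^{-\e}$ is precisely another level-repulsion event, so you are trying to use the theorem to prove itself. The bootstrap does not escape this: a crude floor $|\lambda_i-\lambda_{i+1}|\ge K^{-C}$ is far weaker than the required separation $\gg\Delta_a K^{-\e}\asymp K^{-2/3}a^{-1/3}K^{-\e}$, and ``applying the main estimate inductively to the neighboring pairs'' needs the \emph{same} separation hypothesis for \emph{their} neighbors, so the induction never closes. Even in expectation form, the conditional bound $\lesssim (t/L)^2$ with $L=\lambda_{a-1}-\lambda_{a+2}$ would require $\E[L^{-2}]$ to be finite and of the right size, which is again the statement to be proved.

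The paper avoids this trap by conditioning at a very different scale. Following \cite{BEY3}, one freezes only the eigenvalues $\{\lambda_i\col i>K^{1-\tau}\}$, which, on the rigidity event, lie at a distance $\gg\Delta_a$ from the edge window containing $\lambda_a$; this separation \emph{is} a direct consequence of rigidity, not of level repulsion. The conditioned measure on the remaining $\sim K^{1-\tau}$ near-edge eigenvalues is then a genuine $\beta$-ensemble (here $\beta=1$ Laguerre) with a well-behaved external potential created by the frozen bulk, and the level-repulsion estimate for such log-gases is established in \cite[Appendix~D]{BEY3} via a local-equilibrium-measure argument rather than a direct density comparison. So your overall strategy (explicit Gaussian density plus rigidity plus conditioning) is in the right spirit, but the conditioning must be done at a macroscopically separated set, not on the $O(1)$ nearest neighbors, and the hard analytic work is then the $\beta$-ensemble level-repulsion lemma of \cite{BEY3}, not an elementary Vandermonde comparison.
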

\begin{proof}
We mimic the proof of Theorem 3.2 in \cite{BEY3}. Indeed, the proof from \cite[Appendix D]{BEY3} carries over almost verbatim.  The key input is the eigenvalue rigidity from Theorem \ref{thm: cov-rig}, which for the model of \cite{BEY3} was established using a different method than Theorem \ref{thm: cov-rig}. As in \cite{BEY3}, we condition on the eigenvalues $\{\lambda_i \col i > K^{1 - \tau}\}$. On the conditioned measure, level repulsion follows as in \cite{BEY3}. Finally, thanks to Theorem \ref{thm: cov-rig} we know that the frozen eigenvalues $\{\lambda_i \col i > K^{1 - \tau}\}$ are with high probability near their classical locations. Note that for $\phi \approx 1$, the rigidity estimate \eqref{rigidity1} only holds for indices $i \leq (1 - \tau) K$; however, this is enough for the argument of \cite[Appendix D]{BEY3}, which is insensitive to the locations of eigenvalues at a distance of order one from the right edge $\gamma_+$. We omit the full details.
\end{proof}

\begin{lemma}[Stability of level repulsion] \label{lemma:lr2}
Let $X^{(1)}$ and $X^{(2)}$ be two matrix ensembles satisfying \eqref{cond on entries of X} and \eqref{moments of X-1}. Suppose that Proposition \ref{prop: level repulsion} holds for $X^{(1)}$. Then Proposition \ref{prop: level repulsion} also holds for $X^{(2)}$.
\end{lemma}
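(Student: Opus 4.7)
The plan is a standard Green function comparison (GFT) argument of Lindeberg type, reducing the probability of the small-gap event to a smooth resolvent functional that can be transported entry-by-entry from $X^{(1)}$ to $X^{(2)}$.

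First, I would discretise and smooth the event. Fix an $a \leq K^{1-\tau}$, set $\eta \deq \Delta_a K^{-\epsilon}$, and use Theorem \ref{thm: cov-rig} to place both $\lambda_a$ and $\lambda_{a+1}$ in an $O(K^{-2/3+\tau/2})$-neighbourhood $I_a$ of the classical location $\gamma_a$ with high probability. Covering $I_a$ by $O(K^{\tau})$ centres $E_j$ spaced by $\eta$, one has the deterministic inclusion
\begin{equation*}
\{|\lambda_a - \lambda_{a+1}| \leq \eta\} \;\subset\; \bigcup_j \{\cal N(E_j, 2\eta) \geq 2\} \cup \Xi^c,
\end{equation*}
where $\cal N(E, \eta) \deq \#\{i \col |\lambda_i - E| \leq \eta\}$ and $\Xi$ is the rigidity event. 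It therefore suffices to bound $\P^{(2)}(\cal N(E, 2\eta) \geq 2)$ for each fixed $E \in I_a$. Picking a smooth bump $\chi$ equal to $1$ on $[-2, 2]$ and supported in $[-3,3]$, the Chebyshev-type inequality
\begin{equation*}
\f 1(\cal N(E, 2\eta) \geq 2) \;\leq\; \Phi\pb{\tr \chi((H-E)/\eta)},
\end{equation*}
with $\Phi$ smooth, $\Phi = 0$ on $[0, 3/2]$, $\Phi = 1$ on $[2, \infty)$, replaces the event by a smooth function of $H$.

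Second, I would pass from $\tr \chi((H-E)/\eta)$ to a resolvent functional. Via the Helffer--Sj\"ostrand representation at an intermediate scale $\tilde\eta = K^{-\delta} \eta$ (chosen with $\tilde\eta \gg K^{-1}$), the trace equals an integral of $\im \tr G(E' + \ii \tilde\eta)$ against an explicit density, up to an error that is $O_\prec(K^{-c})$ thanks to the local Marchenko--Pastur law (Theorem \ref{thm: IMP gen}). This reduces the claim to a comparison of $\E^{(1)} F$ and $\E^{(2)} F$, where $F$ is a smooth, bounded functional of finitely many resolvent traces $\tr G(z_k)$ at spectral parameters $z_k$ with $\im z_k \gtrsim K^{-2/3-\delta}$.

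Third, and this is the heart of the matter, I would run the standard Lindeberg swap: order the $MN$ entries of $X$, produce a sequence of intermediate matrices by replacing the entries of $X^{(1)}$ with those of $X^{(2)}$ one at a time, and Taylor-expand $F$ around each swap to fourth order. By \eqref{cond on entries of X} the zeroth, first, and second moments of the two matrix ensembles agree, so only the third and fourth order terms survive. Each such term is a product of entries of $G(z_k)$ controlled by the isotropic local law \eqref{bound: Rij isotropic gen} and the delocalization bound (Theorem \ref{thm: I deloc gen}), and carries a prefactor $(NM)^{-3/4}$ or $(NM)^{-1}$ from the normalisation \eqref{cond on entries of X} together with \eqref{moments of X-1}. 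Summing over the $MN$ swaps yields a total error of order $K^{-c + C\delta}$, which is $o(K^{-\delta'})$ for any $\delta'$ upon choosing $\delta$ small enough. The main obstacle is balancing $\delta$ and $\eta$: one needs $\tilde\eta$ small enough that the smoothed counting approximates the true one to within $K^{-c}$, yet large enough that the Lindeberg telescoping absorbs the $MN$-fold sum. This is the same balance that appears in the GFT-based comparison of correlation functions (e.g.\ Erd\H{o}s--Yau--Yin), and a polynomial window of admissible $\delta$ exists because all local law and rigidity bounds hold down to $\eta \gtrsim K^{-1+\omega}$. Combining the three steps gives $\P^{(2)}(|\lambda_a - \lambda_{a+1}| \leq \Delta_a K^{-\epsilon}) \leq \P^{(1)}(|\lambda_a - \lambda_{a+1}| \leq 2 \Delta_a K^{-\epsilon}) + K^{-D}$; applying Proposition \ref{prop: level repulsion} for $X^{(1)}$ with $\epsilon$ replaced by $\epsilon + \log_K 2$ then completes the argument.
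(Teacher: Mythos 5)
Your plan captures the correct high-level architecture — reduce the gap event via rigidity and a smoothed counting function, express the smoothed trace through Helffer--Sj\"ostrand, and run a Lindeberg swap — and this is indeed the route the paper takes (they start from the representation \eqref{HS split 1} and invoke Sections \ref{sec: 81}--\ref{sec: 83}). However, the proposal glosses over the single hardest point, and the step where you sum the Taylor remainders over the $MN$ swaps is incorrect as written.

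Concretely, the third-order Lindeberg term is $\sum_{b,\beta}\E U_{b\beta}^3 \cdot \E[\text{coeff}]$, with $\E U_{b\beta}^3 \sim (NM)^{-3/4}$. Bounding the coefficient naively by the local law — which is precisely the content of \eqref{bzdbby}, giving $\abs{y_3} \prec \phi^{-1/4}N^{C\epsilon}\kappa_a^{1/2}$ — and summing over the $MN$ swaps produces a total of order $(MN)^{1/4} N^{C\epsilon}\kappa_a^{1/2} \asymp N^{1/2+C\epsilon}\kappa_a^{1/2}$, not $K^{-c+C\delta}$. For any $a$, this is polynomially large, so the naive Green function comparison that you describe does not close. (This is exactly the gap the paper flags in \eqref{E_bound_naive} versus \eqref{Y_bound_claim}: one must "gain an additional factor $N^{-1/2}$.") The missing ingredient is the polynomialization and parity argument of Section \ref{sec: 83} (Definitions \ref{def:weight}--\ref{def:O_prec3}, Lemmas \ref{lem: gain of N1/2}, \ref{lem: polynom}, \ref{lem: polynom2}, \ref{lem: 3m}): one expands the third-order coefficients as graded polynomials in the column $(\bar X_{i\beta})_i$, and the crucial observation is that these coefficients are \emph{odd} polynomials, so their expectations gain an extra $N^{-1/2}$ (Lemma \ref{lem: gain of N1/2}). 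Only with this gain does the telescoping sum become $O(K^{-c})$. Without it the argument fails even for $a=O(1)$, and certainly for the range $a\le K^{1-\tau}$ required by Proposition \ref{prop: level repulsion}. A secondary, cosmetic issue: in the last line $\epsilon + \log_K 2$ should be $\epsilon - \log_K 2$, since $2\Delta_a K^{-\epsilon} = \Delta_a K^{-(\epsilon - \log_K 2)}$.
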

The proof of Lemma \ref{lemma:lr2} relies on Green function comparison, and is given in Section \ref{sec:pf_lrcomparison}.

The second tool behind the proof of Theorem \ref{thm: eigenvector law} is the quantum unique ergodicity of the eigenvectors $\f \zeta_a$ of the matrix $H = X X^*$, stated in Proposition \ref{prop:xie} below. As noted in Section \ref{sec: uncorr}, quantum unique ergodicity is a term borrowed from quantum chaos that describes the complete ``flatness'' of the eigenvectors of $H$. Here ``flatness'' means that the eigenvectors are asymptotically uniformly distributed on the unit sphere of $\R^M$.
The first result on quantum unique ergodicity of Wigner matrices is \cite{KY1}, where the quantum unique ergodicity of eigenvectors near the spectral edge was established. Under an additional four-moment matching condition, this result was extended to the bulk. Subsequently, this second result was derived using a different method in \cite{TV3}. Recently, a new approach to the quantum unique ergodicity was developed in \cite{BoY1}, where quantum unique ergodicity is established for all eigenvectors of generalized Wigner matrices. In this paper, we adopt the approach of \cite{KY1}, based on Green function comparison. As compared to the method of \cite{BoY1}, its first advantage is that it is completely local in the spectrum, and in particular when applied near the right-hand edge of the spectrum it is insensitive to the presence of a hard edge at the origin. The second advantage of the current method is that it is very robust and may be used to establish the asymptotic joint distribution of an arbitrary family of generalized components of eigenvectors, as in Remark \ref{rem: gen QUE} below; we remark that such joint laws cannot currently be analysed using the method of \cite{BoY1}. On the other hand, our results only hold for eigenvector indices $a$ satisfying $a \leq K^{1 - \tau}$ for some $\tau > 0$, while those of \cite{BoY1} admit $\tau = 0$.

Our proof of quantum unique ergodicity generalizes that of \cite{KY1} in three directions. First, we extend the method of \cite{KY1} to sample covariance matrices (in fact to general sample covariance matrices of the form \eqref{wtH_cov} with $\Sigma = T T^* = I_M$; see Section \ref{sec:general_model}). Second, we consider generalized components $\scalar{\f w}{\zeta_a}$ of the eigenvectors instead of the cartesian components $\zeta_a(i)$. The third and deepest generalization is that we establish quantum unique ergodicity much further into the bulk, requiring only that $a \leq K^{1 - \tau}$ instead of the assumption $a \leq (\log K)^{C \log \log K}$ from \cite{KY1}.

\begin{proposition}[Quantum unique ergodicity]\label{prop:xie}
Fix $\tau \in (0,1)$. Then for any $a \le K^{1 - \tau}$ and deterministic unit vector $\f w \in \R^M$ we have
\begin{equation} \label{fluct of vect comp}
M \scalar{\f w}{\f \zeta_a}^2 \; \longrightarrow \; \chi_1^2\,,
\end{equation}
in the sense of moments, uniformly in $a$ and $\f w$.
\end{proposition}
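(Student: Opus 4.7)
The strategy is Green function comparison, generalizing and strengthening the method of \cite{KY1}. The plan has three main steps: (i) represent $M \scalar{\f w}{\f \zeta_a}^2$ as a smooth functional of the resolvent entries $\scalar{\f w}{G(z) \f w}$ at a spectral scale slightly below the local eigenvalue spacing $\Delta_a$; (ii) show via a Lindeberg-type swapping that the distribution of this functional is insensitive to the law of the entries of $X$ up to matching of the first two moments; (iii) compute the limit in the Gaussian reference case, where orthogonal invariance makes \eqref{fluct of vect comp} transparent.

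For step (i), I would fix a smooth cutoff $\theta_a$ supported in an interval of length $K^{\tau/4}\Delta_a$ around the classical location $\gamma_a$ and equal to $1$ on a concentric interval of length $K^{-\tau/4}\Delta_a$. By rigidity (Theorem \ref{thm: cov-rig}) and level repulsion (Proposition \ref{prop: level repulsion}), on a high-probability event $\lambda_a$ is the only eigenvalue of $H$ in the support of $\theta_a$, so $\theta_a(\lambda_b) = \delta_{ab}$. Using spectral decomposition together with the Helffer--Sj\"ostrand representation of $\theta_a$, this yields, for a suitable $\eta_0$ with $K^{-1+\omega} \leq \eta_0 \ll \Delta_a$,
\[
\scalar{\f w}{\f \zeta_a}^2 \;=\; \sum_b \theta_a(\lambda_b)\, \scalar{\f w}{\f \zeta_b}^2 \;=\; \frac{1}{\pi} \int \theta_a(E)\, \im \scalar{\f w}{G(E + \ii \eta_0) \f w}\, \dd E + (\text{negligible}),
\]
where the error is controlled by Theorem \ref{thm: IMP gen}. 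The scale $\eta_0 \ll \Delta_a$ is achievable because $\Delta_a \gg K^{-1}$ whenever $a \leq K^{1-\tau}$.

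In step (ii), I would replace the entries $X_{i\mu}$ one at a time by independent Gaussians with matching first two moments, using a cumulant (or Lindeberg) expansion. The error produced by a single swap involves high-order derivatives of resolvent entries, which are controlled via iterated use of the isotropic local Marchenko--Pastur law from Theorem \ref{thm: IMP gen}, giving $\scalar{\f w}{G(z) \f w} = m_{\phi^{-1}}(z) + O_\prec((M\eta_0)^{-1/2})$ uniformly on the relevant spectral domain. Summing over the $MN$ entries yields an $o(1)$ change in $\E F\!\pbb{\int \theta_a(E)\,\im \scalar{\f w}{G(E + \ii \eta_0)\f w}\,\dd E}$ for any smooth $F$ with polynomially bounded derivatives, which by a standard moment-method argument suffices for convergence in moments. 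For Gaussian $X$, the matrix $X X^*$ is a real Wishart matrix, its eigenvectors form a Haar-distributed orthogonal frame, so $\scalar{\f w}{\f \zeta_a}$ has the law of the first coordinate of a uniform unit vector in $\R^M$, whence $M \scalar{\f w}{\f \zeta_a}^2 \to \chi_1^2$ in all moments.

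The main obstacle will be making step (ii) work deep in the bulk. When $a$ is close to $K^{1-\tau}$ the spacing $\Delta_a$ is only $K^{-1 + \tau/3}$, forcing $\eta_0$ to sit just above $K^{-1}$, and at such small scales the per-swap error in the Lindeberg expansion is borderline: the off-diagonal resolvent entries $\scalar{\f v}{G(z)\f w}$ fluctuate on a scale comparable to $(M\eta_0)^{-1/2} \asymp M^{-\omega}$, so rough power counting does not close the budget. Overcoming this requires the optimal rates in the isotropic local law (Theorem \ref{thm: IMP gen} together with its near-edge version, Corollary \ref{cor:extension}), the sharp level repulsion of Proposition \ref{prop: level repulsion}, and a carefully truncated high-order cumulant expansion so that the error per swap decays at the rate $o((MN)^{-1})$. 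This is the key refinement that extends the range of validity from $a \leq (\log K)^{C \log \log K}$ in \cite{KY1} to $a \leq K^{1-\tau}$.
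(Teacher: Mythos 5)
Your three-step outline matches the paper's high-level strategy (Green function comparison, reduction to the Gaussian case). However, both step (i) and your diagnosis of the difficulty in step (ii) have concrete problems.

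\textbf{Step (i).} Your construction cannot work as written. You center a deterministic smooth cutoff $\theta_a$ at the classical location $\gamma_a$, with support of length $K^{\tau/4}\Delta_a$ and a plateau of length $K^{-\tau/4}\Delta_a$. But rigidity gives $|\lambda_a - \gamma_a| \prec \Delta_a$, i.e.\ $\lambda_a$ fluctuates on the scale $\Delta_a$, which is much larger than the plateau length $K^{-\tau/4}\Delta_a$; so $\theta_a(\lambda_a) = 1$ fails with non-negligible probability. Conversely, the support of $\theta_a$ has length $K^{\tau/4}\Delta_a \gg \Delta_a$, so it typically contains of order $K^{\tau/4}$ eigenvalues, not just $\lambda_a$. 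Level repulsion controls minimal gaps, not the number of eigenvalues in a long interval, so it does not rescue the claim that $\lambda_a$ is the only eigenvalue in $\supp \theta_a$. The paper avoids this by not using a deterministic smooth cutoff at all: it uses the random sharp indicator $\chi(E) = \ind{\lambda_{a+1} \le E^- \le \lambda_a}$ together with an integration over a fixed window $I$ around $\gamma_a$ (Lemma \ref{CTG}), and then replaces $\chi$ with the smooth observable $q(\tr f_E(H))$, where $q$ is a cutoff on the counting function $\tr f_E(H)$ (Lemma \ref{GCC}). The spectrum-dependent ``cutoff'' is thereby expressed as a smooth functional of the Green function at deterministic spectral parameters, which is precisely what the Lindeberg swapping requires. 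Rigidity is used to control the correction to $f_E$, and level repulsion enters to show the event $\Omega_{a,b,\tau}$ in Lemma \ref{CTG} has high probability --- not to isolate $\lambda_a$ inside a deterministic interval.

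\textbf{Step (ii).} You correctly observe that naive power counting in the Lindeberg expansion fails for $a$ near $K^{1-\tau}$, but you misattribute the fix to ``optimal rates,'' ``sharp level repulsion,'' and a ``carefully truncated high-order cumulant expansion.'' Those alone do not close the budget: after matching two moments, the third-order term in the expansion is of order $\E U_{b\beta}^3 \cdot (\text{degree-three coefficients})$, and summed over $MN$ entries this is not small by rate improvements. The paper's resolution is structural: it expresses the third-order coefficients as \emph{graded polynomials} of odd total degree in the entries $\{\bar X_{i\beta}\}_i$ of the corresponding column, and proves (Lemma \ref{lem: gain of N1/2}) that the expectation of an odd graded polynomial gains an extra factor $N^{-1/2}$ over its typical size, because the expectation forces parity constraints on the summation indices. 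This parity cancellation (the ``polynomialization'' machinery of Section \ref{sec: 83}, with Lemmas \ref{lem: polynom}--\ref{lem: polynom2}) is the key new ingredient that extends the index range from $(\log K)^{C\log\log K}$ to $K^{1-\tau}$; it is not a power-counting or truncation refinement. Your proposal contains no analogue of this cancellation, so as written it does not close.
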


\begin{remark} \label{rem: gen QUE}
For simplicity, and bearing the application to Theorem \ref{thm: eigenvector law} in mind, in Proposition \ref{prop:xie} we establish the convergence of a single generalized component of a single eigenvector. However, our method may be easily extended to yield
\begin{equation*}
\pB{M \scalar{\f v_1}{\f \zeta_{a_1}} \scalar{\f \zeta_{a_1}}{\f w_1}, \dots, M \scalar{\f v_k}{\f \zeta_{a_k}} \scalar{\f \zeta_{a_k}}{\f w_k}} \;\simdist\; (Z_1, \dots, Z_k)
\end{equation*}
for any deterministic unit vectors $\f v_1, \dots, \f v_l, \f w_1, \dots, \f w_k \in \R^M$ and $a_1 < \dots < a_k \leq K^{1 - \tau}$, whereby we use the notation $A_N \simdist B_N$ to mean that $A_N$ and $B_N$ are tight, and $\lim_{N \to \infty}\E (f(A_N) - f(B_N)) = 0$ for all polynomially bounded and continuous $f$.
Here $(Z_1, \dots, Z_k)$ is a family of independent random variables defined by $Z_i = A_i B_i$, where $A_i$ and $B_i$ are jointly Gaussian with covariance matrix
\begin{equation*}
\begin{pmatrix}
1 & \scalar{\f v_i}{\f w_i}
\\
\scalar{\f v_i}{\f w_i} & 1
\end{pmatrix}
\,.
\end{equation*}
The proof of this generalization of Proposition \ref{prop:xie} follows that of Proposition \ref{prop:xie} presented in Section \ref{sec: QUE}, requiring only heavier notation. In fact, our method may also be used to prove the universality of the joint eigenvalue-eigenvector distribution for any matrix $Q$ of the form \eqref{wtH_cov} with $\Sigma = T T^* = I_M$; see Theorem \ref{thm:univ_H} below for a precise statement.
\end{remark}

The proof of Proposition \ref{prop:xie} is postponed to Section \ref{sec: QUE}.

Supposing Proposition \ref{prop: bulk_law} holds, together with Propositions \ref{prop: level repulsion} and \ref{prop:xie}, we may complete the proof of Theorem \ref{thm: eigenvector law}.

\begin{proof}[Proof of Theorem \ref{thm: eigenvector law}]
Abbreviating $b \deq a - s_+$ and
\begin{equation*}
\f u \;\deq\; \frac{1}{\sqrt{M}} \sum_i \frac{\sqrt{\sigma_i}\, w_i}{d_i - 1} \, \f v_i\,,
\end{equation*}
we define
\begin{equation*}
\wh \Theta(a, \f w) \;\deq\; M \frac{\scalar{\f u}{\f \zeta_b}^2}{\abs{\f u}^2}\,.
\end{equation*}
Then, by assumption on $a$, we may rewrite \eqref{ind_Omega_component} as
\begin{equation*}
\ind{\Omega} \scalar{\f w}{\f \xi_a}^2 \;=\; \ind{\Omega} \abs{\f u}^2 \, \wh \Theta(a, \f w) + O_\prec (K^{-2\tau/15} \abs{\f u}^2)\,. 
\end{equation*}
Moreover, by Theorem \ref{thm:sticking} and Proposition \ref{prop: level repulsion}, we have $\P(\Omega) \geq 1 - K^{-c}$ for some constant $c > 0$. Finally, by Proposition \ref{prop:xie} we have $\wh \Theta(a, \f w) \to \chi_1^2$ in distribution (even in the sense of moments). The claim now follows easily.
\end{proof}

The remainder of this section is devoted to the proof of Proposition \ref{prop: bulk_law}.

\begin{proof}[Proof of Proposition \ref{prop: bulk_law}]
We define the contour $\Gamma_a$ as the positively oriented circle of radius $K^{-\tau/5} \Delta_a$ with centre $\lambda_b$.
Let $\epsilon > 0$ and $\tau \deq 1/2$, and choose a high-probability event $\Xi$ such that \eqref{Xi_bulk_edge}, \eqref{Xi_W}, and \eqref{Xi_G} hold. For the following we fix a realization $H \in \Omega \cap \Xi$.
Define
\begin{equation*}
Y_a(\f w) \;\deq\; - \frac{1}{2 \pi \ii} \oint_{\Gamma_a} \phi^{1/2} z \, \wt G_{\f w \f w}(z) \, \dd z\,.
\end{equation*}
By the residue theorem and the definition of $\Omega$, we find
\begin{equation} \label{Y-rep}
Y_a(\f w) \;=\; \phi^{1/2} \mu_a \scalar{\f w}{\f \xi_a}^2\,.
\end{equation}
To simplify notation, suppose now that $i \in \cal R$ and consider $\f w = \f v_i$. From \eqref{pert3} we find that
\begin{equation} \label{Y_a_contour}
Y_a(\f v_i) \;=\; \frac{\sigma_i}{d_i^2} \, \frac{1}{2 \pi \ii} \oint_{\Gamma_a} \pbb{\frac{1}{D^{-1} + W(z)}}_{ii} \, \dd z\,.
\end{equation}

In order to compute \eqref{Y_a_contour}, we need precise estimates for $W$ on $\Gamma_a$. Because the contour $\Gamma_a$ crosses the branch cut of $w_\phi$, we should not compare $W(z)$ to $w_\phi(z)$ for $z \in \Gamma_a$. Instead, we compare $W(z)$ to $w_\phi(z_0)$, where
\begin{equation*}
z_0 \;\deq\; \lambda_b + \ii \eta \,, \qquad \eta \;\deq\; K^{-\tau/5} \Delta_a\,.
\end{equation*}
We claim that
\begin{equation} \label{W_w_estimate}
\norm{W(z) - w_\phi(z_0)} \;\leq\; C K^{-1 + \epsilon} \eta^{-1}\,.
\end{equation}
for all $z \in \Gamma_a$. To see this, we split
\begin{equation} \label{W_w_split}
\norm{W(z) - w_\phi(z_0)} \;\leq\; \norm{W(z) - W(z_0)} + \norm{W(z_0) - w_\phi(z_0)}\,.
\end{equation}
We estimate the first term of \eqref{W_w_split} by spectral decomposition, using that $\dist(z, \sigma(H)) \geq c \eta$, similarly to \eqref{W_add_eta}. The result is
\begin{align*}
\norm{W(z) - W(z_0)} &\;=\; C (1 + \phi) \max_{i} \im G_{\f v_i \f v_i}(z_0)
\\
&\;\leq\; C (1 + \phi) \pbb{\im m_{\phi^{-1}}(z_0) + \frac{K^\epsilon}{1 + \phi} \frac{1}{K \eta}}
\\
&\;\leq\; C \im w_\phi(z_0) + C K^{-1 + \epsilon} \eta^{-1}
\\
&\;\leq\; C K^{-1 + \epsilon} \eta^{-1}\,,
\end{align*}
where we used \eqref{Xi_bulk_edge}, \eqref{Xi_G}, and Lemma \ref{lemma: w}. Moreover, we estimate the second term of \eqref{W_w_split} using \eqref{Xi_W} as
\begin{equation*}
\norm{W(z_0) - w_\phi(z_0)} \;\leq\; K^{-1 + \epsilon} \eta^{-1}\,.
\end{equation*}
This concludes the proof of \eqref{W_w_estimate}.

Next, we claim that
\begin{equation} \label{di_z0_est}
\abs{1 + d_i w_\phi(z_0)} \;\geq\; c \abs{d_i - 1}\,.
\end{equation}
The proof of \eqref{di_z0_est} is analogous to that of \eqref{1+dw_bound}, using \eqref{Xi_bulk_edge} and the assumption on $a$; we omit the details.

Armed with \eqref{W_w_estimate} and \eqref{di_z0_est}, we may analyse \eqref{Y_a_contour}. A resolvent expansion in the matrix $w_\phi(z_0) - W(z)$ yields
\begin{multline} \label{Y_res_exp}
Y_a(\f v_i) 
\\
=\; \frac{\sigma_i}{d_i^2} \, \frac{1}{2 \pi \ii} \oint_{\Gamma_a} \pBB{\frac{1}{d_i^{-1} + w_\phi(z_0)} + \frac{w_\phi(z_0) - W_{ii}(z)}{(d_i^{-1} + w_\phi(z_0))^2}  + \pbb{\frac{w_\phi(z_0) - W(z)}{d_i^{-1} + w_\phi(z_0)} \frac{1}{D^{-1} + W(z)} \frac{w_\phi(z_0) - W(z)}{d_i^{-1} + w_\phi(z_0)}}_{ii}} \, \dd z\,.
\end{multline}
We estimate the third term using the bound
\begin{equation} \label{DW_inv_bound 2}
\normbb{\frac{1}{D^{-1} + W(z)}} \;\leq\; \frac{C}{\alpha_+}\,.
\end{equation}
To prove \eqref{DW_inv_bound 2}, we note first that by \eqref{di_z0_est} we have
\begin{equation*}
\min_i \absb{d_i^{-1} + w_\phi(z_0)} \;\geq\; \min_i \frac{\abs{1 + d_i w_\phi(z_0)}}{\abs{d_i}} \;\geq\; c \, \min_i \frac{\abs{d_i - 1}}{\abs{d_i}} \;\geq\; c \, \alpha_+\,.
\end{equation*}
By \eqref{W_w_estimate} and assumption on $a$, it is easy to check that
\begin{equation*}
\min_i \absb{d_i^{-1} + w_\phi(z_0)} \;\geq\; K^{\tau/5} \norm{W(z) - w_\phi(z_0)}\,,
\end{equation*}
from which \eqref{DW_inv_bound 2} follows.

We may now return to \eqref{Y_res_exp}. The first term vanishes, the second is computed by spectral decomposition of $W$, and the third is estimated using \eqref{DW_inv_bound 2}. This gives
\begin{equation*}
Y_a(\f v_i) \;=\; \frac{\phi^{1/2} \sigma_i \lambda_b}{(1 + d_i w_\phi(z_0))^2} \scalar{\f v_i}{\f \zeta_b}^2 + O \pbb{\frac{\sigma_i}{\abs{d_i - 1}^2 K} \, \frac{K^{2 \epsilon}}{K \eta \alpha_+}}\,,
\end{equation*}
where we also used \eqref{di_z0_est}.

Recalling \eqref{Y-rep} and \eqref{Xi_bulk_edge}, we therefore get
\begin{equation*}
\scalar{\f v_i}{\f \xi_a}^2 \;=\; \frac{\sigma_i \lambda_b / \mu_a}{(1 + d_i w_\phi(z_0))^2} \scalar{\f v_i}{\f \zeta_b}^2 + O \pbb{\frac{\sigma_i}{\abs{d_i - 1}^2 M} \, \frac{K^{-1/3 + 2 \epsilon + \tau/5} a^{1/3}}{\alpha_+}}\,,
\end{equation*}
where we used $\phi^{1/2}\mu_a \asymp 1 + \phi$.

In order to simplify the leading term, we use
\begin{equation*}
\frac{-1}{1 + d_i w_\phi(z_0)} \;=\; \frac{1}{d_i - 1} + O\pbb{\frac{K^{-1/3 + \epsilon} a^{1/3}}{\abs{d_i - 1} \alpha_+}}\,,
\end{equation*}
as follows from
\begin{equation*}
\abs{w_\phi(z_0) + 1} \;\leq\; C \sqrt{\kappa(z_0) + \eta} \;\leq\; K^{- 1/3 + \epsilon} a^{1/3}\,,
\end{equation*}
where we used Lemma \ref{lemma: w}. Moreover, we use that
\begin{equation*}
\lambda_b / \mu_a \;=\; 1 + O(K^{-2/3})\,.
\end{equation*}
Using that $\Xi$ has high probability for all $\epsilon > 0$ and recalling the isotropic delocalization bound \eqref{smfy gen}, we therefore get for any random $H$ that
\begin{equation} \label{Y_v_i_final}
\ind{\Omega} \scalar{\f v_i}{\f \xi_a}^2 \;=\; \ind{\Omega} \frac{\sigma_i}{(d_i - 1)^2} \scalar{\f v_i}{\f \zeta_b}^2 + O_\prec \pbb{\frac{\sigma_i}{\abs{d_i - 1}^2 M} \, \frac{K^{-1/3 + \tau/5} a^{1/3}}{\alpha_+}}\,.
\end{equation}

We proved \eqref{Y_v_i_final} under the assumption that $i \in \cal R$, but a continuity argument analogous to that given after \eqref{vPv for R} implies that \eqref{Y_v_i_final} holds for all $i \in \qq{1,M}$. The above argument may be repeated verbatim to yield
\begin{equation*}
\ind{\Omega} \scalar{\f v_i}{\f \xi_a} \scalar{\f \xi_a}{\f v_j} \;=\; \ind{\Omega} \frac{\sqrt{\sigma_i\sigma_j}}{(d_i - 1)(d_j - 1)} \scalar{\f v_i}{\f \zeta_b}\scalar{\f \zeta_b}{\f v_j} + O_\prec \pbb{\frac{\sqrt{\sigma_i \sigma_j}}{\abs{d_i - 1}  \abs{d_j - 1} M} \, \frac{K^{-1/3  + \tau/5} a^{1/3}}{\alpha_+}}\,.
\end{equation*}
Since we may always choose the basis $\{\f v_i\}_{i = 1}^M$ so that at most $\abs{\cal R} + 1$ components of $(w_1, \dots, w_M)$ are nonzero, the claim now follows easily.
\end{proof}

\section{Quantum unique ergodicity near the soft edge of $H$} \label{sec: QUE}

This section is devoted to the proof of Proposition \ref{prop:xie}.

\begin{lemma}\label{CTG}
Fix $\tau \in (0,1)$. Let $h$ be a smooth function satisfying 
\begin{equation} \label{theta_assumption}
\abs{h'(x)} \;\leq\; C (1 + \abs{x})^{C}
\end{equation}
for some positive constant $C$.  Let $a \leq K^{1 - \tau}$  and suppose that $\lambda_a$ satisfies \eqref{level repulsion} with some constants $\e$ and $\delta$. Then for small enough $\delta_1 = \delta_1(\epsilon, \delta)$ and $\delta_2 = \delta_2(\epsilon, \delta, \delta_1)$ the following holds. Defining
\be\label{defEI}
\eta \;\deq\; \Delta_a K^{ -2 \epsilon}, \qquad  E^\pm \;\deq\; E\pm K^{ \delta_1}\eta\,, \qquad  I\;\deq\; \qb{\gamma_a - K^{ \delta_2} \Delta_a  \,,\,  \gamma_a +K^{ \delta_2} \Delta_a}\,,
\ee
we have
\be\label{114}
\E  \, h \pb{M \scalar{\f w} {\f \zeta_a}^2}
 -
\E  \,h \pbb{\frac M\pi \int_{I}  \im G_{\f w \f w }(E + \ii \eta) \, \chi(E) \, \dd E}
\;=\; O(K^{-\delta_2/2})\,,
\ee
where we defined $\chi(E)  \deq \ind{\lambda_{a+1} \leq E^- \leq \lambda_a}$. % and introduced the convention $\lambda_0 \deq + \infty$. 
\end{lemma}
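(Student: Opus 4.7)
The plan is to expand the integrand of \eqref{114} via the spectral decomposition
\begin{equation*}
\im G_{\f w \f w}(E + \ii \eta) \;=\; \sum_b \frac{\eta \, y_b^2}{(\lambda_b - E)^2 + \eta^2}\,, \qquad y_b \;\deq\; \scalar{\f w}{\f \zeta_b}\,,
\end{equation*}
and interchange summation and integration to write
\begin{equation*}
Y \;\deq\; \frac{M}{\pi}\int_I \im G_{\f w \f w}(E + \ii\eta)\chi(E)\,\dd E \;=\; \sum_b M y_b^2 J_b\,, \qquad J_b \;\deq\; \frac{1}{\pi}\int_I \frac{\eta\, \chi(E)}{(\lambda_b - E)^2 + \eta^2}\,\dd E\,.
\end{equation*}
The strategy is that the Lorentzian of width $\eta$ centred at $\lambda_b$ acts as an approximate $\pi \delta(E - \lambda_b)$, while $\chi(E) = \ind{E - K^{\delta_1}\eta \in [\lambda_{a+1}, \lambda_a]}$ selects exactly the contribution from $b = a$ provided the gap $\lambda_a - \lambda_{a+1}$ is much larger than $\eta K^{\delta_1}$. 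Thus heuristically $Y \approx M y_a^2$, which is the quantity appearing inside $h(\cdot)$ on the left-hand side of \eqref{114}.

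To implement this, first introduce the high-probability event $\Omega$ on which (i) rigidity $\abs{\lambda_b - \gamma_b} \leq K^{\delta_2/10} b^{-1/3} K^{-2/3}$ holds for $b \leq (1-\tau/2)K$ (Theorem \ref{thm: cov-rig}); (ii) the level repulsion $\min(\lambda_{a-1} - \lambda_a,\, \lambda_a - \lambda_{a+1}) \geq \Delta_a K^{-\epsilon/2}$ holds (Proposition \ref{prop: level repulsion}, applied to each of the two gaps); and (iii) the delocalization $M y_b^2 \leq K^{\delta_1/10}$ holds for $b \leq (1-\tau/2)K$ (Theorem \ref{thm: I deloc gen}). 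Then $\P(\Omega^c) \leq K^{-\delta'}$ for some $\delta' = \delta'(\epsilon,\delta) > 0$, provided $\delta_1, \delta_2$ are sufficiently small. On $\Omega$, rigidity and $\delta_1 < \delta_2$ ensure $[\lambda_{a+1} + K^{\delta_1}\eta, \lambda_a + K^{\delta_1}\eta] \subset I$, so $J_b$ reduces to an integral over this range that evaluates explicitly via $\arctan$.

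Using $\arctan(x) = \frac{\pi}{2} - \frac{1}{x} + O(x^{-3})$ for large $\abs{x}$, together with level repulsion: $J_a = 1 + O(K^{-\delta_1})$ (two arctangents near $\pm\pi/2$); $J_{a \pm 1} = O(K^{-\delta_1})$ (two arctangents close to $\pi/2$, differing by $O(K^{-\delta_1})$); and for $\abs{b - a} \geq 2$, rigidity provides $\abs{\lambda_b - E} \geq c \abs{b-a} \Delta_a$ on the integration domain, whence $J_b \leq C K^{-2\epsilon + \delta_2}/(b-a)^2$, which is summable in $b$. For $b > (1-\tau/2)K$, the MP global law gives $\abs{\lambda_b - E} \asymp 1$ uniformly, so $\sum_{b > (1-\tau/2)K} y_b^2 J_b$ contributes negligibly. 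Combining with the delocalization bound yields $Y = M y_a^2 + O_\prec(K^{-\delta_1/2})$ on $\Omega$, once $\delta_2$ is chosen small enough. Together with $\abs{X} \prec 1$ and the polynomial growth \eqref{theta_assumption}, a Taylor expansion gives $\E\qB{\abs{h(Y) - h(X)}\ind{\Omega}} = O(K^{-\delta_1/3})$.

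On $\Omega^c$, the crude bound $\abs{Y} \leq (M\abs{I}/\pi) \sup_{E \in I}\im G_{\f w \f w}(E + \ii\eta) \prec K^C$ (using $\eta \gg K^{-1+\omega}$ so that Theorem \ref{thm: IMP gen} applies uniformly in $E$) combined with $\abs{X} \prec 1$, \eqref{theta_assumption}, and H\"older's inequality give $\E\qB{\abs{h(Y) - h(X)}\ind{\Omega^c}} \leq \P(\Omega^c)^{1 - 1/p} \pB{\E \abs{h(Y) - h(X)}^p}^{1/p} \leq K^{-\delta'(1-1/p) + C}$; taking $p$ large and then $\delta_2 < \delta'/3$ makes this $O(K^{-\delta_2/2})$. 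The main technical obstacle lies precisely in this last step: because the integration domain of $Y$ depends on the random eigenvalues $\lambda_a, \lambda_{a+1}$ via $\chi$, one cannot compute $\E Y$ directly, and the argument relies crucially on working on $\Omega$, where level repulsion isolates $\lambda_a$ from its neighbours by a margin $\gg K^{\delta_1}\eta$ so that the approximate delta function identity becomes a quantitative statement with polynomial precision in $K$.
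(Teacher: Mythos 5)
Your argument follows the paper's route very closely: spectrally decompose $\im G_{\f w\f w}$, evaluate the Lorentzian integrals $J_b$, isolate the $b=a$ contribution via level repulsion, and control the tail by rigidity and delocalization. The structural difference---you work pathwise on a high-probability event $\Omega$ and then treat $\Omega^c$, whereas the paper keeps expectations throughout and applies the mean value theorem under a single $\E$---is essentially cosmetic. However, your treatment of $\Omega^c$ as written has a genuine gap.

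Level repulsion (Proposition \ref{prop: level repulsion}) only gives $\P(\Omega^c)\leq K^{-\delta'}$ with an exponent $\delta'=\delta'(\epsilon,\delta)>0$ that may be arbitrarily small. Against that weak probability bound, the ``crude bound'' $Y\prec K^C$ is not sufficient: estimating $Y\leq(M\abs{I}/\pi)\sup_{E\in I}\im G_{\f w\f w}(E+\ii\eta)$ by the isotropic law with the trivial bound $\im m_{\phi^{-1}}=O(1)$ yields only $Y\prec MK^{\delta_2}\Delta_a/\phi\asymp K^{1/3+\delta_2}a^{-1/3}$, i.e.\ an exponent $C$ bounded away from zero. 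The polynomial growth \eqref{theta_assumption} then makes $\abs{h(Y)}$ of order $K^{C'}$ with $C'$ also bounded away from zero, and the H\"older step gives $K^{C'-\delta'(1-1/p)+o(1)}$. This does \emph{not} tend to zero when $\delta'$ is small, however large $p$ and however small $\delta_2$: the phrase ``taking $p$ large and then $\delta_2<\delta'/3$'' does not save the argument because $C'$ depends on neither $p$ nor $\delta_2$. The missing ingredient is the sharper bound $M\int_I\im G_{\f w\f w}(E+\ii\eta)\,\dd E\prec K^{\delta_2+C\epsilon}$---this is exactly the paper's \eqref{bound_for_mvt}---which uses $\im m_{\phi^{-1}}(E+\ii\eta)\asymp\phi^{-1}\sqrt{\kappa+\eta}$ near the edge together with $\Delta_a\sqrt{\kappa_a}\asymp K^{-1}$ (equivalently, the spectral-decomposition argument the paper gives for \eqref{bound_for_mvt}). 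With that in hand, $\abs{h(Y)}\prec K^{C(\delta_2+C\epsilon)}$ on a very-high-probability event, and choosing $\delta_2,\epsilon$ small relative to $\delta'$ makes the $\Omega^c$ contribution $O(K^{-\delta_2/2})$ as required. A secondary slip, not fatal: for $b>2a$ the claimed bound $\abs{\lambda_b-E}\geq c\abs{b-a}\Delta_a$ is false, since near the edge $\gamma_a-\gamma_b\asymp K^{-2/3}(b^{2/3}-a^{2/3})$, which is \emph{smaller} than $(b-a)\Delta_a$ in that range; the sum $\sum_{\abs{b-a}\geq 2}J_b$ still comes out $O(K^{\delta_2-2\epsilon})$ when computed with the correct rigidity scales, so this does not break the argument.
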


\begin{proof}
By the assumption \eqref{theta_assumption} on $h$, rigidity \eqref{rigidity1}, and delocalization \eqref{smfy gen}, we can write
\be\label{19}
\E  \,h \pb{M  \scalar{\f w}{\f  {\f \zeta}_a}^2}
\;=\;
\E \,h \pbb{\frac{M\eta}\pi \,  \int_{I\cap [\alpha,\beta]} \frac{ \scalar{\f w}{\f  {\f \zeta}_a}^2}{(E-\lambda_a)^2+\eta^2} \dd 
E} + O(K^{-\delta_1/2})
\ee
provided that
\begin{equation*}
\alpha \;\leq\; \lambda_a^-\,, \qquad \beta \;\geq\; \lambda_a^+\,,
\end{equation*}
where we defined  $\lambda_a^\pm \deq \lambda_a \pm K^{\delta_1}\eta$.
For the following we choose
\begin{align*}
\alpha \;\deq\; \lambda_a^- \wedge \lambda_{a + 1}^+\,, \qquad
\beta \;\deq\; \lambda_a^+\,.
\end{align*}
Now from \eqref{level repulsion} we get $\P(\lambda_{a+1}^+\ge \lambda_a^-)\le K^{-\delta}$ for $\delta_1 < \epsilon$. For $\delta_1 < \delta \wedge \epsilon$, we therefore get
\begin{equation} \label{uu cutoff}
\E  \,h \pb{M \absb{\scalar{\f w}{\f  {\f \zeta}_a }}^2}
\;=\;
\E \,h\left(\frac{M\eta}\pi \int_{I} \frac{\absb{\scalar{\f w}{\f  {\f \zeta}_a }}^2}{(E-\lambda_a)^2+\eta^2} \, 
\chi(E) \, \dd E
\right) + O(K^{-\delta_1/2})\,.
\end{equation}
In order to obtain \eqref{114}, we have to rewrite the integrand on the right-hand side of \eqref{uu cutoff} in terms of
%Next, we replace the integrand in \eqref{uu cutoff} by $  \im G_{\f w\f w}(E+ \ii \eta)$. By spectral decomposition, we have
\be \label{decomposition of tilde G}
   \im G_{\f w\f w}(E+ \ii \eta)
\;=\;
\sum_{b \neq a} \frac{\eta \scalar{\f w} {\f \zeta_b}^2}{(E-\lambda_b)^2+\eta^2}
+\frac{\eta\scalar{\f w}{\f  {\f \zeta}_a}^2}{(E-\lambda_a)^2+\eta^2}\,.
\ee
Hence \eqref{uu cutoff} and \eqref{decomposition of tilde G} combined with the mean value theorem imply that the left-hand side of \eqref{114} is bounded by
\be 
 \label{1119} K^{C \delta_2}
 \E \, \sum_{b \neq a} \frac{M\eta}\pi \, \int_{I} \frac{\scalar{\f w} {\f \zeta _b}^2}{(E-\lambda_b)^2+\eta^2}\, \chi(E) \, \dd E + C K^{-\delta_1 / 2}
\ee
for any fixed $\delta_2 \in (0,\delta_1)$. When applying the mean value theorem, we estimated the value of $\theta'(\cdot)$ using \eqref{theta_assumption},  the fact that all terms on the right-hand side of \eqref{decomposition of tilde G} are nonnegative, and the estimate
\begin{equation} \label{bound_for_mvt}
M \int_I \im G_{\f w \f w}(E + \ii \eta) \, \dd E \;\prec\; K^{\delta_2}\,.
\end{equation}
The proof of \eqref{bound_for_mvt} follows by using the spectral decomposition from \eqref{decomposition of tilde G} with the delocalization bound \eqref{smfy gen}; for $\abs{b - a} \geq K^{\delta_2}$ we use the rigidity bound \eqref{rigidity1}, and for $\abs{b - a} \leq K^{\delta_2}$ we estimate the integral using $\int \frac{\eta}{e^2 + \eta^2} \, \dd e = \pi$. We omit the full details.

Next, using the eigenvalue rigidity from \eqref{rigidity1}, it is not hard to see that there exists a constant $C_1$ such that the contribution of $|b - a| \geq K^{C_1\delta_2}$ to \eqref{1119} is bounded by $K^{-\delta_2}$. In order to prove \eqref{114}, therefore, it suffices to prove
\be\label{1119-3}
\E \, \sum_{b \neq a \col |b -a| \leq K^{C_1\delta_2} }\frac{M \eta}\pi \, \int_{I} \frac{\scalar{\f w} {\f \zeta _b}^2}{(E-\lambda_b)^2+\eta^2}\, \chi(E) \, \dd E \;=\; O(K^{-\delta_2/2})\,.
\ee
For $b > a$, we get using \eqref{smfy gen} that
\begin{align*}
\sum_{b > a : |b - a| \leq K^{C_1\delta_2}} \frac{M\eta}\pi \, \E  \int_{I} \frac{\scalar{\f w} {\f \zeta _b}^2}{(E-\lambda_b)^2+\eta^2}\, \chi(E) \, \dd E
&\;\leq\; K^{C_1\delta_2}\, \E \int_{\lambda_{a +1}^+}^{\infty} \frac{\eta}{(E-\lambda_{a + 1})^2+\eta^2} \, 
\dd E
\\
&\;\leq\; C K^{ C_1\delta_2 -\delta_1/2}\,,
\end{align*}
which is the right-hand side of \eqref{1119-3} provided $\delta_2$ is chosen small enough.
Here in the first step we replaced $\lambda_b$ with $\lambda_{a+1}$ using the estimates $\lambda_b \leq \lambda_{a+1} \leq E - K^{\delta_1} \eta$ valid for $b > a$ and $E$ in the support of $\chi$.
 
For $b < a$, we partition $I = I_1 \cup I_2$ with $I_1 \cap I_2 = \emptyset$ and
\begin{equation*}
I_1 \;\deq\; \Big\{E\in I \st \exists \, b < a, \;  |b -a| \leq K^{C_1\delta_2} \,,\, 
|E-\lambda_b|\leq \eta K^{\delta_1}
	 \Big\}\,.
\end{equation*}
As above, we find
\begin{align*}
\sum_{b < a: |b - a| \leq K^{C_1\delta_2}} \frac{M\eta}\pi \, \E  \int_{I_2} \frac{\scalar{\f w} {\f \zeta _b}^2}{(E-\lambda_b)^2+\eta^2}\, \chi(E) \, \dd E \;\leq\; K^{ C_2\delta_2 -\delta_1/2}\,.
\end{align*}
Let us therefore consider the integral over $I_{1}$. One readily finds, for $\lambda_a \leq 
\lambda_{a -1} \leq \lambda_b$, that
\begin{equation*}
\frac{  1\, }{(E-\lambda_b)^2+\eta^2} \, \ind{E^- \leq \lambda_a}
\;\leq\;
 \frac{K^{2\delta_1}}{(\lambda_b-\lambda_a)^2+\eta^2}
 \;\leq\; \frac{K^{2\delta_1}}{(\lambda_{a + 1}-\lambda_a)^2+\eta^2}\,.
\end{equation*}
Using delocalization \eqref{smfy gen} we therefore find that 
\begin{align} \label{118}
\sum_{b < a: |b -a| \leq K^{C_1\delta_2}} \frac{M\eta}\pi \, \E  \int_{I_1} \frac{\scalar{\f w} {\f \zeta _b}^2}{(E-\lambda_b)^2+\eta^2}\, \chi(E) \, \dd E \;\leq\; K^{C_1\delta_2+2\delta_1} \E  \,
\frac{\eta^2}{(\lambda_{a - 1}-\lambda_a)^2+\eta^2}\,.
\end{align}
The expectation $\E  \,
\frac{\eta^2}{(\lambda_{a - 1}-\lambda_a)^2+\eta^2}$ in \eqref{118} is bounded by $\P(|\lambda_{a - 1}-\lambda_a|\leq \Delta_a K^{-\e}) +O(K^{-\e})$.  Using \eqref{level repulsion}, we therefore obtain 
\begin{equation*}
\sum_{b< a : \abs{b -a} \leq K^{C_1\delta_2}}  \frac{M\eta}\pi \, \E  \int_{I_1} \frac{\scalar{\f w} {\f \zeta _b}^2}{(E-\lambda_b)^2+\eta^2}\, \chi(E) \, \dd E \;\leq\; K^{C_1\delta_2+2\delta_1-\delta}\,.
\end{equation*}
This concludes the proof.
\end{proof}

In the next step, stated in Lemma \ref{GCC} below, we replace the sharp cutoff function $\chi$ in \eqref{114} with a smooth function of $H$. Note first that from Lemma \ref{CTG} and the rigidity \eqref{rigidity1}, we get
\be\label{aaz}
\E  \, h \pb{M \scalar{\f w}{\f  {\f \zeta}_a}^2}
 -
\E  \,h \pbb{\frac M\pi \int_{I} \im G_{\f w \f w }(E + \ii \eta) \, 
\indb{\cal N( E^-,\tilde E)= a} \, \dd E}
\;=\; O(K^{-\delta_2/2})\,,
\ee
where $\tilde E \deq \gamma_+ + 1$ and $\cal N( E^-,\tilde E) \deq \abs{\{i \col E^- < \lambda_i < \tilde E\}}$ is an eigenvalue counting function. 

Next, for any $E_1, E_2 \in [\gamma_- - 1,  \gamma_+ + 1]$ and $\tilde \eta >0$ we define $f(\lambda) \equiv f_{E_1,E_2,\tilde \eta }(\lambda)$
to be the characteristic function of $[E_1, E_2]$ smoothed on scale $\tilde \eta$:
$f = 1$ on $[E_1, E_2]$, $f = 0$ on $\R\setminus [E_1-\tilde \eta , E_2+\tilde \eta ]$
and $\abs{f'}\le C\,\tilde \eta^{-1}$, $|f''|\le C\,\tilde \eta^{-2}$. Moreover, let $q \equiv q_a:\R \to\R_+$ be a smooth cutoff function 
concentrated around $a$, satisfying
\begin{equation} \label{def_q_cutoff}
q(x) = q_a(x) = 1 \quad  \text{if} \quad |x - a| \le 1/3\,,   \qquad q(x) = 0   \quad  \text{if} \quad |x - a| 
\ge 2/3\,, \qquad \abs{q'} \;\leq\; 6\,.
\end{equation}
The following result is the appropriate smoothed version of \eqref{aaz}. It is a simple extension of Lemma 3.2 and Equation (5.8) from \cite{KY1}, and its proof is omitted.
\begin{lemma}\label{GCC}
Let $\tilde E \deq \gamma_+ + 1$ and
\begin{equation*}
\tilde \eta \;\deq\; \eta  K^{-\epsilon} \;=\; \Delta_a K^{-3\e}\,,
\end{equation*}
and abbreviate $q \equiv q_a$ and $f_E  \equiv f_{ E^-, \tilde E,\tilde \eta }$.
Then under the assumptions of Lemma \ref{CTG} we have
\be\label{115}
\E \, h \pb{M \scalar{\f w}{\f  {\f \zeta}_a}^2}
-
\E  \,h \pbb{\frac M\pi \int_{I} \im G_{\f w \f w }(E + \ii \eta) \,
q\pb{\tr f_E (H)}\, \dd E}
\;=\; O(K^{-\delta_2/2})\,.
\ee
\end{lemma}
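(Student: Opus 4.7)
The plan is to build directly on the already-established equation \eqref{aaz} and to replace, inside the integrand, the sharp indicator $\ind{\mathcal{N}(E^-,\tilde E) = a}$ by the smooth cutoff $q(\mathrm{Tr}\, f_E(H))$. Denote by $Y_1$ and $Y_2$ the two versions of the integral appearing in \eqref{aaz} and \eqref{115}, respectively. By the mean value theorem, the polynomial growth of $h'$ from \eqref{theta_assumption}, and the a priori bound $Y_j \prec K^{\delta_2}$ whose proof is identical to that of \eqref{bound_for_mvt}, the task reduces to showing $\E \abs{Y_1 - Y_2} \prec K^{-2\delta_2}$ (say), since at most a polynomial factor $K^{C\delta_2}$ is lost in comparing $h(Y_1)$ to $h(Y_2)$.

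The crucial observation is that off the random set
\begin{equation*}
B \;\deq\; \hb{E \in I \col \exists i \text{ with } \lambda_i \in (E^- - \tilde\eta, E^- + \tilde\eta) \cup (\tilde E - \tilde\eta, \tilde E + \tilde\eta)}
\end{equation*}
one has $\mathrm{Tr}\, f_E(H) = \mathcal{N}(E^-, \tilde E)$, which is an integer, and the cutoff $q$ from \eqref{def_q_cutoff} satisfies $q(a) = 1$ while $q(k) = 0$ for every integer $k \neq a$ (because $\abs{k-a} \geq 1 > 2/3$). Hence the smooth and sharp cutoffs coincide off $B$, and therefore $\abs{Y_1 - Y_2} \leq \frac{M}{\pi} \int_B \im G_{\f w \f w}(E + \ii \eta)\,\dd E$.

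To control $\abs{B}$, the rigidity estimate \eqref{rigidity1} rules out eigenvalues in the vicinity of $\tilde E = \gamma_+ + 1$ with high probability, eliminating the upper transition region. The same estimate shows that only $O(K^{C\delta_2})$ eigenvalues can lie in the relevant window around $\gamma_a$, so $B$ is contained in $O(K^{C\delta_2})$ intervals of length $2\tilde\eta$, giving $\abs{B} \prec K^{C\delta_2}\tilde\eta = K^{C\delta_2 - 3\epsilon}\Delta_a$. The integrand is then bounded pointwise by the isotropic local Marchenko-Pastur law \eqref{bound: Rij isotropic gen} combined with Lemma \ref{lemma: w}: on $I$ we have $\kappa(E) \asymp (a/K)^{2/3}$, and this yields $\im G_{\f w \f w}(E + \ii\eta) \prec a^{1/3}/((1 + \phi) K^{1/3})$. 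Combining with $M \asymp (1 + \phi)K$ and $\Delta_a = K^{-2/3}a^{-1/3}$ produces $\abs{Y_1 - Y_2} \prec K^{C\delta_2 - \epsilon}$.

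The main obstacle is merely organising the small parameters so that the final bound absorbs the polynomial loss from the mean value theorem; choosing $\delta_2$ sufficiently small compared to $\epsilon$ (after $\delta_1$ has been fixed as in Lemma \ref{CTG}) yields a net error of order $K^{-\delta_2/2}$, as desired. No probabilistic input beyond rigidity and the isotropic local law enters here: the level repulsion of Proposition \ref{prop: level repulsion} was required only in Lemma \ref{CTG}, and does not re-enter in the passage from the sharp to the smooth cutoff.
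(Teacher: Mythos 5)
Your argument is correct and is essentially the same route the paper takes by reference (the paper omits the proof of this lemma and points to Lemma 3.2 and Equation (5.8) of \cite{KY1}, which proceed in exactly this way): one observes that off a thin random exceptional set $B \subset I$ the quantities $\tr f_E(H)$ and $\cal N(E^-,\tilde E)$ coincide, so $q(\tr f_E(H)) = \ind{\cal N(E^-,\tilde E)=a}$ there; controls $\abs{B}$ via rigidity; controls the integrand via the isotropic local law; and finally Taylor-expands $h$ using the a priori bound \eqref{bound_for_mvt}. Two small points worth keeping in mind when writing this up carefully: (a) the pointwise bound $\im G_{\f w\f w}(E+\ii\eta)\prec a^{1/3}/((1+\phi)K^{1/3})$ on $I$ only holds up to an extra factor $K^{C\delta_2}$ when $a\lesssim K^{\delta_2}$, since then $\abs{I}$ is comparable to or larger than $\gamma_a-\gamma_+$ and $\kappa(E)$ need not be $\asymp a^{2/3}K^{-2/3}$ uniformly on $I$; this extra $K^{C\delta_2}$ is harmless and gets absorbed by choosing $\delta_2$ small, but should be acknowledged; and (b) in passing from the high-probability bound on $\abs{h(Y_1)-h(Y_2)}$ to a bound on the expectation, one should note the deterministic bound $\abs{Y_j}\leq M\abs{I}\eta^{-1}\leq K^C$ so that the low-probability tail event contributes only $O(K^{-D})$.
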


We may now conclude the proof of Proposition \ref{prop:xie}.

\begin{proof}[Proof of Proposition \ref{prop:xie}]
The basic strategy of the proof is to compare the distribution of $\scalar{\f w}{\f  {\f \zeta}_a}^2$ under a general $X$ to that under a Gaussian $X$. In the latter case, by unitary invariance of $H = X X^*$, we know that $\f \zeta_a$ is uniformly distributed on the unit sphere of $\R^M$, so that $M\scalar{\f w}{\f  {\f \zeta}_a}^2 \to \chi_1^2$ in distribution.

For the comparison argument, we use the Green function comparison method applied to the Helffer-Sj\"ostrand representation of $f(H)$. Using Lemma \ref{GCC} it suffices to estimate
\be\label{ghsjd}(\E -\E^{\txt{Gauss}}) \,h \pbb{\frac M\pi \int_{I} \im G_{\f w \f w }(E + \ii \eta) \,
q(\tr f_E(H))\, \dd E}\,,
\ee
where $\E^{\txt{Gauss}}$ denotes the expectation with respect to Gaussian $X$.
Now we express $f (H)$ in terms of Green functions using Helffer-Sj\"ostrand functional calculus. Recall the definition of $\kappa_a$ from \eqref{def_kappa_a}. Let $g(y)$ be a 
smooth cutoff function with support in $[-\kappa_a, \kappa_a]$, with $g(y)=1$ for
$|y| \leq \kappa_a/2$ and $\|g^{(n)}\|_\infty \le C\kappa_a^{-n}$, where $g^{(n)}$ denotes the $n$-th derivative of $g$. Then, similarly to \eqref{HS formula}, we have (see e.g.\ Equation (B.12) of \cite{ESY5})
\begin{equation*}
 f_E (\lambda) \;=\; \frac1{2\pi}\int_{\R^2}\frac{\ii \sigma  f_E ''(e)g (\sigma)+ \ii  f_E (e) g'(\sigma)-\sigma 
 f_E '(e)g'(\sigma)}{\lambda-e-\ii \sigma} \, \dd e \, \dd \sigma\,.
\end{equation*}
Thus we get the functional calculus, with $G(z)=(H-z)^{-1}$,
\begin{align}\label{HS split 1}
\tr  f_E (H) &\;=\; \frac{1}{2\pi}\int_{\R^2}\pB{\ii \sigma  f_E ''(e)g (\sigma)+ \ii  f_E (e) g'(\sigma)-\sigma 
 f_E '(e)g'(\sigma)} \tr G(e + \ii \sigma) \, \dd e \, \dd \sigma
\notag \\
&\;=\; \frac{1}{2\pi}\int_{\R^2}\pB{\ii  f_E (e) g'(\sigma)-\sigma  f_E '(e)g'(\sigma)} \tr G(e + \ii \sigma)\, \dd e \, 
\dd \sigma
\notag \\ 
&\qquad {}+{} \frac{\ii }{2 \pi} \int_{\abs{\sigma} > \tilde \eta K^{-d \epsilon}} \dd \sigma \, g(\sigma) \int \dd e \; f''_E(e) 
\, \sigma   \tr G(e + \ii \sigma)
\notag \\ 
&\qquad {}+{}
\frac{\ii}{2 \pi} \int_{-\tilde \eta K^{-d \epsilon}}^{\tilde \eta K^{-d \epsilon}} \dd \sigma \int \dd e \; f''_E(e) \, \sigma   \tr G(e + \ii \sigma)
\,.
\end{align}
As in Lemma 5.1 of \cite{KY1}, one can easily extend \eqref{bound: Rij isotropic gen} to $\eta$ satisfying the lower bound $\eta > 0$ instead of $\eta \geq K^{-1 + \omega}$ in \eqref{def_S_theta}; the proof is identical to that of \cite[Lemma 5.1]{KY1}. Thus we have, for $e \in [\gamma_+ - 1, \gamma_+ + 1]$ and $\sigma \in (0, 1)$,
\be 
\sigma   \tr G(e + \ii \sigma) \;=\; O_\prec(1)\,.
\ee
Therefore, by the trivial symmetry $\sigma \mapsto -\sigma$ combined with complex conjugation, the third term on the right-hand side of \eqref{HS split 1} is bounded  by
\begin{equation} \label{small sigma error}
\frac{\ii  }{2 \pi} \int_{-\tilde \eta K^{-d \epsilon}}^{\tilde \eta K^{-d \epsilon}} \dd \sigma \int \dd e \; f''_E(e) \, \sigma   \tr G(e + \ii \sigma)  \;=\; O_\prec( K^{-d \epsilon})\,,
\end{equation}
where  we used that $\int \abs{f''_E(e)} \, \dd e = O(\tilde \eta^{-1})$. Next, we note that \eqref{bound: Rij isotropic gen} and Lemma \ref{lemma: w} imply
\begin{equation} \label{bound on integral}
\frac M\pi \int_{I} \im G_{\f w \f w }(E + \ii \eta) \, \dd E \;=\; O_\prec(K^{3 \epsilon})\,.
\end{equation}
Recalling \eqref{theta_assumption} and using the mean value theorem, we find from \eqref{115}, \eqref{small sigma error}, and \eqref{bound on integral} that for large enough $d$, in order to estimate \eqref{ghsjd}, and hence prove \eqref{fluct of vect comp}, it suffices to prove the following lemma. Note that in it we choose $X^{(1)}$ to be the original ensemble and $X^{(2)}$ to be the Gaussian ensemble.
\end{proof}

\begin{lemma} \label{CPZX}
Suppose that the two $M \times N$ matrix ensembles $X^{(1)}$ and $X^{(2)}$ satisfy \eqref{cond on entries of X} and \eqref{moments of X-1}. Suppose that the assumptions of Lemma \ref{CTG} hold, and recall the notations
\be\label{defEI-2}
\eta \;\deq\; \Delta_a K^{-2 \epsilon}\,,
\qquad
\tilde \eta \;\deq\; \Delta_a K^{-3\e}\,,
\qquad  
E^\pm \;\deq\; E\pm K^{ \delta_1}\eta\,, \qquad 
\tilde E \;\deq\;  \gamma_+ + 1\,,
\ee
as well as
\be\label{defEI-3}
f_E \;\equiv\; f_{E^-, \tilde E, \tilde \eta}\,, \qquad I \;\deq\; \qb{\gamma_a - K^{ \delta_2} \Delta_a  \,,\,  \gamma_a +K^{ \delta_2} \Delta_a}\,,
\ee
where $f_{E^-, \tilde E, \tilde \eta}$ was defined above \eqref{def_q_cutoff}. Recall $q \equiv q_a$ from \eqref{def_q_cutoff}. Finally, suppose that $\e>4\delta_1$ and $\delta_1>4 \delta_2$.

Then for any $d > 1$ and for small enough $\e \equiv \epsilon(\tau,d) > 0$ and $\delta_2 \equiv \delta_2(\epsilon, \delta, \delta_1)$ we have
\be \label{ee427}
\qb{\E^{(1)} - \E^{(2)}} \,h \qbb{ \int_{I} x(E) \, q \pb{y(E) }   \, \dd E } \;=\; O(K^{-\delta_2})\,,
\ee
where we defined
\be\label{defxE}
x(E) \;\deq\; \frac {M }\pi   \im G_{\f w \f w}(E + \ii \eta)
\ee
and 
\begin{align}
y(E) &\;\deq\; \frac 1{2\pi}\int_{\R^2}
\ii \sigma f_E''(e) g (\sigma)  \tr G(e + \ii \sigma)\,  \indb{\abs{\sigma} > \tilde \eta K^{-d \e}} \,\dd e \, \dd \sigma
\nonumber \\ \label{defy428}
&\qquad \;+\; 
 \frac 1{2\pi}\int_{\R^2}
\pb{ \ii f_E(e) g'(\sigma)- \sigma f_E'(e)g'(\sigma)} \tr G(e + \ii \sigma) \, \dd e \, \dd \sigma\,.
\end{align}
\end{lemma}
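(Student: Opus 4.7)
The proof will use the Lindeberg-type Green function comparison strategy, following the template of \cite[Section 5]{KY1} adapted from the Wigner to the sample covariance setting. Let $F(X)$ denote the argument of the expectation in \eqref{ee427}, viewed as a function of the entries of $X$. The plan is to enumerate the $MN$ entries of $X$ and define a sequence of interpolating matrices $X^{(0)} = X^{(2)}, X^{(1)}, \ldots, X^{(MN)} = X^{(1)}$, where $X^{(s)}$ is obtained from $X^{(s-1)}$ by replacing a single entry with the corresponding entry of $X^{(1)}$. Telescoping, it suffices to bound the contribution of each individual swap by $O_\prec(K^{-2 - \delta_2})$, since there are $MN \asymp K^2$ swaps.

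For each position $(k,\mu)$, I will perform a fourth-order Taylor expansion of $F$ in the scalar variable $\zeta \deq X_{k\mu}$ around $\zeta = 0$, holding the other entries of $X$ fixed. Because $\E X^{(1)}_{k\mu} = \E X^{(2)}_{k\mu} = 0$ and $\E (X^{(1)}_{k\mu})^2 = \E (X^{(2)}_{k\mu})^2 = (NM)^{-1/2}$, the zeroth, first, and second order terms cancel in the difference of expectations. Combined with the moment bound $\E |X_{k\mu}|^p \leq C_p (NM)^{-p/4}$ from \eqref{moments of X-1}, the proof reduces to establishing uniform bounds of the form
\begin{equation*}
\sup_{|\zeta| \prec (NM)^{-1/4}} \absb{\partial_\zeta^p F(\zeta)} \;\prec\; K^{p/2 - 2 - \delta_2} \qquad (p = 3, 4)\,.
\end{equation*}

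Derivatives of $F$ expand via the chain and product rules applied to $h$, $q \equiv q_a$, the integrals defining $y(E)$ in \eqref{defy428}, and the resolvent identity
\begin{equation*}
\frac{\partial G(z)}{\partial X_{k\mu}} \;=\; -G(z) \pbb{\frac{\partial H}{\partial X_{k\mu}}} G(z)\,, \qquad \pbb{\frac{\partial H}{\partial X_{k\mu}}}_{ij} \;=\; \delta_{ik} X_{j\mu} + \delta_{jk} X_{i\mu}\,.
\end{equation*}
The resulting terms are finite sums of products of entries of $G$ at various spectral parameters with imaginary part at least $\tilde\eta K^{-d\e} \geq K^{-2/3 - C\e}$, entries of $X$, and derivatives of the outer smooth functions. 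The isotropic local Marchenko-Pastur law (Theorem \ref{thm: IMP gen}) yields $\absb{G_{ab}(z)} \prec 1$ uniformly in this domain; \eqref{moments of X-1} gives $\abs{X_{ij}} \prec (NM)^{-1/4}$; and one uses $|f_E^{(j)}(e)| \leq C \tilde\eta^{-j}$ together with the polynomial bound on derivatives of $h$ and the compact support and $O(1)$ derivatives of $q_a$.

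The main obstacle will be the power counting once all derivatives are distributed across the nested functional structure. Key integration gains come from $\int_I x(E) \, \dd E \prec K^{C\e}$ (as in \eqref{bound on integral}) and $\int |f_E''(e)| \, \dd e \leq C \tilde\eta^{-1}$, while factors of $(NM)^{-1/4}$ from the $X$-entries in $\partial H / \partial X_{k\mu}$ arise whenever subsequent derivatives do not consume them. For $p = 4$ the required bound follows from this accounting together with the observation that all accumulated $K^{C\e}$ losses are absorbed by the slack in the hierarchy $\e > 4 \delta_1 > 16 \delta_2$. For $p = 3$ one must in addition exploit a cancellation obtained either from the mean-zero property of the unpaired $X$-factors or, equivalently, from integrating the remaining $X$-entries against them; this ``residual expectation'' mechanism is what provides the extra factor of $(NM)^{-1/4}$ needed to close the estimate at third order.
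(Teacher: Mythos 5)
Your high-level strategy mirrors the paper's: Green function comparison via entry-by-entry telescoping, a fourth-order Taylor expansion in the swapped entry, cancellation of the first two moments, and reduction to bounding the third- and fourth-order remainder with enough slack to survive the $MN \asymp K^2$ sum. The power counting $|\partial_\zeta^p F| \prec K^{p/2 - 2 - \delta_2}$ is the correct target, the input bounds (isotropic MP law, $|f_E^{(j)}| \lesssim \tilde\eta^{-j}$, $\int_I x(E)\,\dd E \prec K^{C\e}$) are the right ones, and the fourth-order term indeed closes by brute force once the $\e > 4\delta_1 > 16\delta_2$ hierarchy absorbs the losses.

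The gap is the third-order term, which you dispatch in a single sentence as a ``residual expectation mechanism.'' This is where essentially all of the work in the paper's proof lives, and it is not a routine appeal to mean-zero factors. The coefficient multiplying $\E U_{b\beta}^3$ is not a simple cubic in the column entries: it is a product of the derivatives $h^{(k)}(A_{\f 0})$, $q^{(m)}(y_T)$, the Helffer-Sj\"ostrand integrals defining $y$, and the $x$-integral, all evaluated at $\bar X$. To gain the needed $N^{-1/2}$ under expectation one must show that this entire nested object can be rewritten, up to a negligible error, as a polynomial of \emph{odd} degree in $(\bar X_{i\beta})_i$ with $\bar X^{[\beta]}$-measurable coefficients satisfying the right $\ell^2$ \emph{and} $\ell^3$ norm bounds (the latter is essential: an $\ell^2$ bound alone does not yield the $N^{-1/2}$ gain when indices collapse in groups of three or more). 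This is the ``polynomialization'' step, and the paper has to set up a graded-polynomial calculus (admissible weights, $O_{\prec,d}$, $O_{\prec,\diamond}$, $O_{\prec,\mathrm{even}}$, $O_{\prec,\mathrm{odd}}$), prove a parity lemma giving $|\E \cal P| \prec N^{-1/2} A$ for odd $\cal P$, and then grind through the polynomialization of every coefficient $T_{bb}$, $T_{\f w b}$, $(T\bar X)_{\f v\beta}$, $(\bar X^* T\bar X)_{\beta\beta}$, $(T^2)_{bb}$, etc., including a change of basis to a delocalized one to control $T^2$. Additionally, the crude bound on the third-order coefficient produces a term $\propto \phi^{1/4}|w(b)|$ whose sum over $b$ is $O(M^{1/2})$, not $O(1)$; the parity gain is what replaces it by $|w(b)|^2$ (which sums to $1$). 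Without an actual construction of this polynomialization and verification of the $\ell^3$ condition, the third-order estimate does not close, so the proposal as written does not constitute a proof.
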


The rest of this section is devoted to the proof of Lemma \ref{CPZX}.
 
\subsection{Proof of Lemma \ref{CPZX} I: preparations} \label{sec: 81}
We shall use the Green function comparison method \cite{EYY1,EYY3,KY1} to prove Lemma \ref{CPZX}. For definiteness, we assume throughout the remainder of Section \ref{sec: QUE} that $\phi \geq 1$. The case $\phi < 1$ is dealt with similarly, and we omit the details.

We first collect some basic identities and estimates that serve as a starting point for the Green function comparison argument. We work on the product probability space of the ensembles $X^{(1)}$ and $X^{(2)}$. We fix a bijective 
ordering map $\Phi$ on the index set of the matrix entries,
\[
\Phi \col \{(i, \mu) \col 1\le i\le M, \; 1\le  \mu \le N \} \;\longrightarrow\; \qq{1, MN}\,,
\]
and define the interpolating matrix $X_\gamma$, $\gamma \in \qq{1, MN}$, through
\begin{equation*}
(X_\gamma)_{i \mu} \;\deq\;
\begin{cases}
X_{i \mu}^{(1)} & \text{if } \Phi(i,\mu) > \gamma
\\
X_{i \mu}^{(2)} & \text{if } \Phi(i,\mu) \leq \gamma\,.
\end{cases}
\end{equation*}
In particular, $X_0 = X^{(1)}$ and $ X_{MN} = X^{(2)}$. Hence we have the telescopic sum
\begin{align}\label{tel}
\big [ \E^{(1)} - \E^{(2)} \big ]  \,h \left[\int_{I } x(E) \, q (y(E) ) \, \dd E \right] \;=\; \sum_{\gamma = 
1}^{MN} \Big [ \E^{X_{\gamma-1}}  - \E^{X_{\gamma}} \Big ]  \,h \left[  \int_{I } x(E) \, q 
(y(E) ) \, \dd E\right]
\end{align}
(in self-explanatory notation).

Let us now fix a $\gamma$ and let $(b,\beta)$ be determined by $\Phi (b, \beta) = \gamma$. Throughout the following we consider 
$b, \beta$ to be arbitrary but fixed and often omit dependence on them from the notation. Our strategy is to 
compare $X_{\gamma-1}$ with $X_\gamma$ for each $\gamma$. In the end we shall sum up the differences in the telescopic 
sum \eqref{tel}.

Note that $X_{\gamma - 1}$ and $X_\gamma$ differ only in the matrix entry indexed by $(b,\beta)$. 
Thus we may write
\begin{align}
X_{\gamma-1} &\;=\; \bar X + \wt U\,, \qquad \wt U_{i \mu} \;\deq\; \delta_{i b} \delta_{\mu \beta} X^{(1)}_{b\beta}
 \,,
\notag \\ \label{defHg1}
X_\gamma &\;=\; \bar X + U\,, \qquad U_{i \mu} \;\deq\; \delta_{i b} \delta_{\mu \beta} X^{(2)}_{b\beta}  \,.
\end{align}
Here $\bar X$ is the matrix obtained from $X_{\gamma}$ (or, equivalently, from $X_{\gamma - 1}$) by setting the entry indexed by $(b, \beta)$ to zero.
Next, we define the resolvents
\be\label{defG}
T(z) \;\deq\; (\bar X\bar X^*-z)^{-1}\,,\qquad  \qquad   S(z) \;\deq\; (X_\gamma X_\gamma^*-z)^{-1}\,. 
\ee
We shall show that the difference between the expectations $\E^{X_{\gamma}}$ and $\E^{\bar X}$ depends only on the first two moments of $X^{(2)}_{b \beta}$, up to an error term that is negligible even after summation 
over $\gamma$. Together with same argument applied to $\E^{X_{\gamma -1}}$, and the fact that the second moments of 
$X^{(1)}_{b \beta}$ and $X^{(2)}_{b \beta}$ coincide, this will prove Lemma \ref{CPZX}.

We define $x_T(E)$ and $y_T(E)$ as in \eqref{defxE} and \eqref{defy428} with $G$ replaced by $T$, and similarly $x_S(E)$ and $y_S(E)$ with $G$ replaced by $S$. Throughout the following we use the notation $\f w = (w(i))_{i = 1}^M$ for the components of $\f w$. In order to prove \eqref{ee427} using \eqref{tel}, it is enough to prove that for some constant $c > 0$ we have
\be\label{Nva-1}
\E h \left[  \int_{I} x_S(E) \, q \pb{y_S(E) }   \, \dd E  \right]
-
 \E h \left[  \int_{I} x_T(E) \, q \pb{y_T(E) }   \, \dd E  \right]
 =\E\cal A+O (K^{-c}) \pb{\phi^{-1}K^{-2}
 +K^{-1} \abs{w(b)}^2}\,,
\ee
where $\cal A$ is polynomial of degree two in $U_{b \beta}$ whose coefficients are $\bar X$-measurable.

The rest of this section is therefore devoted to the proof of \eqref{Nva-1}. Recall that we assume throughout that $\phi \geq 1$ for definiteness; in particular, $K = N$.

We begin by collecting some basic identities from linear algebra. In addition to $G(z) \deq (X X^* - z)^{-1}$ we introduce the auxiliary resolvent $R(z) \deq (X^* X - z)^{-1}$. Moreover, for $\mu \in \qq{1,M}$ we split
\begin{equation*}
X \;=\; X_{[\mu]} + X^{[\mu]} \,, \qquad (X_{[\mu]})_{i \nu} \;\deq\; \ind{\nu = \mu} X_{i \nu}\,, \qquad (X^{[\mu]})_{i \nu} \;\deq\; \ind{\nu \neq \mu} X_{i \nu}\,.
\end{equation*}
We also define the resolvent $G^{[\mu]} \deq (X^{[\mu]} (X^{[\mu]})^* - z)^{-1}$. A simple Neumann series yields the identity
\begin{equation} \label{Neumann}
G \;=\; G^{[\mu]} - \frac{G^{[\mu]} X_{[\mu]} X_{[\mu]}^* G^{[\mu]}}{1 + (X^* G^{[\mu]} X)_{\mu \mu}}\,.
\end{equation}
Moreover, from \cite[Equation (3.11)]{BEKYY}, we find
\begin{equation} \label{R_mumu}
z R_{\mu \mu} \;=\; - \frac{1}{1 + (X^* G^{[\mu]} X)_{\mu \mu}}\,.
\end{equation}
From \eqref{Neumann} and \eqref{R_mumu} we easily get
\begin{equation} \label{G_X}
GX_{[\mu]} \;=\; -z \,R_{\mu\mu} \, G^{[\mu]}X_{[\mu]}\,, \qquad
X^*_\mu G \;=\; -z R_{\mu \mu} X_{[\mu]}^* G^{[\mu]}\,.
\end{equation}

Throughout the following we shall make use of the fundamental error parameter
\begin{equation} \label{def_Psi}
\Psi(z) \;\deq\; \sqrt{\frac{\im m_\phi(z)}{N \eta}} + \frac{1}{N \eta}\,,
\end{equation}
which is analogous to the right-hand side of \eqref{bound: Rij isotropic gen} and will play a similar role.
We record the following estimate, which is analogous to Theorem \ref{thm: IMP gen}.

\begin{lemma}\label{thm: bound on RX}
Under the assumptions of Theorem \ref{thm: IMP gen} we have, for $z \in \f S$,
\begin{equation}\label{bound: XRv}
\absb{(GX )_{\f w \mu} }  \;\prec\;  \phi^{-1/4} \Psi
\end{equation}
and 
\begin{equation}\label{bound: XRX}
 \absb{(X^*GX)_{\mu\nu} - \delta_{\mu\nu}(1+zm_{\phi}) } \;\prec\; \phi^{1/2}\Psi \,.
 \end{equation}
 \end{lemma}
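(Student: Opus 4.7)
The plan is to derive both bounds from Theorem \ref{thm: IMP gen} combined with the Schur complement identities \eqref{Neumann}--\eqref{G_X} and a standard large deviation estimate for linear combinations of independent centred random variables. Since the paper assumes $\phi \geq 1$ throughout Section \ref{sec: QUE}, I carry out the argument in that regime, in which $|z| \asymp \phi^{1/2}$ and $|m_\phi| \asymp 1$ for $z \in \f S$ by \eqref{bounds on mg}.

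The estimate \eqref{bound: XRX} is essentially algebraic. From the identity $X^*(XX^* - z) = (X^*X - z)X^*$ we obtain $X^*G = RX^*$ and therefore
\begin{equation*}
X^* G X \;=\; R X^* X \;=\; I_N + z R,
\end{equation*}
so that $(X^*GX)_{\mu\nu} - \delta_{\mu\nu}(1 + zm_\phi) = z(R_{\mu\nu} - \delta_{\mu\nu}m_\phi)$. The resolvent $R$ is that of the transposed ensemble, so Theorem \ref{thm: IMP gen} applies via the transposition $X \to X^T$ (which swaps $M \leftrightarrow N$ and hence $\phi \leftrightarrow \phi^{-1}$) and yields $|R_{\mu\nu} - \delta_{\mu\nu}m_\phi| \prec \sqrt{\im m_\phi/(N\eta)} + (N\eta)^{-1} = \Psi$. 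Combined with $|z| \asymp \phi^{1/2}$, this gives the claim.

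For \eqref{bound: XRv}, I use \eqref{G_X} in the form $(GX)_{\f w\mu} = -zR_{\mu\mu}(G^{[\mu]}X_{[\mu]})_{\f w\mu}$. Since $G^{[\mu]}$ is independent of $X_{[\mu]}$, and the nonzero entries of $X_{[\mu]}$ are independent and centred with variance $(NM)^{-1/2}$, a standard large-deviation bound together with the identity $((G^{[\mu]})(G^{[\mu]})^*)_{\f w\f w} = \eta^{-1}\im G^{[\mu]}_{\f w\f w}$ gives
\begin{equation*}
\absb{(G^{[\mu]}X_{[\mu]})_{\f w\mu}}^2 \;=\; \absbb{\sum_j G^{[\mu]}_{\f w j} X_{j\mu}}^2 \;\prec\; \frac{1}{\sqrt{NM}}\sum_j \absb{G^{[\mu]}_{\f w j}}^2 \;=\; \frac{\im G^{[\mu]}_{\f w\f w}}{\sqrt{NM}\,\eta}.
\end{equation*}
Theorem \ref{thm: IMP gen} (together with a routine rank-one comparison of $G^{[\mu]}$ with $G$ via the Woodbury identity) yields $\im G^{[\mu]}_{\f w\f w} \prec \im m_{\phi^{-1}} + (N\eta)^{-1} \asymp \phi^{-1}(\im m_\phi + (N\eta)^{-1})$ by \eqref{mphi_minvphi} and \eqref{im_m_swap}. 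Substituting and using $\sqrt{NM} = \sqrt{\phi}\,N$ gives $|(G^{[\mu]}X_{[\mu]})_{\f w\mu}| \prec \phi^{-3/4}\Psi$. Finally, $R_{\mu\mu} = m_\phi + O_\prec(\Psi)$ combined with $|zm_\phi| \asymp \phi^{1/2}$ gives $|zR_{\mu\mu}| \asymp \phi^{1/2}$, from which $|(GX)_{\f w\mu}| \prec \phi^{-1/4}\Psi$ as claimed. The main technical point is the rank-one comparison for $G^{[\mu]}$, but this is routine and has appeared in similar form in several earlier works on random covariance matrices; no new conceptual difficulty arises.
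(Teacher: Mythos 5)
Your proposal follows essentially the same route as the paper: both bounds are derived from the Schur complement identities \eqref{Neumann}--\eqref{G_X} together with a large deviation estimate for $(G^{[\mu]}X_{[\mu]})_{\f w\mu}$ viewed as a linear form in the independent column $(X_{j\mu})_j$, and the isotropic local law. The treatment of \eqref{bound: XRX} via $X^*GX = 1 + zR$ and Theorem \ref{thm: IMP gen} applied to $X^*$ is exactly the paper's one-line argument. I flag two small issues in the argument for \eqref{bound: XRv}. First, the intermediate claim
\begin{equation*}
\im G^{[\mu]}_{\f w\f w} \;\prec\; \im m_{\phi^{-1}} + (N\eta)^{-1} \;\asymp\; \phi^{-1}\pb{\im m_\phi + (N\eta)^{-1}}
\end{equation*}
is internally inconsistent: the two sides of the ``$\asymp$'' differ by a factor of $\phi$ in the $(N\eta)^{-1}$ term, and for $\phi \gg 1$ the middle expression is much larger than the right one. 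The bound actually provided by Theorem \ref{thm: IMP gen} is $\im G^{[\mu]}_{\f w\f w} \prec \im m_{\phi^{-1}} + \phi^{-1}\Psi$, i.e.\ the additive error is $(M\eta)^{-1}$, not $(N\eta)^{-1}$. Since your subsequent arithmetic uses the correct right-hand expression, the final answer $\phi^{-1/4}\Psi$ still comes out right, but if one plugged the literal middle expression into $\phi^{1/4}\sqrt{\im G^{[\mu]}_{\f w\f w}/(N\eta)}$ one would obtain $\phi^{1/4}\Psi$ rather than $\phi^{-1/4}\Psi$. Second, the ``routine rank-one comparison of $G^{[\mu]}$ with $G$ via the Woodbury identity'' is not quite routine: from \eqref{Neumann} the difference $G^{[\mu]}_{\f w\f w} - G_{\f w\f w}$ is proportional to $(G^{[\mu]}X_{[\mu]})_{\f w\mu}^2$, precisely the quantity you are estimating, so this route requires a short self-consistency bootstrap (which does close, since $(N\eta)^{-1} < 1$ on $\f S$). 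The paper sidesteps this entirely by applying Theorem \ref{thm: IMP gen} directly to the minor $X^{[\mu]}$, which is the cleaner way to get the bound on $\im G^{[\mu]}_{\f w\f w}$; you may want to do the same.
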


\begin{proof}
This result is a generalization of (5.22) in \cite{PY}. The key identity is \eqref{G_X}.
Since $G^{[\mu]}$ is independent of $(X_{i \mu})_{i = 1}^N$, we may apply the large deviation estimate \cite[Lemma 3.1]{BEKYY} to $G^{[\mu]}X_{[\mu]}$. Moreover, $\abs{R_{\mu \mu}} \prec 1$, as follows from Theorem \ref{thm: IMP gen} applied to $X^*$, and Lemma \ref{lemma: w}. Thus we get
\begin{multline*}
\abs{(GX )_{\f w \mu}} \;\prec\; \phi^{1/2} \pBB{(M N)^{-1/2} \sum_{i = 1}^M \abs{G_{\f w i}^{[\mu]}}^2}^{1/2} \;=\; \phi^{1/4} \pBB{\frac{1}{N \eta} \im G_{\f w \f w}^{[\mu]}}^{1/2}
\\
\prec\; \phi^{1/4} \pBB{\frac{1}{N \eta} \pbb{\im m_{\phi^{-1}} + \phi^{-1} \Psi}}^{1/2}
\;\leq\; C \phi^{-1/4} \Psi\,,
\end{multline*}
where the second step follows by spectral decomposition, the third step from Theorem \ref{thm: IMP gen} applied to $X^{[\mu]}$ as well as \eqref{im_m_swap}, and the last step by definition of $\Psi$. This concludes the proof of \eqref{bound: XRv}.
 
Finally, \eqref{bound: XRX} follows easily from Theorem \ref{thm: IMP gen} applied to the identity
$X^*G X = 1 + zR$. 
\end{proof}

After these preparations, we continue the proof of \eqref{Nva-1}. We first expand the difference between $S$ and $T$ in terms of $V$ (see \eqref{defHg1}).
We use the resolvent expansion: for any $m \in \N$ we have
\be\label{SR-N}
	 S \;=\; T + \sum_{k=1}^m (-1)^{k}[T( \bar XU^*+U \bar X^* +  U U^* )]^kT
	 + (-1)^{m+1} [T(  \bar XU^*+U \bar X^*+  U U^* )]^{m+1} S
\ee
and
\be\label{RS-N}
	 T \;=\;  S + \sum_{k=1}^m [S( XU^*+UX^* +  U U^* )]^kS
	 +  [S( XU^*+UX^*+  U U^* )]^{m+1} T\,.
\ee
Note that Theorem \ref{thm: IMP gen} and Lemma \ref{thm: bound on RX} immediately yield for $z \in \f S$
\begin{equation*}
\absb{S_{\bv \bw}-\scalar{\f v}{\f w} m_{\phi^{-1}}} \;\prec\; \phi^{-1}\Psi\,, 
\qquad
 \abs{(SX_\gamma)_{\bv i}} \;\prec\; \phi^{-1/4}\Psi\,, 
  \qquad 
\absb{(X_\gamma^* SX_\gamma)_{\mu\nu} - \delta_{\mu\nu}(1+zm_{\phi })}  \;\prec\;  \phi^{1/2}\Psi
\end{equation*}
Using \eqref{RS-N}, we may extend these estimates to analogous ones on $\bar X$ and $T$ instead of $X_\gamma$ and $S$. Indeed, using the facts $\norm{R} \leq \eta^{-1}$, $\Psi \geq N^{-1/2}$, and $\abs{U_{b \beta}} \prec \phi^{-1/4}N^{-1/2}$ (which are easily derived from the definitions of the objects on the left-hand sides) combined with \eqref{RS-N}, we get the following result.
\begin{lemma}\label{princ} For $A\in\{S,T\}$ and $B\in\{X_\gamma, \bar X\}$ we have
\begin{equation*}
 \absb{A _{\bv \bw}-\scalar{\bv}{\bw} m_{\phi^{-1}}} \;\prec\; \phi^{-1}\Psi\,, 
\qquad
 \abs{( A B)_{\bv i}} \;\prec\; \phi^{-1/4}\Psi \,, 
  \qquad 
  \absb{(B^*  A   B )_{\mu\nu} - \delta_{\mu\nu}(1+zm_{\phi }(z))}  \;\prec\;  \phi^{1/2}\Psi\,.
\end{equation*}
\end{lemma}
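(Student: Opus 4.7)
The lemma asserts three bounds uniformly over $(A,B) \in \{S,T\}\times\{X_\gamma,\bar X\}$. The case $(A,B) = (S,X_\gamma)$ is already established in the three lines immediately preceding the lemma: the first bound is Theorem \ref{thm: IMP gen} applied to $X_\gamma X_\gamma^*$ together with \eqref{im_m_swap} (which turns the right-hand side of \eqref{bound: Rij isotropic gen} into $\phi^{-1}\Psi$); the second is Lemma \ref{thm: bound on RX}; and the third is \eqref{bound: XRX}. My plan is to propagate these base estimates to the three remaining cases by a resolvent expansion in $U$ and the identity $\bar X = X_\gamma - U$, using the three inputs recalled just before the lemma: $\|S\|,\|T\|\le\eta^{-1}$, $\Psi\ge N^{-1/2}$, and $|U_{b\beta}|\prec\phi^{-1/4}N^{-1/2}$.

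\textbf{Passage from $S$ to $T$.} I would truncate \eqref{RS-N} at a finite order $m$, chosen large in terms of the exponent $D$ from Definition \ref{def:stocdom}. Since $U$ is the rank-one matrix $U_{b\beta}\f e_b\f e_\beta^*$, each of the finitely many correction terms of order $k \le m$ collapses, when sandwiched between $\f v$ and $\f w$, into a finite product of scalars already controlled by the base case. For example, the three first-order contributions to $T_{\f v\f w} - S_{\f v\f w}$ are
\[
\bar U_{b\beta}(SX_\gamma)_{\f v\beta}S_{b\f w}\,,\qquad U_{b\beta}S_{\f v b}(X_\gamma^* S)_{\beta\f w}\,,\qquad |U_{b\beta}|^2 S_{\f v b}S_{b\f w}\,,
\]
each of which is $\prec\phi^{-1/4}N^{-1/2}\cdot\phi^{-1/4}\Psi\cdot\phi^{-1/2}\le\phi^{-1}\Psi$ by the base case (using $|m_{\phi^{-1}}|\asymp\phi^{-1/2}$ for $\phi\ge 1$ to bound $|S_{\f v b}|$ via the first base estimate with $\f w = \f e_b$). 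Higher-order corrections contract to analogous products with additional small factors per order. The $(m+1)$-st remainder reduces, via the same rank-one collapse, to a scalar product of the form $\bar U_{b\beta}^{m+1}(SX_\gamma)_{\f v\beta}\bigl((SX_\gamma)_{b\beta}\bigr)^m(ST)_{b\f w}$ (and $3^{m+1}-1$ analogous terms), bounded by $\prec N^{-(m+1)/2}\eta^{-2}\le N^{2(1-\omega)-(m+1)/2}$, which for $m$ large enough is $O_\prec(N^{-D})$. Iterating this argument through the three estimates yields the bounds for $(T,X_\gamma)$.

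\textbf{Passage from $X_\gamma$ to $\bar X$.} Here I would use $\bar X = X_\gamma - U$ directly. For the second bound, $(A\bar X)_{\f v i} = (AX_\gamma)_{\f v i} - \delta_{i\beta}U_{b\beta}A_{\f v b}$; the correction is $\prec\phi^{-1/4}N^{-1/2}\cdot\phi^{-1/2}\le\phi^{-1/4}\Psi$ by the first bound, already proved for $A \in \{S, T\}$. For the third bound, expanding
\[
(\bar X^*A\bar X)_{\mu\nu} \;=\; (X_\gamma^*AX_\gamma)_{\mu\nu} \,-\, \delta_{\mu\beta}\bar U_{b\beta}(AX_\gamma)_{b\nu} \,-\, \delta_{\nu\beta}U_{b\beta}(X_\gamma^*A)_{\mu b} \,+\, \delta_{\mu\beta}\delta_{\nu\beta}|U_{b\beta}|^2 A_{bb}
\]
shows that each of the three corrections contains a factor $U_{b\beta}$ paired with a resolvent or contracted entry already controlled by the first or second bound for $A$, and a term-by-term check gives corrections of order $\prec\phi^{1/2}\Psi$.

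\textbf{Main obstacle.} The argument is essentially a routine resolvent-expansion comparison, so there is no conceptual difficulty; the only real work is bookkeeping the three distinct $\phi$-scalings $\phi^{-1}$, $\phi^{-1/4}$, $\phi^{1/2}$ across the various contractions of $U$ against resolvent entries and columns $X_\gamma\f e_\beta$, and verifying that every product respects the intended right-hand side. This is precisely what the sentence introducing the lemma in the paper labels ``easily derived''.
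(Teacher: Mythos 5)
Your proposal is correct and follows exactly the route the paper gestures at: establish the $(S,X_\gamma)$ case from Theorem \ref{thm: IMP gen} and Lemma \ref{thm: bound on RX}, then propagate to $(T,\bar X)$ via the rank-one resolvent expansion \eqref{RS-N} and the substitution $\bar X = X_\gamma - U$, using $\norm{S},\norm{T}\le\eta^{-1}$, $\Psi\ge N^{-1/2}$, and $|U_{b\beta}|\prec\phi^{-1/4}N^{-1/2}$. (Two harmless slips in your remainder term: after the rank-one collapse the last factor is $T_{b\f w}$, not $(ST)_{b\f w}$, and the crude $\eta^{-2}$ bound tacitly assumes you keep the $\prec\phi^{-1/4}\Psi$ bounds, rather than $\eta^{-1}$, for the intermediate $(SX_\gamma)_{b\beta}$ factors — which you evidently do, so the argument goes through.)
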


The final tool that we shall need is the following lemma, which collects basic algebraic properties of stochastic domination $\prec$. We shall use them tacitly throughout the following. Their proof is an elementary exercise using union bounds and Cauchy-Schwarz. See \cite[Lemma 3.2]{BEKYY} for a more general statement.
\begin{lemma} \label{lemma: basic properties of prec}
\begin{enumerate}
\item
Suppose that $A(v) \prec  B(v)$ uniformly in $v \in V$. If $\abs{V} \leq N^C$ for some constant $C$ then $\sum_{v \in V} A(v) \prec \sum_{v \in V} B(v)$.
\item
Suppose that $A_{1} \prec B_{1}$ and $A_{2} \prec B_{2}$. Then $A_{1} A_{2} \prec B_{1} B_{2}$.
\item
Suppose that $\Psi \geq N^{-C}$ is deterministic and $A$ is a nonnegative random variable satisfying $\E A^2 \leq N^{C}$. Then $A \prec \Psi$ implies that $\E A \prec \Psi$.
\end{enumerate}
If the above random variables depend on an additional parameter $u$ and all hypotheses are uniform in $u$ then so are the conclusions.
\end{lemma}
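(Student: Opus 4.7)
All three parts follow by direct unpacking of Definition \ref{def:stocdom} (with uniformity carried along automatically), and the only nontrivial ingredients are the union bound, the trivial inequality $(ab \leq N^{2\epsilon} B_1 B_2)$ on intersections of good events, and a Cauchy--Schwarz truncation for the expectation statement. The plan is to handle (i) and (ii) by a two-line union bound/intersection argument, and then devote the bulk of the work to (iii), where one must convert a high-probability bound into an in-expectation bound.

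For (i), fix $\epsilon > 0$ and $D > 0$. By hypothesis, for each $v \in V$ we have $\P[A(v) > N^{\epsilon} B(v)] \leq N^{-D'}$ for any $D' > 0$ and all $N \geq N_0(\epsilon, D')$, uniformly in $v$. Choose $D' \deq D + C + 1$ and take a union bound over $v \in V$; since $\abs{V} \leq N^C$, the resulting event of probability $\geq 1 - N^{-D-1}$ entails $A(v) \leq N^\epsilon B(v)$ simultaneously in $v$, hence $\sum_v A(v) \leq N^\epsilon \sum_v B(v)$. For (ii), simply intersect the two events $\{A_k \leq N^{\epsilon/2} B_k\}$ for $k = 1,2$; on the intersection (which has probability $\geq 1 - 2 N^{-D}$ by another union bound), $A_1 A_2 \leq N^\epsilon B_1 B_2$. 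Uniformity in any parameter is preserved by both arguments.

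For (iii), the key idea is to split the expectation on the event $\Xi \deq \hb{A \leq N^\epsilon \Psi}$ and its complement. On $\Xi$ we have the deterministic bound $\E\qb{A \ind{\Xi}} \leq N^\epsilon \Psi$. For the complementary piece, Cauchy--Schwarz gives
\begin{equation*}
\E\qb{A \ind{\Xi^c}} \;\leq\; (\E A^2)^{1/2} \, \P(\Xi^c)^{1/2} \;\leq\; N^{C/2} N^{-D/2}\,,
\end{equation*}
where the first factor uses the hypothesis $\E A^2 \leq N^C$ and the second uses $A \prec \Psi$ with exponent $D$ (chosen freely). Since $\Psi \geq N^{-C}$ is deterministic, for any target $D' > 0$ we may choose $D \deq 2 D' + 3C$ so that $N^{C/2 - D/2} \leq N^{-D'} N^{-C} \leq N^{-D'} \Psi$. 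Combining the two pieces yields $\E A \leq 2 N^\epsilon \Psi$ for $N$ large enough, which is $\E A \prec \Psi$. Uniformity in auxiliary parameters carries through since the threshold $N_0(\epsilon, D)$ from the hypothesis $A \prec \Psi$ is itself uniform.

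The only substantive step is the Cauchy--Schwarz truncation in (iii); the assumption $\Psi \geq N^{-C}$ is what allows the polynomial loss $N^{C/2}$ coming from the second moment to be absorbed into $\Psi$ at the cost of making $D$ a bit larger, and without this lower bound the conclusion would fail. No other obstacles are anticipated.
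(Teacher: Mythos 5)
Your proof is correct and follows exactly the route the paper indicates (the paper omits the details, describing the proof as "an elementary exercise using union bounds and Cauchy-Schwarz" and pointing to \cite{BEKYY} for a more general statement): union bound over the polynomially many parameters for (i), intersection of good events for (ii), and the splitting $\E A = \E[A\ind{\Xi}] + \E[A \ind{\Xi^c}]$ with Cauchy--Schwarz and the lower bound $\Psi \geq N^{-C}$ for (iii). The only cosmetic point is that in (ii) the failure probability $2N^{-D}$ should be tidied to $N^{-D}$ by running the argument with $D+1$, which is immediate since $D$ is arbitrary.
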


\subsection{Proof of Lemma \ref{CPZX} II: the main expansion} \label{sec: 82}
Lemma \ref{princ} contains the a-priori estimates needed to control the resolvent expansion \eqref{SR-N}. The precise form that we shall need is contained in the following lemma, which is our main expansion. Define the control parameter
\begin{equation*}
\Psi_b(z) \;\deq\; \phi^{1/2} \abs{w(b)} + \Psi(z)\,,
\end{equation*}
where we recall the notation $\f w = (w(i))_{i = 1}^M$ for the components of $\f w$.

\begin{lemma}[Resolvent expansion of $x(E)$ and $y(E)$] \label{zddz}
The following results hold for $E \in I$. (Recall the definition \eqref{defEI-3}. For brevity, we omit $E$ from our notation.)
\begin{enumerate}
\item
We have the expansion
\begin{equation} \label{lghj} 
x_S - x_T \;=\; \sum_{l=1}^3 x_l \, U_{b\beta}^l
+ O_\prec\pB{\phi^{-1} N^{-1} (\phi^{1/4} \abs{w(b)} + \Psi)^2}\,,
\end{equation}
where $x_l$ is a polynomial, with constant number of terms, in the variables
\begin{equation*}
\hb{T_{bb}, T_{\f w  b}, T_{b \f w}, (T\bar X)_{\f w \beta}, (\bar X^*T)_{\beta \f w}, (\bar X^*T\bar X)_{\beta\beta}}\,.
\end{equation*}
In each term of $x_l$, the index $\f w$ appears exactly twice, while the indices $b$ and $\beta$ each appear exactly $l$ times. 

Moreover, we have the estimates
\begin{equation} \label{bzdbb}
\abs{x_1} + \abs{x_3} \;\prec\; \phi^{-1/4}N \Psi\Psi_b\,,
\qquad
\abs{x_2} \;\prec\; \phi^{-1/2}N  \Psi_b^2+N\Psi^2\,,
\end{equation}
where the spectral parameter on the right-hand side is $z = E + \ii \eta$.
\item
We have the expansion
\begin{equation} 
\label{lghj2}
\tr S - \tr T \;=\; \sum_{l=1}^3 J_l U_{b\beta}^l + O_\prec\pb{\phi^{-1} N^{-1} \Psi^2}\,,
\end{equation}
where $J_l$ is a polynomial, with constant number of terms, in the variables
\begin{equation*}
\hb{T_{bb}, (T^2)_{bb}, (T^2 \bar X)_{b\beta}, (\bar X^*T^2)_{\beta b}, (\bar X^*T\bar X)_{\beta\beta}, (\bar X^*T^2\bar X)_{\beta\beta}}\,.
\end{equation*}
In each term of $J_l$, $T^2$ appears exactly once, while the indices $b$ and $\beta$ each appear exactly $l$ times. 

Moreover, for $z \in \f S$ we have the estimates
\begin{equation} \label{lghj2_est}
\abs{J_1} + \abs{J_3} \;\prec\; \phi^{-1/4}N\Psi^2\,, \qquad \abs{J_2} \;\prec\; N  \Psi^2\,.
\end{equation}

\item
Defining
\begin{equation*}
y_l \;\deq\; \frac{1}{2\pi}\int_{\R^2} J_{l}     \, 
\Big( \ii \sigma f_E''(e) g(\sigma)  \,   \indb{\abs{\sigma} > \tilde \eta N^{-d \e}}
+\ii f_E(e) g'(\sigma)- \sigma f_E'(e)g'(\sigma)\Big)\,\dd e \, \dd 
\sigma \,,
\end{equation*}
we have the expansion
\begin{equation} \label{lghj3}
y_S - y_T \;=\; \sum_{l=1}^3 y_{l} U_{a\beta}^l+ O_\prec\pb{N^{C \epsilon} \phi^{-1} N^{-2} \kappa_a^{1/2}}
\end{equation}
together with the bounds
\begin{equation} \label{bzdbby}
\abs{y_1} + \abs{y_3} \;\prec\; \phi^{-1/4}N^{C\e}\kappa_a^{1/2}\,, \qquad \abs{y_2}  \;\prec\;   N^{C\e}\kappa_a^{1/2}\,.
\end{equation}
Here all constants $C$ depend on the fixed parameter $d$.
\end{enumerate}
\end{lemma}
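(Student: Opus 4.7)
All three parts rest on a single algebraic expansion: apply the resolvent identity \eqref{SR-N} to $S - T$ with $m = 3$ and exploit the rank-one structure of $U$. Since $U_{i\mu} = \delta_{ib}\delta_{\mu\beta} U_{b\beta}$ is supported on a single entry, each of the three ingredients $\bar X U^*$, $U\bar X^*$, $UU^*$ of $M_U \deq \bar X U^* + U\bar X^* + UU^*$ has a transparent rank-one structure, e.g.\ $(\bar X U^*)_{ij} = \bar X_{i\beta}\,\delta_{jb}\,U_{b\beta}$. Consequently, any $k$-fold product $T M_U T M_U \cdots T$ collapses onto a product of a left outer factor from $\{T_{\cdot b}, (T\bar X)_{\cdot\beta}\}$, a right outer factor from $\{T_{b\cdot}, (\bar X^* T)_{\beta\cdot}\}$, interior ``bridges'' from $\{T_{bb}, (T\bar X)_{b\beta}, (\bar X^* T)_{\beta b}, (\bar X^* T\bar X)_{\beta\beta}, 1\}$, and a scalar power of $U_{b\beta}$. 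Sandwiching with $\f w$ on both sides yields the polynomial structure of $x_l$ in part (i); taking the trace and cyclically commuting one $T$ to produce a single factor of $T^2$ yields the structure of $J_l$ in part (ii). The index-counting rules in the lemma are then immediate by inspection.

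The coefficient bounds in \eqref{bzdbb} and \eqref{lghj2_est} follow by applying Lemma \ref{princ} and Lemma \ref{lemma: w} to each atomic factor. For $\phi \geq 1$ near the edge, \eqref{mphi_minvphi} combined with Lemma \ref{lemma: w} gives $|m_{\phi^{-1}}| \asymp \phi^{-1/2}$, whence $|T_{\f w b}| \prec \phi^{-1/2}|w(b)| + \phi^{-1}\Psi \leq \phi^{-1}\Psi_b$; similarly $|(T\bar X)_{\f w\beta}| \prec \phi^{-1/4}\Psi$ and $|(\bar X^* T\bar X)_{\beta\beta}| \prec 1 + \phi^{1/2}\Psi$. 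Multiplying these out according to the monomial structure identified in Step 1, and using $M = \phi N$, verifies the sizes of $x_1, x_2, x_3$. For the $J_l$ bounds, one factor $T$ is replaced by $T^2$; we control $(T^2)_{bb}$, $(T^2\bar X)_{b\beta}$, and $(\bar X^* T^2\bar X)_{\beta\beta}$ by combining $T^2 = -\partial_z T$ with Cauchy's integral formula on a contour of radius $\asymp \eta$ around $z$ on which the estimates of Lemma \ref{princ} still apply (equivalently, by the Ward identity $TT^* = \im T / \eta$), gaining the extra factor of $N\Psi$ in the estimate. The order-$U^4$ truncation remainders are bounded using $|U_{b\beta}| \prec \phi^{-1/4} N^{-1/2}$, so $U_{b\beta}^4 \prec \phi^{-1} N^{-2}$, together with the same control of the outer matrix entries; for part (i) the two outer factors produce the $\Psi_b^2$ enhancement, yielding the stated error $O_\prec(\phi^{-1}N^{-1}(\phi^{1/4}|w(b)|+\Psi)^2)$.

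For part (iii), substitute \eqref{lghj2} directly into \eqref{defy428}: the coefficients $y_l$ and the remainder are integrals of the corresponding objects from part (ii) against the Helffer--Sj\"ostrand kernel. Three features control the size. First, $\int|f_E''(e)|\,\dd e \lesssim \tilde\eta^{-1}$ and $\int|f_E(e)g'(\sigma)|\,\dd e \lesssim 1$, controlling the $e$-integration. Second, the cutoff $\ind{|\sigma| > \tilde\eta N^{-d\epsilon}}$ prevents the $1/|\sigma|$-type singularity of $J_l(e + \ii\sigma)$ (inherited from the $T^2$ factor) from producing a divergence. Third, the edge asymptotic $\im m_{\phi^{-1}}(e+\ii\sigma) \asymp \phi^{-1}\sqrt{\kappa+|\sigma|}$ from Lemma \ref{lemma: w} enters through $\Psi^2$, and for $E \in I$ we have $\kappa(E) \asymp \kappa_a$. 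The resulting iterated integration yields the factor $\kappa_a^{1/2}$, with slack $N^{C\epsilon}$ absorbing the various cutoff scales, giving \eqref{bzdbby}. The remainder term in \eqref{lghj3} is estimated the same way from the remainder of \eqref{lghj2}.

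\medskip

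\noindent\textbf{Main obstacle.} The most delicate step is the last: obtaining the $\kappa_a^{1/2}$ factor in \eqref{bzdbby} sharply requires careful tracking of the interplay between the $\eta^{-1}$-type singularity of $J_l(e+\ii\sigma)$, the square-root edge behavior of the Marchenko--Pastur density, and the nested cutoff scales $\eta$, $\tilde\eta$, $\tilde\eta N^{-d\epsilon}$. The preceding algebraic and analytic steps are essentially mechanical once the rank-one structure of $U$ is exploited and Lemma \ref{princ} is in hand.
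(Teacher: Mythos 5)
Parts (i) and (ii) of your proposal are essentially the paper's argument, with a helpful elaboration. The paper dispatches part (ii) in one sentence ("set $\f w = \f e_i$ and sum"), and your cyclicity-of-trace reformulation together with the Ward identity $T(z)T(z)^* = \im T(z)/\eta$ is the correct way to make that one sentence precise: e.g.\ $|(T^2)_{bb}| \leq \sum_j|T_{bj}|^2 = \eta^{-1}\im T_{bb} \prec \phi^{-1}N\Psi^2$, and likewise for $(T^2\bar X)_{b\beta}$, $(\bar X^*T^2\bar X)_{\beta\beta}$. One correction: the Cauchy-formula bound $|(T^2)_{bb}| \lesssim \eta^{-1}\sup|T_{bb}| \lesssim \phi^{-1/2}\eta^{-1}$ is \emph{not} equivalent to the Ward-identity bound; for $\phi \gg 1$ it loses a factor of $\phi^{1/2}$, which would spoil \eqref{lghj2_est}. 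Only the Ward/spectral-decomposition route is sharp enough.

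Part (iii) contains a genuine gap. You propose to bound the $f_E''$ contribution by combining $\int|f_E''(e)|\,\dd e \lesssim \tilde\eta^{-1}$ with the $\sigma$-integral of $\sigma|g(\sigma)|\,|J_l(e+\ii\sigma)|$ over $\tilde\eta N^{-d\epsilon} \leq |\sigma| \leq \kappa_a$. Using $|J_l| \prec N\Psi^2 \asymp \sqrt{\kappa_a+\sigma}/\sigma + (N\sigma^2)^{-1}$ near the edge, the $\sigma$-integral is $\asymp \kappa_a^{3/2}$, so this route yields $\tilde\eta^{-1}\kappa_a^{3/2}$. Since $\tilde\eta^{-1} = N^{3\epsilon}\Delta_a^{-1} = N^{1+3\epsilon}\kappa_a^{1/2}$, that is $N^{1+3\epsilon}\kappa_a^2$; compared to the target $N^{C\epsilon}\kappa_a^{1/2}$ the ratio is $\asymp N^{3\epsilon}a$, which for $a$ of order $K^{1-\tau}$ is off by nearly a full power of $N$. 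The missing step is precisely the integration by parts the paper carries out (first in $e$, then in $\sigma$), which converts $\ii\sigma f_E''(e)g(\sigma)$ into $f_E'(e)\bigl(\sigma g'(\sigma)+g(\sigma)\bigr)$ plus a boundary term at $|\sigma| = \tilde\eta N^{-d\epsilon}$. This replaces $\int|f_E''| \lesssim \tilde\eta^{-1}$ by $\int|f_E'| \lesssim 1$; the boundary term carries an explicit factor $\tilde\eta N^{-d\epsilon}$ that tames the $(N\sigma^2)^{-1}$ singularity of $J_l$ at $\sigma = \tilde\eta N^{-d\epsilon}$, and the remaining bulk term produces only a harmless $\log N$. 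Without this IBP the estimate \eqref{bzdbby} is unobtainable for the full index range $a \leq K^{1-\tau}$, so it is not merely a convenience but the crux of part (iii).
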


\begin{proof}
The proof is an application of the resolvent expansion \eqref{SR-N} with $m = 3$ to the definitions of $x$ and $y$.

We begin with part (i). The expansion \eqref{lghj} is obtained by expanding the resolvent $S_{\f w \f w}$ in the definition of $x_S$ using \eqref{SR-N} with $m = 3$. The terms are regrouped according the power, $l$, of $U_{b \beta}$. The error term of \eqref{lghj} contains all terms with $l \geq 4$. It is a simple matter to check that the polynomials $x_l$, for $l = 1,2,3$, have the claimed algebraic properties. In order to establish the claimed bounds on the terms of the expansion, we use Lemma \ref{princ} to derive the estimates
\be\label{uamza}
\abs{T_{\f w b}} \;\prec\; \phi^{-1} \Psi_b,\, 
 \qquad \abs{(T\bar X )_{\f w \beta}} \;\prec\; \phi^{-1 /4}\Psi \,,  
 \qquad \abs{T_{bb}} \;\prec\; C\phi^{-1/2}\,,
 \qquad \abs{(\bar X^*T\bar X)_{\beta\beta}} \;\prec\; \phi^{1/2}\,,
\ee
and the same estimates hold if $T$ is replaced by $S$. Note that in \eqref{uamza} we used the bound $\abs{m_{\phi^{-1}}} \asymp \phi^{-1/2}$, which follows from the identity
\begin{equation*} 
m_{\phi^{-1}}(z) \;=\; \frac{1}{\phi} \pbb{m_\phi(z) + \frac{1 - \phi}{z}}
\end{equation*}
and Lemma \ref{lemma: w}. Using \eqref{uamza}, it is not hard to conclude the proof of part (i).

Part (ii) is proved in the same way as part (i), simply by setting $\f w = \f e_i$ and summing over $i = 1, \dots, M$.

What remains is to prove the bounds in part (iii).  
To that end, we integrate by parts, first in $e$ and then in $\sigma$, 
in the term containing $ f_E''(e)$, and obtain
\begin{multline*}
\int_{\R^2} \ii \sigma f_E''(e) g (\sigma)  J_l(e + \ii \sigma)\,  \ind{\abs{\sigma} > \tilde \eta_d} \,\dd e \, \dd \sigma \;=\; 
\sum_{\pm} \mp \int \tilde \eta_d f_E'(e) g(\pm\tilde \eta_d)  J_l(e \pm \ii \tilde \eta_d)\,\dd e
\\
+ \int_{\R^2} \pb{\sigma g' (\sigma) + g(\sigma)} f_E'(e)  J_l(e + \ii \sigma)\, \ind{\abs{\sigma} > \tilde \eta_d} \,\dd e \, \dd \sigma\,,
\end{multline*}
where we abbreviated $\tilde \eta_d \deq  \tilde \eta N^{-d \epsilon}$. Thus we get the bound
\begin{multline} \label{866}
\abs{y_l(E)} \;\leq\; \int \dd e \, \tilde \eta_d \abs{ f_E'(e)} \absb{J_l(e + \ii \tilde \eta_d)}
+
\int \dd e \, \dd \sigma \, \pb{\abs{f_E(e) g'(\sigma)} + \abs{\sigma f_E'(e)g'(\sigma)}} \absb{J_l(e + \ii \sigma)}
\\ +
\int \dd e \int_{\tilde \eta_d}^\infty \dd \sigma \,  \pb{\sigma \abs{g' (\sigma)} + g(\sigma)} \abs{f_E'(e)} \absb{J_l(e + \ii \sigma)}\,.
\end{multline}
Using \eqref{866}, the conclusion of the proof of part (iii) follows by a careful estimate of each term on the right-hand side, using part (ii) as input. The ingredients are the definitions of $\tilde \eta_d$ and $\kappa_a$, as well as the estimate
\begin{equation*}
\Psi^2(z) \;\leq\; C \frac{\sqrt{\kappa} + \sqrt{\eta}}{N \eta} + \frac{C}{N^2 \eta^2}\,.
\end{equation*}
The same argument yields the error bound in \eqref{lghj3}. This concludes the proof.
\end{proof}

Armed with the expansion from Lemma \ref{zddz}, we may do a Taylor expansion of $q$. To that end, we record the estimate $ \int_I \abs{x_T(E)} \, \dd E \prec N^{C \epsilon}$, as follows from Lemma \ref{princ}. Hence using Lemma \ref{zddz} and expanding $q(y_S(E))$ around $q(y_T(E))$ with a fourth order rest term, we get
\begin{multline} \label{whew}
\int_{I} x_S(E)  \, q \pb{y_S(E) } \, \dd E - \int_{I} x_T(E)  \, q \pb{y_T(E) }   \, \dd E
\\
=\;
\sum_{\f l \in \cal L} A_{\f l} \, U_{b\beta}^{\abs{\f l}}
+O_\prec \pB{\phi^{-1} N^{-2+C\e }\kappa_a^{1/2} + \phi^{-1/2}N^{-1+C\e } \Delta_a \abs{w(b)}^2}\,,
\end{multline}
where we defined
\begin{equation*}
\cal L \;\deq\; \hb{\f l = (l_0, \dots, l_m) \in \qq{0,3} \times \qq{1,3}^m \col m \in \N\,,\, 1 \leq \abs{\f l} \leq 3}\,,
\qquad
\abs{\f l} \;\deq\; \sum_{i = 0}^m l_i\,,
\end{equation*}
as well as the polynomial
\begin{equation} \label{def_Al}
A_{\f l} \;\deq\; \int_I \frac{q^{(m)}(y_T)}{m!} x_{l_0} y_{l_1} \cdots y_{l_m} \, \dd E\,,
\end{equation}
where we abbreviated $m \equiv m(\f l)$. Here we use the convention that $x_0 \deq x_T$.
Note that $\cal L$ is a finite set (it has 14 elements), and for each $\f l \in \cal L$ the polynomial $A_{\f l}$ is independent of $U_{b \beta}$.
In the estimate of the error term on the right-hand side of \eqref{whew} we also used the fact that for $E\in I$ we have $\Psi(E+i\eta) \leq N^{C\e}\kappa_a^{1/2}$

Next, using lemma \ref{zddz} and $N \Delta_a \asymp \kappa_a^{-1/2}$, we find
\begin{equation} \label{mganq0}
\abs{A_{\f l}} \;\prec\;
N^{C \epsilon}
\begin{cases}
\phi^{-1/4} \pb{\kappa_a^{1/2} + \phi^{1/2} \abs{w(b)}} & \text{if } \abs{\f l} = 1
\\
\kappa_a^{1/2} + \kappa_a^{-1/2} \phi^{1/2} \abs{w(b)}^2 & \text{if } \abs{\f l} = 2
\\
\phi^{-1/4} \pb{\kappa_a^{1/2} + \phi^{1/2} \abs{w(b)} + \phi \abs{w(b)}^2} & \text{if } \abs{\f l} = 3\,.
\end{cases}
\end{equation}
Using \eqref{whew} and \eqref{mganq0}, we may do a Taylor expansion of $h$ on the left-hand side of \eqref{Nva-1}. This yields
\begin{multline} \label{lgcld}
h \left[  \int_{I} x_S \, q \pb{y_S }   \, \dd E  \right]
-
h \left[  \int_{I} x_T \, q \pb{y_T }   \, \dd E  \right]
\;=\; \sum_{k=1}^3 \frac{1}{k!}h^{(k)}(A_{\f 0}) \pBB{\sum_{\f l \in \cal L} A_{\f l} U_{b\beta}^{\abs{\f l}}}^{k}
\\
+O_\prec \pB{\phi^{-1} N^{-2+C\e }\kappa_a^{1/2} + N^{ -2+C\e} \kappa_a^{-1/2} \abs{w(b)}^2 }\,,
\end{multline}
where we abbreviated $A_{\f 0} \deq \int_I x_0 \, \dd E$. Since $a \leq N^{1 - \tau}$, it is easy to see that, by choosing $\epsilon$ small enough depending on $\tau$, the error term in \eqref{lgcld} is bounded by $N^{-c} (\phi^{-1} N^{-2} + N^{-1} \abs{w(b)}^2)$ for some positive constant $c$. Taking the expectation and recalling that $\abs{U_{b \beta}} \prec \phi^{-1/4} N^{-1/2}$ , we therefore get
\begin{multline} \label{app}
\E h \left[  \int_{I} x_S \, q \pb{y_S }   \, \dd E  \right]
-
\E h \left[  \int_{I} x_T \, q \pb{y_T }   \, \dd E  \right]
\;=\;\E \cal A +O_\prec \pB{N^{-c} \pb{\phi^{-1} N^{-1} + N^{-1} \abs{w(b)}^2}}
\\
+ \E U_{b \beta}^3 \, \E
\sum_{k=1}^3 \frac{1}{k!}h^{(k)}(A_{\f 0}) \sum_{\f l_1, \dots, \f l_{k} \in \cal L} \indBB{\sum_{i = 1}^{k} \abs{\f l_i} = 3} \prod_{i = 1}^{k} A_{\f l_i}
\,,
\end{multline}
where $\E \cal A$ is as described after \eqref{Nva-1}, i.e.\ it depends on the random variable $U_{b \beta}$ only through its first two moments.

At this point we note that if we make the stronger assumption that the first \emph{three} moments of $X^{(1)}$ and $X^{(2)}$ match (which, in the ultimate application to the proof of Proposition \ref{prop:xie}, means that $\E X_{i \mu}^3 = 0$), the proof is now complete. Indeed, in that case we may allow $\cal A$ to be a polynomial of degree three in $U_{b \beta}$ with $\bar X$-measurable coefficients, and we may absorb the last line of \eqref{app} into $\E \cal A$. This completes the proof of \eqref{Nva-1}, and hence of Lemma \ref{CPZX}, for the special case that the third moments of $X^{(1)}$ and $X^{(2)}$ match.

For the general case, we still have to estimate the last line of \eqref{app}.
The terms that we need to analyse are
\begin{subequations}
\label{terms of degree three}
\begin{gather}
 h^{(3)}(A_{  {\bf 0} })\,
 A_{ {{(1 )}}} ^m A_{  {(0,1)} }^n \qquad (m+n=3)\,,
\\
 h^{(2)}(A_{\f 0} )\,
 \pb{ A_{  (2 ) }+   A_{(0,2) }
 +    A_{ {(1,1)} }
 +     A_{  {(0,1,1)} }
 }    \pb{A_{(1)} +A_{(0,1)}}\,,
\\
 h^{(1)}(A_{\bf 0})\,
 \pb{ A_{(3)}+A_{(0,3)}+  A_{(1,2)}
 + A_{(2,1)}
 + A_{(0,1,2)}
 +A_{(1,1,1)}+A_{(0,1,1,1)}}\,.
\end{gather}
\end{subequations}
These terms are dealt with in the following lemma.
\begin{lemma}\label{lem: 3m}
Let $Y$ denote any term of \eqref{terms of degree three}.
Then there is a constant $c > 0$ such that
\begin{equation} \label{Y_bound_claim}
\abs{\E Y} \;\leq\; N^{-c}\pb{\phi^{-1/4}N^{-1/2}
 +\phi^{3/4} \abs{w(b)}^2}\,.
\end{equation}
\end{lemma}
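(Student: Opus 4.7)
The plan is to bound each of the ten or so terms listed in \eqref{terms of degree three} separately by a case analysis, leveraging three ingredients already in place: the algebraic description of $x_l$ and $J_l$ in Lemma \ref{zddz}, the pointwise a priori estimates of Lemma \ref{princ}, and the smallness of the integration interval $\abs{I} \asymp K^{\delta_2} \Delta_a$. The key observation is that $A_{\f 0} = \int_I x_T \, \dd E$ concentrates around a deterministic value (given by the isotropic MP law of Lemma \ref{lem: Qij iso}), so that each $h^{(k)}(A_{\f 0})$ is $O_\prec(1)$. Thus the problem reduces to estimating $\E \abs{A_{\f l_1} \cdots A_{\f l_k}}$ subject to $\sum_i \abs{\f l_i} = 3$.

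Next, I would refine the pointwise bound \eqref{mganq0} by splitting every $x_l$ and $J_l$ into a leading deterministic part (obtained from Lemma \ref{princ} by replacing $T$ and $\bar X^* T \bar X$ with their MP-law approximations) plus a fluctuation of size $O_\prec(\phi^{-1}\Psi)$. The leading parts contribute factors that are either purely deterministic or depend only on $w(b)$ times constants, while the fluctuation parts each save an extra factor of $N^{-c}$ via the fact that on $I$ we have $\Psi(E+\ii\eta) \prec K^{C\epsilon} \kappa_a^{1/2} \leq N^{-c}$ once $\epsilon$ is small enough depending on $\tau$ (recall $a \leq K^{1-\tau}$, so $\kappa_a \asymp K^{-2/3} a^{2/3}\le K^{-2\tau/3}$). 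The $y_l$ terms carry an additional factor $\kappa_a^{1/2}$ by \eqref{bzdbby}, which is exactly the smallness we need.

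The bookkeeping is then organized by tracking the number of $w(b)$ factors in each term, using the structural information from Lemma \ref{zddz}(i): in $x_l$ the index $\f w$ appears exactly twice. Thus in each product $A_{\f l_1}\cdots A_{\f l_k}$ exactly two factors of $\f w$ appear, which after invoking $T_{\f w b} \prec \phi^{-1}\Psi_b$ from \eqref{uamza} produces at most two factors of $\phi^{1/2}\abs{w(b)}$. Combined with the scale $\abs{I}\cdot\kappa_a^{1/2} \ll N^{-c}$ coming from $y_l$ factors and with the powers of $\phi^{-1/4}$ and $\Psi$ accumulated from fluctuations, every term in \eqref{terms of degree three} splits into either a $w(b)$-free contribution bounded by $N^{-c}\phi^{-1/4} N^{-1/2}$ or a contribution bounded by $N^{-c}\phi^{3/4}\abs{w(b)}^2$, which is \eqref{Y_bound_claim}.

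The main obstacle I anticipate is the term $h^{(1)}(A_{\f 0}) A_{(3)}$ (and its variants $A_{(1,2)}, A_{(1,1,1)}$, etc.): here all three $U_{b\beta}$ powers are ``spent'' on a single integral, so we cannot borrow a factor of $\kappa_a^{1/2}$ from a separate $y_l$ factor. For these, the $\phi\abs{w(b)}^2$ piece of $\abs{x_3}$ in \eqref{bzdbb} is the threatening contribution; one must check that the geometric factor $\abs{I}\cdot(\text{extra }\Psi^2) \prec K^{C\epsilon}\kappa_a$ is sufficient to tame it, using the explicit appearance of $\bar X^* T \bar X - (1+zm_\phi)$ (which is $O_\prec(\phi^{1/2}\Psi)$ rather than $O_\prec(\phi^{1/2})$) in every $l=3$ monomial. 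Once this single-integral case is handled, the remaining terms follow from straightforward Cauchy-Schwarz combined with the observations above.
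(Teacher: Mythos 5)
Your proposal has a genuine gap: it cannot produce the extra factor $N^{-1/2}$ that the lemma requires. Look at the paper's own remark directly after \eqref{E_bound_naive}: applying the pointwise bounds \eqref{mganq0} already gives
\[
\abs{\E Y} \;\leq\; N^{C\epsilon} \pb{\kappa_a^{-1/2}\phi^{3/4}\abs{w(b)}^2 + \phi^{1/4}\abs{w(b)} + \phi^{-1/4}\kappa_a^{1/2}},
\]
and this is off from \eqref{Y_bound_claim} by exactly a factor $N^{-1/2}$. Your plan -- refining the pointwise bounds by splitting $x_l, J_l$ into deterministic leading parts plus small fluctuations, tracking $w(b)$ factors via Lemma \ref{zddz}(i), and exploiting that on $I$ one has $\Psi \prec K^{C\epsilon}\kappa_a^{1/2}$ -- is essentially a careful re-derivation of the naive bound. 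Every $\kappa_a^{1/2}$ and $\Psi$ factor you propose to harvest is already accounted for in \eqref{E_bound_naive} (indeed, the $y_l$ bound \eqref{bzdbby} with its $\kappa_a^{1/2}$ is precisely how \eqref{E_bound_naive} is obtained). Pointwise, or $\prec$-type, estimates on $Y$ simply cannot distinguish $\E Y$ from $\sqrt{\E\abs{Y}^2}$; no amount of bookkeeping in that mode will close the gap.

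The paper's proof is a different kind of argument: a parity or fluctuation-averaging argument. Each term $Y$ in \eqref{terms of degree three}, after polynomializing its $\bar X$-dependence in the column $\bar X_{\cdot\beta}$ (Lemmas \ref{lem: polynom} and \ref{lem: polynom2}), is an \emph{odd}-degree graded polynomial (Definitions \ref{def:O_prec1}--\ref{def:O_prec3}) with $\bar X^{[\beta]}$-measurable coefficients. Lemma \ref{lem: gain of N1/2} then shows that the expectation of an odd-degree graded polynomial is smaller than its typical size by $N^{-1/2}$, because the pairing constraint $\E[\bar X_{i_1\beta}\cdots \bar X_{i_d\beta}]$ forces an index collision of multiplicity at least three, which costs $N^{-1/2}$ via the $\ell^3$-weight bound in Definition \ref{def:weight}. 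That mechanism -- taking the expectation collapses one extra summation -- is the source of the missing $N^{-1/2}$ and is entirely absent from your plan. You would need to replace your refinement step with a polynomialization and parity argument of this kind (or equivalent), not merely sharper deterministic or $\prec$-bounds.
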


Plugging the estimate of Lemma \ref{lem: 3m} into \eqref{app}, and recalling that $\E U_{b \beta}^3 \leq C \phi^{-3/4} N^{-3/2}$, it is easy to complete the proof of \eqref{Nva-1}, and hence of Lemma \ref{CPZX}.  Lemma \ref{lem: 3m} is proved in the next subsection.

\subsection{Proof of Lemma \ref{CPZX} III: the terms of order three and proof of Lemma \ref{lem: 3m}} \label{sec: 83}
Recall that we assume $\phi \geq 1$, i.e.\ $K = N$; the case $\phi \leq 1$ is dealt with analogously, and we omit the details.

We first remark that using the bounds \eqref{mganq0} we find
\begin{equation} \label{E_bound_naive}
\abs{\E Y} \;\leq\; N^{C \epsilon} \pb{\kappa_a^{-1/2} \phi^{3/4} \abs{w(b)}^2 + \phi^{1/4} \abs{w(b)} + \phi^{-1/4} \kappa_a^{1/2}}\,.
\end{equation}
Comparing this to \eqref{Y_bound_claim}, we see that we need to gain an additional factor $N^{-1/2}$. How to do so is the content of this subsection.

The basic idea behind the additional factor $N^{-1/2}$ is that the expectation $\E Y$ is smaller than the typical size $\sqrt{\E \abs{Y}^2}$ of $Y$ by a factor $N^{-1/2}$. This is a rather general property of random variables which can be written, up to a negligible error term, as a polynomial of odd degree in the entries $\{\bar X_{i \beta}\}_{i = 1}^m$. A systematic representation of a large family of random variables in terms of polynomials was first given in \cite{EKY2}, and was combined with a parity argument in \cite{BEKYY}. Subsequently, an analogous parity argument for more singular functions was developed in \cite{Y1}. Following \cite{Y1}, we refer to the process of representing a random variable $Y$ as a polynomial in $\{\bar X_{i \beta}\}_{i = 1}^M$ up to a negligible error term as the \emph{polynomialization} of $Y$.

We shall develop a new approach to the polynomialization of the variables \eqref{terms of degree three}. The main reason is that these variables have a complicated algebraic structure, which needs to be combined with the Helffer-Sj\"ostrand representation \eqref{defy428}. These difficulties lead us to define a family of graded polynomials (given in Definitions \ref{def:O_prec1}--\ref{def:O_prec3}), which is general enough to cover the polynomialization of all terms from \eqref{terms of degree three} and imposes conditions on the coefficients that ensure the gain of $N^{-1/2}$. The basic structure behind these polynomials is a classification based on the $\ell^2$- and $\ell^3$-norms of their coefficients.

Let us outline the rough idea of the parity argument. We use the notations $\bar X = \bar X_{[\beta]} + \bar X^{[\beta]}$ and $T^{[\beta]}(z) \deq (\bar X^{[\beta]} (\bar X^{[\beta]})^* - z)^{-1}$, in analogy to those introduced before \eqref{Neumann}. A simple example of a polynomial is
\begin{equation*}
\cal P_2 \;=\; (\bar X^* T^{[\beta]} \bar X)_{\beta \beta} \;=\; \sum_{i,j} T^{[\beta]}_{ij} \bar X_{i \beta} \bar X_{j \beta}\,.
\end{equation*}
This is a polynomial of degree two. Note that the coefficients $T^{[\beta]}_{ij}$ are $\bar X^{[\beta]}$-measurable, i.e.\ independent of $\bar X_{[\beta]}$. It is not hard to see that $\E \cal P_2$ is of the same order as $\sqrt{\E \abs{\cal P_2}^2}$, so that taking the expectation of $\cal P_2$ does not yield better bounds. The situation changes drastically if the polynomial is \emph{odd} degree. Consider for instance the polynomial
\begin{equation*}
\cal P_3 \;\deq\; (\bar X^* T^{[\beta]} \bar X)_{\beta \beta} (T^{[\beta]} \bar X)_{\f w \beta} \;=\; \sum_{i,j,k} T^{[\beta]}_{ij} T^{[\beta]}_{\f w k} \bar X_{i \beta} \bar X_{j \beta} \bar X_{k \beta}\,.
\end{equation*}
Now we have $\abs{\E \cal P_3} \lesssim N^{-1/2} \sqrt{\E \abs{\cal P_3}^2}$. The reason for this gain of a factor $N^{-1/2}$ is clear: taking the expectation forces all three summation indices $i,j,k$ to coincide.

In the following we define a large family of $\Z_2$-graded polynomials that is sufficiently general to cover the polynomializations of the terms \eqref{terms of degree three}. We shall introduce a notation $O_{\prec, *}(A)$, which generalizes the notation $O_{\prec}(A)$ from \eqref{def:stocdom}; here $* \in \h{\txt{even}, \txt{odd}}$ denotes the parity of the polynomial, and $A$ its size. We always have the trivial bound $O_{\prec, *}(A) = O_{\prec}(A)$. In addition, we roughly have the estimates
\begin{equation*}
\E O_{\prec, \txt{even}}(A) \;\lesssim\; A\,, \qquad \E O_{\prec, \txt{odd}}(A) \;\lesssim\; N^{-1/2} A\,.
\end{equation*}
The need to gain an additional factor $N^{-1/2}$ from odd polynomials imposes nontrivial constraints on the polynomial coefficients, which are carefully stated in Definitions \ref{def:O_prec1}--\ref{def:O_prec3}; they have been tailored to the class of polynomials generated by the terms \eqref{terms of degree three}.

We now move on to the proof of Lemma \ref{lem: 3m}. We recall that we assume throughout that $\phi \geq 1$.
We first introduce a family of graded polynomials suitable for our purposes. It depends on a constant $C_0$, which we shall fix during the proof to be some large but fixed number.

\begin{definition}[Admissible weights] \label{def:weight}
Let $\varrho = (\varrho_i \col i \in \qq{1, \phi N})$ be a family of deterministic nonnegative weights. We say that $\varrho$ is an \emph{admissible weight} if
\begin{equation} \label{l2-l3_bounds}
\frac{1}{N^{1/2} \phi^{1/4}} \pbb{\sum_i \varrho_i^2}^{1/2} \;\leq\; 1\,, \qquad \frac{1}{N^{1/2} \phi^{1/4}} \pbb{\sum_i \varrho_i^3}^{1/3} \;\leq\; N^{-1/6}\,.
\end{equation}
\end{definition}
\begin{definition}[$O_{\prec, d}(\cdot)$] \label{def:O_prec1}
For a given degree $d \in \N$ let
\begin{equation} \label{cal_P_rep}
\cal P \;=\; \sum_{i_1, \dots, i_d = 1}^{\phi N} V_{i_1 \cdots i_d} \bar X_{i_1 \beta} \cdots \bar X_{i_d \beta}
\end{equation}
be a polynomial in $\bar X$. Analogously to the notation $O_\prec(\cdot)$ introduced in Definition \ref{def:stocdom}, we write $\cal P = O_{\prec, d}(A)$
if the following conditions are satisfied.
\begin{enumerate}
\item
$A$ is deterministic and $V_{i_1 \cdots i_d}$ is $\bar X^{[\beta]}$-measurable.
\item
There exist admissible weights $\varrho^{(1)}, \dots, \varrho^{(d)}$ such that
\begin{equation} \label{V_leq_rho}
\abs{V_{i_1 \cdots i_d}} \;\prec\; A \, \varrho^{(1)}_{i_1} \cdots \varrho^{(d)}_{i_d}\,.
\end{equation}
\item
We have the deterministic bound $\abs{V_{i_1 \cdots i_d}} \leq N^{C_0}$.
\end{enumerate}
The above definition extends trivially to the case $d = 0$, where $\cal P = V$ is $\bar X^{[\beta]}$-measurable.
\end{definition}

\begin{definition}[$O_{\prec, \diamond}(\cdot)$] \label{def:O_prec2}
Let $\cal P$ be a polynomial of the form
\begin{equation} \label{P_diamond}
\cal P \;=\; \sum_{i = 1}^{\phi N} V_i \pbb{\bar X_{i \beta}^2 - \frac{1}{N \phi^{1/2}}}\,.
\end{equation}
We write $\cal P \;=\; O_{\prec, \diamond}(A)$ if $V_i$ is $\bar X^{[\beta]}$-measurable, $\abs{V_i} \leq N^{C_0}$, and $\abs{V_i} \prec A$ for some deterministic $A$.
\end{definition}
\begin{definition}[Graded polynomials] \label{def:O_prec3}
We write $\cal P = O_{\prec, \txt{even}}(A)$ if $\cal P$ is a sum of at most $C_0$ terms of the form
\begin{equation*}
A \cal P_0 \prod_{s = 1}^m \cal P_i\,, \qquad \cal P_0 \;=\; O_{\prec, 2n}(1) \,, \qquad \cal P_i \;=\; O_{\prec, \diamond}(1)\,,
\end{equation*}
where $n,m \leq C_0$ and $A$ is deterministic.

Moreover, we write $\cal P = O_{\prec, \txt{odd}}(A)$ if $\cal P = \wh{\cal P}\, \cal P_{\txt{even}}$, where $\wh{\cal P} = O_{\prec, 1}(1)$ and $\cal P_{\txt{even}} = O_{\prec, \txt{even}}(A)$.
\end{definition}

Definitions \ref{def:O_prec1}--\ref{def:O_prec3} refine Definition \ref{def:stocdom} in the sense that
\begin{equation} \label{LDE for graded}
\cal P \;=\; O_{\prec, d}(A) \quad \text{or} \quad \cal P \;=\; O_{\prec, \diamond}(A) \qquad \Longrightarrow \qquad \cal P \;=\; P_\prec(A)\,.
\end{equation}
Indeed, let $\cal P = O_{\prec, d}(A)$ be of the form \eqref{cal_P_rep}. Then a simple large deviation estimate (e.g.\ a trivial extension of \cite[Theorem B.1 (iii)]{EKYY3}) yields
\begin{equation*}
\abs{\cal P} \;\prec\; \pbb{\pb{N \phi^{1/2}}^{-d} \sum_{i_1, \dots, i_d} \abs{V_{i_1 \cdots i_d}}^2}^{1/2} \;\prec\; A\,,
\end{equation*}
where the last step follows from the definition of admissible weights. Similarly, if $\cal P = O_{\prec, \diamond}(A)$ is of the form \eqref{P_diamond}, a large deviation estimate (e.g.\cite[Theorem B.1 (i)]{EKYY3}) yields
\begin{equation*}
\abs{\cal P} \;\prec\; \pbb{N^{-2} \phi^{-1} \sum_i \abs{V_i}^2}^{1/2} \;\prec\; N^{-1/2} A \;\leq\; A\,.
\end{equation*}

Note that terms of the form $O_{\prec, \f \cdot}(A)$ satisfy simple algebraic rules. For instance, we have
\begin{equation*}
O_{\prec, \txt{even}}(A_1) + O_{\prec, \txt{even}}(A_2) \;=\; O_{\prec, \txt{even}}(A_1 + A_2)\,,
\end{equation*}
and
\begin{equation*}
O_{\prec, \txt{odd}}(A_1) \, O_{\prec, \txt{even}}(A_2) \;=\; O_{\prec, \txt{odd}}(A_1 A_2)
\end{equation*}
after possibly increasing $C_0$.
(As with the standard big O notation, such expressions are to be read from left to right.)  We stress that such operations may be performed an arbitrary, but bounded, number of times. It is a triviality that all of the following arguments will involve at most $C_0$ such algebraic operations on graded polynomials, for large enough $C_0$.

The point of the graded polynomials is that bounds of the form \eqref{LDE for graded} are improved if $d$ is odd and we take the expectation. The precise statement is the following.

\begin{lemma} \label{lem: gain of N1/2}
Let $\cal P = O_{\prec, \txt{odd}}(A)$ for some deterministic $A \leq N^C$. Then for any fixed $D > 0$ we have
\begin{equation*}
\abs{\E \cal P} \;\prec\; N^{-1/2} A + N^{-D}\,.
\end{equation*}
\end{lemma}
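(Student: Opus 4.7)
The plan is to exploit parity: conditioning on $\bar X^{[\beta]}$, the problem becomes computing $\E$ of a polynomial of odd total degree $D$ in the independent variables $\{\bar X_{i\beta}\}_i$. Only Isserlis-type contractions pairing all indices contribute to this expectation, and since $D$ is odd, at least one contraction class must have size $m_l \geq 3$ (a class of size $1$ vanishes by $\E \bar X_{i\beta} = 0$). Compared with the naive $L^2$ bound, this forces an extra factor of $N^{-1/2}$.

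First I would reduce to a single summand $\cal P = \hat{\cal P} \, A \, \cal P_0 \prod_{s = 1}^m \cal P_s$, with $\cal P_s = \sum_i V_i^{(s)} (\bar X_{i\beta}^2 - (N\phi^{1/2})^{-1})$, and split each $\cal P_s$ into its degree-$2$ and degree-$0$ parts in $\bar X_{\cdot\beta}$. Expanding the product gives a bounded number of polynomials of the form
\begin{equation*}
\sum_{\f i} c_{\f i} \, \bar X_{i_1\beta} \cdots \bar X_{i_D\beta}
\end{equation*}
of odd degree $D$, where the $\bar X^{[\beta]}$-measurable coefficients satisfy $\absb{c_{\f i}} \prec A \prod_{l = 1}^D \varrho^{(l)}_{i_l}$ for some admissible weights $\varrho^{(l)}$, along with the deterministic bound $\absb{c_{\f i}} \leq N^{C'}$. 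Taking conditional expectation and summing over contractions, the contribution of a pattern with classes of sizes $m_1, \dots, m_k$ is bounded, using H\"older's inequality within each class together with $\E \bar X^{m} \leq C (N\phi^{1/2})^{-m/2}$, by
\begin{equation*}
C A \prod_{l = 1}^k \qbb{\pbb{\prod_{s \in \txt{class } l} \norm{\varrho^{(\pi(s))}}_{m_l}} \pb{N\phi^{1/2}}^{-m_l/2}}\,,
\end{equation*}
where $\pi$ tracks how the admissibility superscripts distribute over the contraction. The two admissibility bounds $\norm{\varrho}_2 \leq N^{1/2}\phi^{1/4}$ and $\norm{\varrho}_3 \leq N^{1/3}\phi^{1/4}$, interpolated via $\norm{\varrho}_m^m \leq \norm{\varrho}_\infty^{m - 3} \norm{\varrho}_3^3 \leq N^{(m - 1)/2} \phi^{m/4}$ for $m \geq 4$ (using $\norm{\varrho}_\infty \leq \norm{\varrho}_2$), then show that each class of size $2$ contributes a net factor of $1$ and each class of size $m \geq 3$ contributes $N^{-1/2}$, while all $\phi$-powers cancel exactly. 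Since at least one class has size $\geq 3$, this yields $\absb{\E [\cal P \mid \bar X^{[\beta]}]} \prec N^{-1/2} A$ on a high-probability event.

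Finally I would upgrade this conditional high-probability estimate to an unconditional bound on $\E \cal P$: splitting $\E \cal P = \E[\cal P \ind{\Xi}] + \E[\cal P \ind{\Xi^c}]$ for a high-probability event $\Xi$ on which all admissibility $\prec$-bounds hold with exponent $N^\epsilon$, the first term is controlled by the previous step, while the second is estimated by Cauchy-Schwarz from the deterministic bound $\absb{c_{\f i}} \leq N^{C'}$ and $\P(\Xi^c) \leq N^{-D'}$ for arbitrarily large $D'$, producing the $N^{-D}$ error. The main obstacle will be the combinatorial bookkeeping of the middle step: one must keep track of how the various superscripts $\pi(s)$ of the admissible weights distribute over the contraction classes and ensure that the interpolated $\norm{\varrho}_{m_l}$ bound (rather than merely $\norm{\varrho}_2$) is applied to the unavoidable class of size $\geq 3$; the exact cancellation of $\phi$-powers then falls out of the specific form of the interpolation dictated by the two norms appearing in the admissibility condition.
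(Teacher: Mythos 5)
Your overall strategy — count contraction classes, exploit that at least one class has odd cardinality $\geq 3$, and play the $\ell^2$ versus $\ell^3$ admissibility bounds against each other — is the same parity idea the paper uses, and your final step (upgrading the conditional high-probability bound to a bound on $\E\cal P$ via the deterministic bound $\abs{V} \leq N^{C_0}$ on the low-probability event) is exactly right. But there is a genuine gap at the step where you split each $\cal P_s = \sum_i V^{(s)}_i \bigl(\bar X_{i\beta}^2 - (N\phi^{1/2})^{-1}\bigr)$ into its degree-$2$ and degree-$0$ parts and then claim that the resulting expanded polynomial has coefficients with \emph{admissible} weights in every slot.

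Two things break. First, the degree-$2$ part of $\cal P_s$ has coefficients bounded only in $\ell^\infty$, $\abs{V^{(s)}_i}\prec 1$, and the constant weight $\varrho_i\equiv 1$ is \emph{not} admissible when $\phi>1$ (which is the case in this section): $\norm{\mathbf 1}_2 = (\phi N)^{1/2} = N^{1/2}\phi^{1/2}$, while admissibility requires $\norm{\varrho}_2\le N^{1/2}\phi^{1/4}$. So plugging these slots into your interpolation machinery produces spurious positive powers of $\phi$, and the ``exact cancellation of $\phi$-powers'' you assert does not happen. Second, and more seriously, the split destroys the cancellation $\E\bigl[\bar X_{i\beta}^2 - (N\phi^{1/2})^{-1}\bigr]=0$ that is essential to rule out a $\diamond$-index being its own contraction block. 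If you estimate the degree-$2$ and degree-$0$ pieces separately, the degree-$2$ piece alone contributes $(N\phi^{1/2})^{-1}\sum_i V^{(s)}_i$ when its index is uncontracted, which can be as large as $\phi^{1/2}$ — far bigger than the target $N^{-1/2}$. The paper's proof keeps the $\diamond$-factors intact precisely so that (a) the centering forces every index in $\qq{d+1,d+m}$ to coincide with something else, and (b) each $\diamond$-factor contributes a clean explicit factor $(N\phi^{1/2})^{-1}$ that is tracked separately from the $\tilde\varrho$'s; the per-block estimate is then carried out with the parameters $(s,t)$ (number of odd-type versus $\diamond$-type indices in the block) kept distinct, rather than by pretending all weights are admissible. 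To fix your argument you would need to retain the $\diamond$-structure through the contraction step and do the bookkeeping with the two types of indices separated, at which point you essentially recover the paper's argument.
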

\begin{proof}
It suffices to set $A = 1$ and consider $\cal P = \wh {\cal P} \cal P_0 \prod_{s = 1}^{m}\cal P_i$, where $\wh {\cal P}$, $\cal P_0$, and $\cal P_i$ are as in Definition \ref{def:O_prec3}. By linearity, it suffices to consider
\begin{equation*}
\cal P \;=\; \sum_{i_0} W_{i_0} \bar X_{i_0 \beta} \sum_{i_1, \dots, i_d} V_{i_1 \cdots i_d} \bar X_{i_1 \beta} \cdots \bar X_{i_d \beta} \prod_{l = d+1}^{d+m} \pBB{\sum_{i_l} V^{(l)}_{i_l} \pbb{\bar X_{i_l \beta}^2 - \frac{1}{N \phi^{1/2}}}}\,,
\end{equation*}
where $d = 2n$ is even. We suppose that $\abs{W_{i_0}} \prec \varrho^{(0)}_{i_0}$, $\abs{V_{i_1 \cdots i_d}} \prec \varrho^{(1)}_{i_1} \cdots \varrho^{(d)}_{i_d}$, and $\abs{V^{(d+l)}_{i_{d+l}}} \prec 1$ for $l = d+1, \dots, d+m$. Here $\varrho^{(k)}_{i_k}$ denotes an admissible weight (see Definition \ref{def:weight}).
Thus we have
\begin{equation*}
\abs{\E \cal P} \;\prec\; \sum_{i_0, \dots, i_{d+m}} \varrho_{i_0}^{(0)} \cdots \varrho_{i_d}^{(d)} \absBB{\E \pBB{\bar X_{i_0 \beta} \cdots \bar X_{i_d \beta} \prod_{l = d+1}^{d+m} \pbb{\bar X_{i_l \beta}^2 - \frac{1}{N \phi^{1/2}}}}} + N^{-D}\,,
\end{equation*}
where the term $N^{-D}$ comes from the trivial deterministic bound $\abs{V_{i_1 \cdots i_d}} \leq N^C$ on the low-probability event of $\prec$ (i.e.\ the event inside $\P [\,\cdot\,]$ in \eqref{prec_def}) in \eqref{V_leq_rho}, and analogous bounds for the other $\bar X^{[\beta]}$-measurable coefficients.

The expectation imposes that each summation index $i_0, \dots, i_{d+m}$ coincide with at least one other summation index. Thus we get
\begin{equation} \label{E PP estimate}
\abs{\E \cal P} \;\prec\; \sum_{i_0, \dots, i_{d+m}} \tilde \varrho^{(0)}_{i_0} \cdots \tilde \varrho^{(d)}_{i_d} I(i_0, \dots, i_{d+m}) \frac{1}{(N \phi^{1/2})^m} + N^{-D}\,,
\end{equation}
where the indicator function $I(\cdot)$ imposes the condition that each summation index must coincide with at least another one, and we introduced the weight $\tilde \varrho_i^{(k)} \deq N^{-1/2} \phi^{-1/4} \varrho_i$. Note that
\begin{equation} \label{tho tilde bounds}
\sum_i \tilde \varrho^{(k)}_i \;\leq\; N^{1/2} \phi^{1/2} \,, \qquad \sum_i \pb{\tilde \varrho^{(k)}_i}^2 \;\leq\; 1 \,, \qquad \sum_i \pb{\tilde \varrho_i^{(k)}}^q \;\leq\; N^{-q/6} \quad (q \geq 3)\,.
\end{equation}
Here for $q > 3$ we used the inequality $\norm{\tilde \varrho^{(k)}}_{\ell^q} \leq \norm{\tilde \varrho^{(k)}}_{\ell^p}$ for $q \geq p$.
The indicator function $I$ on the right-hand side of \eqref{E PP estimate} imposes a reduction in the number of independent summation indices. We may write $I = \sum_{P} I_P$ as a sum over all partitions $P$ of the set $\qq{0, d+m}$ with blocks of size at least two, whereby
\begin{equation*}
I_P(i_1, \dots, i_{d+m}) \;=\; \prod_{p \in P} \ind{i_k = i_l \txt{ for all } k,l \in p}\,.
\end{equation*}
Hence the summation over $i_1, \dots, i_{d+m}$ factors into a product over the blocks of $P$. We shall show that the contribution of each block is at most one, and that there is a block whose contribution is at most $N^{-1/2}$.

Fix $p \in P$ and denote by $S_p$ the contribution of the block $p$ to the summation in the main term of \eqref{E PP estimate}. Define $s \deq \abs{p \cap \qq{0,d}}$ and $t \deq \abs{p \cap \qq{d+1, d+m}}$. By definition of $P$, we have $s + t \geq 2$. By the inequality of arithmetic and geometric means, we have
\begin{equation*}
S_p \;\leq\; \max_k \sum_{i} \pb{\tilde \varrho_i^{(k)}}^s \frac{1}{(N \phi^{1/2})^t}\,.
\end{equation*}
Using \eqref{tho tilde bounds} it is easy to conclude that
\begin{equation*}
S_p \;\leq\;
\begin{cases}
1 & \text{if } (s,t) = (2,0)
\\
N^{-1/2} & \text{if } (s,t) \neq (2,0)\,.
\end{cases}
\end{equation*}
Moreover, since $d$ is even, at least one block of $P$ satisfies $(s,t) \neq (2,0)$.

Thus we find that
\begin{equation*}
\abs{\E \cal P} \;\prec\; \sum_{P} \prod_{p \in P} S_p + N^{-D} \;\leq\; C_{d+m} N^{-1/2} + N^{-D}\,.
\end{equation*}
Since $d+m \leq 2 C_0$, the proof is complete.
\end{proof}

In order to apply Lemma \ref{lem: gain of N1/2} to the terms $Y$ from \eqref{terms of degree three}, we need to expand $Y$ in terms of graded polynomials. This expansion is summarized in the following result, which gives the polynomializations of the coefficients of the terms from \eqref{terms of degree three}. For an arbitrary unit vector $\f v \in \R^M$ we define the control parameter
\begin{equation*}
\Psi^{\f v} \;\deq\; \Psi + (N^{-1} \norm{\f v}_{\infty})^{1/3} \,, \qquad \norm{\f v}_{\infty} \;\deq\; \max_i \abs{v(i)}\,.
\end{equation*}

\begin{lemma} \label{lem: polynom}
Fix $D > 0$. Then there exists $C_0 = C_0(D)$ such that for any unit vector $\f v \in \R^M$ we have
\begin{align}
T_{\f v \f v} &\;=\; T_{\f v \f v}^{[\beta]} + O_{\prec, \txt{even}}(\phi^{-1} (\Psi^{\f v})^2) + O_\prec(N^{-D})\,, \label{TTbeta_pol}
\\
T_{bb} &\;=\; O_{\prec, \txt{even}}(\phi^{-1/2}) + O_\prec(N^{-D})\,, \label{Tbb_pol}
\\
T_{\f w b} &\;=\; O_{\prec, \txt{even}}(\phi^{-1} \Psi_b) + O_\prec(N^{-D})\,, \label{Twb_pol}
\\
(T \bar X)_{\f v \beta} &\;=\; O_{\prec, \txt{odd}}(\phi^{-1/4} \Psi^{\f v}) + O_\prec(N^{-D})\,, \label{TQ_pol}
\\
(\bar X^* T \bar X)_{\beta \beta} &\;=\; O_{\prec, \txt{even}}(\phi^{1/2}) + O_\prec(N^{-D})\,, \label{QTQ_pol}
\end{align}
uniformly for $z \in \f S$.
\end{lemma}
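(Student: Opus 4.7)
All five expansions stem from a single Schur-complement identity. With $T^{[\beta]}$ the resolvent of $\bar X^{[\beta]} (\bar X^{[\beta]})^*$, the identities \eqref{Neumann}--\eqref{R_mumu} (applied with $X \mapsto \bar X$ and $\mu = \beta$) give
\begin{equation*}
T \;=\; T^{[\beta]} + zR^{\bar{\,}}_{\beta\beta} \, T^{[\beta]}\bar X_{[\beta]}\bar X_{[\beta]}^* T^{[\beta]}\,, \qquad zR^{\bar{\,}}_{\beta\beta} \;=\; \frac{-1}{1+\xi}\,, \qquad \xi \;\deq\; (\bar X^* T^{[\beta]}\bar X)_{\beta\beta}\,,
\end{equation*}
where $R^{\bar{\,}}$ denotes the resolvent of $\bar X^*\bar X$. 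Each of the five objects in the lemma is thereby expressed as a rational function of $\xi$ and of bilinear forms $(T^{[\beta]}\bar X_{[\beta]})_{\f v\cdot}$, with coefficients that are $\bar X^{[\beta]}$-measurable. I will Taylor-expand $(1+\xi)^{-1}$ to a high fixed order $K = K(D)$, extract the explicit polynomial in $\bar X_{i\beta}$, and absorb the rest into the $O_\prec(N^{-D})$ remainder.

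\textbf{Expansion and remainder.} Writing $\xi = -1 - zm_\phi + \delta\xi$, Lemma \ref{thm: bound on RX} applied to $\bar X^{[\beta]}$ yields $\delta\xi = O_\prec(\phi^{1/2}\Psi)$, while $|zm_\phi|\asymp \phi^{1/2}$ by Lemma \ref{lemma: w}. A geometric series in $\delta\xi/(-zm_\phi)$ therefore converges, with the tail of order $K+1$ having probabilistic size $\prec\Psi^{K+1}$ and a deterministic bound coming from $\|T^{[\beta]}\|\le\eta^{-1}\le N^C$ and the sub-Gaussian tails \eqref{moments of X-1}; choosing $K = K(D)$ large enough makes it $O_\prec(N^{-D})$. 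The decisive structural observation is that $\delta\xi$ decomposes cleanly as a diagonal piece $\sum_i T^{[\beta]}_{ii}\bigl(\bar X_{i\beta}^2 - (N\phi^{1/2})^{-1}\bigr)$, which is pure $O_{\prec,\diamond}(\phi^{-1/2})$, an off-diagonal piece $\sum_{i\ne j} T^{[\beta]}_{ij}\bar X_{i\beta}\bar X_{j\beta}$, which is pure $O_{\prec,2}(\phi^{-1/2})$, and a deterministic remainder $\frac{1}{N\phi^{1/2}}\sum_i T^{[\beta]}_{ii} - (-1-zm_\phi) = O_\prec(\phi^{-1/2}\Psi)$ controlled by the local Marchenko--Pastur law. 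These are exactly the building blocks of $O_{\prec,\text{even}}$ in Definition \ref{def:O_prec3}.

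\textbf{Parity and size.} Every power of $\delta\xi$ contributes even total degree in $\bar X_{[\beta]}$, and the numerator correction $T^{[\beta]}\bar X_{[\beta]}\bar X_{[\beta]}^*T^{[\beta]}$ contributes a further factor of degree two. Hence $T_{\f v\f v}$, $T_{bb}$, $T_{\f w b}$ and $(\bar X^* T \bar X)_{\beta\beta}=1+zR^{\bar{\,}}_{\beta\beta}$ are of even total degree, establishing the parity in \eqref{TTbeta_pol}, \eqref{Tbb_pol}, \eqref{Twb_pol} and \eqref{QTQ_pol}. For \eqref{TQ_pol}, the identity \eqref{G_X} gives $(T\bar X)_{\f v\beta} = -zR^{\bar{\,}}_{\beta\beta}\sum_i T^{[\beta]}_{\f v i}\bar X_{i\beta}$, which is (even) $\times$ (pure degree one), and hence odd. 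The declared scales $\phi^{-1/2}$, $\phi^{-1}\Psi_b$, $\phi^{-1}(\Psi^{\f v})^2$, $\phi^{1/2}$, $\phi^{-1/4}\Psi^{\f v}$ then follow from Theorem \ref{thm: IMP gen} applied to $T^{[\beta]}$, together with $|zR^{\bar{\,}}_{\beta\beta}|\prec\phi^{1/2}$, after collecting powers of the geometric factor $(-zm_\phi)^{-1}\asymp\phi^{1/2}$.

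\textbf{Main obstacle.} The subtle point is verifying admissibility (Definition \ref{def:weight}) of the weight $\varrho_i \deq |T^{[\beta]}_{\f v i}|$ appearing in the odd building block of \eqref{TQ_pol}. The $\ell^2$ bound is the Ward identity $\sum_i|T^{[\beta]}_{\f v i}|^2 = \eta^{-1}\im T^{[\beta]}_{\f v\f v}$, combined with Theorem \ref{thm: IMP gen} and Lemma \ref{lemma: w}, which yields $\sum_i\varrho_i^2\prec \phi^{-1}\im m_\phi/\eta + \Psi/\eta \prec N\phi^{-1}\Psi^2$; this gives admissibility at the $\ell^2$-level with $A = \phi^{-3/4}\Psi^{\f v}$. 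The $\ell^3$-bound is the delicate one: one writes $\sum_i\varrho_i^3 \le (\max_i\varrho_i)\cdot\sum_i\varrho_i^2$ and, to control the sup-norm pointwise, splits $\f v = v(i)\f e_i + (\f v - v(i)\f e_i)$ and applies the isotropic law to each summand, yielding $\max_i|T^{[\beta]}_{\f v i}| \prec \|\f v\|_\infty\phi^{-1/2} + \Psi$. The resulting contribution $\|\f v\|_\infty\phi^{-1/2}\cdot N\phi^{-1}\Psi^2$ is precisely what forces the inclusion of the extra $(N^{-1}\|\f v\|_\infty)^{1/3}$ in the definition of $\Psi^{\f v}$; with this correction the $\ell^3$-norm of $\varrho$ meets the admissibility bound $N^{1/2}\phi^{3/4}A^3$ with $A=\phi^{-3/4}\Psi^{\f v}$. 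For the higher-order products of $T^{[\beta]}$-entries coming from the geometric expansion, interior indices are contracted via $\sum_k T^{[\beta]}_{jk}T^{[\beta]}_{kl} = \partial_z T^{[\beta]}_{jl}$, whose isotropic control follows from a Cauchy-integral argument that shifts $\eta$ by a factor of order $1$; this reduces all higher-order admissibility checks to the base case treated above.
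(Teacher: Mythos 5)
Your overall strategy matches the paper's: expand $T$ around $T^{[\beta]}$ via the Schur complement identities \eqref{Neumann}--\eqref{R_mumu}, Taylor-expand $(1+\xi)^{-1}$ around the deterministic value of $\xi=(\bar X^*T^{[\beta]}\bar X)_{\beta\beta}$ to a high enough order that the remainder is $O_\prec(N^{-D})$, decompose $\delta\xi$ into its off-diagonal ($O_{\prec,2}$), centred-diagonal ($O_{\prec,\diamond}$) and deterministic pieces, and read off parity. (Two small slips: the leading value of $\xi$ is $\phi^{1/2}m_{\phi^{-1}}=-1-(zm_\phi)^{-1}$, not $-1-zm_\phi$, and the pointwise error in $T^{[\beta]}_{\f v i}$ is $\phi^{-1}\Psi$, not $\Psi$.)

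However, the argument in your ``Main obstacle'' paragraph has a genuine gap precisely at the step you flag as the delicate one. You estimate $\sum_i\varrho_i^3\le(\max_i\varrho_i)\sum_i\varrho_i^2$ with $\varrho_i=|T^{[\beta]}_{\f v i}|$, $\max_i\varrho_i\prec\phi^{-1/2}\|\f v\|_\infty+\phi^{-1}\Psi$ and $\sum_i\varrho_i^2\prec N\phi^{-1}\Psi^2$, which gives $\sum_i\varrho_i^3\prec N\phi^{-3/2}\|\f v\|_\infty\Psi^2+N\phi^{-2}\Psi^3$. But the admissibility threshold for $A=\phi^{-3/4}\Psi^{\f v}$ (working out Definition \ref{def:weight}) is $\sum_i\varrho_i^3\le N\phi^{-3/2}(\Psi^{\f v})^3$; comparing the first terms requires $\|\f v\|_\infty\Psi^2\le(\Psi^{\f v})^3$, which is false in general --- e.g.\ $\f v=\f e_j$ (so $\|\f v\|_\infty=1$), $\phi\asymp 1$, $\Psi\asymp N^{-2/5}$ gives LHS $\asymp N^{-4/5}\gg N^{-1}\asymp$ RHS. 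Applying H\"older after taking the sup over $i$ is too lossy: the point is that $|T^{[\beta]}_{\f v i}|\prec\phi^{-1/2}|v(i)|+\phi^{-1}\Psi$ \emph{componentwise}, and one should substitute this bound directly into $\sum_i|T^{[\beta]}_{\f v i}|^3$ so that the $\ell^3$ contraction happens at the level of $v(i)$ (namely $\sum_i|v(i)|^3\le\|\f v\|_\infty$), yielding $\sum_i|T^{[\beta]}_{\f v i}|^3\prec\phi^{-3/2}\|\f v\|_\infty+N\phi^{-2}\Psi^3\le N\phi^{-3/2}(\Psi^{\f v})^3$. This is the paper's route, and it is exactly where the term $(N^{-1}\|\f v\|_\infty)^{1/3}$ in $\Psi^{\f v}$ earns its keep. (As a side remark, you quote the threshold as $N^{1/2}\phi^{3/4}A^3$; the correct cube of the $\ell^3$ condition is $N\phi^{3/4}A^3$, which may be part of the confusion. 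Also the contraction $\sum_k T^{[\beta]}_{jk}T^{[\beta]}_{kl}=(T^{[\beta]})^2_{jl}$ is not needed for this lemma; it enters only in Lemma \ref{lem: polynom2}.)
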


\begin{proof}
We begin by noting that \eqref{bound: Rij isotropic gen} applied to $X^{[\mu]}$ and \eqref{im_m_swap} combined with a large deviation estimate (see \cite[Theorem B.1]{EKYY3}) yields
\begin{equation*}
(X^* G^{[\beta]} X)_{\beta \beta} \;=\; \phi^{1/2} m_{\phi^{-1}} + O_\prec(\phi^{-1/2} \Psi)\,.
\end{equation*}
Using \eqref{RS-N} and Lemma \ref{princ}, it is not hard to deduce that
\begin{equation*}
(\bar X^* T^{[\beta]} \bar X)_{\beta \beta} \;=\; \phi^{1/2} m_{\phi^{-1}} + O_\prec(\phi^{-1/2} \Psi)\,.
\end{equation*}
Thus for any fixed $n$ we may expand
\begin{align*}
-\frac{1}{1 + (\bar X^* T^{[\beta]} \bar X)_{\beta \beta}} &\;=\; - \frac{1}{1 + \phi^{1/2} m_{\phi^{-1}} - \pb{\phi^{1/2} m_{\phi^{-1}} - (\bar X^* T^{[\beta]} \bar X)_{\beta \beta}}}
\\
&\;=\; -\sum_{k = 0}^n (z m_\phi)^{k + 1} \pb{\phi^{1/2} m_{\phi^{-1}} - (\bar X^* T^{[\beta]} \bar X)_{\beta \beta}}^k + O_\prec(\phi^{1/2} \Psi^{n+1})\,,
\end{align*}
where in the second step we used \eqref{identity for m MP} and \eqref{mphi_minvphi}. Now we split
\begin{align*}
(\bar X^* T^{[\beta]} \bar X)_{\beta \beta} - \phi^{1/2} m_{\phi^{-1}} &\;=\; \sum_{i \neq j} T^{[\beta]}_{ij} \bar X_{i \beta} \bar X_{j \beta} + \sum_i T^{[\beta]}_{ii} \pbb{\bar X_{i \beta} - \frac{1}{N \phi^{1/2}}} + \sum_i \frac{1}{N \phi^{1/2}}  \pb{T_{ii}^{[\beta]} - m_{\phi^{-1}}}
\\
&\;=\; O_{\prec,2}(\phi^{-1/2} \Psi) + O_{\prec, \diamond}(\phi^{-1/2}) + O_{\prec, 0}(\phi^{-1/2} \Psi)
\\
&\;=\; O_{\prec, \txt{even}}(\phi^{-1/2})\,,
\end{align*}
where in the second step we used the estimates $\abs{T_{ij}^{[\beta]} - \delta_{ij} m_{\phi^{-1}}} \prec \phi^{-1} \Psi$  and $\abs{m_{\phi^{-1}}} \leq C \phi^{-1/2}$. Since $\abs{z m_\phi} \leq C \phi^{1/2}$, we therefore conclude that
\begin{equation*}
-\frac{1}{1 + (\bar X^* T^{[\beta]} \bar X)_{\beta \beta}} \;=\; O_{\prec, \txt{even}}(n \phi^{1/2}) + O_\prec(\phi^{1/2} \Psi^{n+1})\,.
\end{equation*}
From \eqref{def_Psi} and the definition of $\eta$, we readily find that $\Psi \leq N^{-c \tau}$ for some constant $c$. Therefore choosing $n \equiv n(\tau,D)$ large enough yields
\begin{equation} \label{R_polynom}
-\frac{1}{1 + (\bar X^* T^{[\beta]} \bar X)_{\beta \beta}} \;=\; O_{\prec, \txt{even}}(\phi^{1/2}) + O_\prec(\phi^{1/2} N^{-D})\,.
\end{equation}

Having established \eqref{R_polynom}, the remainder of the proof is relatively straightforward. From \eqref{R_mumu} and \eqref{G_X} we get
\begin{equation*}
(T \bar X)_{\f v \beta} \;=\; \frac{1}{1 + (\bar X^* T^{[\beta]} \bar X)_{\beta \beta}} (T^{[\beta]} \bar X)_{\f v \beta}\,.
\end{equation*}
Moreover, using $\Psi \geq c N^{-1/2}$ and $T_{\f v i}^{[\beta]} = v(i) m_{\phi^{-1}} + O_\prec(\phi^{-1} \Psi) = O_\prec(\phi^{-1/2} \abs{v(i)} + \phi^{-1} \Psi)$, we find
\begin{equation*}
\frac{1}{N \phi^{1/2}} \sum_i \absb{T_{\f v i}^{[\beta]}}^2 \;\prec\; \phi^{-3/2} \Psi^2\,, \qquad
\frac{1}{N^{3/2} \phi^{3/4}} \sum_i \absb{T_{\f v i}^{[\beta]}}^3 \;\prec\; N^{-1/2} \phi^{-9/4} (\Psi^{\f v})^3\,.
\end{equation*}
We conclude that
\begin{equation}  \label{TbetaQ}
(T^{[\beta]} \bar X)_{\f v \beta} \;=\; \sum_{i} T_{\f v i}^{[\beta]} \bar X_{i \beta} \;=\; O_{\prec, 1}(\phi^{-3/4} \Psi^{\f v})\,.
\end{equation}
Now \eqref{TQ_pol} follows easily from \eqref{TbetaQ} and \eqref{R_polynom}.

Moreover, \eqref{TTbeta_pol} and \eqref{Twb_pol} follow from  \eqref{Neumann} combined with \eqref{TbetaQ} and \eqref{R_polynom}. For \eqref{Twb_pol} we estimate the second term in \eqref{Neumann} by
\begin{equation*}
O_{\prec, \txt{even}}\pb{\phi^{1/2} \phi^{-3/2} \Psi^{\f w} \Psi^{\f e_b}} \;=\; O_{\prec, \txt{even}}\pb{\phi^{-1} (\Psi + N^{-1/3})^2} \;=\; O_{\prec, \txt{even}}\pb{\phi^{-1} \Psi}\,,
\end{equation*}
where in the last step we used that $\Psi \geq c N^{-1/2}$.
Moreover, \eqref{Tbb_pol} is a trivial consequence of \eqref{TTbeta_pol}. Finally, \eqref{QTQ_pol} follows from \eqref{Neumann} and \eqref{R_polynom} combined with
\begin{equation*}
(\bar X^* T^{[\beta]} \bar X)_{\beta \beta} \;=\; O_{\prec, 2}(1)\,.
\end{equation*}
This concludes the proof.
\end{proof}

Note that the upper bounds in Lemma \ref{lem: polynom} are the same as those of \eqref{uamza}, except that $\Psi$ is replaced with the larger quantity $\Psi^{\f v}$. In order to get back to $\Psi$ from $\Psi^{\f v}$, we use the following trivial result.
\begin{lemma} \label{lem: recover Psi}
We have
\begin{equation*}
\Psi^{\f v} \;\prec\; \Psi
\end{equation*}
if 
\begin{equation} \label{Psi_geq}
\Psi \;\geq\; N^{-1/3} \qquad \text{or} \qquad \norm{\f v}_{\infty} \;\prec\; N^{-1/2}\,.
\end{equation}
\end{lemma}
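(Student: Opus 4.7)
The proof is essentially a direct verification that splits into the two cases of the hypothesis, so my plan is to treat each case separately using only the definition $\Psi^{\f v} = \Psi + (N^{-1}\norm{\f v}_\infty)^{1/3}$ and a standard lower bound on $\Psi$. The key observation is that since $\f v$ is a unit vector we always have $\norm{\f v}_\infty \leq 1$, and hence the correction term is bounded a priori by $(N^{-1})^{1/3} = N^{-1/3}$.

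In the first case, where $\Psi \geq N^{-1/3}$, I would simply note that $(N^{-1}\norm{\f v}_\infty)^{1/3} \leq N^{-1/3} \leq \Psi$, so $\Psi^{\f v} \leq 2\Psi$, which gives $\Psi^{\f v} \prec \Psi$ at once. In the second case, where $\norm{\f v}_\infty \prec N^{-1/2}$, I would bound the correction term by $(N^{-1}\norm{\f v}_\infty)^{1/3} \prec (N^{-3/2})^{1/3} = N^{-1/2}$ and then compare this against a universal lower bound $\Psi(z) \geq cN^{-1/2}$, which holds throughout the relevant spectral domain.

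The only step that requires any thought is establishing the lower bound $\Psi \geq cN^{-1/2}$ uniformly for $z \in \f S$ (or, as needed, $z \in \wt{\f S}$). This follows from the definition \eqref{def_Psi} together with the asymptotics of $\im m_\phi$ recorded in Lemma \ref{lemma: w}: when $E \in [\gamma_-,\gamma_+]$ one has $\im m_\phi \asymp \sqrt{\kappa+\eta} \geq \sqrt{\eta}$, so that $\sqrt{\im m_\phi/(N\eta)} \geq c\, \eta^{-1/4} N^{-1/2} \geq cN^{-1/2}$ since $\eta \leq \omega^{-1}$, and when $E \notin [\gamma_-,\gamma_+]$ one has $\im m_\phi \asymp \eta/\sqrt{\kappa+\eta}$, giving $\sqrt{\im m_\phi/(N\eta)} \asymp (N(\kappa+\eta)^{1/2})^{-1/2} \geq cN^{-1/2}$ because $\kappa + \eta$ is bounded above.

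I do not expect any real obstacle: once the lower bound on $\Psi$ is noted, both cases reduce to one-line estimates, and the whole argument is essentially bookkeeping. The only mild subtlety is ensuring that the hidden constant in $\prec$ from the hypothesis $\norm{\f v}_\infty \prec N^{-1/2}$ propagates correctly through the cube root, which is immediate from monotonicity of $x \mapsto x^{1/3}$ and the definition of stochastic domination.
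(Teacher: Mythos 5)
Your proposal is correct and takes essentially the same approach as the paper's (very terse) proof, which simply cites the bound $\Psi \geq N^{-1/2}$ on $\f S$ --- in fact the paper's phrasing ``upper bound $\Psi \geq N^{-1/2}$'' is a typo for ``lower bound,'' as you correctly read it. Your added derivation of $\Psi \geq cN^{-1/2}$ from \eqref{def_Psi} and \eqref{im m gamma} is sound in both the bulk ($\eta^{-1/4}$ only helps) and exterior ($\kappa + \eta$ bounded above on $\f S$) regimes, and the two-case split together with the a priori bound $\norm{\f v}_\infty \leq \abs{\f v} = 1$ is exactly what is needed.
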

\begin{proof}
The claim follows immediately from the upper bound $\Psi \geq N^{-1/2}$, valid for all $z \in \f S$.
\end{proof}

In each application of Lemma \ref{lem: polynom}, we shall verify one of the conditions of \eqref{Psi_geq}. The first condition is verified for $\eta \leq N^{-2/3}$, which always holds for the coefficients of $x_1$, $x_2$, and $x_3$ (recall \eqref{defEI}).

The second condition of \eqref{Psi_geq} will be verified when computing the coefficients of $y_1$, $y_2$, and $y_3$. To that end, we make use of the freedom of the choice of basis when computing the trace in the definition of $J_1$, $J_2$, and $J_3$. We shall choose a basis that is completely delocalized. The following simple result guarantees the existence of such a basis.
\begin{lemma} \label{lem: deloc_basis}
There exists an orthonormal basis $\f w_1, \dots, \f w_M$ of $\R^M$ satisfying
\begin{equation} \label{deloc_basis}
\abs{w_i(j)} \;\prec\; M^{-1/2}
\end{equation}
uniformly in $i$ and $j$.
\end{lemma}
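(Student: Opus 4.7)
The plan is to prove Lemma \ref{lem: deloc_basis} by an entirely deterministic construction, so the symbol $\prec$ reduces to the statement that $|w_i(j)| \leq C M^{-1/2}$ for some absolute constant $C$ (in fact we can take $C = \sqrt{2}$). The key point is that such a flat basis can be written down explicitly, and no probabilistic input is required.

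A natural candidate is the real discrete Fourier basis. For concreteness, take $M$ arbitrary and define, for $j \in \qq{1,M}$,
\begin{equation*}
w_1(j) \;\deq\; \frac{1}{\sqrt M}\,, \qquad w_{2k}(j) \;\deq\; \sqrt{\frac{2}{M}} \cos\!\pbb{\frac{2\pi k j}{M}}\,, \qquad w_{2k+1}(j) \;\deq\; \sqrt{\frac{2}{M}} \sin\!\pbb{\frac{2\pi k j}{M}}\,,
\end{equation*}
for $k = 1, \dots, \lfloor (M-1)/2 \rfloor$, together with $w_M(j) = M^{-1/2} (-1)^j$ when $M$ is even. By construction every entry satisfies $|w_i(j)| \leq \sqrt{2/M}$, which is the desired bound.

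The remaining step is to verify orthonormality, which follows from the standard orthogonality relations for discrete trigonometric functions: for integers $k, l$ with $0 \leq k, l \leq M/2$ one has
\begin{equation*}
\sum_{j=1}^M \cos\!\pbb{\frac{2\pi k j}{M}} \cos\!\pbb{\frac{2\pi l j}{M}} \;=\; \frac{M}{2}\,\delta_{kl} + \frac{M}{2}\,\ind{k=l \in \{0,M/2\}}\,,
\end{equation*}
and analogously for sine-sine and cosine-sine sums. These identities are immediate from writing the trigonometric functions as sums of complex exponentials and summing geometric series. The normalizations above have been chosen precisely so that each $\f w_i$ has unit length and distinct $\f w_i, \f w_j$ are orthogonal. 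Since the construction produces $M$ mutually orthogonal unit vectors in $\R^M$, they form an orthonormal basis.

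There is no genuine obstacle here; the only mild bookkeeping is to handle the edge cases $M$ odd vs.\ $M$ even so that the total count of basis vectors is exactly $M$. Alternatively, one may sidestep the parity issue by invoking any standard flat unitary on $\C^M$ (e.g.\ the DFT, whose entries have modulus $M^{-1/2}$) and taking real and imaginary parts, or by using a Hadamard-type construction when $M$ is a multiple of $4$; any such choice will yield the uniform bound $|w_i(j)| \leq C M^{-1/2}$ required by the lemma.
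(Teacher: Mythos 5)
Your proof is correct, but it takes a genuinely different route from the paper. The paper's proof is probabilistic: it takes a Haar-uniform random orthogonal matrix, notes that each column is uniformly distributed on the unit sphere of $\R^M$, invokes Gaussian concentration to get $\abs{w_i(j)} \prec M^{-1/2}$, and then concludes that at least one realization of this random basis works (the paper also remarks that a sharper analysis yields the bound $(2+\epsilon)(\log M)^{1/2} M^{-1/2}$). Your construction is deterministic and explicit — the real discrete Fourier (trigonometric) basis — and gives the stronger, non-asymptotic bound $\abs{w_i(j)} \leq \sqrt{2/M}$ with no logarithmic loss and no appeal to concentration. Both arguments are valid and equally suitable for the only use made of the lemma (namely in Lemma \ref{lem: polynom2}, where $\prec M^{-1/2}$ suffices). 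Your approach is more elementary and yields a cleaner constant; the paper's approach is arguably more in the spirit of random matrix theory and generalizes trivially to settings where explicit flat bases are less convenient, but neither advantage matters here. The bookkeeping in your argument (odd vs.\ even $M$, the count of basis vectors, and the orthogonality relations) is standard and correct.
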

\begin{proof}
Let the matrix $[\f w_1 \cdots \f w_M]$ of orthonormal basis vectors be uniformly distributed on the orthogonal group $\r O(M)$. Then each $\f w_i$ is uniformly distributed on the unit sphere, and by standard Gaussian concentration arguments one finds that $\abs{w_i(j)} \prec M^{-1/2}$. In particular, there exists an orthonormal basis $\f w_1, \dots, \f w_M$ satisfying \eqref{deloc_basis}. In fact, a slightly more careful analysis shows that one can choose $\abs{w_i(j)} \leq (2 + \epsilon) (\log M)^{1/2} M^{-1/2}$ for any fixed $\epsilon > 0$ and large enough $M$.
\end{proof}

We may now derive estimates on the matrix $T^2$ by writing $(T^2)_{jk} = \sum_{i}T_{j \f w_i} T_{\f w_i k}$, where $\{\f w_i\}$ is a basis satisfying \eqref{deloc_basis}. From Lemmas \ref{lem: polynom} and \ref{lem: recover Psi} we get the following result.

\begin{lemma} \label{lem: polynom2}
Fix $D > 0$. Then there exists $C_0 = C_0(D)$ such that
\begin{align}
\tr T &\;=\; \tr T^{[\beta]} + O_{\prec, \txt{even}}(\phi^{-1} N \Psi^2) + O_\prec(N^{-D})\,, \label{TTbeta_pol2} 
\\
(T^2)_{bb} &\;=\; O_{\prec, \txt{even}}(N \phi^{-1} \Psi^2) + O_\prec(N^{-D})\,, \label{Tbb_pol2}
\\
(T^2 \bar X)_{b \beta} &\;=\; O_{\prec, \txt{odd}}(\phi^{-1/4} N \Psi^2) + O_\prec(N^{-D})\,, \label{TQ_pol2}
\\
(\bar X^* T^2 \bar X)_{\beta \beta} &\;=\; O_{\prec, \txt{even}}(\phi^{1/2} N \Psi^2) + O_\prec(N^{-D})\,, \label{QTQ_pol2}
\end{align}
uniformly for $z \in \f S$.
\end{lemma}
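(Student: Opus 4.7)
My plan is to reduce each $T^2$-quantity to a single graded polynomial in $\bar X_{\cdot,\beta}$ of the type introduced in Definitions \ref{def:O_prec1}--\ref{def:O_prec3}, by combining the Neumann expansion \eqref{Neumann} of $T$ around $T^{[\beta]}$ with a completeness-of-basis identity. I would work with the orthonormal basis $\{\f w_l\}_{l=1}^M$ from Lemma \ref{lem: deloc_basis}, which satisfies $\norm{\f w_l}_\infty \prec M^{-1/2}$; since we assume $\phi \geq 1$ throughout Section \ref{sec: QUE} we have $M \geq N$, so the second condition of Lemma \ref{lem: recover Psi} applies and yields $\Psi^{\f w_l} \prec \Psi$ uniformly in $l$.

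Using completeness $I_M = \sum_l \f w_l \f w_l^*$, I would rewrite
\begin{equation*}
\tr T - \tr T^{[\beta]} \;=\; \sum_l \pb{T_{\f w_l \f w_l} - T^{[\beta]}_{\f w_l \f w_l}}\,, \qquad (T^2)_{bb} \;=\; \sum_l T_{b \f w_l}^2\,,
\end{equation*}
and similarly $(T^2 \bar X)_{b\beta} = \sum_l T_{b \f w_l}(T\bar X)_{\f w_l \beta}$ and $(\bar X^* T^2 \bar X)_{\beta\beta} = \sum_l [(T\bar X)_{\f w_l \beta}]^2$. The parities of the resulting graded polynomials follow directly from Lemma \ref{lem: polynom}: by \eqref{TTbeta_pol} each $T_{\f w_l \f w_l} - T^{[\beta]}_{\f w_l \f w_l}$ is even; by \eqref{Twb_pol} (extended from $\f w$ to an arbitrary unit vector via polarization, or simply by repeating its Neumann-expansion proof) each $T_{b \f w_l}$ is even; and by \eqref{TQ_pol} each $(T\bar X)_{\f w_l \beta}$ is odd. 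The four products thus yield even, even, odd, and even polynomials respectively, matching the parities claimed in \eqref{TTbeta_pol2}--\eqref{QTQ_pol2}.

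The main obstacle will be reconciling the $M$-fold summation with the bounded-degree constraint in Definition \ref{def:O_prec3}, since naively combining $M$ graded polynomials into one is not permitted. I would resolve this by collapsing the sums using the identities $\sum_l T^{[\beta]}_{\f u \f w_l} T^{[\beta]}_{\f w_l \f v} = ((T^{[\beta]})^2)_{\f u \f v}$ and $\sum_l [(T^{[\beta]} \bar X)_{\f w_l \beta}]^2 = (\bar X^* (T^{[\beta]})^2 \bar X)_{\beta\beta}$, so that after expanding each $T$-factor via \eqref{Neumann} the $l$-summation disappears and one is left with a polynomial in $\bar X_{\cdot,\beta}$ whose coefficients are $(T^{[\beta]})^2$-entries (hence $\bar X^{[\beta]}$-measurable), together with the denominator $(1 + (\bar X^* T^{[\beta]} \bar X)_{\beta\beta})^{-1}$ polynomialized exactly as in \eqref{R_polynom}. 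What then remains is to bound the $(T^{[\beta]})^2$-coefficients by admissible weights in the sense of Definition \ref{def:weight}. For this I would use the Ward identity $|((T^{[\beta]})^2)_{\f u \f v}| \leq \sqrt{\im T^{[\beta]}_{\f u \f u}/\eta}\,\sqrt{\im T^{[\beta]}_{\f v \f v}/\eta}$ together with the isotropic local law \eqref{bound: Rij isotropic gen}, delocalization \eqref{smfy gen}, and the identity $\im m_{\phi^{-1}}/\eta \lesssim \phi^{-1} N \Psi^2$, which in combination give the diagonal bound $\sup_j |((T^{[\beta]})^2)_{jj}| \prec \phi^{-1} N \Psi^2$ and the $\ell^2$- and $\ell^3$-weighted estimates needed for admissibility, ultimately producing the sizes claimed in \eqref{TTbeta_pol2}--\eqref{QTQ_pol2}.
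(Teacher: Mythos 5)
Your proposal tracks the paper's argument closely. The paper proves \eqref{TQ_pol2} by exactly your opening moves: it chooses the delocalized basis $\{\f w_i\}$ of Lemma \ref{lem: deloc_basis}, writes $(T^2 \bar X)_{b\beta} = \sum_i T_{b \f w_i} (T\bar X)_{\f w_i \beta}$, applies Lemma \ref{lem: polynom} (via \eqref{Twb_pol} with $\f w \to \f w_i$ and \eqref{TQ_pol}) together with Lemma \ref{lem: recover Psi} (using $\phi \geq 1$ so that $\|\f w_i\|_\infty \prec M^{-1/2} \leq N^{-1/2}$), and then says ``Summing over $i$\,\ldots\ it is easy to conclude.'' The remaining three estimates are declared analogous. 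So you are following essentially the same path.

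Where your proposal is more careful than the paper's text is precisely the step you flag: the paper's remark after Definition \ref{def:O_prec3} stresses that the algebraic rules (e.g.\ $O_{\prec,\txt{even}} + O_{\prec,\txt{even}} = O_{\prec,\txt{even}}$) may only be applied a \emph{bounded} number of times, so one cannot literally iterate the addition rule $M$ times. Your fix --- expand each $T$-factor via \eqref{Neumann} and observe that the $l$-summation collapses by completeness to produce a single polynomial whose coefficients are entries of $(T^{[\beta]})^2$ (and products of $T^{[\beta]}$- with $(T^{[\beta]})^2$-entries, not purely $(T^{[\beta]})^2$-entries as you write) --- is indeed the correct way to unpack the paper's ``easy to conclude,'' and it also makes transparent why the degree in $\bar X_{\cdot,\beta}$ stays bounded. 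The size bound $N\phi^{-1/4}\Psi^2$ then matches what one gets from summing the right-hand sides of \eqref{bzdbb}, as the paper indicates (using $\sum_i |w_i(b)| \leq M^{1/2}$ and $\Psi \geq N^{-1/2}$).

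One caveat on your final paragraph: the diagonal Ward bound $\sup_j |((T^{[\beta]})^2)_{jj}| \prec \phi^{-1}N\Psi^2$ is correct, but a \emph{uniform} coefficient bound with constant weight $\varrho_k \equiv 1$ is not admissible in the sense of Definition \ref{def:weight} when $\phi > 1$ (the $\ell^2$ condition would force $\phi^{1/4} \leq 1$). To verify admissibility you actually need to split $((T^{[\beta]})^2)_{bk}$ into its diagonal and off-diagonal contributions, exactly as is done for $T^{[\beta]}_{\f v k}$ in the proof of \eqref{TbetaQ} inside Lemma \ref{lem: polynom}, i.e.\ use $T^{[\beta]}_{jk} = \delta_{jk}m_{\phi^{-1}} + O_\prec(\phi^{-1}\Psi)$ to produce a $\delta_{bk}$-dominated piece plus an $\ell^2$/$\ell^3$-small remainder. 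This fills the remaining hole in your sketch and is what lets the sizes in \eqref{TTbeta_pol2}--\eqref{QTQ_pol2} come out correctly.
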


\begin{proof}
We prove \eqref{TQ_pol2}; the other estimates are proved similarly. We choose a basis $\f w_1, \dots, \f w_M$ as in Lemma \ref{lem: deloc_basis}, and write
\begin{equation*}
(T^2 \bar X)_{b \beta} \;=\; \sum_i T_{b \f w_i} (T\bar X)_{\f w_i \beta} \;=\; \sum_{i = 1}^{N\phi} O_{\prec, \txt{even}} \pb{\phi^{-1} \Psi + \phi^{-1/2} \abs{w_i(b)}} \, O_{\prec, \txt{odd}}(\phi^{-1/4} \Psi) + O_{\prec}(N^{-D})\,,
\end{equation*}
where we used \eqref{Twb_pol} with $\f w$ replaced by $\f w_i$, \eqref{TQ_pol}, and Lemma \ref{lem: recover Psi}. Summing over $i$, and recalling that $\Psi \geq N^{-1/2}$, it is easy to conclude \eqref{TQ_pol2}.
\end{proof}

In particular, as in \eqref{lghj2_est} we find
\begin{equation} \label{J_bounds_pol}
J_1,J_3 \;=\; O_{\prec, \txt{odd}}(\phi^{-1/4}N\Psi^2) + O_\prec(N^{-D})\,, \qquad J_2 \;=\; O_{\prec, \txt{even}} (N  \Psi^2) + O_{\prec}(N^{-D})\,,
\end{equation}
where the parity of $J_i$ follows easily from its definition.

The estimates from Lemma \ref{lem: polynom} are compatible with integration in the following sense. Suppose that $\cal P(s)$ depends on a parameter $s \in S$, where $S \subset \R^k$  has bounded volume, and that $\cal P(s) = O_{\prec, *}(A(s)) + O_\prec(N^{-D})$ uniformly in $s \in S$, where $A(s)$ is a deterministic function of $s$ and $* \in \{\txt{even}, \txt{odd}\}$ denotes the parity of $\cal P$.  Suppose in addition that $\cal P(s)$ is Lipschitz continuous with Lipschitz constant $N^C$. Then, analogously to Remark \ref{rem:all_z}, we have
\begin{equation*}
\int_S \cal P(s) \, \dd s \;=\; O_{\prec, *}\pbb{\int_S A(s) \, \dd s} + O_\prec\pbb{\int_S A(s) \, \dd s \, N^{-D}}\,.
\end{equation*}

Lemmas \ref{lem: polynom} and \ref{lem: polynom2} are the key estimates of the coefficients appearing in \eqref{terms of degree three}. We claim that all estimates of Lemma \ref{zddz}, along with \eqref{uamza}, remain valid, in the sense that an estimate of the form $\abs{u} \prec v$ is to be replaced with
\begin{equation*}
u \;=\; O_{\prec, *}(v) + O_\prec(N^{-D})\,,
\end{equation*}
where $* \in \{\txt{even}, \txt{odd}\}$ denotes the parity of polynomialization of $u$. Indeed, for the estimates \eqref{bzdbb} on $x_i$, we always have $\im z = \eta \leq N^{-2/3}$, so that by Lemma \ref{lem: recover Psi} we have $\Psi^{\f v} \prec \Psi$. Thus we get from Lemma \ref{lem: polynom} that
\begin{equation*}
x_1, x_3 \;=\; O_{\prec, \txt{odd}}\pb{\phi^{-1/4}N \Psi\Psi_b} + O_\prec(N^{-D})\,,
\qquad
x_2 \;=\; O_{\prec, \txt{even}} \pb{\phi^{-1/2}N  \Psi_b^2+N\Psi^2} + O_\prec(N^{-D})\,,
\end{equation*}
where the parity of $x_i$ may be easily deduced from their definitions. Moreover, for the estimates \eqref{bzdbby} we use \eqref{J_bounds_pol} to get
\begin{equation*}
y_1,y_3 \;=\; O_{\prec, \txt{odd}} \pb{\phi^{-1/4}N^{C\e}\kappa_a^{1/2}} + O_\prec(N^{-D})\,, \qquad
y_2  \;=\;  O_{\prec, \txt{even}} \pb{N^{C\e}\kappa_a^{1/2}} + O_\prec(N^{-D})\,.
\end{equation*}
Note that, thanks to Lemmas \ref{lem: polynom} and \ref{lem: polynom2}, we have obtained exactly the same upper bounds on the coefficients $x_i$ and $y_i$ as the ones obtained in Lemma \ref{zddz}, but we have in addition expressed them, up to a negligible error, as graded polynomials, to which Lemma \ref{lem: gain of N1/2} is applicable.

In addition to the coefficients $x_i$ and $y_i$, we have to control the coefficient $q^{(m)}(y_T)$ in the definition \eqref{def_Al} of $A_{\f l}$. We in fact claim that
\begin{equation}
q^{(m)}(y_T) \;=\; O_{\prec, \txt{even}}(N^{C \epsilon}) + O_\prec(N^{-D})\,.
\end{equation}
This follows from the estimate
\begin{equation*}
y_{T} \;=\; y_{T^{[\beta]}} + O_{\prec, \txt{even}}(N^{C \epsilon} \kappa_a) + O_\prec(N^{-D}) \;=\; O_{\prec, \txt{even}}(N^{C \epsilon}) + O_\prec(N^{-D})\,,
\end{equation*}
which may be derived from \eqref{TTbeta_pol2}, combined with a Taylor expansion of $q^{(m)}$. Similarly, we find that
\begin{equation}
h^{(k)}(A_{\f 0}) \;=\; O_{\prec, \txt{even}}(N^{C \epsilon}) + O_\prec(N^{-D}) \qquad (k = 1,2,3)\,.
\end{equation}

We may now put everything together. Noting that the degree of the polynomializations of the expressions \eqref{terms of degree three} is always odd, we obtain, in analogy to \eqref{E_bound_naive} that
\begin{equation*}
Y \;=\; O_{\prec, \txt{odd}} \pB{N^{C \epsilon} \pb{\kappa_a^{-1/2} \phi^{3/4} \abs{w(b)}^2 + \phi^{1/4} \abs{w(b)} + \phi^{-1/4} \kappa_a^{1/2}}} + O_\prec(N^{-D})
\end{equation*}
for $Y$ being any term of \eqref{terms of degree three}. Hence Lemma \ref{lem: 3m} follows from Lemma \ref{lem: gain of N1/2} and Young's inequality.

\subsection{Stability of level repulsion: proof of Lemma \ref{lemma:lr2}} \label{sec:pf_lrcomparison}
This is a Green function comparison argument, using the machinery introduced in Section \ref{sec: 81}. A similar comparison argument was given in Propositions 2.4 and 2.5 of \cite{KY1}. The details in the sample covariance case and for indices $a$ satisfying $a \leq K^{1 - \tau}$ follow an argument very similar to (in fact simpler than) the one from Sections \ref{sec: 81}--\ref{sec: 83}. As in the proofs of Propositions 2.4 and 2.5 of \cite{KY1}, one writes the level repulsion condition in terms of resolvents. In our case, one uses the representation \eqref{HS split 1} as the starting point. Then the machinery of Sections \ref{sec: 81}--\ref{sec: 83} may be applied with minor modifications. We omit the details.

\section{Extension to general $T$ and universality for the uncorrelated case} \label{sec:general_model}

In this section we relax the assumption \eqref{simpl_assumption}, and hence extend all arguments of Sections \ref{sec:prelim}--\ref{sec: QUE} to cover general $T$. We also prove the fixed-index joint eigenvector-eigenvalue universality of the matrix $H$ defined in \eqref{def_H}, for indices bounded by $K^{1 - \tau}$ for some $\tau > 0$.

Bearing the applications in the current paper in mind, we state the results of this section for the matrix $H$ from \eqref{def_H}, but it is a triviality that all results and their proofs carry over to case of arbitrary $Q$ from \eqref{wtH_cov} provided that $\Sigma = T T^* = I_M$.

\subsection{The isotropic Marchenko-Pastur law for $Y Y^*$}

We start with the singular value decomposition of $T$, which we write as
\begin{equation*}
T \;=\; O' (\Lambda, 0) O'' \;=\; O' \Lambda (I_M, 0) O''\,,
\end{equation*}
where $O' \in \r O(M)$ and $O'' \in \r O(M+r)$ are orthogonal matrices, $0$ is the $M \times r$ zero matrix, and $\Lambda$ is an $M \times M$ diagonal matrix containing the singular values of $T$. Setting
\begin{equation} \label{O_dep_T}
\Sigma^{1/2} \;=\; O' \Lambda (O')^* \,, \qquad O \;\deq\;
\begin{pmatrix}
O' & 0
\\
0 & I_r
\end{pmatrix}
O''\,,
\end{equation}
we have
\begin{equation*}
T \;=\; \Sigma^{1/2} (I_M, 0) O\,.
\end{equation*}
We conclude that
\begin{equation*}
Q \;=\; \Sigma^{1/2} H \Sigma^{1/2} \,,
\end{equation*}
where $H \deq Y Y^*$ and $Y \deq (I_M, 0) O X$ were defined in \eqref{def_H}. Comparing this to \eqref{Q_simplified},  we find that to relax the assumption \eqref{simpl_assumption} we have to generalize the arguments of Sections \ref{sec:prelim}--\ref{sec: QUE} by replacing $X X^*$ with $H = Y Y^*$.

The generalization of $G = (X X^* - z)^{-1}$ is the resolvent of $Y Y^*$,
\begin{equation*}
\wh G(z) \;\deq\; (Y Y^* - z)^{-1}\,.
\end{equation*}
We also abbreviate
\begin{equation*}
G' \;\deq\; (O X X^* O^* - z)^{-1}\,.
\end{equation*}
Throughout the following we identify $\f w \in \R^M$ with its natural embedding $\binom{\f w}{0} \in \R^{M+r}$. Thus, for example, for $\f v, \f w \in \R^M$ we may write $G'_{\f v \f w}$.

\begin{theorem}[Local laws for $Y Y^*$] \label{IMP for YY}
Theorem \ref{thm: IMP gen} remains valid with $G$ replaced by $\wh G$. Moreover, Theorems \ref{thm: I deloc gen} and \ref{thm: cov-rig} remain valid for $\f \zeta_i$ and $\lambda_i$ denoting the eigenvectors and eigenvalues of $Y Y^*$.
\end{theorem}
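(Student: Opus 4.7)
The plan is to derive the three claims (isotropic local MP law for $\wh G$, eigenvalue rigidity for $\lambda_i$, and isotropic delocalization for $\f \zeta_i$) from the corresponding results for $XX^*$, by exploiting $Y = \pi_Y OX$ with $\pi_Y \deq (I_M, 0)$ and $\pi_Z \deq (0, I_r)$ the complementary projection. The starting observation is that $OXX^*O^*$ is merely the conjugation of $XX^*$ by the orthogonal matrix $O$, so that $G'(z) \deq (OXX^*O^* - z)^{-1} = OG(z)O^*$ and hence $\scalar{\f v}{G'(z)\f w} = \scalar{O^*\f v}{G(z) O^*\f w}$ for any $\f v, \f w \in \R^{M+r}$. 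Applying Theorem \ref{thm: IMP gen} to the pair $O^*\f v, O^*\f w$, and noting that the parameter $\phi' \deq (M+r)/N$ differs from $\phi = M/N$ by $O(1/N)$ so that $m_{\phi'^{-1}}$ and the error parameter $\Psi$ differ from their $\phi$-counterparts within the stated accuracy, we obtain the isotropic local MP law for $G'$ with parameter $\phi$ and arbitrary deterministic unit test vectors in $\R^{M+r}$.

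To transfer this from $G'$ to $\wh G$, I would apply the Schur complement identity to the top-left $M \times M$ block of $G'$. Writing $OX$ in block form with $Y$ the top $M$ rows and $Z$ the bottom $r$ rows, one finds $(G'|_M)^{-1} = \wh G^{-1} - YZ^* P^{-1} ZY^*$ with $P \deq ZZ^* - z$, and the Woodbury identity then yields
\begin{equation*}
\wh G \;=\; G'|_M \;-\; G'|_M\, (YZ^*) \, (P + ZY^* G'|_M YZ^*)^{-1} (ZY^*)\, G'|_M\,,
\end{equation*}
a rank-at-most-$r$ correction. To estimate the correction, the identity $G'|_M\, YZ^* = -(\pi_Y G' \pi_Z^*)\, P$, which follows from $G'|_M = \pi_Y G' \pi_Y^*$, the relation $\pi_Y^*\pi_Y = I_{M+r} - \pi_Z^*\pi_Z$, the resolvent identity $G'(G'^{-1} + z) = I + zG'$, and $\pi_Y\pi_Z^* = 0$, reduces each matrix element $\scalar{\f v}{G'|_M YZ^* \f e_k}$ to a linear combination of off-diagonal entries of $G'$ between the orthogonal subspaces $\pi_Y^*\R^M$ and $\pi_Z^*\R^r$; by the isotropic MP law for $G'$ these are $O_\prec(\Psi)$. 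Combined with the concentration $ZZ^* = \phi^{-1/2} I_r + O_\prec(M^{-1/2})$ (a direct large-deviation estimate, since each $Z_{k\mu}$ is a deterministic rotation of the $\mu$-th column of $X$) and the averaged local law for $\tr G'|_M$, one checks that $P + ZY^* G'|_M YZ^*$ equals, up to $O_\prec$-errors, a scalar multiple of $I_r$ involving the combination $-z + \phi^{-1/2} + m_{\phi^{-1}}(z)$, which is bounded away from zero on $\f S \cup \wt{\f S}$ by the MP self-consistent equation; hence its inverse has operator norm of order one and the Schur correction contributes $O_\prec(\Psi)$, yielding the isotropic MP law for $\wh G$ on both $\f S$ and $\wt{\f S}$.

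The eigenvalue rigidity for $YY^*$ follows at once from that for $XX^*$ via Cauchy interlacing: since $YY^*$ is a principal $M \times M$ submatrix of $OXX^*O^*$ whose spectrum coincides with that of $XX^*$, one has $\lambda_{i+r}(XX^*) \leq \lambda_i(YY^*) \leq \lambda_i(XX^*)$ for $1 \leq i \leq M$, and combined with the $O(1/N)$ discrepancy between the classical locations for $\phi$ and $\phi'$ this produces rigidity at the rate $(i \wedge (K+1-i))^{-1/3} K^{-2/3}$. The isotropic delocalization then follows from the isotropic MP law for $\wh G$ via the standard bound $|\scalar{\f v}{\f \zeta_i}|^2 \leq \eta\, \im \wh G_{\f v\f v}(\lambda_i + \ii\eta)$ at the optimal scale $\eta \asymp K^{-1+\omega}$, using rigidity to locate $\lambda_i$. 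The main anticipated obstacle is verifying uniformly in $z$ that $P + ZY^* G'|_M YZ^*$ is invertible with the correct inverse-norm bound—in particular that the leading scalar $-z + \phi^{-1/2} + m_{\phi^{-1}}(z)$ stays nondegenerate throughout the spectral domains $\f S$ and $\wt{\f S}$, a property that has to be read off from the MP self-consistent equation and may require some case analysis in the regime of $\phi$.
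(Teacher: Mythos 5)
Your proposal takes essentially the same route as the paper: exploit that $Y Y^* = \pi_Y (O X X^* O^*) \pi_Y^*$ is a principal $M\times M$ submatrix of the conjugated resolvent, express $\wh G$ via a rank-$r$ Schur-complement correction to $G'|_M$, and estimate the correction by the isotropic law applied to $G'(z)=O G(z) O^*$. The paper invokes the direct submatrix formula $\wh G = G'_{YY} - G'_{YZ}\,(G'_{ZZ})^{-1}\,G'_{ZY}$ (citing \cite{BEKYY}), whereas you reach the same endpoint through a Woodbury expansion; the two are algebraically identical once you note (from the block equation $G'(A-z)=I$) that
\[
P + Z Y^*\,G'|_M\,Y Z^* \;=\; P\,G'_{ZZ}\,P\,, \qquad\text{so}\qquad
G'|_M\,Y Z^*\,(P + Z Y^*G'|_M Y Z^*)^{-1} Z Y^*\,G'|_M \;=\; G'_{YZ}\,(G'_{ZZ})^{-1}\,G'_{ZY}\,.
\]
This also dissolves the anticipated obstacle at the end of your proposal: the invertibility of $P + Z Y^* G'|_M Y Z^*$ reduces precisely to the nondegeneracy of $G'_{ZZ}$, and the isotropic law gives $G'_{ZZ} = m_{\phi^{-1}}(z)\,I_r + O_\prec(\Phi)$ with $\Phi/|m_{\phi^{-1}}| \leq N^{-c}$ on $\f S \cup \wt{\f S}$ — exactly the bound the paper uses. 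There is no need to extract the explicit self-consistent combination you mention (and the one you wrote down is not the one that naturally appears). The one genuine departure is that you obtain rigidity from Cauchy interlacing and the rigidity of $X X^*$, and then deduce delocalization from the isotropic law plus rigidity; the paper states more tersely that rigidity and delocalization both follow from the isotropic law as in \cite{BEKYY}. Your interlacing argument is cleaner and more elementary for the rigidity part, and equally valid.
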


\begin{proof}
It suffices to prove the first sentence, since all claims in the second sentence follow from the isotropic law (see \cite{BEKYY} for more details). We only prove \eqref{bound: Rij isotropic gen} for $\wh G$; the other bound, \eqref{bound: Rij isotropic outside sc gen} for $\wh G$, is proved similarly. To simplify the presentation, we suppose that $r = 1$; the case $r \geq 2$ is a trivial extension. Abbreviate $\bar M \deq M + 1$. Noting that $Y_{i \mu} = \ind{i \neq \bar M} (OX)_{i \mu}$, we find from \cite[Definition 3.5 and Equation (3.7)]{BEKYY} that
\begin{equation} \label{wh_G_formula}
\wh G_{\f v \f w} \;=\; G'_{\f v \f w} - \frac{G'_{\f v \bar M} G'_{\bar M \f w}}{G'_{\bar M \bar M}}\,.
\end{equation}

For definiteness, we focus on \eqref{bound: Rij isotropic gen} for $\wh G$; the proof of \eqref{bound: Rij isotropic outside sc gen} for $\wh G$ is similar.
Since $G' = O G O^*$, we have $G'_{\f v \f w} = G_{O^* \f v \, O^* \f w}$. Hence, using \eqref{bound: Rij isotropic gen} and \eqref{wh_G_formula}, the proof will be complete provided we can show that
\begin{equation} \label{def_Phi}
\absbb{\frac{G'_{\f v \bar M} G'_{\bar M \f w}}{G'_{\bar M \bar M}}} \;\prec\; \Phi\,, \qquad  \Phi \;\deq\; \sqrt{\frac{\im m_{\phi^{-1}}(z)}{M \eta}} + \frac{1}{M \eta} \;\asymp\; \phi^{-1} \Psi\,,
\end{equation}
where we recall the definition \eqref{def_Psi} of $\Psi$.
In fact, from Lemma \ref{lemma: w} and \eqref{mphi_minvphi} we find that $\Phi / \abs{m_{\phi^{-1}}} \leq N^{-c}$ for some positive constant $c$ depending on $\tau$. Hence \eqref{bound: Rij isotropic gen} yields
\begin{equation*}
\absbb{\frac{G'_{\f v \bar M} G'_{\bar M \f w}}{G'_{\bar M \bar M}}} \;\prec\; \Phi^2 / \abs{m_{\phi^{-1}}} \;\leq\; \Phi\,.
\end{equation*}
This concludes the proof.
\end{proof}

Having established Theorem \ref{IMP for YY}, all arguments from Sections \ref{sec:prelim}--\ref{sec:bulk} that use it as input may be taken over verbatim, after replacing $G$ by $\wh G$. More precisely, all results from Sections \ref{sec:prelim}--\ref{sec:bulk} remain valid for a general $Q$, with the exception of Proposition \ref{prop: level repulsion}, Lemmas \ref{lemma:lr1} and \ref{lemma:lr2}, and Proposition \ref{prop:xie}. Therefore we have completed the proofs of all of our main results except Theorem \ref{thm: eigenvector law}.

In order to prove Theorem \ref{thm: eigenvector law}, we still have to prove Lemmas \ref{lemma:lr1} and \ref{lemma:lr2} and Proposition \ref{prop:xie} for $Y Y^*$ instead of $X X^*$. Lemma \ref{lemma:lr1} is easy: for Gaussian $X$ we have $Y \eqdist (I_M, 0) X = \wt X$, where $\wt X$ is the $M \times N$ matrix obtained from $X$ by deleting its bottom $r$ rows.

The proofs of Lemma \ref{lemma:lr2} and Proposition \ref{prop:xie} rely on Green function comparison. What remains, therefore, is to extend the argument of Section \ref{sec: QUE} from $H = X X^*$ to $H = Y Y^*$.

\subsection{Quantum unique ergodicity for $YY^*$} \label{sec: QUE Y}
In this subsection we prove Proposition \ref{prop:xie} for the eigenvectors $\f \zeta_a$ of $H = Y Y^*$. As explained in Section \ref{sec:pf_lrcomparison}, the proof of Lemma \ref{lemma:lr2} is analogous and therefore omitted. We proceed exactly as in Section \ref{sec: QUE}, replacing $G$ with $\wh G$. It suffices to prove the following result.

\begin{lemma} \label{lem: QUE Y}
Lemma \ref{CPZX} remains valid if $x(E)$ and $y(E)$ are replaced with $\wh x(E)$ and $\wh y(E)$, obtained from the definitions \eqref{defxE} and \eqref{defy428} by replacing $G$ with $\wh G$.
\end{lemma}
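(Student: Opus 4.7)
The plan is to follow the strategy of Sections \ref{sec: 81}--\ref{sec: 83} almost verbatim, with one structural modification: perturbing a single entry $X_{b\beta}$ of the underlying independent matrix $X$ now induces a \emph{rank-$2$} (rather than rank-$1$) perturbation of $H = YY^*$. Concretely, writing $u \deq (I_M,0) O e_b \in \R^M$ (a deterministic vector with $\abs{u} \leq 1$) and $\bar Y \deq (I_M,0) O \bar X$, I observe that $\bar Y$ differs from $Y_\gamma$ only in column $\beta$, and
\begin{equation*}
Y_\gamma Y_\gamma^* - \bar Y \bar Y^* \;=\; U_{b\beta}\qb{\bar Y e_\beta\, u^* + u\, e_\beta^* \bar Y^*} + U_{b\beta}^2\, u u^*\,.
\end{equation*}
This replaces the rank-$1$ update $\bar X U^* + U \bar X^* + UU^*$ used in \eqref{SR-N}; crucially, it is still a polynomial of degree two in $U_{b\beta}$ whose coefficients are $\bar X^{[\beta]}$-measurable vectors (or a fixed vector), so the entire algebraic machinery of Section \ref{sec: 82} goes through.

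Concretely, I would (i) rewrite the resolvent expansion \eqref{SR-N} for $\wh S \deq (Y_\gamma Y_\gamma^* - z)^{-1}$ around $\wh T \deq (\bar Y \bar Y^* - z)^{-1}$ using the rank-$2$ perturbation above; (ii) establish the a priori bounds of Lemma \ref{thm: bound on RX} and Lemma \ref{princ} with $(G,X)$ replaced by $(\wh G, Y)$, which reduces to Theorem \ref{IMP for YY} together with column-wise large deviation estimates, noting that the columns $Y_{\cdot\mu} = (I_M,0) O X_{\cdot\mu}$ are independent (as linear images of the independent columns of $X$) and that large deviation bounds for quadratic forms can be applied by expressing $\wh G^{[\mu]} Y_{\cdot\mu} = \wh G^{[\mu]} \tilde O X_{\cdot\mu}$ with $\tilde O \deq (I_M,0)O$ deterministic; (iii) redo the expansions of Lemma \ref{zddz} with $T_{\cdot b}$ and $(T\bar X)_{\cdot \beta}$ replaced by $\wh T u$ and $\wh T\bar Y e_\beta$ respectively, obtaining exactly the same size bounds \eqref{bzdbb}--\eqref{lghj2_est} since $\abs{u} \leq 1$ and $\abs{\bar Y e_\beta}$ satisfies the same bounds as $\abs{\bar X e_\beta}$; and finally (iv) adapt the polynomialization of Lemmas \ref{lem: polynom} and \ref{lem: polynom2} to the variables $\{X_{i\beta}\}_{i\neq b}$. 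The admissibility of the weights $\varrho^{(k)}$ in Definition \ref{def:weight} is unaffected because the relevant $\ell^2$ and $\ell^3$ sums of the polynomialized coefficients are insensitive to the orthogonal change of basis by $O$ and the fixed vector $u$ contributes a bounded multiplicative constant. Consequently, the parity argument of Lemma \ref{lem: gain of N1/2} applies without change and delivers the required factor $N^{-1/2}$ for each odd-degree contribution.

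The main obstacle I anticipate is the bookkeeping in step (iv): because the rank-$2$ perturbation introduces two distinct ``external'' vectors, $u$ (deterministic) and $\bar Y e_\beta$ (depending only on $\bar X^{[\beta]}$ together with the columns of $X$ through the column $\beta$), each monomial in the expansion of $\wh x_S - \wh x_T$ and $\wh y_S - \wh y_T$ must be carefully re-classified in the graded polynomial framework of Definitions \ref{def:O_prec1}--\ref{def:O_prec3}, and one must verify that the parity of every resulting term is preserved so that Lemma \ref{lem: gain of N1/2} is applicable. This is a routine but tedious verification; none of it requires new probabilistic input beyond Theorem \ref{IMP for YY} and the identities of Section \ref{sec: 81} adapted from $X$ to $Y$.
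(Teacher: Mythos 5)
Your proposal takes a genuinely different route from the paper. The paper does \emph{not} redo the Green function comparison machinery for $\wh G$ and $Y$; instead it reduces Lemma \ref{lem: QUE Y} to the already-proved Lemma \ref{CPZX} by a purely algebraic argument. The key observation, taken from the proof of Theorem \ref{IMP for YY}, is that the resolvent $\wh G$ is a rank-one modification of $G' = O G O^*$ where $G$ is the resolvent of the full $(M+r)\times(M+r)$ matrix $XX^*$: writing $\f u \deq O\f w$ and $\f r$ for the (rotated) extra direction, one has
\begin{equation*}
\wh G_{\f w\f w} \;=\; G_{\f u\f u} - \frac{G_{\f u\f r}G_{\f r\f u}}{G_{\f r\f r}}\,,\qquad
\tr \wh G \;=\; \tr G - \frac{(G^2)_{\f r\f r}}{G_{\f r\f r}}\,.
\end{equation*}
The isotropic law of Theorem \ref{thm: IMP gen}, together with the particular choice of $\eta$ from \eqref{defEI}, then yields $\sup_{E\in I}\abs{\wh x(E)-x(E)}\cdot\abs{I}\prec N^{-c}$ and $\sup_{E\in I}\abs{\wh y(E)-y(E)}\prec N^{-c}$, and the mean value theorem closes the argument against \eqref{ee427}. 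This avoids re-deriving any of the a priori bounds, the main expansion, or the polynomialization, which is what makes the paper's proof about a page long.

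Your alternative is plausible in outline — the perturbation identity $Y_\gamma Y_\gamma^* - \bar Y\bar Y^* = U_{b\beta}(\bar Y e_\beta u^* + u e_\beta^*\bar Y^*) + U_{b\beta}^2 uu^*$ with $u = (I_M,0)Oe_b$ is correct, the columns of $Y$ are indeed independent so the $[\mu]$-partial resolvent identities carry over, and Theorem \ref{IMP for YY} gives the required a priori isotropic bounds. But it buys you nothing over the paper's reduction and leaves real work undone. In particular, in step (iv) the coefficient matrices in the polynomialization are now of the form $\tilde O^* \wh T^{[\beta]}\tilde O$ with $\tilde O=(I_M,0)O$, and the leading-order term is $m_{\phi^{-1}}(\delta_{kl}-P_{kl})$ with $P = O^*\diag(0_M,I_r)O$ a rank-$r$ projection. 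The off-diagonal contribution $P_{kl}$ does not literally factor as a product of one-index weights, so one must split it off and verify admissibility via $\abs{P_{kl}}\le\sqrt{P_{kk}P_{ll}}$ together with $\sum_k P_{kk}=r$; this works, but it is not immediate from ``insensitivity to an orthogonal change of basis'' and needs to be spelled out to confirm that parities are preserved after the new monomials are reclassified. (Also a small inaccuracy: you describe the original single-entry swap as inducing a rank-$1$ perturbation of $H$, but $\bar X U^* + U\bar X^* + UU^*$ is generically rank $2$, exactly as in your $Y$-setting; the structural difficulty you flag is therefore not new.) In short, your plan would likely go through after substantial additional bookkeeping, but it misses the short proof the paper intended.
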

\begin{proof}
We take over the notation from the proof of Theorem \ref{IMP for YY}, and to simplify notation again assume that $r = 1$. As in Section \ref{sec: QUE}, we suppose for definiteness that $\phi \geq 1$. Defining $\f u \deq O \f w$ and $\f r \deq O \f e_{M+1}$, we have $\scalar{\f u}{\f r} = 0$ and, using \eqref{wh_G_formula},
\begin{equation*}
\wh G_{\f w \f w}\;=\; G_{\f u \f u} - \frac{G_{\f u \f r} G_{\f r \f u}}{G_{\f r \f r}}\,, \qquad
\tr \wh G \;=\; \tr G - \frac{(G^2)_{\f r \f r}}{G_{\f r \f r}}\,.
\end{equation*}
We conclude that
\begin{equation*}
\wh x(E) \;=\; x(E) - \frac{M}{\pi} \im \pbb{\frac{G_{\f u \f r} G_{\f r \f u}}{G_{\f r \f r}}}(E + \ii \eta)\,.
\end{equation*}
Recalling \eqref{bound: Rij isotropic gen} and \eqref{def_Psi}, we find that the second term is stochastically dominated by
\begin{equation*}
M \frac{\phi^{-2} \Psi^2}{\phi^{-1/2}} \;\leq\; N \Psi^2 \;\leq\; C N \frac{1}{N^2 \eta^2} \;=\;  \frac{C N^{4 \epsilon}}{N \Delta_a} \, \frac{1}{\Delta_a} \;\leq\; C N^{4 \epsilon} \kappa_a^{1/2} \frac{1}{\Delta_a}\,,
\end{equation*}
where in the second step we used that $\Psi \leq C (N \eta)^{-1}$, as follows from Lemma \ref{lemma: w} and the definition of $\eta$ in \eqref{defEI}.
Recalling the definitions from \eqref{defEI}, we therefore conclude that for small enough $\epsilon \equiv \epsilon(\tau)$ we have
\begin{equation} \label{x - x_hat}
\abs{I} \sup_{E \in I} \absb{\wh x(E) - x(E)} \;\prec\; N^{-c}
\end{equation}
for some positive constant $c$ depending on $\tau$.

Similarly, we have for any $z \in \f S$
\begin{equation} \label{trG-trGhat}
\absb{\tr G - \tr \wh G} \;\prec\; N \Psi^2 \;\leq\; N^{-c} \eta^{-1}
\end{equation}
for some positive constant $c$ depending on $\tau$. Plugging \eqref{trG-trGhat} into the definition of $\wh y(E)$ and estimating the error term using integration by parts, as in \eqref{866}, we get
\begin{equation*}
\sup_{E \in I} \absb{\wh y(E) - y(E)} \;\prec\; N^{-c}\,.
\end{equation*}
Using the mean value theorem and the bound $\abs{y(E)} \prec 1$, we therefore get
\begin{equation*}
h \qbb{ \int_{I} \wh x(E) \, q \pb{\wh y(E) }   \, \dd E } \;=\; h \qbb{ \int_{I} x(E) \, q \pb{y(E) }   \, \dd E } + O_\prec(N^{-c})\,.
\end{equation*}
Combined with \eqref{ee427}, this concludes the proof.
\end{proof}

This concludes the proof of Theorem \ref{thm: eigenvector law} for the case of general $T$.

\subsection{The joint eigenvalue-eigenvector universality of $YY^*$ near the spectral edges}

In this section we observe that the technology developed in Section \ref{sec: QUE} allows us to establish the universality of the joint eigenvalue-eigenvector distribution of $Q$ provided that $\Sigma = I_M$. Without loss of generality, we consider the case where $Q$ is given by $H = Y Y^*$ defined in \eqref{def_H}. This result applies to arbitrary eigenvalue and eigenvector indices which are bounded by $K^{1 - \tau}$, and does in particular not need to invoke eigenvalue correlation functions.

This result generalizes the quantum unique ergodicity from Proposition \ref{prop:xie} and its extension from Remark \ref{rem: gen QUE} by also including the distribution of the eigenvalues. The universality of both the eigenvalues and the eigenvectors is formulated in the sense of fixed indices. A result in a similar spirit was given in \cite[Theorem 1.6]{KY1}, except that the upper bound on the eigenvalue and eigenvector indices $(\log K)^{C \log \log N}$ from \cite{KY1} is improved all the way to $K^{1 - \tau}$, for any $\tau > 0$. A result covering all eigenvalue and eigenvector indices, i.e.\ with an index upper bound $K$, was given in \cite[Theorem 1.10]{KY1} and \cite[Theorem 8]{TV3}, but under the assumption of a four-moment matching assumption. Theorem \ref{thm:univ_H} is a true universality result in that it does not require any moment matching assumptions, but it does require an index upper bound of $K^{1 - \tau}$ instead of $K$ on the eigenvalue and eigenvector indices.

In addition, Theorem \ref{thm:univ_H} extends the previous results from \cite{KY1} and \cite{TV3} by considering arbitrary generalized components $\scalar{\f \zeta_a}{\f v}$ of the eigenvectors. Finally, Theorem \ref{thm:univ_H} holds for the general class of covariance matrices defined in \eqref{def_H}.

\begin{theorem}[Universality for the uncorrelated case] \label{thm:univ_H}
Fix $\tau > 0$, $k = 1,2,3, \dots$, and $r = 0,1,2, \dots$. Choose an observable $h \in C^4(\R^{2k})$ satisfying
\begin{equation*}
\abs{\partial^\alpha h(x)} \;\leq\; C(1 + \abs{x})^C
\end{equation*}
for some constant $C > 0$ and for all $\alpha \in \N^{2k}$ satisfying $\abs{\alpha} \leq 4$. Let $X$ be an $(M + r) \times N$ matrix, and define $H$ through \eqref{def_H} for some orthogonal $O \in \r O(M + r)$. Denote by $\lambda_1 \geq \cdots \geq \lambda_M$ the eigenvalues of $H$ and by $\f \zeta_1, \dots, \f \zeta_M$ the associated unit eigenvectors. Let $\E^{(1)}$ and $\E^{(2)}$ denote the expectations with respect to two laws on $X$, both of which satisfy \eqref{cond on entries of X} and \eqref{moments of X-1}. Recall the definition \eqref{def_Delta} of $\Delta_a$, the typical distance between $\lambda_a$ and $\lambda_{a+1}$, and \eqref{def:gamma_alpha} of the classical location $\gamma_a$.

Then for any indices $a_1, \dots, a_k, b_1, \dots, b_k \in \qq{1, K^{1 - \tau}}$ and deterministic unit vectors $\f u_1, \f w_1, \dots, \f u_k, \f w_k \in \R^M$ we have
\begin{equation*}
(\E^{(1)} - \E^{(2)}) \, h \pBB{\frac{\lambda_{a_1} - \gamma_{a_1}}{\Delta_{a_1}}, \dots, \frac{\lambda_{a_k} - \gamma_{a_k}}{\Delta_{a_k}}, M \scalar{\f u_1}{\f \zeta_{b_1}} \scalar{\f \zeta_{b_1}}{\f w_1}, \dots, M \scalar{\f u_k}{\f \zeta_{b_k}} \scalar{\f \zeta_{b_k}}{\f w_k}} \;=\; O(N^{-c})
\end{equation*}
for some constant $c \equiv c(\tau, k, r, h) > 0$.
\end{theorem}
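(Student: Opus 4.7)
The plan is a Green function comparison argument that treats the $2k$ observables simultaneously, following and extending the scheme of Section \ref{sec: QUE}. Let $h$ be the test function, and write the argument of $h$ as
\begin{equation*}
\Phi(X) \;\deq\; \pB{\xi_1(X), \dots, \xi_k(X), \pi_1(X), \dots, \pi_k(X)}\,,
\end{equation*}
where $\xi_j \deq \Delta_{a_j}^{-1}(\lambda_{a_j} - \gamma_{a_j})$ and $\pi_j \deq M \scalar{\f u_j}{\f \zeta_{b_j}} \scalar{\f \zeta_{b_j}}{\f w_j}$. By rigidity \eqref{rigidity1} and delocalization \eqref{smfy gen} the variables $\xi_j$ and $\pi_j$ are stochastically bounded, so by standard mollification it suffices to establish the bound for $h$ smooth with polynomially bounded derivatives up to order four. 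Using the telescopic sum from \eqref{tel}, the proof reduces to comparing the expectation of $h(\Phi)$ for two matrices $X_{\gamma - 1}$ and $X_\gamma$ that differ in a single entry indexed by $(b,\beta)$. A third-order Taylor expansion of $h$ about the $(b,\beta)$-independent configuration, combined with the matching of the first two moments of $X^{(1)}_{b\beta}$ and $X^{(2)}_{b\beta}$, reduces everything to controlling the cubic term in the expansion (as in \eqref{app}).

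The first step is to express each $\xi_j$ and each $\pi_j$ as a functional of the resolvent $\wh G$ of $H = YY^*$. For $\pi_j$, I use the approximate identity of Lemmas \ref{CTG} and \ref{GCC} (generalized, as in Remark \ref{rem: gen QUE}, to the bilinear form $\scalar{\f u_j}{\im \wh G(E + \ii \eta) \f w_j}$) together with the cutoff $q_{b_j}(\tr f_E(H))$. For $\xi_j$, I represent $\lambda_{a_j}$ through the smoothed eigenvalue counting function: I write $\xi_j$ as a smooth functional of $\cal N(E, \tilde E) = \tr f_E(H)$ with $f_E = f_{E^-, \tilde E, \tilde\eta}$ on scale $\tilde\eta = \Delta_{a_j} K^{-3\epsilon}$, and invoke rigidity and level repulsion (Proposition \ref{prop: level repulsion}, valid for $a_j \leq K^{1 - \tau}$) to replace $\indb{\lambda_{a_j} \leq E}$ by $\indb{\tr f_E(H) \leq a_j - 1/2}$ with negligible error. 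Both types of functionals are then expressed through $\wh G$ via the Helffer-Sj\"ostrand formula \eqref{HS split 1}, with the contribution of $\abs{\sigma} \leq \tilde\eta K^{-d\epsilon}$ bounded by $N^{-c}$ as in \eqref{small sigma error}. Because there are only finitely many observables and the scales $\Delta_{a_j}$ satisfy $\Delta_{a_j} \geq K^{-2/3}$, all these reductions produce cumulative errors that are still $O(K^{-c})$ for a positive constant $c$ depending on $\tau, k, r$.

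Once $h(\Phi)$ is written as $h$ evaluated at a bounded number of resolvent functionals, I apply the resolvent expansion of Lemma \ref{zddz} in the cubic Taylor remainder. Each appearance of $x_T, y_T$ in the original proof is replaced by one of the finitely many analogous resolvent functionals corresponding to $\xi_j$ and $\pi_j$; since the Taylor remainder is a polynomial of fixed degree in the $A_{\f l}$ coefficients, the result is, as in \eqref{terms of degree three}, a bounded sum of products in which the total degree in $U_{b\beta}$ equals three. The polynomialization of Lemma \ref{lem: polynom} and Lemma \ref{lem: polynom2}, supplemented by Lemma \ref{lem: QUE Y} to pass from $G = (XX^*-z)^{-1}$ to $\wh G = (YY^*-z)^{-1}$ (via the Schur-type identity \eqref{wh_G_formula}), writes each coefficient as a graded polynomial in $\bar X_{\cdot \beta}$ of definite parity with the same $O_{\prec, \cdot}$-norm bounds as in Section \ref{sec: 83}. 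The key point is that the polynomialization of each individual coefficient is unchanged by introducing additional observables, and parities combine additively; the coefficients arising from $\xi_j$ admit the same $O_{\prec, *}$ bounds as those from the trace $\tr T$ and $\tr S$ already treated in Lemma \ref{lem: polynom2}. Lemma \ref{lem: gain of N1/2} then gains a factor $K^{-1/2}$ on the terms of odd total degree, and after summation over the telescopic sum the remaining error is at most $K^{-c}$.

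The main technical obstacle will be the eigenvalue part of the argument for indices $a_j$ approaching $K^{1-\tau}$, because there the eigenvalue spacing $\Delta_{a_j}$ is only moderately larger than the optimal scale $K^{-1}$ and the window $I$ in \eqref{defEI-3} must be chosen separately for each $a_j$; verifying that the level repulsion estimate of Proposition \ref{prop: level repulsion} together with the rigidity estimate \eqref{rigidity1} suffices uniformly down to index $K^{1-\tau}$ requires a careful bookkeeping of the parameters $\epsilon, \delta_1, \delta_2$ of Lemma \ref{CTG}. A secondary obstacle, though purely bookkeeping, is that the smooth counting functions $q_{b_j}(\tr f_E(H))$ and $q_{a_j}(\cdot)$ for different indices $j$ must be handled jointly; I expect that this imposes no new difficulty since $k$ is fixed and the Taylor expansion of $h$ distributes the indices trivially across the factors.
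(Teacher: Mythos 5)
Your proposal is correct and follows essentially the same route as the paper: a Green function comparison based on the Helffer--Sj\"ostrand representation of the smoothed eigenvalue counting function (for the $\lambda_{a_j}$ part, as in \cite{KY1}, Sections 4--5) combined with the resolvent representation of eigenvector components developed in Section \ref{sec: QUE}, with the polynomialization and parity arguments of Section \ref{sec: 83} supplying the extra $K^{-1/2}$ gain needed to close the telescopic sum. Your identification of the main obstacles---uniformity of level repulsion and the $\Delta_{a_j}$ scales down to index $K^{1-\tau}$, and the joint bookkeeping of multiple observables---matches what the machinery of Sections \ref{sec: QUE}--\ref{sec:general_model} is designed to handle, and the reliance on Lemma \ref{lem: QUE Y} (together with the analogue of Lemma \ref{lem: polynom2} for $\wh G$ and on the validity of Proposition \ref{prop: level repulsion} for $Y Y^*$) correctly accounts for the passage from $XX^*$ to $H = YY^*$ when $r > 0$.
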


\begin{proof}
The proof is a Green function comparison argument, a minor modification of that developed in Section \ref{sec: QUE}. We write the distribution of $\lambda_a - \gamma_a$ in terms of the resolvent $\wh G$, starting from the Helffer-Sj\"ostrand representation \eqref{HS split 1}, exactly as in \cite[Sections 4 and 5]{KY1}. We omit further details.
\end{proof}

\begin{remark} \label{rem: universality}
In particular, Theorem \ref{thm:univ_H} establishes the fixed-index universality of eigenvalues with indices bounded by $K^{1 - \tau}$. Indeed, we may choose $\E^{(2)}$ to be the expectation with respect to a Gaussian law, in which case $H \eqdist \wt X \wt X^*$, where $\wt X$ is a $M \times N$ and Gaussian. (For example, the top eigenvalue of $H$ is distributed according to the Tracy-Widom-1 distribution, etc.)

We note that even this fixed-index universality of eigenvalues is a new result, having previously only been established under the four-moment matching condition \cite{KY1, TV3} (in the context of Wigner matrices).
\end{remark}

\begin{remark}
We formulated Theorem \ref{thm:univ_H} for the real symmetric covariance matrices of the form \eqref{def_H}, but it and its proof remain valid for complex Hermitian covariance matrices, as well as Wigner matrices (both real symmetric and complex Hermitian).
\end{remark}

\begin{remark}
Assuming $\abs{\phi - 1} > \tau$, the condition $a \leq K^{1 - \tau}$ on the indices in Theorem \ref{thm:univ_H} may be replaced with $a \notin \qq{K^{1 - \tau}, K - K^{1 - \tau}}$.
\end{remark}

\begin{remark}\label{rem:univ_Q}
Combining Theorems \ref{thm:univ_H} and \ref{thm:sticking}, we get the following universality result for $Q$. Fix $\tau > 0$, $k = 1,2,3, \dots$, and $r = 0,1,2, \dots$.
For any continuous and bounded function $h$ on $\R^k$ we have
\begin{equation*}
\lim_{N \to \infty} \qBB{\E \, h\pbb{\frac{\mu_{s_+ + a_1} - \gamma_{a_1}}{\Delta_{a_1}}, \cdots, \frac{\mu_{s_+ + a_k} - \gamma_{a_k}}{\Delta_{a_k}}} - \E^{\txt{Wish}} \, h \pbb{\frac{\lambda_{a_1} - \gamma_{a_1}}{\Delta_{a_1}}, \cdots, \frac{\lambda_{a_k} - \gamma_{a_k}}{\Delta_{a_k}}}} \;=\; 0
\end{equation*}
for any indices $a_1, \dots, a_k \leq K^{1 - \tau} \alpha_+^3$. Here $\E^{\txt{Wish}}$ denotes expectation with respect to the Wishart case, where $r = 0$, $T = I_M$, and $X$ is Gaussian. A similar result holds near the left edge provided that $\abs{\phi - 1} \geq \tau$.

\end{remark}

\section{Extension to $\dot Q$ and proof of Theorem \ref{thm: Q dot}} \label{sec: Q dot}
In this section we explain how to extend our analysis from $Q$ defined in \eqref{wtH_cov} to $\dot Q$ defined in \eqref{def_dot_Q}, hence proving Theorem \ref{thm: Q dot}. We define the resolvent
\begin{equation*}
\dot G(z) \;\deq\; \pb{X (1 - \f e \f e^*) X^* - z}^{-1}\,,
\end{equation*}
which will replace $G(z) = (X X^* - z)^{-1}$ when analysing with $\dot Q$ instead of $Q$. We begin by noting that the isotropic local laws hold for also for $\dot G$.

\begin{theorem}[Local laws for $X (1 - \f e \f e^*) X^*$] \label{IMP for XeX}
Theorem \ref{thm: IMP gen} remains valid with $G$ replaced by $\dot G$. Moreover, Theorems \ref{thm: I deloc gen} and \ref{thm: cov-rig} remain valid for $\f \zeta_i$ and $\lambda_i$ denoting the eigenvectors and eigenvalues of $X (1 - \f e \f e^*) X^*$.
\end{theorem}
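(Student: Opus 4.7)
The proof will mirror the strategy used for Theorem \ref{IMP for YY}: realize $\dot H$ as a rank-one perturbation of $H$ and apply Sherman--Morrison, then bound the resulting correction using the already-established isotropic law for $G$. Concretely, $X(1 - \f e \f e^*)X^* = XX^* - (X\f e)(X\f e)^*$ exhibits $\dot H$ as a rank-one modification of $H$, and since $X$ is real and $G = G^T$, Sherman--Morrison gives
\begin{equation*}
\dot G_{\f v \f w} \;=\; G_{\f v \f w} + \frac{(GX\f e)_{\f v}\,(GX\f e)_{\f w}}{1 - (X^*GX)_{\f e \f e}}\,.
\end{equation*}
The task reduces to estimating the scalar denominator and the bilinear numerator, after which the target bound \eqref{bound: Rij isotropic gen} (resp.\ \eqref{bound: Rij isotropic outside sc gen}) for $\dot G$ follows by triangle inequality against the same bound for $G$.

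For the denominator, the plan is to exploit the identity $X^*GX = 1 + zR$ with $R(z) = (X^*X - z)^{-1}$, so that $(X^*GX)_{\f e \f e} = 1 + z R_{\f e \f e}$, making the denominator equal to $-z R_{\f e \f e}$. Applying Theorem \ref{thm: IMP gen} to $X^*$ (which swaps $M \leftrightarrow N$ and $\phi \leftrightarrow \phi^{-1}$) yields the isotropic local law $R_{\f e \f e} = m_\phi(z) + O_\prec(\Psi(z))$. Since $|z| \geq \omega$ and $|m_\phi| \asymp 1$ on $\f S$, the leading term $|z m_\phi|$ is bounded below by a positive constant, while the error $O_\prec(|z|\Psi)$ is $o(1)$; hence the denominator has magnitude $\asymp |z|$ with high probability.

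For the numerator, I will need the natural extension of Lemma \ref{thm: bound on RX} from standard basis vectors $\f e_\mu \in \R^N$ to arbitrary deterministic unit vectors: $|(GX)_{\f v \f e}| \prec \phi^{-1/4} \Psi$ for deterministic unit $\f v \in \R^M$ and $\f e \in \R^N$. This isotropic bilinear estimate is already a standard output of the isotropic local law machinery of \cite{BEKYY} (and can alternatively be recovered by the same column-removal large deviation argument used in the proof of Lemma \ref{thm: bound on RX}, combined with the bound \eqref{bound: Rij isotropic gen} applied to $G^{[\mu]}$). Granted this, the numerator satisfies $|(GX\f e)_{\f v}(GX\f e)_{\f w}| \prec \phi^{-1/2}\Psi^2$, and combining with the denominator bound gives
\begin{equation*}
|(\dot G - G)_{\f v \f w}| \;\prec\; \frac{\phi^{-1/2} \Psi^2}{|z|} \;\prec\; \phi^{-1} \Psi^2 \;\leq\; \phi^{-1} \Psi \;\asymp\; \Phi\,,
\end{equation*}
where the penultimate inequality uses $\Psi \leq 1$ on $\f S$. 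The analogous estimate on $\tilde{\f S}$ follows by the same calculation with \eqref{bound: Rij isotropic outside sc gen} in place of \eqref{bound: Rij isotropic gen}. The rigidity and isotropic delocalization claims for eigenvalues and eigenvectors of $X(1-\f e\f e^*)X^*$ are then standard consequences of the isotropic local law, exactly as in \cite{BEKYY}.

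The main obstacle will be justifying the bilinear bound on $(GX)_{\f v \f e}$ for a general direction $\f e$: the proof of Lemma \ref{thm: bound on RX} uses the column-removal identity \eqref{G_X} in an essential way, which does not immediately generalize to a non-basis direction. Either one invokes \cite{BEKYY} directly for the bilinear isotropic estimate, or one writes $GX = XR$ and decomposes $R\f e = m_\phi \f e + (R - m_\phi I)\f e$, estimating the first contribution by a scalar large deviation bound and controlling the second through the Ward identity $(R-m_\phi)\f e$ has $\ell^2$-norm governed by $\im R_{\f e \f e}/\eta$ combined with an appropriate conditioning step on minors of $X$.
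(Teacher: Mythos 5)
Your overall architecture matches the paper's exactly: Sherman--Morrison reduces $\dot G - G$ to the rank-one correction, the denominator is handled through $X^*GX = 1 + zR$ and the isotropic law for $R$, and the whole thing closes once one has a bound $\abs{(GX)_{\f v \f e}} \prec \phi^{-1/4}\Psi$ (or more precisely $\phi^{1/4}(1+\phi^{1/2})\Phi$, which coincides for $\phi\geq 1$). You also correctly flag this bilinear estimate as the real obstacle; the issue is that neither of your proposed ways to obtain it actually works. First, this estimate is \emph{not} a stated output of \cite{BEKYY}: that paper gives isotropic laws for $G$ and $R$ but not for the mixed block $GX$, which is exactly why the present paper devotes Lemma \ref{lem: fluct_avg} to it. Second, your fallback via $GX = XR$, $R\f e = m_\phi \f e + (R - m_\phi)\f e$, and Cauchy--Schwarz fails by a factor of $\sqrt N$. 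Writing $\f u = X^*\f v$, the second term is $\f u^*(R - m_\phi)\f e$; here $\f u$ is random and \emph{correlated with $R$}, so the isotropic law for $R$ cannot be applied with $\f u$ as a test vector. A deterministic Cauchy--Schwarz gives $\norm{\f u}\,\norm{(R - m_\phi)\f e}$, and the Ward identity together with the isotropic law for $R$ gives $\norm{(R-m_\phi)\f e}^2 = \frac{\im m_\phi}{\eta} - \abs{m_\phi}^2 + O_\prec(\Psi/\eta) \asymp \frac{\im m_\phi}{\eta}$. Since $\norm{\f u}\asymp\phi^{-1/4}$, the resulting bound is of order $\phi^{-1/4}\sqrt{\im m_\phi/\eta}\asymp \sqrt{N}\,\phi^{-1/4}\Psi$ — this is precisely the $\ell^1$-versus-$\ell^2$ gap the paper warns about. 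The lost $\sqrt N$ can only be recovered by exploiting cancellations between the terms $e(\mu)(GX)_{\f v\mu}$; in the paper this is the fluctuation averaging content of Lemma \ref{lem: fluct_avg}, which requires estimating high moments of $(GX)_{\f v\f e}$ using column-removal identities and a graph expansion, not a pointwise Ward argument. Your ``conditioning on minors'' hint points vaguely in that direction but does not by itself close the gap.

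A secondary and more minor point: your estimate ``the denominator has magnitude $\asymp\abs{z}$'' and the bound $\abs{(GX)_{\f v\f e}}\prec\phi^{-1/4}\Psi$ both tacitly use $\abs{m_\phi}\asymp 1$, which by Lemma \ref{lemma: w} is only asserted for $\phi\geq 1$; for $\phi<1$ one has $\abs{m_\phi}\asymp\phi^{1/2}$ and the denominator is instead $\asymp 1+\phi^{1/2}$, which is what the paper actually uses. This is easily repaired once Lemma \ref{lem: fluct_avg} is available (it is stated for all $\phi$), but as written your argument covers only the case $\phi\geq 1$.
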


\begin{proof}
As in the proof of Theorem \ref{IMP for YY}, we only prove \eqref{bound: Rij isotropic gen} for $\dot G$.
Using the identity \eqref{woodbury} we get
\begin{equation} \label{dotG_G_id}
\dot G \;=\; \pb{X X^* - z - X \f e \f e^* X^*}^{-1} \;=\; G + \frac{1}{1 - (X^* G X)_{\f e \f e}} \, G X \f e \f e^* X^* G\,.
\end{equation}
Using \eqref{bound: Rij isotropic gen}, the proof will be complete provided we can show that
\begin{equation} \label{ee_iso_claim}
\absBB{\frac{(GX)_{\f v \f e} (X^* G)_{\f e \f w}}{1 - (X^* G X)_{\f e \f e}}} \;\prec\; \Phi
\end{equation}
for unit vectors $\f v, \f w \in \R^M$, where $\Phi$ was defined in \eqref{def_Phi}. Recall the definition $R(z) \deq (X^* X - z)^{-1}$. From the elementary identity $X^*G X = 1 + zR$ and Theorem \ref{thm: IMP gen} applied to $X^*$ instead of $X$, we get
\begin{equation*}
\absbb{\frac{1}{1 - (X^* G X)_{\f e \f e}}} \;\prec\; \frac{1}{\abs{z} \abs{m_\phi}} \;\leq\; \frac{C}{1 + \phi^{1/2}}\,,
\end{equation*}
where in the last step we used \eqref{bounds on mg} and \eqref{mphi_minvphi}. Using Lemma \ref{lem: fluct_avg} below with $\f x = \f e$, \eqref{ee_iso_claim} therefore follows provided we can prove that
\begin{equation*}
\phi^{1/2} (1 + \phi^{1/2}) \Phi^2 \;\leq\; C \Phi\,.
\end{equation*}
This is an immediate consequence of the estimate $(1 + \phi) \Phi \leq C$, which itself easily follows from the definition \eqref{def_S_theta} of $\f S$ and \eqref{im_m_swap}.
\end{proof}

Next, we deal with the quantum unique ergodicity of $X (1 - \f e \f e^*) X^*$. As explained in Section \ref{sec: QUE Y}, it suffices to prove the following result.

\begin{lemma} \label{lem: QUE eX}
Lemma \ref{CPZX} remains valid if $x(E)$ and $y(E)$ are replaced with $\dot x(E)$ and $\dot y(E)$, obtained from the definitions \eqref{defxE} and \eqref{defy428} by replacing $G$ with $\dot G$.
\end{lemma}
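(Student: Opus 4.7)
The plan is to follow closely the strategy of Lemma \ref{lem: QUE Y}. The key input is the rank-one perturbation identity \eqref{dotG_G_id}, which writes
\[
\dot G \;=\; G + \frac{GX\f e \f e^* X^* G}{1 - (X^*GX)_{\f e \f e}}.
\]
My plan is to show that this rank-one correction contributes negligibly to both $\dot x(E) - x(E)$ and $\dot y(E) - y(E)$ uniformly in $E \in I$, and then conclude via the mean value theorem and \eqref{ee427}.

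First, for the isotropic part I would compute
\[
\dot G_{\f w \f w}(z) - G_{\f w \f w}(z) \;=\; \frac{(GX)_{\f w \f e}(z)\,(X^*G)_{\f e \f w}(z)}{1 - (X^*GX)_{\f e \f e}(z)}.
\]
The proof of Lemma \ref{thm: bound on RX} extends verbatim to arbitrary deterministic unit vectors in $\R^N$ in place of $\f e_\mu$, yielding $|(GX)_{\f w \f e}(z)| \prec \phi^{-1/4}\Psi(z)$. For the denominator, the identity $X^*GX = 1 + zR$ with $R = (X^*X - z)^{-1}$, combined with Theorem \ref{thm: IMP gen} applied to $X^*$ and Lemma \ref{lemma: w}, gives $|1 - (X^*GX)_{\f e \f e}(z)| = |zm_\phi(z)| + O_\prec(|z|\Psi(z)) \asymp 1 + \phi^{1/2}$. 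Hence $|\dot G_{\f w \f w}(z) - G_{\f w \f w}(z)| \prec \phi^{-1/2}\Psi(z)^2/(1+\phi^{1/2})$, and exactly the same arithmetic as in the proof of Lemma \ref{lem: QUE Y} yields $|I| \sup_{E \in I}|\dot x(E) - x(E)| \prec N^{-c}$ for some $c \equiv c(\tau) > 0$, provided $\epsilon$ is small enough.

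Next, for the trace a similar computation gives
\[
\tr \dot G(z) - \tr G(z) \;=\; \frac{(X^*G^2X)_{\f e \f e}(z)}{1 - (X^*GX)_{\f e \f e}(z)}.
\]
Using $(X^*G^2 X)_{\f e \f e} = \partial_z(1 + zR_{\f e \f e}) = R_{\f e \f e} + z(R^2)_{\f e \f e}$, together with Theorem \ref{thm: IMP gen} applied to $X^*$ and the spectral bound $|(R^2)_{\f e \f e}| \leq \eta^{-1} \im R_{\f e \f e}$, I would obtain $|\tr \dot G(z) - \tr G(z)| \prec N\Psi(z)^2$ uniformly for $z \in \f S$. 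Plugging this into the definition of $\dot y(E) - y(E)$ and estimating the three resulting integrals by integration by parts in $e$ followed by splitting in $\sigma$ (as in \eqref{866}) yields $\sup_{E \in I}|\dot y(E) - y(E)| \prec N^{-c}$. The bounds on $\dot x - x$ and $\dot y - y$ then combine via the mean value theorem, the regularity of $h$ and $q$, the a priori estimate $\int_I |x(E)|\,\dd E \prec N^{C\epsilon}$, and the trivial bound $|y(E)| \prec 1$, to conclude the proof via \eqref{ee427}.

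The main obstacle is controlling the denominator $1 - (X^*GX)_{\f e \f e}$ uniformly on the spectral domain $\f S$; this requires both the isotropic local law of Theorem \ref{thm: IMP gen} applied to $X^*$ (to identify its deterministic limit as $-zm_\phi(z)$) and the lower bound $|zm_\phi(z)| \asymp 1 + \phi^{1/2}$ from Lemma \ref{lemma: w}. Once this is in place, the remaining estimates are direct consequences of the Green function machinery developed for $G$ in Section \ref{sec: QUE}, and the polynomialization arguments of Sections \ref{sec: 82}--\ref{sec: 83} apply to $\dot G$ unchanged through this reduction.
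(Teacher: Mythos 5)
Your overall strategy matches the paper's (admittedly very terse) approach: use the rank-one identity \eqref{dotG_G_id} in place of \eqref{wh_G_formula}, show that the resulting correction is negligible for both $\dot x - x$ and $\dot y - y$ uniformly on $I$, and conclude via the mean value theorem exactly as in Lemma \ref{lem: QUE Y}. The identity $(X^*G^2X)_{\f e\f e} = R_{\f e\f e} + z(R^2)_{\f e\f e}$ together with the Ward-type bound $\abs{(R^2)_{\f e\f e}} \le \eta^{-1}\im R_{\f e\f e}$ is a clean way to handle the trace correction, and the denominator estimate via $X^*GX = 1 + zR$, Theorem \ref{thm: IMP gen} applied to $X^*$, and Lemma \ref{lemma: w} is correct.

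There is, however, a genuine gap in the numerator estimate. You assert that the proof of Lemma \ref{thm: bound on RX} ``extends verbatim'' to the vector $\f e = N^{-1/2}(1,\dots,1)^*$ in place of $\f e_\mu$, giving $\abs{(GX)_{\f w\f e}} \prec \phi^{-1/4}\Psi$. It does not. The proof of \eqref{bound: XRv} hinges on the single-column identity \eqref{G_X}, $GX_{[\mu]} = -zR_{\mu\mu}G^{[\mu]}X_{[\mu]}$, which isolates a single index $\mu$; decomposing $(GX)_{\f w\f e} = \sum_\mu e(\mu)(GX)_{\f w\mu}$ and applying \eqref{bound: XRv} term by term produces a bound proportional to $\norm{\f e}_1 = \sqrt N$, not $\norm{\f e}_2 = 1$. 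That factor $N^{1/2}$ propagates to $\abs{\dot x - x} \prec N^2\Psi^2$ rather than $N\Psi^2$, and the argument collapses. Obtaining the sharp $\ell^2$-bound for a genuinely averaged vector requires exploiting cancellations among the $\mu$-summands, which is precisely the content of Lemma \ref{lem: fluct_avg} (proved in the paper via a high-moment fluctuation-averaging expansion, not a verbatim repetition of \eqref{bound: XRv}). You should cite Lemma \ref{lem: fluct_avg} directly; its bound $\phi^{1/4}(1+\phi^{1/2})\Phi$ reduces to $\phi^{-1/4}\Psi$ up to constants when $\phi \ge 1$, which is the case treated throughout Section \ref{sec: QUE}, so once that citation is made the rest of your arithmetic goes through unchanged.
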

\begin{proof}
The proof mirrors closely that of Lemma \ref{lem: QUE Y}, using the identity \eqref{dotG_G_id} instead of \eqref{wh_G_formula} as input. We omit the details.
\end{proof}
Using Theorem \ref{IMP for XeX} and Lemma \ref{lem: QUE eX}, combined with the results of Section \ref{sec: QUE Y}, we conclude the proof of Theorem \ref{thm: Q dot}. To be precise, the arguments of Sections \ref{sec:general_model} and \ref{sec: Q dot} have to be successively combined so as to obtain the isotropic local laws and quantum unique ergodicity of the matrix $Y (1 - \f e \f e^*) Y^*$. This has to be done in the following order. First, using Theorem \ref{IMP for XeX} and Lemma \ref{lem: QUE eX}, one establishes the local laws and quantum unique ergodicity for $X (1 - \f e \f e^*) X^*$. Second, using these results as input, one repeats the arguments of Section \ref{sec:general_model}, except that $X X^*$ is replaced with $X (1 - \f e \f e^*) X^*$; this is a trivial modification of the arguments presented in Section \ref{sec:general_model}. Thus we get the local laws and quantum unique ergodicity for the matrix
\begin{equation*}
Y (1 - \f e \f e^*) Y^* \;=\; (I_M,0) O X (I_N - \f e \f e^*) X^* O^* (I_M,0)^*\,.
\end{equation*}
Moreover, we find that Theorem \ref{thm:univ_H} also holds if $H = Y Y^*$ from \eqref{def_H} is replaced with $Y (1 - \f e \f e^*) Y^*$.

All that remains is the proof of the following estimate, which generalizes \eqref{bound: XRv}.
\begin{lemma} \label{lem: fluct_avg}
For $z \in \f S$ and deterministic unit vectors $\f v \in \R^M$ and $\f x \in \R^N$, we have
\begin{equation} \label{GXvu}
\absb{(GX )_{\f v \f x} }  \;\prec\;  \phi^{1/4} (1 + \phi^{1/2}) \Phi\,,
\end{equation}
where $\Phi$ was defined in \eqref{def_Phi}.
\end{lemma}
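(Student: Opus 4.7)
The claim is the isotropic generalization of \eqref{bound: XRv}, with the basis vector $\f e_\mu$ replaced by an arbitrary deterministic unit vector $\f x \in \R^N$. My plan is to exploit the algebraic identity $GX = XR$, where $R(z) \deq (X^* X - z)^{-1}$, so that $(GX)_{\f v \f x} = \f v^* X R \f x$. The isotropic Marchenko--Pastur law for $R$---obtained from Theorem \ref{thm: IMP gen} applied to $X^*$, which interchanges $M \leftrightarrow N$, $\phi \leftrightarrow \phi^{-1}$, and $m_{\phi^{-1}} \leftrightarrow m_\phi$---yields the decomposition $R\f x = m_\phi(z)\,\f x + \f r$ with $|\langle \f x, \f r\rangle| \prec \Psi$, and, via the elementary identity $|R\f x|^2 = \eta^{-1}\im R_{\f x\f x}$ (which follows from $R^*R = \eta^{-1}\im R$) combined with iso MP for $R_{\f x\f x}$, the $\ell^2$ bound $|\f r|^2 \prec \eta^{-1}\im m_\phi(z)$.

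Writing $(GX)_{\f v \f x} = m_\phi(z)\, \f v^* X \f x + \f v^* X \f r$, the first term is bounded by a direct large deviation estimate on the bilinear form in two deterministic unit vectors, giving $|\f v^* X \f x| \prec (NM)^{-1/4}$; combined with $|m_\phi(z)| \lesssim 1$ this contribution is dominated by the target. The dominant piece is $\f v^* X \f r$, for which the heuristic of treating $\f r$ as independent of $X$ predicts
\[
|\f v^* X \f r| \;\prec\; \sqrt{\E X_{i\mu}^2}\,|\f r| \;\prec\; (NM)^{-1/4}\sqrt{\eta^{-1}\im m_\phi(z)}\,,
\]
which, by Lemma \ref{lemma: w} and \eqref{im_m_swap} applied separately in the regimes $\phi \geq 1$ and $\phi \leq 1$, is bounded by $\phi^{1/4}(1+\phi^{1/2})\,\Phi$---precisely the claim.

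The main obstacle is that $\f r = R\f x - m_\phi \f x$ depends on $X$, so the large deviation estimate for $\f v^* X \f r$ cannot be applied directly. To handle this I would decompose $X$ along the direction $\f v$: let $R^{(\f v)}$ be the dual resolvent associated to $X^{(\f v)} \deq (I_M - \f v \f v^*)X$, defined by $R^{(\f v)} \deq ((X^{(\f v)})^* X^{(\f v)} - z)^{-1}$, so that $R^{(\f v)}$ is a measurable function of $X^{(\f v)}$ alone, and the difference $R - R^{(\f v)}$ is a rank-one Sherman--Morrison correction depending only on the row $\f v^* X$. Writing $\f r = (R^{(\f v)}\f x - m_\phi \f x) + (R - R^{(\f v)})\f x$, the first piece is handled by a large deviation estimate applied conditionally on $X^{(\f v)}$ (for Gaussian entries, $\f v^* X$ and $X^{(\f v)}$ are exactly independent by rotational invariance, and for non-Gaussian entries the deviation from conditional independence is controlled by a cumulant/moment expansion of the kind developed in Section \ref{sec: QUE}), together with the iso MP law for $R^{(\f v)}$, which still holds since only a deterministic rank-one perturbation has been removed from $X^* X$. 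The rank-one correction piece is controlled directly from the iso MP law for $R_{\f x\f x}$ together with the algebraic identities above. The overall scheme is a routine bookkeeping extension of the argument underlying \eqref{bound: XRv}.
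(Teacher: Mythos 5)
Your approach --- using $GX = XR$ and decomposing $R\f x = m_\phi\f x + \f r$ --- is genuinely different from the paper's. The paper proves this lemma by estimating the $p$-th moment $\E|(GX)_{\f v\f x}|^p$ directly, using the identity $(GX)_{\f v\mu} = -zR_{\mu\mu}(G^{[\mu]}X)_{\f v\mu}$ from \eqref{G_X} to make the resolvent independent of column $\mu$, and then running the fluctuation averaging / graph expansion machinery of \cite{EKY2,BEKYY}. Your route avoids that machinery entirely and reduces the problem to the $\ell^2$-bound $|\f r|^2\prec\eta^{-1}(\im m_\phi+\Psi)$ plus a single large deviation estimate for $\f v^*X\f r$. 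The bookkeeping at the end is correct (including the extra $\eta^{-1}\Psi$ term that your heuristic drops: one checks $(NM)^{-1/4}\sqrt{\eta^{-1}\Psi}\lesssim\phi^{-3/4}(1+\phi^{1/2})\Psi$ precisely because $\phi\leq(1+\phi^{1/2})^2$), and the Sherman--Morrison correction can plausibly be controlled, so the scheme is sound \emph{for Gaussian} $X$.

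The genuine gap is the non-Gaussian case, and I do not think it is a matter of routine bookkeeping. Your decomposition removes a \emph{direction} $\f v\in\R^M$ rather than a coordinate column of $X$. For Gaussian $X$, rotational invariance in the row index makes $\f v^*X$ and $X^{(\f v)}=(I_M-\f v\f v^*)X$ exactly independent, and the conditional large deviation estimate applies. But for non-Gaussian entries this independence fails: within a fixed column $\mu$, the scalar $\f v^*\f x_\mu=\sum_iv(i)X_{i\mu}$ and the vector $(I_M-\f v\f v^*)\f x_\mu$ are uncorrelated but not independent, and $\f r^{(\f v)}_\mu$ is a nonlinear functional of the latter, so $\E[\f v^*X\,\f r^{(\f v)}\mid X^{(\f v)}]\neq 0$. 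Your appeal to ``a cumulant/moment expansion of the kind developed in Section \ref{sec: QUE}'' does not resolve this: Section \ref{sec: QUE} is a Lindeberg comparison between two ensembles, not a tool for establishing approximate conditional independence within a single ensemble. To salvage your route you would either (a) first prove the lemma for Gaussian $X$ and then run a separate Green function comparison argument to transfer it, or (b) re-expand $\f v^*X\f r^{(\f v)}$ in the columns of $X$ and perform a cumulant (Stein-type) expansion in each $X_{i\mu}$ --- and at that point you have essentially reproduced the paper's fluctuation averaging estimate, which is precisely the device designed to extract the $\ell^2$-norm of $\f x$ from a sum $\sum_\mu x(\mu)(GX)_{\f v\mu}$ of \emph{dependent} summands without any Gaussian structure. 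So the gain of your argument --- avoiding the graph expansion --- is real only under a Gaussian hypothesis, and the extension to general entries is the crux that the paper's method handles and yours leaves open.
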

\begin{proof}
In the case where $\f x = \f e_\mu$ is a standard unit vector of $\R^N$, \eqref{GXvu} is a trivial extension of \eqref{bound: XRv} (which was proved under the assumption that $\phi \geq 1$). For general $\f x$, the proof requires more work. Indeed, writing $(GX )_{\f v \f x} = \sum_\mu (GX )_{\f v \mu} \, u(\mu)$ and estimating $\abs{(GX )_{\f v \mu}}$ by $O_\prec(\phi^{1/4} (1 + \phi^{1/2}) \Phi)$ leads to a bound proportional to the $\ell^1$-norm of $\f x$ instead of its $\ell^2$-norm. In order to obtain the sharp bound, which is proportional to the $\ell^2$-norm, we need to exploit cancellations among the summands. This phenomenon is related to the \emph{fluctuation averaging} from \cite{EKY2}, and was previously exploited in \cite{BEKYY} to obtain the isotropic laws from Theorem \ref{thm: IMP gen}. It is best made use of by estimating the $p$-th moment for an even integer $p$,
\begin{equation} \label{EGXp}
\E \absb{(GX )_{\f v \f x}}^p \;=\; \abs{z}^p \sum_{\mu_1, \dots, \mu_p} x(\mu_1) \cdots x(\mu_p) \, \E \pB{R_{\mu_1\mu_1} \, (G^{[\mu_1]} X)_{\f v \mu_1} \cdots \ol{R_{\mu_p \mu_p} \, (G^{[\mu_p]} X)_{\f v \mu_p}} }\,;
\end{equation}
here we used the first identity of \eqref{G_X}. A similar argument was given in \cite[Section 5]{BEKYY}. The basic idea is to make all resolvents on the right-hand side of \eqref{EGXp} independent of the columns of $X$ indexed by $\{\mu_1, \dots, \mu_p\}$ (see \cite[Definition 3.7]{BEKYY}). As in \cite[Section 5]{BEKYY}, we do this using the identities from \cite[Lemma 3.8]{BEKYY} for the entries of $R$. In addition, for the entries of $G$ we use the identity (in the notation of \cite[Definition 3.7]{BEKYY})
\begin{equation}
G^{[T]}_{\f v \f w} \;=\; G^{[T \mu]}_{\f v \f w} + z R^{[T]}_{\mu \mu} \sum_{k,l = 1}^M G^{[T \mu]}_{\f v k} G^{[T \mu]}_{l \f w} X_{k \mu}  X_{l \mu}\,,
\end{equation}
which follows from \eqref{Neumann} and \eqref{R_mumu}. As in \cite[Section 5]{BEKYY}, the resulting expansion may be conveniently organized using graphs, and brutally truncated after a number of steps that depends only on $p$ and $\omega$ (here $\omega$ is the constant from $\f S$ in \eqref{def_S_theta}). The key observation is that, once the expansion is performed, we may take the pairing among the variables $\{X_{k \mu_i} \col k \in \qq{1,N}, i \in \qq{1,p}\}$; we find that each independent summation index $\mu_i$ comes with a weight bounded by $x(\mu_i)^2 + N^{-1}$, which sums to $O(1)$.  We refer to \cite{BEKYY} for the full details of the method, and leave the modifications outlined above to the reader.
\end{proof}

\appendix

\section{A few remarks on applications to statistics} \label{sec:stats}
In this appendix we give a few remarks on what our results imply for applications to statistics. We assume throughout that the population covariance matrix satisfies
\begin{equation} \label{Sigma_kl_bound}
\max_k \abs{\Sigma_{kk}} \leq C
\end{equation}
for some constant $C$.

We consider the following simple model problem. Suppose there is some (unknown) set $S \subset \{1, \dots, M\}$ whose associated variables $(a_k)_{k \in S}$ are strongly correlated. For simplicity, let us assume that the correlations are given by a single spike in $\Sigma$, i.e.
\begin{equation*}
\Sigma \;=\; I_M + (\sigma - 1) \f v \f v^*\,, \qquad \sigma - 1 \;=\; \phi^{1/2} d\,,
\end{equation*}
where the spike direction $\f v = (v(k))_{k = 1}^M$ is given by
\begin{equation*}
v(k) \;\deq\;
\begin{cases}
\abs{S}^{-1/2} & \text{if } k \in S
\\
0 & \text{if } k \notin S\,.
\end{cases}
\end{equation*}
(We choose this precise form for $\f v$ so as to simplify the presentation as much as possible. The following discussion also holds if $\f v$ is essentially supported on $S$, but the magnitude of its entries is not necessarily constant.)
Moreover, for simplicity we assume that $T = \Sigma^{1/2}$. In components, we have
\begin{equation} \label{Sigma_kl_example}
\Sigma_{kl} \;=\; \delta_{kl} + (\sigma - 1) v(k) v(l)\,.
\end{equation}

The goal is to recover the set $S$ from an observed realization of the sample covariance matrix $\cal Q$. (In the more general case where $\f v$ is not constant on $S$, one may easily recover its entries from the submatrix $(\cal Q_{kl})_{k,l \in S}$ once $S$ has been determined.)

The most naive way to proceed is to compare the entries of $\cal Q$ with those of $\Sigma$. Using \eqref{Sigma_kl_bound} it is not hard to conclude that
\begin{equation*}
\cal Q_{kl} \;=\; \Sigma_{kl} + O_\prec(N^{-1/2})\,.
\end{equation*}
%where $O_\prec(N^{-1/2})$ denotes a random error term that is bounded with high probability by $N^{\epsilon - 1/2}$ for all $\epsilon > 0$; see Definition \ref{def:stocdom} below for a precise definition.
We look at the off-diagonal terms of $\cal Q$ and infer that $k$ belongs to $S$ if there exists an index $l$ such that $\cal Q_{kl}$ is much larger than $N^{-1/2}$. For this approach to work, we require that $\abs{\Sigma_{kl}} \gg N^{-1/2}$, which reads $(\sigma - 1) v(k) v(l) \gg N^{-1/2}$. We conclude that this naive entrywise approach works provided that
\begin{equation} \label{cond_trivial}
\sigma - 1 \;\gg\; \frac{\abs{S}}{\sqrt{N}}\,.
\end{equation}

In contrast, according to Theorems \ref{thm: outlier locations} and \ref{thm: outlier eigenvectors}, looking at the principal components of $\cal Q$ allows us to determine $S$ from the top eigenvector $\f \xi_1$ provided the spike $\sigma$ is supercritical, i.e.\ gives rise to an outlier separated by a distance of order one from the bulk spectrum. This gives the condition
\begin{equation} \label{cond_PCA}
\sigma - 1 \;\gg\; \phi^{1/2}\,.
\end{equation}
The principal component analysis works and the naive componentwise approach does not if \eqref{cond_PCA} holds and \eqref{cond_trivial} does not. These conditions may be written as
\begin{equation*}
\phi^{1/2} \;\ll\; \sigma - 1 \;\ll\; \frac{\abs{S}}{\sqrt{M}}\,.
\end{equation*}
Hence the principal component analysis for this example is very effective when the family of correlated variables is quite large, $\abs{S} \gg \phi^{1/2} \sqrt{M}$.

More generally, the principal component approach may work in the regime
\begin{equation} \label{cond_PCA2}
\abs{S} \;\gg\; \phi^{1/2}\,
\end{equation}
and cannot work for smaller $\abs{S}$. Indeed, the assumption \eqref{Sigma_kl_bound} is satisfied for $\sigma \leq \abs{S}$, so that in the case of the strongest possible correlations, $\sigma \asymp \abs{S}$, the condition \eqref{cond_PCA} reduces to \eqref{cond_PCA2}. On the other hand, if $\abs{S} \ll \phi^{1/2}$, then from \eqref{Sigma_kl_example} and the assumption \eqref{Sigma_kl_bound} we find $\sigma - 1 \leq \abs{S} \ll \phi^{1/2}$, in contradiction to \eqref{cond_PCA}.

Clearly, by \eqref{cond_PCA}, for the purposes of statistical inference it is desirable to make $\phi$ as small as possible. This means that the number of samples per variables is as large as possible. It is therefore natural to attempt to make $M$ smaller so as to reduce $\phi$. Obviously, if we know a priori that $S$ is contained in some subset $Y$ of $\{1, \dots, M\}$ of size $M/2$, then we simply discard all variables indexed by $Y^c$ and consider the correlations restricted to $(a_i)_{i \in Y}$; we have halved $\phi$ in the process.

However, if we have no such a priori knowledge about $S$, discarding half of the variables $a_i$ is a bad idea. In this case, the best one can do is to choose $Y$ at random. Thus, suppose that $S$ is uniformly distributed among the subsets of $\{1, \dots, M\}$ of size $\abs{S}$. We cut the sample space $\{1, \dots, M\}$ in half by keeping only the $M/2$ first elements. We therefore obtain a new family of variables with dimensional parameters
\begin{equation*}
\wt M \;\deq\; M/2 \,, \qquad \wt \phi \;\deq\; \phi/2\,.
\end{equation*}
Let $\wt \Sigma$ be the $M/2 \times M/2$ matrix obtained from $\Sigma = 1 + \phi^{1/2} d \f v \f v^*$ by restricting it to the first $M/2$ elements, i.e. $\wt \Sigma \deq (\Sigma_{kl})_{k,l = 1}^{M/2}$. Note that $\wt \Sigma$ is again a rank-one perturbation of the identity. We write it in the form
\begin{equation*}
\wt \Sigma \;=\; I_{M/2} + \wt \phi^{1/2} \wt d \wt {\f v} \wt {\f v}^*\,,
\end{equation*}
where $\wt {\f v}$ is a unit vector.
Since
\begin{equation*}
\absb{\hb{k \col \wt v(k) \neq 0}} \;\approx\; \frac{\abs{S}}{2}
\end{equation*}
with high probability, we find that $\wt v(k) \approx \sqrt{2} v(k)$ for $k \in S$. Picking an entry $\Sigma_{kl} = \wt \Sigma_{kl}$ for $k,l \in S \cal \{1, \dots, M/2\}$, we obtain
\begin{equation*}
\phi^{1/2} \, d \, v(k) \, v(l) \;=\; \wt \phi^{1/2} \, \wt d \, \wt v(k) \, \wt v(l)\,.
\end{equation*}
Therefore,
\begin{equation*}
\wt d \;\approx\; d / \sqrt{2}\,.
\end{equation*}
We conclude that detecting spikes in the new problem is \emph{more difficult} than in the original problem, and the halving of sample space is therefore counterproductive unless one has some good a priori information about $\Sigma$.

We conclude this appendix with a remark about the use of the \emph{non-outlier} principal components for statistical inference.
Theorem \ref{thm: eigenvector law} implies that the non-outlier eigenvectors near the edge are all biased in the direction of $\f v_i$ provided that $d_i$ is near the BBP transition point $1$. Suppose for simplicity that $\phi$ is bounded. Then Theorem \ref{thm: eigenvector law} implies that for $d_i$ near $1$ we have $\scalar{\f w}{\f \xi_a}^2 \asymp (d_i - 1)^{-2} M^{-1}$ with high probability. We can therefore detect the spike $d_i$ through the resulting bias in the direction $\f v_i$ as long as $\abs{d_i - 1} \ll 1$. (Recall that the unbiased, or completely delocalized, case corresponds to $\scalar{\f w}{\f \xi_a}^2 \asymp M^{-1}$.) Hence, the non-outlier eigenvectors $\f \xi_a$ retain information even about \emph{subcritical} spikes. This is in stark contrast to the eigenvalues $\mu_a$, which, by Theorem \ref{thm:sticking}, retain no information about the subcritical spikes.

If $d_i \leq 1 - \tau$ for some fixed $\tau$, then the principal components of $Q$ contain no information about the spike $d_i$. We illustrate this using the simple model from \eqref{Sigma_kl_example}. Suppose we try to determine the set $S$ by choosing the components of $\f \xi_a$ that are much larger than the unbiased background value $M^{-1/2}$. This works provided that for $k \in S$ we have
\begin{equation*}
\xi_a(k)^2 \;\asymp\; \frac{(1 + \phi)^{1/2} v(k)^2}{M} \;\gg\; \frac{1}{M}\,.
\end{equation*}
Using $v(k)^2 = \abs{S}^{-1}$ for $k \in S$, we therefore get the condition $(1 + \phi)^{1/2} \gg \abs{S}$. This however cannot hold, since we have by assumption \eqref{Sigma_kl_bound} on $\Sigma$ that
\begin{equation*}
C \;\geq\; \Sigma_{kk} \;\asymp\; 1 + \phi^{1/2} \abs{S}^{-1}\,.
\end{equation*}

{\small

\providecommand{\bysame}{\leavevmode\hbox to3em{\hrulefill}\thinspace}
\providecommand{\MR}{\relax\ifhmode\unskip\space\fi MR }
% \MRhref is called by the amsart/book/proc definition of \MR.
\providecommand{\MRhref}[2]{%
  \href{http://www.ams.org/mathscinet-getitem?mr=#1}{#2}
}
\providecommand{\href}[2]{#2}

}

%{\small
%\subsection*{Acknowledgement}
%This material is based upon work supported by the National Science
%Foundation under agreement No.\ DMS-1128255. Any opinions, findings, and conclusions
%or recommendations expressed in this material are those of the author(s)
%and do not necessarily reflect the views of the National Science
%Foundation.
%}


\begin{thebibliography}{10}

\bibitem{BY2}
Z.~Bai and J.~Yao, \emph{On sample eigenvalues in a generalized spiked
  population model}, Journal of Multivariate Analysis \textbf{106} (2012),
  no.~0, 167 -- 177.

\bibitem{BY1}
Z.~D. Bai and J.~F. Yao, \emph{Central limit theorems for eigenvalues in a
  spiked population model}, Ann. Inst. H. Poincar{\'e} (B) \textbf{44} (2008),
  447--474.

\bibitem{BBP}
J.~Baik, G.~Ben~Arous, and S.~P{\'e}ch{\'e}, \emph{Phase transition of the
  largest eigenvalue for nonnull complex sample covariance matrices}, Ann.\
  Prob. \textbf{33} (2005), 1643--1697.

\bibitem{BS}
J.~Baik and J.~W. Silverstein, \emph{Eigenvalues of large sample covariance
  matrices of spiked population models}, J. Multivar. Anal. \textbf{97} (2006),
  1382--1408.

\bibitem{BaoPanZhou2}
Z.~Bao, G.~Pan, and W.~Zhou, \emph{Universality for the largest eigenvalue of a
  class of sample covariance matrices}, Preprint arXiv:1304.5690v5.

\bibitem{BGGM2}
F.~Benaych-Georges, A.~Guionnet, and M.~Ma{\"i}da, \emph{Large deviations of
  the extreme eigenvalues of random deformations of matrices}, to appear in
  Prob. Theor. Rel. Fields. Preprint arXiv:1009.0135.

\bibitem{BGGM1}
\bysame, \emph{Fluctuations of the extreme eigenvalues of finite rank
  deformations of random matrices}, Electr. J. Prob. \textbf{16} (2011),
  1621--1662.

\bibitem{BGN}
F.~Benaych-Georges and R.~R. Nadakuditi, \emph{The eigenvalues and eigenvectors
  of finite, low rank perturbations of large random matrices}, Adv. Math.
  \textbf{227} (2011), 494--521.

\bibitem{BGN2}
\bysame, \emph{The singular values and vectors of low rank perturbations of
  large rectangular random matrices}, J. Multivar. Anal. \textbf{111} (2012),
  120--135.

\bibitem{BEKYY}
A.~Bloemendal, L.~Erd{\H{o}}s, A.~Knowles, H.-T. Yau, and J.~Yin,
  \emph{Isotropic local laws for sample covariance and generalized wigner
  matrices}, Electron. J. Probab \textbf{19} (2014), 1--53.

\bibitem{BV2}
A.~Bloemendal and B.~Vir{\'a}g, \emph{Limits of spiked random matrices {II}},
  Preprint arXiv:1109.3704.

\bibitem{BV1}
\bysame, \emph{Limits of spiked random matrices {I}}, Prob.\ Theor.\ Rel.\
  Fields \textbf{156} (2013), 795--825.

\bibitem{PeBor}
A.~Borodin and S.~P{\'e}ch{\'e}, \emph{Airy kernel with two sets of parameters
  in directed percolation and random matrix theory}, J. of Stat. Phys.
  \textbf{132} (2008), 275--290.

\bibitem{BEY3}
P.~Bourgade, L.~Erd\H{o}s, and H.-T. Yau, \emph{Edge universality of beta
  ensembles}, Preprint arXiv:1306.5728 (2013).

\bibitem{BoY1}
P.~Bourgade and H.-T. Yau, \emph{The eigenvector moment flow and local quantum
  unique ergodicity}, Preprint arXiv:1312.1301 (2013).

\bibitem{Davies}
E.~B. Davies, \emph{The functional calculus}, J. London Math. Soc. \textbf{52}
  (1995), 166--176.

\bibitem{ElK1}
N.~El~Karoui, \emph{On the largest eigenvalue of wishart matrices with identity
  covariance when $n$, $p$, and $p/n \to \infty$}, Preprint arXiv0309.355
  (2003).

\bibitem{EKY2}
L.~Erd{\H{o}}s, A.~Knowles, and H.-T. Yau, \emph{Averaging fluctuations in
  resolvents of random band matrices}, Ann.\ H.\ Poincar{\'e} \textbf{14}
  (2013), 1837--1926.

\bibitem{EKYY3}
L.~Erd{\H{o}}s, A.~Knowles, H.-T. Yau, and J.~Yin, \emph{Delocalization and
  diffusion profile for random band matrices}, Comm. Math. Phys. \textbf{323}
  (2013), 367--416.

\bibitem{EKYY4}
\bysame, \emph{The local semicircle law for a general class of random
  matrices}, Electron. J. Probab \textbf{18} (2013), 1--58.

\bibitem{ESY5}
L.~Erd{\H{o}}s, J.~Ramirez, B.~Schlein, and H.-T. Yau, \emph{Universality of
  sine-kernel for {W}igner matrices with a small {G}aussian perturbation},
  Electr.\ J.\ Prob. \textbf{15} (2010), 526--604.

\bibitem{EYY1}
L.~Erd{\H{o}}s, H.-T. Yau, and J.~Yin, \emph{Bulk universality for generalized
  {W}igner matrices}, Prob.\ Theor.\ Rel.\ Fields \textbf{154} (2012),
  341--407.

\bibitem{EYY3}
\bysame, \emph{Rigidity of eigenvalues of generalized {W}igner matrices}, Adv.
  Math \textbf{229} (2012), 1435--1515.

\bibitem{Joh1}
K.~Johansson, \emph{Shape fluctuations and random matrices}, Comm.\ Math.\
  Phys.\ \textbf{209} (2000), 437--476.

\bibitem{Johnstone}
I.~M. Johnstone, \emph{On the distribution of the largest eigenvalue in
  principal components analysis}, Ann. Stat. \textbf{29} (2001), 295--327.

\bibitem{Johnstone2}
\bysame, \emph{High dimensional statistical inference and random matrices},
  Preprint arXiv:0611589 (2006).

\bibitem{KY2}
A.~Knowles and J.~Yin, \emph{The isotropic semicircle law and deformation of
  {W}igner matrices}, to appear in Comm. Pure Appl. Math. Preprint
  arXiv:1110.6449.

\bibitem{KY3}
\bysame, \emph{The outliers of a deformed wigner matrix}, to appear in Ann.
  Prob. Preprint arXiv:1207.5619.

\bibitem{KY1}
\bysame, \emph{Eigenvector distribution of {W}igner matrices}, Prob.\ Theor.\
  Rel.\ Fields \textbf{155} (2013), 543--582.

\bibitem{MP}
V.~A. Marchenko and L.~A. Pastur, \emph{Distribution of eigenvalues for some
  sets of random matrices}, Mat. Sbornik \textbf{72} (1967), 457--483.

\bibitem{Mes}
X.~Mestre, \emph{Improved estimation of eigenvalues and eigenvectors of
  covariance matrices using their sample estimates}, IEEE Trans. Inf. Theor.
  \textbf{54} (2008), 5113--5129.

\bibitem{NA1}
Boaz Nadler, \emph{Finite sample approximation results for principal component
  analysis: A matrix perturbation approach}, Ann. Stat. \textbf{36} (2008),
  no.~6, 2791--2817.

\bibitem{Paul}
D.~Paul, \emph{Asymptotics of sample eigenstructure for a large dimensional
  spiked covariance model}, Stat. Sinica \textbf{17} (2007), 1617.

\bibitem{Pec}
S.~P{\'e}ch{\'e}, \emph{The largest eigenvalue of small rank perturbations of
  {H}ermitian random matrices}, Prob.\ Theor.\ Rel.\ Fields \textbf{134}
  (2006), 127--173.

\bibitem{Pe1}
\bysame, \emph{Universality results for the largest eigenvalues of some sample
  covariance matrix ensembles}, Probab. Theory Relat. Fields \textbf{143}
  (2009), 481--516.

\bibitem{PY}
N.~S. Pillai and J.~Yin, \emph{Universality of covariance matrices}, Preprint
  arXiv:1110.2501.

\bibitem{SoshPert}
A.~Pizzo, D.~Renfrew, and A.~Soshnikov, \emph{On finite rank deformations of
  {W}igner matrices}, to appear in Ann.\ Inst.\ Henri Poincar{\'e} (B).
  Preprint arXiv:1103.3731.

\bibitem{SoshPert2}
D.~Renfrew and A.~Soshnikov, \emph{On finite rank deformations of {W}igner
  matrices {II}: delocalized perturbations}, Preprint arXiv:1203.5130.

\bibitem{Shi}
D.~Shi, \emph{Asymptotic joint distribution of extreme sample eigenvalues and
  eigenvectors in the spiked population model}.

\bibitem{Sosh2}
A.~Soshnikov, \emph{A note on universality of the distribution of the largest
  eigenvalues in certain sample covariance matrices}, J. Stat. Phys.
  \textbf{108} (2002), 1033--1056.

\bibitem{TV3}
T.~Tao and V.~Vu, \emph{Random matrices: {U}niversal properties of
  eigenvectors}, Rand. Mat.: Th. and Appl. \textbf{1} (2012), 1150001.

\bibitem{TW1}
C.~Tracy and H.~Widom, \emph{Level-spacing distributions and the {A}iry
  kernel}, Comm.\ Math.\ Phys.\ \textbf{159} (1994), 151--174.

\bibitem{TW2}
\bysame, \emph{On orthogonal and symplectic matrix ensembles}, Comm.\ Math.\
  Phys.\ \textbf{177} (1996), 727--754.

\bibitem{Y1}
J.~Yin, \emph{The local circular law {III}: General case}, Preprint
  arXiv:1212.6599 (2013).

\end{thebibliography}
\end{document}